\numberwithin{equation}{section}
\newtheorem{thm}{Theorem}[section]
\newtheorem{ques}[thm]{Question}
\newtheorem*{claim}{Claim}
\newtheorem{claimnum}{Claim}
\newtheorem{lem}[thm]{Lemma}
\newtheorem{prop}[thm]{Proposition}
\newtheorem{cor}[thm]{Corollary}
\theoremstyle{definition}
\newtheorem{dfn}[thm]{Definition}
\newtheorem{standing-assumption}[thm]{Assumption}
\theoremstyle{remark}
\newtheorem{rem}[thm]{Remark}
\newtheorem{ex}[thm]{Example}
\DeclareMathOperator{\homeo}{\mathsf{Homeo}}
\DeclareMathOperator{\rep}{\mathsf{Rep}_{\operatorname{irr}}}
\newcommand{\fix}{\operatorname{\mathsf{Fix}}}
\DeclareMathOperator{\BP}{\mathsf{BP}}
\newcommand{\id}{\mathsf{id}}
\newcommand{\R}{\mathbb{R}}
\newcommand{\N}{\mathbb N}
\newcommand{\Z}{\mathbb Z}
\newcommand{\Ccal}{\mathcal{C}}
\newcommand{\nice}{{\mathfrak{D}}}
\newcommand{\Tsf}{\mathsf{T}}
\newcommand{\Hsf}{\mathsf{H}}
\newcommand{\Fsing}{\mathscr{F}}
\newcommand{\Tsingsf}{\mathscr{T}}
\newcommand{\Tsing}{\mathscr{T}}
\renewcommand{\setminus}{\smallsetminus}
\renewcommand{\emptyset}{\varnothing}
\DeclareMathOperator{\supp}{\mathsf{Supp}}
\title{A realisation result for moduli spaces of group actions on the line}
\date{\today}
\author{Joaquín Brum}
\author{Nicol\'as Matte Bon} 
\author{Crist\'obal Rivas}
\author{Michele Triestino}
\thanks{All the authors acknowledge the support of the projects MATH AMSUD, DGT
-- Dynamical Group Theory (22-MATH-03) and  ANR Gromeov (ANR19-CE40-0007). The work of NMB was supported by the LABEX MILYON (ANR-10-LABX-0070) of Université de Lyon, within the program ```France 2030'' (ANR-11-IDEX-0007) operated by the French National Research Agency (ANR). 
MT has been partially supported by the project ANER Agroupes (AAP 2019 Région Bourgogne Franche–Comté) and his host department IMB receives support from the EIPHI Graduate
School (ANR-17-EURE-0002).}
\keywords{Group actions on the real line, semi-conjugacy of actions, Deroin space}
\begin{document}
\maketitle

\begin{abstract}
	Given a finitely generated group $G$, the possible actions of $G$ on  the real line (without global fixed points), considered up to semi-conjugacy, can be encoded by the space of orbits of a flow on a compact space $(Y, \Phi)$ naturally associated with $G$ and uniquely defined up to flow equivalence, that we call the \emph{Deroin space} of $G$. We show a realisation result:  every expansive flow $(Y, \Phi)$ on a compact metrisable space of topological dimension 1, satisfying some mild additional assumptions,  arises as the Deroin space of a finitely generated group. This is proven by identifying the Deroin space of an explicit family of groups acting on suspension flows of subshifts, which is a variant of a construction introduced by the second and fourth authors. This result provides a source of examples of finitely generated groups satisfying various new phenomena for actions on the line, related to their rigidity/flexibility properties and to the structure of (path-)connected components of the space of actions.

			\smallskip
	
	{\noindent\footnotesize \textbf{MSC\textup{2020}:} Primary 37C85, 57M60. Secondary 37E05, 37B05.}
\end{abstract}

\section{Introduction}

The topic of this paper is the study of actions of finitely generated groups $G$ on the real line, that is, representations of $G$ into the group $\homeo_0(\R)$ of  orientation-preserving homeomorphisms of the line.  We start with some basic terminology. A representation $\rho\colon G\to \homeo_0(\R)$ is said to be \emph{irreducible} if $\rho(G)$ has no fixed points in $\R$. We will denote by $\rep(G;\R)$ the space of irreducible representations of a group $G$ in $\homeo_0(\R)$. The latter has a natural  (Polish) topology, inherited from the compact-open topology on $\homeo_0(\R)$.
One says that two representations $\rho_1, \rho_2\in \rep (G;\R)$ are  \emph{semi-conjugate} if there exists a non-decreasing $G$-equivariant map $h\colon \R \to \R$ (here we strictly reserve this terminology to the case of a non-decreasing map, and will always specify when two representations are semi-conjugate only up to reversing the orientation). This defines an equivalence relation on $\rep(G; \R)$. Considering irreducible representations up to semi-conjugacy is a way to restrict the attention to their minimal sets, which in a sense are the basic building blocks of the dynamics for group actions in dimension 1. 

Our main focus is the study of the semi-conjugacy classes in $\rep(G; \R)$ from a topological and dynamical perspective. One might think of the quotient of $\rep(G; \R)$ by the semi-conjugacy relation as a {moduli space} for actions of $G$ on the line; unfortunately this quotient is often too ill-behaved  to develop a meaningful theory.  In fact for many groups $G$ the semi-conjugacy relation is not \emph{smooth} in the Borel sense, namely there is no Borel map from $\rep(G; \R)$ to any standard Borel space which is constant on semi-conjugacy classes and separates them. This can be seen as an obstruction to find explicit invariants that classify actions up to semi-conjugacy. This contrasts with the case of actions on the circle, for which complete semi-conjugacy invariants are available (see Ghys \cite{Ghys} and Matsumoto \cite{Matsumoto}) and  the quotient by the semi-conjugacy relation is a compact metrisable space (see Mann and Wolff \cite{MW}).  

For finitely generated groups, a solution to this problem is to consider an intermediate quotient space, that we call the \emph{Deroin space} of the group $G$,  which is a canonical version of a construction  of Deroin \cite{Deroin}. It is a compact metrisable space $(\mathcal{D}, \Psi)$ endowed with a  one-parameter {flow} $\Psi\colon\R\times \mathcal D\to \mathcal D$ and a continuous $G$-action preserving $\Psi$-orbits, which is uniquely associated to $G$ up to $G$-equivariant flow equivalence (that is, homeomorphism sending flow-orbits to flow-orbits in an orientation preserving way). The space $\mathcal{D}$ can be abstractly defined as the quotient of $\rep(G; \R)$ by the finer equivalence relation of \emph{pointed}  semi-conjugacy, namely semi-conjugacy preserving a marked point on the line (see Definition \ref{dfn.pointedsemi}), and the flow $\Psi$ moves the marked point. By the results of Deroin, Kleptsyn, Navas, and Parwani \cite{DKNP}, the space $\mathcal{D}$ can actually be realised as a subspace of $\rep(G; \R)$ by selecting a suitable system of representatives (namely the set of normalized $\mu$-harmonic representations), in such a way that the flow $\Psi$ coincides with the  restriction to $\mathcal{D}$ of the translation action of $(\R, +)$ on $\rep(G; \R)$. (See \S\ref{sc:Deroin} for details). 

The interest for Deroin spaces from the perspective described above is that the quotient map $r_{\mathcal{D}}\colon \rep(G; \R)\to \mathcal{D}$ is continuous, surjective, and has the property that two actions $\rho_1, \rho_2\in \rep(G; \R)$ are semi-conjugate if and only if $r_{\mathcal{D}}(\rho_1), r_{\mathcal{D}}(\rho_2)$ belong to the same $\Psi$-orbit. Therefore all the complexity of the semi-conjugacy relation in $\rep(G; \R)$ is witnessed by the dynamics of the flow $(\mathcal{D}, \Psi)$.  This is a strong restriction even if one retains only the Borel structure of $\rep(G; \R)$, see the discussion in our previous work \cite[\S 14.4]{BMRT} (in particular the semi-conjugacy relation on $\rep(G; \R)$ is smooth if and only if the orbit-equivalence relation of $(\mathcal{D}, \Psi)$ is so).
Well beyond, the flow $(\mathcal{D}, \Psi)$ carries  topological information on how semi-conjugacy classes sit inside $\rep(G; \R)$.

Describing the Deroin space of a finitely generated group requires a global understanding of its possible actions on the line up to semi-conjugacy. The pool of examples for which such understanding is available gives rise to rather degenerate flows; let us mention some.  It has long been known  that if $G$ is a group of subexponential word-growth, then every $\rho\in \rep(G; \R)$ is semi-conjugate to an action by translations (see Plante \cite{Plante-measure}); this implies that its Deroin space is either empty or homeomorphic to a sphere $\mathbb{S}^n$ with trivial flow.  Some groups admit finitely many actions on the line up to semi-conjugacy, such as the solvable Baumslag--Solitar groups (see the work of the third author \cite{Rivas-conradian}), or central extensions of some groups acting on the circle such as Thompson's group $T$ (see the work of the second and fourth authors \cite{MB-T}), or the Fuchsian group $\Delta(2, 3, 7)$ (see Mann and the fourth author \cite{Mann-Triestino}); in  these cases the flow ($\mathcal{D}, \Psi)$ can be easily computed and has only finitely many orbits (e.g.\ in the last two examples it is the disjoint union of two circles, which are $\Psi$-periodic orbits). Some more sources of examples come from our previous works \cite{BMRT, solvable}, and give rise to flows with an uncountable number of orbits but still very restricted dynamics: for example in the case of Thompson's group $F$,   the Deroin space has a smooth orbit space and all orbits converge towards some limit point  (see \cite[Figure 1.3.1]{BMRT});  the same  holds for the wreath product $\Z \wr \Z$, relying on \cite[Proposition 6.1]{solvable} (a more precise result for all finitely generated solvable groups will be the object of a work in preparation). An example on the opposite side of the spectrum is given by the non-abelian free groups  $\mathbb{F}_n$. In this case the flow $(\mathcal{D}, \Psi)$  has a non-smooth space of orbits, as one can show by exhibiting a family of representations of $\mathbb{F}_n$ giving rise to a subflow with chaotic behaviour \cite[Remark 14.4.4]{BMRT}. An explicit description of the Deroin space of $\mathbb{F}_n$ is not known (note that it would encode actions on the line of every $n$-generated group). 

Our main result shows that many flows can be realised as the  Deroin space of a finitely generated group. This is proven by identifying the Deroin space of an explicit family of groups, which are relatives of the groups introduced by two of the authors in \cite{MB-T}.  The class of flows that we consider are suspension flows of subshifts,  intrinsically characterised (by Bowen and Walters \cite{expansive}) as expansive flows on compact metrisable spaces of topological dimension 1 (all these notions are recalled in \S \ref{s-Deroin-identification}). We say that a flow $(Y, \Phi)$ is \emph{freely reversible} if it admits a fixed-point-free  order-2 homeomorphism  $\sigma \colon Y\to Y$  which is a flow equivalence between $\Phi$ and its time-reverse; the Deroin space of a finitely generated group always has this property (Proposition \ref{p-Deroin-freely-reversible}).
\begin{thm}\label{t-realisation}
	Let $(Y, \Phi)$ be an expansive flow without fixed points on a compact metrisable space of topological dimension 1. Assume that $\Phi$ is freely reversible and topologically free (namely, the set of  non-periodic points is dense).  Then, there exists a finitely generated group $G$ acting faithfully on $Y$ by homeomorphisms,   preserving each $\Phi$-orbit,  such that $(Y,\Phi)$ can be identified with the Deroin space of $G$ (up to $G$-equivariant flow equivalence).
	\end{thm}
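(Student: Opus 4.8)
The plan is to construct, for a given flow $(Y,\Phi)$ as in the statement, an explicit finitely generated group $G$ acting on $Y$ by homeomorphisms preserving each $\Phi$-orbit, and then to prove that this $(Y,\Phi)$ is $G$-equivariantly flow-equivalent to the Deroin space $(\mathcal{D},\Psi)$ of $G$. By Bowen--Walters we may replace $(Y,\Phi)$ with a suspension flow over a subshift: fix a finite alphabet $\A$, a subshift $\Sigma\subseteq\A^{\Z}$ together with a continuous roof function giving a suspension space $Y=\Sigma\times\R/{\sim}$ with the natural vertical flow $\Phi$. The freely-reversible hypothesis provides an order-$2$ fixed-point-free homeomorphism $\sigma$ reversing the flow, which on the symbolic side should come from an alphabet involution combined with the reflection $x\mapsto -x$ of $\R$; topological freeness says non-periodic orbits are dense, which we will need to guarantee irreducibility and faithfulness.

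The construction of $G$ follows the template of \cite{MB-T}: the idea is to let $G$ be generated by (i) elements coming from the dynamics of the base subshift — roughly, partial homeomorphisms that act on the $\R$-coordinate of $Y$ supported over cylinder sets of $\Sigma$, realising the shift's structure — together with (ii) a supply of compactly-supported homeomorphisms of intervals in the $\R$-direction (e.g.\ copies of Thompson-like or $\PL$-type groups) rich enough to act minimally and with the flexibility needed to build harmonic measures, and (iii) the involution $\sigma$. One then checks that $G$ is finitely generated and that its natural action on $Y$ is faithful, minimal on each $\Phi$-orbit closure in the appropriate sense, and irreducible. The key design requirement is that the orbits of the $G$-action on $Y$ be exactly controlled by the $\Phi$-orbits: $G$ preserves each $\Phi$-orbit by construction, and conversely the subshift data must be encoded faithfully enough that two points of $Y$ in different $\Phi$-orbits are never identified by any semi-conjugacy of the resulting line action.

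The heart of the proof is the identification of the Deroin space. One direction is soft: every irreducible action of $G$ on $\R$ obtained from the tautological action on a leaf of $Y$ (unwrapping a $\Phi$-orbit to $\R$) produces a point of $\mathcal{D}$, and these assemble into a continuous $G$-equivariant flow map $Y\to\mathcal{D}$; one must verify it is injective (distinct $\Phi$-orbits give non-semiconjugate actions — this is where topological freeness and the explicit structure of $G$ enter) and that it intertwines $\Phi$ with $\Psi$. The harder direction is surjectivity/completeness: one must show that \emph{every} irreducible representation $\rho\colon G\to\homeo_0(\R)$ is, up to pointed semi-conjugacy, one of the tautological ones — i.e.\ the classification of all actions of $G$ on the line. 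This is where the specific architecture of $G$ must be exploited: using that the ``Thompson-like'' pieces of $G$ have rigid or well-understood actions on the line (so any $\rho$ restricted to them is semi-conjugate to a standard one), one pins down the dynamics of $\rho$ on each generating piece, then uses the commutation/conjugation relations encoding the subshift to show the global picture must reassemble into a suspension leaf. Concretely one expects to: (a) analyse fixed-point sets and supports of $\rho$ of the interval generators, (b) reconstruct from their configuration a point of $\Sigma$ (a coding argument), (c) check the roof/return structure matches, and (d) invoke \cite{DKNP} harmonic representatives to normalise and get the flow statement.

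The main obstacle I expect is step (b)--(c): proving that an \emph{arbitrary} action of $G$ on the line is forced to have the suspension structure, rather than some exotic action unrelated to $\Sigma$. This requires showing the relations in $G$ are rigid enough to recover the coding — the subshift must be ``written into'' the group in a way that survives any semi-conjugacy, which is exactly the technical core inherited and extended from \cite{MB-T}; controlling the interaction between the flexible interval-homeomorphism generators and the subshift-encoding generators, and ruling out degenerate actions (e.g.\ where some generator acts trivially or with unexpected fixed points), will be the delicate part. A secondary difficulty is verifying expansiveness and the dimension-$1$ hypothesis are genuinely used — via Bowen--Walters they let us reduce to subshifts, without which no symbolic coding, hence no finitely generated model, would be available.
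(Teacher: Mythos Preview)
Your high-level strategy matches the paper's: reduce via Bowen--Walters to a suspension over a reversible subshift $(X,\varphi,\sigma)$, build a Thompson-like group $G$ acting on the suspension in the spirit of \cite{MB-T}, then classify all line actions of $G$ to identify the Deroin space. Two concrete points, however, go wrong.

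First, the involution $\sigma$ must \emph{not} be a generator of $G$. The map $\hat\sigma$ reverses the flow and (under the fixed-point-free hypothesis, via Lemma~\ref{l-flipping-involution}) swaps distinct $\Phi$-orbits, so including it would violate the requirement that $G$ preserve each $\Phi$-orbit. In the paper $G=\Tsing(\varphi,\sigma)$ is the \emph{centraliser} of $\hat\sigma$ inside $\Tsing(\varphi)$; equivalently $G$ acts on the dihedral suspension $Z=Y/\langle\hat\sigma\rangle$. The involution is a symmetry constraint on $G$, not an element. Relatedly, obtaining a reversible subshift from the flow requires choosing the cross-section $\hat\sigma$-symmetrically ($X=X_0\cup\hat\sigma(X_0)$), a step your sketch omits.

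Second, and this is the real gap, you miss the key technical innovation over \cite{MB-T}. The paper does \emph{not} use the groups $\Tsf(\varphi)$ but a modification $\Tsing(\varphi,\sigma)$ in which dyadic PL maps are replaced by maps ``of type $\nice$'' (allowing countably many breakpoints accumulating on isolated higher-order singularities locally commuting with doubling). The resulting interval group $\Fsing$ is \emph{perfect} (Lemma~\ref{l-fingenperfect}), whereas Thompson's $F$ is not. This is essential: the chart subgroups $G_{C,I}\cong\mathcal{C}(C,\Fsing_I)$ are not finitely generated, and the classification relies on Proposition~\ref{p-semicon2}, which says any irreducible line action of $\mathcal{C}(X,H)$ has a perfect minimal set provided $H$ is finitely generated and perfect. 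With ordinary $F$ this fails --- $\mathcal{C}(X,F)$ has infinite-rank abelianisation and admits exotic actions --- and the argument collapses (the paper says so explicitly in the remark following Proposition~\ref{p-semicon2}). Your steps (a)--(c) are in the right spirit but the actual mechanism is not a symbolic coding: one studies the nested family of support intervals $I(C,I,\xi)$, shows their intersections over shrinking charts are singletons (Lemma~\ref{l-intervals}), and thereby builds an equivariant locally injective map $h\colon\R\to Z$ onto a single leaf (Proposition~\ref{t-hmap}). No harmonic representatives are invoked for the identification: once every minimal action is some $\rho_y$, the image of $Y$ in $\rep(G;\R)$ directly satisfies the axioms of a Deroin space.
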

	
	\begin{rem}
		Recall that a group is \emph{left-orderable} if it admits a total order which is invariant under left multiplication. A countable group is left-orderable if and only if it is isomorphic to a subgroup of $\homeo_0(\R)$. Since we assume in Theorem \ref{t-realisation} that $\Phi$ is  topologically free, the group $G$ acts faithfully on the union of non-periodic orbits, and thus it is left-orderable by a standard argument (see \cite[Proposition 3.3]{MB-T}).
	\end{rem}
	
	Theorem \ref{t-realisation} provides the first explicit computation of Deroin spaces with chaotic behaviour (in particular whose space of orbits is non-smooth). In fact it produces examples such that the flow $(\mathcal{D}, \Psi)$ has rather arbitrary dynamical properties.
		
One direction of application of Theorem \ref{t-realisation} is related  to the study of rigidity and flexibility of group actions on the line. In a broad sense, this topic aims at understanding, given a group $G$, how its representations can be modified (up to semi-conjugacy) by slight perturbations, which boils down to understand  how semi-conjugacy classes accumulate onto each other.  This leads to several tightly related notions of rigidity and flexibility for representations, appearing in the literature on group actions on one-manifolds, see e.g.\  \cite{Mann-survey, Mann-Rivas, MW, KKMj}.   We will use here the following terminology, analogous to the one used by Mann and Wolff \cite{MW} for actions on the circle.
	\begin{dfn} A representation $\rho \in \rep(G; \R)$ is \emph{locally rigid} if it is an interior point in its semi-conjugacy class. 
It is \emph{rigid} if its whole semi-conjugacy class is open.  A representation which is not locally rigid is called \emph{flexible}. \end{dfn}
	A representation $\rho\in \rep(G; \R)$ is rigid if and only if its image $r_\mathcal{D}(\rho)$ in the Deroin space $(\mathcal{D}, \Psi)$ defines an isolated point in any local cross-section of the flow $\Psi$, i.e.\ if its $\Psi$-orbit is an an open subset of $\mathcal{D}$. Local rigidity is not witnessed by the Deroin space in general, although it is equivalent to rigidity for minimal representations \cite[Remark 14.2.5]{BMRT}. However for the groups that we construct to prove Theorem \ref{t-realisation}, we additionally show that local rigidity and rigidity are equivalent for all representations  (see Corollary \ref{c-rigid}). Thus for this family of groups we have complete control on which representations are (locally) rigid (clearly a flow as in Theorem \ref{t-realisation} may or may not admit open orbits). More generally,  given $\rho\in \rep(G; \R)$, we obtain a precise characterisation of all representations that accumulate on $\rho$, up to semi-conjugacy, in terms of the flow $(Y, \Phi)$, see Theorem \ref{t-accumulation}.	
	
	A relevant special case of Theorem \ref{t-accumulation} arises by choosing the flow $(Y, \Phi)$ to be \emph{minimal}, i.e.\ with all orbits dense. In this case our construction produces examples of groups with the following extreme flexibility property.
	\begin{cor}\label{c-dense}
	There exist finitely generated groups $G$ for which any semi-conjugacy class is dense in $\rep(G; \R)$ (and $\rep(G, \R)$ contains uncountably many semi-conjugacy classes). \end{cor}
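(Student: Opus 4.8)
The plan is to derive Corollary~\ref{c-dense} from Theorem~\ref{t-realisation} and Theorem~\ref{t-accumulation} by feeding in a \emph{minimal} flow. Thus the first step is to produce a flow $(Y,\Phi)$ satisfying all the hypotheses of Theorem~\ref{t-realisation} --- expansive, without fixed points, on a compact metrisable space of topological dimension $1$, freely reversible and topologically free --- which is moreover minimal and has uncountably many orbits. Granting this, Theorem~\ref{t-realisation} furnishes a finitely generated group $G$ whose Deroin space $(\mathcal{D},\Psi)$ is identified with $(Y,\Phi)$. Since two representations of $G$ are semi-conjugate exactly when their images under $r_{\mathcal{D}}$ lie on a common $\Psi$-orbit, and $r_{\mathcal{D}}$ is surjective, the semi-conjugacy classes in $\rep(G;\R)$ are in bijection with the $\Psi$-orbits of $\mathcal{D}$; as an infinite minimal subshift --- hence also the suspension flow over it --- has uncountably many orbits, there are uncountably many semi-conjugacy classes. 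Finally, because $\Psi$ is minimal every $\Psi$-orbit is dense in $\mathcal{D}$, so Theorem~\ref{t-accumulation}, of which this is exactly the announced minimal special case, yields that the semi-conjugacy class of every $\rho\in\rep(G;\R)$ is dense in $\rep(G;\R)$.

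It remains to construct $(Y,\Phi)$. I would take the suspension flow, with constant roof function, of an infinite minimal subshift $(X,T)\subseteq\mathcal{A}^{\Z}$: by the Bowen--Walters description recalled in \S\ref{s-Deroin-identification} this is automatically an expansive, fixed-point-free flow of topological dimension $1$, and it is topologically free with uncountably many orbits because an infinite minimal subshift is aperiodic and homeomorphic to a Cantor set. The one delicate point is \emph{free reversibility}. For this I would fix a fixed-point-free involution $\iota$ of the alphabet $\mathcal{A}$ and demand that $X$ be invariant under the reversing map $R\colon\mathcal{A}^{\Z}\to\mathcal{A}^{\Z}$, $(Rx)_n=\iota(x_{-n})$, which conjugates $T$ to $T^{-1}$; then $\sigma(x,t):=(Rx,-t)$ descends to an order-two self-homeomorphism of $Y$ that is a flow equivalence between $\Phi$ and its time-reverse. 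A short computation with the suspension relation $(x,t+1)\sim(Tx,t)$ shows that $\sigma$ is fixed-point free as soon as $X$ contains no bi-infinite word $z$ with $z_{n+1}=\iota(z_{-n})$ for all $n$; since the central block of such a word is $z_0\,\iota(z_0)$, it is enough to choose $X$ that avoids every two-letter pattern $d\,\iota(d)$, $d\in\mathcal{A}$. One then checks that an $R$-invariant, minimal, aperiodic subshift with this property indeed exists --- for instance the subshift generated by a suitable primitive substitution that is equivariant under both $\iota$ and word reversal.

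The main --- and essentially only --- obstacle is this last combinatorial point: exhibiting a minimal subshift carrying a \emph{fixed-point-free} reversing symmetry. (Note that one cannot simply take two copies of a minimal suspension flow, exchanged by $\sigma$ and flowed in opposite time directions: that $\sigma$ is fixed-point free but the resulting flow is disconnected, hence not minimal, so the symmetry must really act on a minimal system --- this is also imposed by Proposition~\ref{p-Deroin-freely-reversible}.) Once a suitable $(Y,\Phi)$ is available, the corollary is a formal consequence of Theorems~\ref{t-realisation} and \ref{t-accumulation}: distinct $\Psi$-orbits give distinct semi-conjugacy classes, there are uncountably many of them, and by minimality each is dense; the coexistence of uncountably many pairwise disjoint dense semi-conjugacy classes is no contradiction, exactly as for the cosets of $\Q$ in $\R$.
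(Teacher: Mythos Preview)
Your argument is correct and follows exactly the route the paper takes: Corollary~\ref{c-dense} is the announced minimal special case of Theorem~\ref{t-accumulation} (stated separately as Corollary~\ref{c-dense-characterisation}), applied to the group $G=\Tsing(\varphi,\sigma)$ for a minimal reversible subshift satisfying Assumption~\ref{standing}.

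The only place where you spend more effort than the paper is the construction of a minimal $(X,\varphi,\sigma)$ with the required fixed-point-free reversing symmetry. Your analysis of the combinatorial condition is right (and matches Lemma~\ref{l-flipping-involution}: one only needs $Rx\neq x$ and $Rx\neq Tx$, so forbidding the blocks $d\,\iota(d)$ suffices), and primitive substitutions equivariant under $\iota$ and reversal do exist. But you can bypass this entirely: the paper already provides a concrete minimal example in Example~4.4 (the Denjoy blow-up of an irrational rotation along the four orbits of reflection fixed points), so the ``main obstacle'' you flag is in fact already handled in the text.
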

		
		In other words, a group $G$ as in Corollary \ref{c-dense} has the property that every representation $\rho\in \rep(G; \R)$ admits arbitrarily small perturbations which are semi-conjugate to any other representation in $\rep(G; \R)$. We call such a representation $\rho$  \emph{universally flexible} (see Definition \ref{s-universally-flexible}). To our knowledge, there were previously no groups known to admit any representation with this property.

		 \begin{rem}
		 Corollary \ref{c-dense} can be compared with the problem of whether there exists a finitely generated left-orderable group acting minimally on its space of left orders, see Navas \cite[Question 6]{Navas-ICM}.
		 The conclusion in Corollary \ref{c-dense} is the analogous property at the level of actions on the line;  however there is no formal connection: the groups constructed here do not act minimally on their space of left-orders, and conversely this property does not seem to imply \emph{a priori} the conclusion of Corollary \ref{c-dense} (because not all representations in $\rep(G;\R)$ on the line are dynamical realisations of orders). However both properties imply that the group $G$ acts minimally on its Deroin space, which can be identified with a quotient of the space of left-orders \cite[Chapter 14]{BMRT}.
		 \end{rem}
		 
		  Beyond the minimal case, different choices for the flow $(Y, \Phi)$  yield groups with other prescribed  rigidity/flexibility property, for example groups  which admit both universally flexible representations together with  a given number (finite or countably infinite) of rigid ones (see Corollary \ref{c-prescribed-rigidity}). 
		  
		  In a similar spirit, we consider a natural analogue of the Cantor--Bendixson rank (CB-rank) for the space of semi-conjugacy classes in $\rep(G; \R)$, that we call the \emph{semi-conjugacy CB-rank} of $\rep(G; \R)$, defined according to a transfinite process in which the open semi-conjugacy classes play a role analogous to the isolated points for the usual CB-rank  (Definition \ref{d-CB-rank}). Then Theorem \ref{t-realisation}  produces examples of groups for which the semi-conjugacy CB-rank of $\rep(G; \R)$ can be computed and can be an arbitrarily large countable ordinal, see Corollary \ref{c-CB-rank}. Again, this can be seen as a counterpart at the level of actions of a question asked by Mann and the third author \cite{Mann-Rivas} for spaces of left orders, namely whether there exists a finitely generated left-orderable group whose space of left orders has Cantor--Bendixson rank larger than 1.

As another application, we obtain examples of finitely generated groups that admit many non semi-conjugate actions on $\R$, yet have a unique \emph{generic} action (up to possibly reversing the orientation) in the following strong sense.
\begin{cor}[Groups with a generic representation] \label{c-generic}
There exist finitely generated groups $G$  such that $\rep(G; \R)$ contains infinitely (countably or uncountably) many distinct semi-conjugacy classes, and there is $\rho\in \rep(G; \R)$ such that the semi-conjugacy classes of $\rho$ and of its conjugate by the reflection $x\mapsto -x$ form a dense open set. 
\end{cor}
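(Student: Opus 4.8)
The plan is to derive this from Theorem~\ref{t-realisation} by choosing the flow $(Y,\Phi)$ carefully. By the Bowen--Walters characterisation of expansive $1$-dimensional flows as suspensions of subshifts (recalled in Section~\ref{s-Deroin-identification}), it is enough to produce a subshift $X$ whose suspension flow $(Y,\Phi)$ is topologically free and carries a fixed-point-free reversal $\sigma$, and which contains a point $p$ such that, writing $o$ for the shift-orbit of $p$ and $o'$ for the shift-orbit of its $\sigma$-mirror $p':=R'p$, one has: $o\cup o'$ is dense in $X$; every point of $o\cup o'$ is isolated in $X$; and $X\setminus(o\cup o')$ has infinitely (or uncountably) many orbits. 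Indeed, denoting by $\Ocal$ and $\sigma\Ocal$ the $\Phi$-orbits corresponding to $o$ and $o'$, the first two conditions say exactly that $\Ocal\cup\sigma\Ocal$ is open and dense in $Y$, while the last says $(Y,\Phi)$ has infinitely (or uncountably) many orbits.

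To construct $X$, fix an infinite subshift $\Lambda_0$ on an alphabet $\mathcal{B}$, with countably many, resp.\ uncountably many, orbits according to the case one wants; let $R'$ be the map sending a bi-infinite word to its reversal with each letter replaced by a formal barred copy, and set $\Lambda:=\Lambda_0\sqcup R'(\Lambda_0)$, an infinite subshift on $\mathcal{B}\sqcup\bar{\mathcal{B}}$ closed under $R'$. Adjoin two marker letters $\#$ and $\bar\#$, exchanged by $R'$, so that $R'$ becomes fixed-point-free on the whole alphabet. Let $p$ be the sequence whose positive tail reads $\# w_1\# w_2\# w_3\cdots$ and whose negative tail reads $\cdots v_2\# v_1\#$, where the $w_k$ (resp.\ $v_k$) are words of $\Lambda$ of even (resp.\ odd) lengths tending to $\infty$, chosen so that every word of $\Lambda$ occurs as a factor of some $w_k$ and of some $v_k$. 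Put $X:=\overline{o\cup o'}$ with $o,o'$ the shift-orbits of $p$ and $p'=R'p$, and let $(Y,\Phi)$ be the suspension of $X$ with $\sigma$ induced by $R'$.

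Now I would verify the properties. Since every word of $\Lambda$ occurs arbitrarily far out inside the $w_k$ (and $v_k$), one gets $\Lambda\subseteq\overline o$ (and $\subseteq\overline{o'}$), so $X\setminus(o\cup o')\supseteq\Lambda$ carries the desired number of orbits; density of $o\cup o'$ in $X$ is automatic. For isolation: given any position $n$, take a window of $p$ around $n$ long enough to contain two markers on each side; it then contains a factor $\#u\#$ in which $u$ is a full block equal to some $w_j$ or $v_j$, and since the $w_j$ have pairwise distinct even lengths and the $v_j$ pairwise distinct odd lengths, this factor together with its marker type determines $n$ uniquely inside $p$; being a two-marker factor it cannot occur in any point of $X$ with at most one marker. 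As every point outside $o\cup o'$ has at most one marker and every point of $o'$ uses only $\bar\#$, the window occurs in $X$ only centred at $S^n p$, so $S^n p$ is isolated in $X$ (and symmetrically for $o'$). Finally no $x\in X$ satisfies $R'x=S^kx$ (the three pieces $o$, $o'$ and the marker-poor limit set are distinguished by their marker letters), so the suspended $\sigma$ is fixed-point-free; topological freeness holds because the dense set $o\cup o'$ consists of non-periodic points; and $X$ has no constant sequence, so $(Y,\Phi)$ has no fixed points. Thus $(Y,\Phi)$ satisfies the hypotheses of Theorem~\ref{t-realisation}.

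To conclude, let $G$ and the identification $(\mathcal{D},\Psi)\cong(Y,\Phi)$ be furnished by Theorem~\ref{t-realisation}, with $\sigma$ corresponding to the involution of the Deroin space induced by conjugation by $x\mapsto-x$ (Proposition~\ref{p-Deroin-freely-reversible}). Pick $\rho\in\rep(G;\R)$ with $r_{\mathcal{D}}(\rho)\in\Ocal$. By the defining property of the Deroin space, the semi-conjugacy classes of $\rho$ and of its reflection are $r_{\mathcal{D}}^{-1}(\Ocal)$ and $r_{\mathcal{D}}^{-1}(\sigma\Ocal)$, and their union $r_{\mathcal{D}}^{-1}(\Ocal\cup\sigma\Ocal)$ is open (by continuity of $r_{\mathcal{D}}$) and dense (since $\Ocal\cup\sigma\Ocal$ is dense in $\mathcal{D}$, and density of a $\Psi$-saturated set passes to its $r_{\mathcal{D}}$-preimage, exactly as in the proof of Corollary~\ref{c-dense}); since $Y$ has infinitely, resp.\ uncountably, many orbits, so does $\rep(G;\R)$ in semi-conjugacy classes. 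The main obstacle is the construction of $X$: a dense orbit can never be open in a subshift unless a combinatorial device forces short identifying windows — this is the role of the markers — and one must simultaneously keep $X$ large and make sure the reversal, once suspended, stays fixed-point-free, which is why the barred alphabet and the length-parity bookkeeping are needed.
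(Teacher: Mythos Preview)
The paper does not prove Corollary \ref{c-generic} explicitly; it is stated in the introduction and left as a consequence of the results of Section 6. Your approach is the intended one, and the subshift construction is correct (if more elaborate than needed: the barred alphabet and parity bookkeeping could be replaced by the doubling construction of Example \ref{e-doubling}, starting from a non-reversible subshift with a single marker letter and an isolated dense orbit). The verification that every shift of $p$ is isolated, that limit points carry at most one marker, and that $R'$ preserves no orbit is sound; the remark about constant sequences is superfluous since suspension flows never have fixed points.

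There is one genuine soft spot in the last paragraph. The implication ``$\Ocal\cup\sigma\Ocal$ dense in $\mathcal{D}$ $\Rightarrow$ $r_{\mathcal{D}}^{-1}(\Ocal\cup\sigma\Ocal)$ dense in $\rep(G;\R)$'' does \emph{not} follow from continuity of $r_{\mathcal{D}}$ or from any general property of Deroin spaces: the retraction is not an open map, and your citation to ``the proof of Corollary \ref{c-dense}'' points to a proof that does not exist in the paper. What you need is Theorem \ref{t-accumulation} (applied with $\mathcal{F}=\{\rho,\hat\iota\rho\}$), which in turn rests on the transverse-openness property of Proposition \ref{p-retraction-open}, a result specific to the groups $\Tsing(\varphi,\sigma)$. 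With that substitution the argument is complete. A second point worth flagging: the identification of your involution $\hat\sigma$ on $Y$ with the Deroin involution $\iota$ of Proposition \ref{p-Deroin-freely-reversible} is correct but is not stated in the paper; it follows from the identity $\rho_{\hat\sigma(y)}=\hat\iota(\rho_y)$, which is a short computation from \eqref{e-action} and the relation $\tau_g\circ\hat\sigma=-\tau_g$ of Lemma \ref{l-lifting-H0}.
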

\begin{rem}
Since we required semi-conjugacy to be orientation-preserving, it is not possible for a group $G$ to have a representation $\rho\in \rep(G; \R)$ with an open dense (or even co-meager) semi-conjugacy class, as its conjugate by the reflection should have the same property.
\end{rem}

		Another general theme in the study of representation spaces is the classification of their connected and path-connected components. 
		See for instance the works of Mann and Wolff \cite{Mann, MW} for the case of surface group actions on the circle (for which this problem remains open in general). For a finitely generated group $G$, the semi-conjugacy classes in $\rep(G; \R)$ are always path connected, so the connected and path-connected components of $\rep(G; \R)$ are in correspondence with those of the Deroin space (Proposition \ref{p-deroin-correspondence}). Again in analogy  with the terminology of \cite{MW}, one may call an action $\rho\in \rep(G; \R)$ \emph{path-rigid} if its path-component coincides with its semi-conjugacy class (that is, $\rho$ cannot be  deformed \emph{along a continuous path} into any non-semi-conjugate representation). 
Theorem \ref{t-realisation}  is a source of groups with a rather extreme behaviour with regard to this notion. In particular, it shows that  path-continuous deformations of representations  might be dramatically more restricted than arbitrary small perturbations in the compact-open topology.

\begin{cor}[Path-continuous deformations vs general perturbations] \label{c-path-rigid}
For every group $G$ satisfying the conclusion of Theorem \ref{t-realisation}, the path-components of  $\rep(G; \R)$ coincide with the semi-conjugacy classes.   

Combining with Corollary \ref{c-dense}, there are finitely generated groups $G$ such that all representations in $\rep(G; \R)$ are simoultaneously path-rigid and universally flexible, and the space $\rep(G; \R)$  is connected but not path-connected, and nowhere locally connected.
\end{cor}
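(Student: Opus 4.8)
The plan is to reduce every statement to topological properties of the Deroin space $(\mathcal{D},\Psi)$, which by Theorem~\ref{t-realisation} we identify with the flow $(Y,\Phi)$ of a suspension of a subshift. The one new ingredient is the purely topological fact that \emph{in a suspension flow of a subshift, every continuous path lies in a single flow-orbit}. To see this, recall (see \S\ref{s-Deroin-identification}, after Bowen--Walters \cite{expansive}) that around every point $y$ of such a flow there is a compact, totally disconnected local transversal $C\ni y$ and a radius $\eta>0$ such that $(c,s)\mapsto\Phi^s(c)$ is a homeomorphism of $C\times(-\eta,\eta)$ onto a neighbourhood of $y$. Given a path $\gamma\colon[0,1]\to Y$ and $t\in[0,1]$, for $\delta$ small $\gamma$ maps the interval around $t$ into such a neighbourhood of $\gamma(t)$, where we may write $\gamma(u)=\Phi^{s(u)}(c(u))$ with $c(u)\in C$ and $s(u)\in(-\eta,\eta)$ continuous; since $u\mapsto c(u)$ is a continuous map of an interval into a totally disconnected space it is constant, so $\gamma$ sends a neighbourhood of $t$ into $\{\Phi^s(c):|s|<\eta\}$, which lies in the $\Phi$-orbit of $\gamma(t)$. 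A routine connectedness argument on $[0,1]$ (the set of $t$ with $\gamma([0,t])$ inside the orbit of $\gamma(0)$ is non-empty, open and closed) then gives the global statement. As every $\Phi$-orbit is the image of $\R$ under the orbit map, hence path-connected, the path-components of $Y$ are exactly the $\Phi$-orbits. Consequently, for $G$ as in Theorem~\ref{t-realisation}, a path in $\rep(G;\R)$ pushes forward under the continuous map $r_{\mathcal{D}}$ to a path in $\mathcal{D}$, which stays on a single $\Psi$-orbit; since $\rho_1,\rho_2$ are semi-conjugate if and only if $r_{\mathcal{D}}(\rho_1),r_{\mathcal{D}}(\rho_2)$ lie on the same $\Psi$-orbit, the endpoints of the path are semi-conjugate. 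Hence each path-component of $\rep(G;\R)$ is contained in a semi-conjugacy class, and as semi-conjugacy classes are themselves path-connected, the two partitions coincide (this also matches Proposition~\ref{p-deroin-correspondence}). This is the first assertion.

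For the second assertion, apply Corollary~\ref{c-dense} to obtain a finitely generated $G$ whose Deroin space $(\mathcal{D},\Psi)$ is minimal; being topologically free (equivalently, not a single circle) it is the suspension of an \emph{infinite} minimal subshift, so $(\mathcal{D},\Psi)$ has uncountably many orbits and the transversal $C$ in each chart is infinite. Then every $\rho\in\rep(G;\R)$ is path-rigid, by the first assertion; every $\rho$ is universally flexible, which is exactly the content of Corollary~\ref{c-dense}; the space $\rep(G;\R)$ is connected, since a semi-conjugacy class is path-connected and (by Corollary~\ref{c-dense}) dense, so $\rep(G;\R)$ is the closure of a connected set; and $\rep(G;\R)$ is not path-connected, since it has uncountably many semi-conjugacy classes, hence uncountably many path-components.

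It remains to prove that $\rep(G;\R)$ is nowhere locally connected, which is the main point. First note that each semi-conjugacy class has empty interior: two distinct classes are disjoint and both dense (Corollary~\ref{c-dense}), so neither can contain a non-empty open set. Fix $\rho\in\rep(G;\R)$ and choose $\varepsilon>0$ so small that the $\varepsilon$-ball around $r_{\mathcal{D}}(\rho)$ in $\mathcal{D}$ lies inside a single product chart $C\times(-\eta,\eta)$ as above; by continuity of $r_{\mathcal{D}}$ there is a neighbourhood $N$ of $\rho$ with $r_{\mathcal{D}}(N)$ contained in that ball. If $W$ is any connected open set with $\rho\in W\subseteq N$, then $r_{\mathcal{D}}(W)$ is connected and contained in $C\times(-\eta,\eta)$, hence projects to a single point of the totally disconnected set $C$, hence lies on one $\Psi$-orbit $O$; therefore $W\subseteq r_{\mathcal{D}}^{-1}(O)$, a single semi-conjugacy class. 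But that class has empty interior while $W$ is non-empty and open, a contradiction. Hence $\rho$ admits no connected open neighbourhood inside $N$, so $\rep(G;\R)$ is not locally connected at $\rho$; as $\rho$ was arbitrary, it is nowhere locally connected.

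The step I expect to be the crux is the local product description of the Deroin space with a totally disconnected transversal, i.e.\ its being a suspension of a subshift (through expansiveness and topological dimension $1$); once that and the continuity of $r_{\mathcal{D}}$ are available, together with the density and pairwise disjointness of semi-conjugacy classes in the minimal case, the remaining arguments are formal.
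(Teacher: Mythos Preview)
Your argument is correct. The first assertion and the clauses about path-rigidity, universal flexibility, connectedness, and failure of path-connectedness track the paper's treatment closely (via Proposition~\ref{p-deroin-correspondence} and Corollary~\ref{c-path-rigid-text}); your explicit verification that path-components of a suspension of a subshift are exactly the flow-orbits is precisely the implicit step the paper relies on.

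The one genuine difference is in the proof that $\rep(G;\R)$ is \emph{nowhere locally connected}. The paper's argument (Corollary~\ref{c-path-rigid-text}\eqref{i-nowhere-connected}) invokes Proposition~\ref{p-retraction-open}, a non-trivial result saying that $r_Y$ is open in the transverse direction: the image of any neighbourhood of $\rho$ contains an entire local transversal $\pi_Y(V\times\{t_0\})$. From this one concludes that the composition of $r_Y$ with the projection to $C$ is open, so any non-empty open subset of a small neighbourhood of $\rho$ maps onto a non-empty open subset of the perfect, totally disconnected set $C$, and therefore cannot be connected. Your argument avoids Proposition~\ref{p-retraction-open} altogether: using only continuity of $r_{\mathcal{D}}$, a connected open $W$ inside a chart-neighbourhood projects to a single point of $C$, so $W$ lies in a single semi-conjugacy class; in the minimal case such classes have empty interior (being dense and pairwise disjoint), which yields the contradiction. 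This is strictly more elementary for the statement at hand. The trade-off is scope: the paper's route proves nowhere local connectedness whenever $X$ has no isolated point, without any minimality assumption and without needing semi-conjugacy classes to be dense, whereas your argument is specific to the setting of Corollary~\ref{c-dense}.
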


	\subsection*{On the group construction}
	The groups that we consider to prove Theorem \ref{t-realisation} are based on a modification of the groups $\Tsf(\varphi)$ acting on the suspension space of subhifts from \cite{MB-T}.  A {subshift} $(X, \varphi)$ is a closed shift-invariant subset $X$ of $A^\Z$, where $A$ is a finite alphabet, and $\varphi\colon X\to X$ denotes the restriction of the shift.  Subshifts are characterised up to topological conjugacy as expansive homeomorphisms of compact, totally disconnected, metrisable spaces. The  \emph{suspension space} of $(X, \varphi)$ is the space $Y:=(X\times \R)/_{(\varphi(x),t)\sim(x,t+1)}$.  The space $Y$ is endowed with the \emph{suspension flow} $\Phi\colon \R\times Y\to Y$, induced by the natural flow on $X\times \R$ which translates the $\R$-coordinate. To take into account the restriction that the Deroin space is freely reversible, we need to consider {reversible} subshifts, by which we mean the data  $(X,\varphi,\sigma)$ of a subshift additionally equipped with an involution $\sigma \colon X\to X$ (an order-2 homeomorphism) such that $\sigma \varphi \sigma=\varphi^{-1}$. Such an involution naturally induces an involution $\hat{\sigma}\colon Y\to Y$ on the suspension space, which reverses the suspension flow. To ensure that $\hat{\sigma}$ has no fixed points, we need to further assume that $\sigma$ does not preserve any $\varphi$-orbit (see Lemma \ref{l-flipping-involution}). Every flow as in Theorem \ref{t-realisation} can be realised as the suspension flow of a subshift $(X, \varphi)$ with these properties, relying on Bowen and Walters \cite{expansive}. 
	
	 Given $(X, \varphi)$, the group $\Tsf(\varphi)$ is defined as a group of homeomorphisms of the its suspension space $Y$  analogous to Thompson's groups, whose action in the flow direction is by dyadic PL homeomorphisms, and such that the displacement along each flow-orbit is locally constant in the transverse direction. For a reversible subshift $(X, \varphi, \sigma)$ one can define an analogous group $\Tsf(\varphi, \sigma)$ as the centralizer of $\hat{\sigma}$ in $\Tsf(\varphi)$ (this definition appears in the work by Le Boudec and the second author \cite{confined}); a special case of the groups $\Tsf(\varphi, \sigma)$ are the groups defined by Hyde and Lodha in \cite{HL}.  The definition of the groups $\Tsf(\varphi)$ was largely inspired by the notion of Deroin space, and since their introduction the question naturally arose whether such constructions could be used  to prove a realisation result as Theorem \ref{t-realisation}, by identifying their Deroin space completely (see Question 1 in \cite{MB-T}). With the original definition of the groups $\Tsf(\varphi)$ or $\Tsf(\varphi, \sigma)$, our attempts to do so have run into  technical difficulties,  related to the fact that certain subgroups  playing an important role (namely the subgroups supported in dyadic charts)  have huge abelianisation (free abelian of infinite rank). This complicates the analysis of actions on the line. This technical point is  an artifact of the use of PL maps in the definition of the groups  $\Tsf(\varphi)$. A starting point of this paper is the idea {of replacing} PL maps by a larger pseudogroup of transformations; namely we consider dyadic PL transformations where we allow a non-discrete set of discontinuity points for the derivative, which accumulate onto some isolated ``higher order'' singularities with a controlled behaviour, required to locally commute with the doubling map.  Certain groups acting by such countably singular PL maps were already considered in our previous work \cite[\S 10.4]{BMRT}. On the interval $[0, 1]$, this definition yields a finitely generated group $\Fsing$ sharing many features with Thompson's group $F$,  except that it is \emph{perfect}.  Accordingly we define groups $\Tsing(\varphi)$ and $\Tsing(\varphi, \sigma)$ associated with any reversible subshift $(X, \varphi, \sigma)$. The main content of this work is the identification of the Deroin space of the groups $\Tsing(\varphi, \sigma)$ with the suspension flow of $(X, \varphi)$, from which Theorem \ref{t-realisation} follows. 
	
	\begin{rem}
		Beyond finite generation, the other properties established for the groups $\Tsf(\varphi)$ in \cite{MB-T} remain valid for the groups $\Tsing(\varphi)$ and $\Tsing(\varphi,\sigma)$, with similar proofs. For instance, these groups are simple if and only if $\varphi$ is minimal \cite[Theorem B]{MB-T}, they do not have Kazhdan's property ($T$) \cite[Theorem F]{MB-T}, and they are not finitely presented if $\varphi$ is not a subshift of finite type \cite[Theorem G]{MB-T}. We avoid a more detailed discussion here, as these properties are not relevant for the main object of the paper.
	\end{rem}
	
It would be interesting to identify similar constructions of groups acting {on flows over spaces} of higher topological dimension, and use them to extend Theorem \ref{t-realisation}.  Thus we conclude this introduction by proposing the following general question.
	
	\begin{ques}
	Which compact spaces $(Y, \Phi)$ endowed with a flow can be realised as the Deroin space of a finitely generated group?
	\end{ques}

\subsection*{Acknowledgements} We thank Ville Salo for sharing useful comments on the CB-rank of subshifts.

\section{General preliminaries on actions on the line} Recall that given a group $G$, we  call a representation $\rho\colon G\to \homeo_0(\R)$  irreducible if it has no fixed points. The space of all irreducible representations is denoted by $\rep(G; \R)$. It is endowed with the topology induced from the product topology on $\homeo_0(\R)^G$, where $\homeo_0(\R)$ has the compact-open topology.
We say that $\rho_1, \rho_2\in\rep(G; \R)$ are \emph{semi-conjugate} if there exists a non-decreasing map $h\colon \R \to \R$ which intertwines $\rho_1$ and $\rho_2$. Semi-conjugacy is an equivalence relation. 

Throughout the text, by a \emph{minimal set} for  a representation $\rho\in \rep(G; \R)$ we mean a closed, non-empty, minimal invariant subset $\Lambda\subset \R$. Every such set satisfies one of the following possibilities:
	\begin{enumerate}
		\item either $\Lambda$ is a closed discrete orbit, in which case the action of $G$ on $\Lambda$ factors through a cyclic quotient of $G$, or
		\item $\Lambda$ is a perfect set and it is the unique minimal set of $\rho$. Furthermore, in this case we have either $\Lambda=\R$, or $\Lambda$ has empty interior.
	\end{enumerate}

It is also well known that when $G$ is \emph{finitely generated}, every irreducible action $\rho\in \rep(G;\R)$ admits a minimal set, see for instance Navas \cite[Proposition 2.1.12]{Navasbook}. (However such a set need not exist if the finite generation assumption is dropped.)

This discussion implies that if $G$ is a finitely generated group, then every semi-conjugacy class in $\rep(G; \R)$ contains a representative which is either minimal, or \emph{cyclic} (namely which factors through an epimorphism of $G$ onto a cyclic group of translations). Such a representative is unique up to conjugacy by an element of $\homeo_0(\R)$. Moreover any map implementing a semi-conjugacy between minimal actions is automatically a homeomorphism, and hence a conjugacy  (see for instance Kim, Koberda, and Mj \cite[Lemma A.4]{KKMj}).

We also record the following. 
\begin{prop} \label{p-path-connected-semiconjugacy}
Let $G$ be a finitely generated group. Then every semi-conjugacy class in $\rep(G; \R)$ is path-connected. 
\end{prop}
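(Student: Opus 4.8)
The plan is to reduce to the case of a minimal or cyclic representative and then construct an explicit path. Fix $\rho \in \rep(G;\R)$ and let $\rho_0$ be the distinguished representative of its semi-conjugacy class that is either minimal or cyclic (which exists and is unique up to conjugacy in $\homeo_0(\R)$, as recalled just before the statement). Since conjugating by a path in $\homeo_0(\R)$ from the identity to a given element $g$ keeps us inside a single semi-conjugacy class and $\homeo_0(\R)$ is path-connected, it suffices to build, for an arbitrary $\rho$ in the class, a path inside the semi-conjugacy class from $\rho$ to $\rho_0$. So the real content is: \emph{any} irreducible representation can be connected by a path, within its semi-conjugacy class, to its minimal/cyclic model.

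First I would handle the cyclic case, which is easy: if $\rho$ is semi-conjugate to a cyclic translation action $\rho_0$, then $\rho$ preserves a closed discrete orbit $\Lambda = \{x_n\}_{n\in\Z}$, and one can collapse the complementary intervals. Concretely, pick $G$-equivariant data describing $\rho$ on each gap $(x_n, x_{n+1})$; since $G$ acts on $\Lambda$ through a cyclic quotient, the dynamics in the gaps is governed by a homeomorphism of $[0,1]$ (the return map of a generator of the acting cyclic group) together with the way $G$ permutes the gaps. Then linearly interpolating that gap-homeomorphism to the identity, equivariantly and continuously in the parameter, yields a path from $\rho$ to a representation whose restriction to each gap is trivial — i.e.\ to a conjugate of $\rho_0$. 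One must check continuity of this family in the compact-open topology, which holds because on each compact set only finitely many gaps are relevant and the interpolation is uniform there.

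For the minimal case, let $h\colon \R\to\R$ be the (monotone, surjective, $G$-equivariant) semi-conjugacy onto the minimal model $\rho_0$ with minimal set $\Lambda_0$. The complement $\R \setminus h^{-1}(\Lambda_0)$ is a $\rho$-invariant countable union of "wandering" open intervals, permuted by $G$, and on each $h$ is constant. As above, the strategy is to equivariantly deform $\rho$ so as to shrink all these intervals to points; the limiting representation is a conjugate of $\rho_0$. To make this precise I would fix a base interval in each $G$-orbit of gaps, interpolate the restriction of $\rho$ there to the identity, and transport the interpolation to all gaps in the orbit via the $G$-action, obtaining a family $\rho_s$, $s\in[0,1]$, with $\rho_1 = \rho$ and $\rho_0$ the model (after conjugating by a suitable homeomorphism straightening the parametrisation). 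Continuity in $s$ again follows by a local finiteness argument: any compact $K\subset\R$ and any fixed $g\in G$ only see finitely many gaps of size bounded below on $K$, and on the rest the contribution to the sup-norm is uniformly small.

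The main obstacle I anticipate is \textbf{organising the deformation equivariantly and continuously at once}, especially controlling the countably many gaps simultaneously so that the path is continuous in the compact-open topology and stays inside $\rep(G;\R)$ (irreducibility is automatic since collapsing gaps cannot create a global fixed point when $\rho_0$ is irreducible). A clean way to package this is to work in the "pointed" picture: one knows (as discussed in the introduction, via \cite{DKNP}) that the semi-conjugacy class, suitably normalised, is parametrised by a slice of harmonic representations together with the choice of a marked point, so a path amounts to a path in that slice combined with moving the marked point — and both the slice and the flow $\Psi$ are connected (indeed the flow is by $\R$). Alternatively, and more self-containedly, one argues directly that the set of representations in a fixed semi-conjugacy class that are conjugate to the minimal/cyclic model is dense and that each such representation is joined to the model by the conjugation path, then upgrades density to path-connectedness by the interpolation above; I would likely present the direct interpolation argument, relegating the continuity estimate to a short lemma about locally finite families of gap-supported perturbations.
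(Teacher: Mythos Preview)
Your overall plan---reduce to a minimal or cyclic representative $\rho_0$ and connect any $\tilde\rho$ in the class to $\rho_0$ by a path---matches the paper's, but your execution is more laboured than needed and leaves the key step unresolved. The paper bypasses the whole ``shrink the countably many gaps equivariantly'' programme with a one-line trick: when the semi-conjugacy $h\colon \R\to\R$ from $\tilde\rho$ to $\rho_0$ is \emph{continuous} (which it is whenever the target is minimal, and also in the cyclic case once you pass through an auxiliary blow-up), the convex interpolation $h_t=(1-t)\id+th$ is a homeomorphism for every $t\in[0,1)$, and the path $t\mapsto h_t\,\tilde\rho\,h_t^{-1}$ is continuous and lands at $\rho_0$ as $t\to 1$. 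No equivariance is needed: you are conjugating, not deforming gap by gap.

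A couple of specific issues in your sketch. In the minimal case you write ``$\R\setminus h^{-1}(\Lambda_0)$'' for the gaps, but $\Lambda_0=\R$ since $\rho_0$ is minimal; what you want are the maximal intervals on which $h$ is constant. In the cyclic case, the claim that the dynamics in the gaps is ``governed by a homeomorphism of $[0,1]$ (the return map of a generator)'' is not right: the full group $G$ acts on each gap through its stabiliser, and distinct elements of $G$ with the same image in the cyclic quotient may act differently there. This is exactly the sort of complication that the convex-interpolation trick sidesteps. Your alternative via harmonic representations is also circular in spirit: the construction of the Deroin space does not by itself give path-connectedness of the entire semi-conjugacy class in $\rep(G;\R)$, which is the content of the proposition.
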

\begin{proof}
Let $\mathcal{C}\subset \rep(G; \R)$ be a semi-conjugacy class, and choose a representative $\rho\in \mathcal{C}$ which is minimal or cyclic. It is enough to see that there exists a continuous path from any $\tilde\rho\in \mathcal{C}$ to ${\rho}$ inside $\mathcal{C}$.
	Assume first that $\rho$ is minimal. Let $h:\R\to \R$ be a semi-conjugacy between $\tilde{\rho}$ and $\rho$  and remark that as ${\rho}$ is minimal, the map $h$ is continuous (see  \cite[Lemma A.4]{KKMj}). For $t\in [0,1]$, write $h_t=(1-t)\id+th$, and observe that for $t\in [0,1)$ this gives a homeomorphism because it is the convex sum of an increasing map with a non-decreasing map. For $t\in (0,1)$, define $\tilde\rho_t:=h_t\tilde \rho h_t^{-1}$. This gives the desired continuous path. Assume now that  ${\rho}$ is cyclic.  This implies that $\tilde\rho$ has a closed discrete orbit $\Lambda\subset \R$. Let $\tilde\rho'\in \mathcal{C}$ be the action obtained by blowing up each point of $\Lambda$ to an interval. Then there is a  semi-conjugacy from $\rho'$ to $\tilde\rho$ which is implemented by a continuous map (obtained by collapsing back the blown-up intervals). There also exists a  semi-conjugacy from $\tilde \rho'$ to ${\rho}$  through a continuous map (obtained by collapsing the complement of the closure of the blown-up intervals). Repeating the argument for the minimal case, we may find continuous paths connecting $\tilde\rho$ first to $\tilde \rho'$ and then to ${\rho}$. \qedhere

\end{proof}

\begin{rem}
	When studying topological properties of $\rep(G, \R)$, such as the space of path-components and the separation properties of semi-conjugacy classes, it is crucial that we restrict the attention to irreducible representations. For example it is not difficult to see that the space of \emph{all} representations of $G$ into $\homeo_0(\R)$ always deformation retracts onto the trivial representation (using a version of the classical ``Alexander trick''),  moreover every  semi-conjugacy class accumulates on the trivial representation (see Mann and Wolff \cite[Proposition 2.13]{MW}).
\end{rem}

We finally recall the fundamental classification of minimal group actions on the line into three types according to their centralizer. 
 Given an action $\rho\colon G\to \homeo_0(\R)$, we denote by $\mathcal{Z}(\rho)$ the centraliser of $\rho(G)$ in $\homeo_0(\R)$. We say that two points $x, y\in \R$ are \emph{proximal} for $\rho$ if there exists a sequence $(g_n)\subset G$ such that $\rho(g_n)(x)$ and $\rho(g_n)(y)$ converge to the same point of $\R$. The action $\rho$ is \emph{proximal} if all pairs of points are proximal, and \emph{locally proximal} if every point is contained in some open interval whose endpoints are proximal.
The following result is proven in Malyutin \cite{Malyutin}.
\begin{thm}\label{t-centraliser}
	Let $G$ be any group and $\rho\in \rep(G;\R)$ a minimal action. Then one of the following holds.
	\begin{enumerate}[label=(\Roman*)]
	\item Either $\rho$ is conjugate to an action by translation, and $\mathcal{Z}(\rho)$ is isomorphic  to $(\R, +)$; or
	\item $\rho$ is locally proximal, but not proximal, $\mathcal{Z}(\rho)$ is conjugate to an infinite cyclic group of translations, and the action of $G$ on the circle $\R/\mathcal{Z}(\rho)\cong \mathbb{S}^1$ is proximal; or
	
	\item $\rho$ is proximal, and  $\mathcal{Z}(\rho)$ is trivial.
	
	\end{enumerate}
	In particular, in all cases, $\mathcal{Z}(\rho)$ is abelian. 
\end{thm}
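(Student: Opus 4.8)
The plan is to read $\mathcal{Z}(\rho)$ off the dynamics and organise everything by its isomorphism type as a subgroup of $(\R,+)$. The first step is that $\mathcal{Z}(\rho)$ acts \emph{freely} on $\R$: if some $h\in\mathcal{Z}(\rho)$ had a fixed point, then $\fix(h)$ would be a non-empty closed $\rho(G)$-invariant set (as $h$ commutes with $\rho(G)$), hence all of $\R$ by minimality, so $h=\id$. By Hölder's theorem a group acting freely on $\R$ is abelian and embeds into $(\R,+)$ via translation numbers; this already gives the last assertion. It then remains to match the isomorphism types $\{1\}$, $\Z$ (discrete), and non-discrete (which I will force to be all of $\R$) with cases (III), (II), (I).

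The heart of the argument, and the step I expect to be the main obstacle, is the equivalence \emph{$\rho$ proximal $\iff$ $\mathcal{Z}(\rho)=1$}. One direction is easy: a non-trivial $h\in\mathcal{Z}(\rho)$ is fixed-point-free by the above, and for $x$, $y=h(x)$ and a sequence with $\rho(g_n)x,\rho(g_n)y\to p$ one has $\rho(g_n)y-\rho(g_n)x=h(\rho(g_n)x)-\rho(g_n)x\to h(p)-p$, which must then vanish, contradicting freeness. For the converse I would manufacture a central element by hand. If $\rho$ is not proximal there are $a<b$ with $\delta:=\inf_{g}\,(\rho(g)b-\rho(g)a)>0$ (for minimal actions this is equivalent to non-proximality, which I would include as a preliminary check). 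Writing $\delta(u,v)=\inf_g(\rho(g)v-\rho(g)u)$, a $G$-invariant and superadditive function of $u<v$, set
\[
h(x):=\sup_{g\in G}\rho(g)^{-1}\bigl(\rho(g)(x)+\delta\bigr).
\]
Taking $g=e$ gives $h(x)\ge x+\delta>x$. The supremum is finite: by minimality every $x$ has a $G$-translate of $[a,b]$ immediately to its right, producing $c_0$ with $\delta(x,c_0)\ge\delta$ and hence $\rho(g)^{-1}(\rho(g)x+\delta)\le c_0$ for all $g$. The substitution $g\mapsto g\gamma$ gives $h\circ\rho(\gamma)=\rho(\gamma)\circ h$. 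The delicate point is that $h$ is a homeomorphism: as a supremum of increasing maps it is non-decreasing; its image is $\rho(G)$-invariant, hence dense, so $h$ has no jumps and is onto; and it has no interval of constancy, because the union of the $\rho(G)$-translates of such an interval would be a non-empty open $\rho(G)$-invariant set that is a disjoint union of open intervals, and minimality forces its complement to be $\emptyset$ or $\R$, both impossible. Thus $h\in\homeo_0(\R)\cap\mathcal{Z}(\rho)\setminus\{\id\}$, proving case (III).

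Granting this, assume $\mathcal{Z}(\rho)\neq1$, so $\rho$ is not proximal, and first treat the case $\mathcal{Z}(\rho)\not\cong\Z$, i.e.\ with non-discrete image in $(\R,+)$. Pick $h_1\in\mathcal{Z}(\rho)\setminus\{\id\}$; a free $\Z$-action on $\R$ is conjugate to $x\mapsto x+1$, so $C:=\R/\langle h_1\rangle\cong\T$ and $\rho(G)$ descends to a minimal action $\bar\rho$ on $C$; non-discreteness of $\mathcal{Z}(\rho)$ produces homeomorphisms of $C$ commuting with $\bar\rho(G)$ that are either of infinite order with irrational rotation number (if $\mathcal{Z}(\rho)$ has rank $\ge 2$) or of finite, arbitrarily large order (if it is divisible of rank $1$). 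Ghys's trichotomy for minimal circle actions then leaves only one possibility: $\bar\rho$ is conjugate to a group of rotations. Pulling back the invariant probability measure on $C$ yields a $\rho(G)$-invariant Radon measure on $\R$, which (being atomless and fully supported, since $\rho$ is minimal) conjugates $\rho$ to a translation action by a dense subgroup of $(\R,+)$, whose centraliser in $\homeo_0(\R)$ is the whole translation group by a density–continuity argument. Hence $\mathcal{Z}(\rho)\cong(\R,+)$, which is case (I).

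Finally, for $\mathcal{Z}(\rho)\cong\Z$ generated by $\tau$: again $C:=\R/\langle\tau\rangle\cong\T$ carries a minimal action $\bar\rho$. By Ghys's trichotomy, $\bar\rho$ cannot be conjugate to rotations (else, as above, $\rho$ would be conjugate to translations and $\mathcal{Z}(\rho)\cong\R$) and cannot be a $k$-fold cyclic cover ($k\ge2$) of a proximal circle action (else the induced infinite cyclic cover $\R\to C'$ would have deck group generated by some $\tau_1\in\mathcal{Z}(\rho)$ with $\tau_1\notin\langle\tau\rangle$); hence $\bar\rho$ is proximal on $\T$. Then $\rho$ is locally proximal — for $x,y$ in a common fundamental domain of $\langle\tau\rangle$, the short arc joining their images in $C$ can be contracted, and this lifts to a contraction of $[x,y]$ — and $\rho$ is not proximal since $\mathcal{Z}(\rho)\neq1$, while $\mathcal{Z}(\rho)$ is conjugate to the group of integer translations; this is case (II), exhausting the trichotomy. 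The two places where I would be most careful are the verification that the map $h$ built above is a genuine homeomorphism (the minimality/connectedness argument excluding flat parts) and the bookkeeping in the passage to the quotient circle, where one must use that $\mathcal{Z}(\rho)$ is the \emph{full} centraliser.
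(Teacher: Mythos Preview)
The paper does not prove this theorem: it is stated as a result of Malyutin and used throughout as a black box. So there is no proof in the paper to compare yours against.

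Your proposal follows the standard route and is essentially correct. Freeness of $\mathcal Z(\rho)$ from minimality, then H\"older for abelianness and the embedding into $(\R,+)$, then a case split on the image, is the right skeleton. Your explicit construction $h(x)=\sup_{g}\rho(g)^{-1}(\rho(g)(x)+\delta)$ of a non-trivial centralising element in the non-proximal case is correct, and your checks that $h$ is a homeomorphism (no jumps since the image is $\rho(G)$-invariant hence dense; no flat interval by a minimality argument) go through. The treatment of cases~(I) and~(II) via the quotient circle and the Ghys trichotomy is also correct, and your separate handling of the rank-$1$ divisible sub-case in~(I) is the right move.

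The one substantive subtlety is exactly the ``preliminary check'' you flag: that for a \emph{minimal} action, non-proximality yields a pair $a<b$ with $\inf_g(\rho(g)b-\rho(g)a)>0$. A priori the images $\rho(g_n)[a,b]$ could shrink while escaping to $\pm\infty$, which would not witness proximality in the sense defined here (convergence to a point of $\R$); excluding this is not automatic and is where minimality must be used carefully. The same issue resurfaces in your derivation of local proximality in case~(II): contracting an arc on $\T=\R/\mathcal Z(\rho)$ lifts to contracting an interval in $\R$, but the lifted interval might run off to infinity rather than converge to a point, since $\tau$ need not lie in $\rho(G)$. You correctly identify these as the places requiring care; filling them in is where the real content of the argument lies.
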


\section{The Deroin space} \label{sc:Deroin}

\subsection{Definition of the Deroin space} Let $G$ be a discrete group and consider the space $\rep(G;\R)$. Given $s\in\R$, denote by $T_s\in\homeo_0(\R)$ the translation by $s$. We define the \emph{translation flow} $\Psi:\R\times\rep(G;\R)\to \rep(G;\R)$, given by $\Psi^s(\rho)=T_{-s}\circ\rho\circ T_s$. We also have an involution $\hat\iota$ on $\rep(G;\R)$ defined by conjugation by the reflection $x\mapsto -x$. Together with $\Psi$, this gives an action of the group $\mathsf{Isom}(\R)$ of isometries of $\R$ on $\rep(G;\R)$, which is continuous with respect to the compact-open topology. 
We also define the \emph{translation action}  $G\times\rep(G;\R)\to\rep(G;\R)$ by $g. \rho=\Psi^{\rho(g)(0)}(\rho)$.
A proof that this is a well-defined action can be found in Deroin \cite[pp.\ 187--188]{Deroin}.  It is clear that the translation action is continuous and preserves every $\Psi$-orbit.

In other words, one can think of an element $\rho\in \rep(G;\R)$ as an action on the real line with a marker on the point $0$. The translation flow corresponds to moving the marker by a translation, while the translation action of $G$ moves the marker according to the underlying $G$-action. With this point of view in mind, it is useful to introduce the following terminology. 
\begin{dfn}\label{dfn.pointedsemi}
	Consider irreducible actions $\rho_1,\rho_2\in\rep(G;\R)$.
	\begin{enumerate}
		\item We say that $\rho_1$ and $\rho_2$ are \emph{pointed conjugate} if there exists an orientation-preserving conjugacy  $h\colon \R\to \R$ between  $\rho_1$ and $\rho_2$ such that $h(0)=0$.
		\item We say that $\rho_1$ and $\rho_2$ are \emph{pointed semi-conjugate} if there exist $\rho_\ast\in\rep(G;\R)$, either minimal or cyclic, and semi-conjugacies $h_i$ between $\rho_i$ and $\rho_\ast$ for $i\in \{1,2\}$, such that $h_1(0)=h_2(0)$. 
	\end{enumerate}
\end{dfn}
\begin{rem}\label{rem-pointedsemicon}
	Notice that pointed semi-conjugacy is an equivalence relation, which is (strictly) finer than semi-conjugacy. Also, notice that if $\rho_1$ and $\rho_2$ are minimal or cyclic, then they are pointed semi-conjugate if and only if they are pointed conjugate. When $\rho_1$ and $\rho_2$ are cyclic, this is clear, and when they are minimal it follows from the fact that a semi-conjugacy between minimal actions is automatically a conjugacy.\end{rem}

Deroin showed in \cite{Deroin} that if $G$ is finitely generated, then every representation $\rho\in \rep(G; \R)$ is conjugate to one which belongs to some compact $\Psi$-invariant subset of $\rep(G; \R)$. A more universal version of this construction follows from the results in \cite{DKNP} and allows to find a single such set which intersects all semi-conjugacy classes, see \cite[Chapter 14]{BMRT}. The following definition captures the main properties of this construction which will be important for our purposes.
\begin{dfn}\label{d-Deroin}
	Let $G$ be a finitely generated group. We say that a subset $\mathcal{D}\subset\rep(G;\R)$ is a \emph{Deroin space} for $G$ if it satisfies:
	\begin{enumerate}
	\item $\mathcal{D}$ is compact, and invariant under the translation flow $\Psi$; 
	\item every $\rho\in\mathcal{D}$ is either minimal  or cyclic;
	\item every $\rho\in \rep(G;\R)$ is pointed semi-conjugate to a unique element of $\mathcal{D}$. 
	\end{enumerate}
\end{dfn}
\begin{rem}
	As a consequence of the definition,  any Deroin space is $G$-invariant for the translation action of $G$, for the latter preserves orbits of the translations flow. Moreover the $G$-orbit of every  $\rho\in \mathcal{D}$ is dense in its $\Psi$-orbit (when $\rho$ is cyclic this is trivial since the translation flow fixes $\rho$, else it follows from the minimality of $\rho$ by the definition of translation action).
	\end{rem}

Given a Deroin space $\mathcal{D}\subset\rep(G;\R)$, we can define a map \[r_\mathcal{D}\colon \rep(G;\R)\to\mathcal{D}\] that associates to each irreducible representation its unique representative up to pointed semi-conjugacy in $\mathcal{D}$. 
\begin{prop}\label{l-rprop}
	Let $\mathcal D$ be a Deroin space for a finitely generated group $G$.
	The map $r_{\mathcal{D}}$ is a $G$-equivariant continuous retraction, which preserves pointed-semi-conjugacy classes. Two actions $\rho_1, \rho_2\in \rep(G;\R)$ are semi-conjugate if and only if $r_\mathcal{D}(\rho_1)$ and $r_\mathcal{D}(\rho_2)$ belong to the same $\Psi$-orbit in $\mathcal{D}$. \end{prop}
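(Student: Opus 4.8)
The plan is to verify the three claimed properties in turn, drawing everything from the defining properties (1)--(3) of a Deroin space. First I would establish that $r_{\mathcal D}$ is a retraction: if $\rho \in \mathcal{D}$, then $\rho$ is pointed semi-conjugate to itself, and by uniqueness in property (3) it is pointed semi-conjugate to no other element of $\mathcal{D}$, so $r_{\mathcal D}(\rho) = \rho$. That $r_{\mathcal D}$ preserves pointed semi-conjugacy classes is immediate from the fact that pointed semi-conjugacy is an equivalence relation (Remark \ref{rem-pointedsemicon}): if $\rho_1$ and $\rho_2$ are pointed semi-conjugate, they have the same representative in $\mathcal{D}$ by uniqueness, hence $r_{\mathcal D}(\rho_1) = r_{\mathcal D}(\rho_2)$; conversely $\rho_i$ is pointed semi-conjugate to $r_{\mathcal D}(\rho_i)$, so equality of images forces $\rho_1$ and $\rho_2$ to be pointed semi-conjugate through $r_{\mathcal D}(\rho_1)$. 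For $G$-equivariance, I would observe that the translation action of $G$ moves the marked point $0$ to $\rho(g)(0)$, i.e.\ $g.\rho = \Psi^{\rho(g)(0)}(\rho)$; since a pointed semi-conjugacy $h$ between $\rho$ and $\rho_\ast$ with $h(0)=h_\ast(0)$ conjugates the $G$-action and the translation flow compatibly, one checks that $g.\rho$ is pointed semi-conjugate to $g.(r_{\mathcal D}(\rho))$, the latter lying in $\mathcal D$ by its $G$-invariance; uniqueness then gives $r_{\mathcal D}(g.\rho) = g.r_{\mathcal D}(\rho)$.

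Next, the semi-conjugacy characterisation. For the ``if'' direction: suppose $r_{\mathcal D}(\rho_1)$ and $r_{\mathcal D}(\rho_2)$ lie on the same $\Psi$-orbit, say $r_{\mathcal D}(\rho_2) = \Psi^s(r_{\mathcal D}(\rho_1))$. Since $\Psi^s$ is conjugation by the translation $T_s$, this is in particular a conjugacy, hence a semi-conjugacy; and each $\rho_i$ is pointed semi-conjugate — in particular semi-conjugate — to $r_{\mathcal D}(\rho_i)$. Composing, $\rho_1$ and $\rho_2$ are semi-conjugate. For the ``only if'' direction: if $\rho_1$ and $\rho_2$ are semi-conjugate, then (as recalled in the preliminaries, using finite generation of $G$) they share a common representative $\rho_\ast$ that is minimal or cyclic, unique up to conjugacy in $\homeo_0(\R)$, with semi-conjugacies $h_i \colon \rho_i \to \rho_\ast$. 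Pre-composing $h_2$ with a translation $T_s$, where $s$ is chosen so that the new map sends $0$ to $h_1(0)$, replaces $\rho_\ast$ by the conjugate $\Psi^{-s}\rho_\ast = T_s \rho_\ast T_{-s}$ but now realises a pointed semi-conjugacy between $\rho_1$ and this translated $\rho_\ast$; wait — more carefully, I adjust so that the two maps into the (possibly translated) common model agree at $0$. Then $\rho_1$ is pointed semi-conjugate to $\rho_\ast$ and $\rho_2$ is pointed semi-conjugate to $\Psi^{s}\rho_\ast$ for a suitable $s$, so $r_{\mathcal D}(\rho_2) = \Psi^{s}(r_{\mathcal D}(\rho_1))$ lies on the $\Psi$-orbit of $r_{\mathcal D}(\rho_1)$.

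The one genuinely nontrivial point is \textbf{continuity} of $r_{\mathcal D}$. Here I would use compactness of $\mathcal D$ (property (1)) together with a closed-graph argument. Suppose $\rho_n \to \rho$ in $\rep(G;\R)$; by compactness, after passing to a subsequence, $r_{\mathcal D}(\rho_n) \to \bar\rho$ for some $\bar\rho \in \mathcal D$. It suffices to show $\bar\rho = r_{\mathcal D}(\rho)$, i.e.\ that $\bar\rho$ is pointed semi-conjugate to $\rho$, since then every subsequential limit equals $r_{\mathcal D}(\rho)$ and the full sequence converges. To produce the pointed semi-conjugacy, I would take semi-conjugacies $h_n$ from $\rho_n$ to $r_{\mathcal D}(\rho_n)$ normalised by $h_n(0) = $ the common marked value; the family $\{h_n\}$ consists of monotone maps, and after normalising (e.g.\ at two points, or using a Helly-type selection for monotone functions) one extracts a subsequence converging pointwise to a monotone map $h_\infty$, which one checks intertwines $\rho$ and $\bar\rho$ and satisfies the marked-point condition, using continuity of the $G$-actions in $\rho$ and the convergence $r_{\mathcal D}(\rho_n)\to\bar\rho$. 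This is where care is needed: one must ensure the limiting $h_\infty$ is not constant (irreducibility and the structure of minimal sets recalled in Section 2 rule this out) and that the marked-point normalisation passes to the limit. I expect this compactness/selection argument for monotone semi-conjugacies to be the main obstacle; the algebraic properties (retraction, equivariance, preservation of classes, and the semi-conjugacy dichotomy) are essentially formal consequences of Definition \ref{d-Deroin} and Remark \ref{rem-pointedsemicon}.
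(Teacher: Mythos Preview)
Your proposal is essentially correct and follows the same overall structure as the paper's proof. The retraction, preservation of pointed-semi-conjugacy classes, and $G$-equivariance are handled just as in the paper; your equivariance sketch is exactly the computation the paper carries out, namely $h_0(\rho_0(g)(0))=\rho_\ast(g)(h_0(0))=\rho_\ast(g)(h_1(0))=h_1(\rho_1(g)(0))$.

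Two remarks on the differences. First, for the ``only if'' direction of the semi-conjugacy characterisation, the paper's argument is cleaner than yours: rather than juggling translations of the common model $\rho_\ast$, observe directly that if $\rho_1,\rho_2$ are semi-conjugate then so are $r_{\mathcal D}(\rho_1),r_{\mathcal D}(\rho_2)$; since both are minimal or cyclic, semi-conjugacy upgrades to conjugacy (Remark~\ref{rem-pointedsemicon}); then some $\Psi$-translate of $r_{\mathcal D}(\rho_1)$ is \emph{pointed} conjugate to $r_{\mathcal D}(\rho_2)$, hence equal by uniqueness. This avoids your hesitation about which side to translate.

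Second, and more substantively: the paper does \emph{not} prove continuity of $r_{\mathcal D}$ here; it cites \cite[Theorem 3.2]{BMRT}, noting that the proof there uses only the abstract axioms of Definition~\ref{d-Deroin}. Your closed-graph/Helly-selection sketch is a reasonable route to this, and likely close to what is done in that reference. The one point you should make more precise is the uniform bound on the $h_n$ over compacts: with $h_n(0)=0$, use irreducibility of $\rho$ and convergence $\rho_n\to\rho$, $r_{\mathcal D}(\rho_n)\to\bar\rho$ to bound $h_n(\rho_n(w)(0))=r_{\mathcal D}(\rho_n)(w)(0)$ for suitable words $w$ pushing $0$ past any given compact set. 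Non-constancy of the limit $h_\infty$ then follows because a constant limit would force $0\in\fix(\bar\rho)$, contradicting irreducibility of $\bar\rho\in\mathcal D$.
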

\begin{proof}
	The proof that the map $r_\mathcal{D}$ is a continuous retraction is given  in \cite[Theorem 14.2.1]{BMRT}. (This result is stated there for a specific realisation of Deroin space,  namely the space of normalised $\mu$-harmonic actions, but its proof only relies on the abstract properties  in Definition \ref{d-Deroin}). It remains to prove $G$-equivariance of $r_{\mathcal{D}}$. For this, take $\rho_0\in\rep(G;\R)$ and let $\rho_1:=r_{\mathcal{D}}(\rho_0)$ be its projection to $\mathcal{D}$. Then, there exist a minimal or cyclic model $\rho_\ast\in\rep(G;\R)$ and semi-conjugacies $h_i$ between $\rho_i$ and $\rho_\ast$ for $i\in \{0,1\}$, so that $h_0(0)=h_1(0)$. Thus we have \[h_0(\rho_0(g)(0))=\rho_\ast(g)(h_0(0))=\rho_\ast(g)(h_1(0))=h_1(\rho_1(g)(0)),\]
	which implies that $g.\rho_0$ and $g.\rho_1$ are pointed semi-conjugate. Since Deroin spaces intersect each pointed-semi-conjugacy class in a single point, we have that $r_{\mathcal{D}}(g.\rho_0)=g.\rho_1$. 
	To show the last sentence, note that if $r_\mathcal{D}(\rho_1)$ and $r_\mathcal{D}(\rho_2)$ are actions in $\mathcal{D}$ that belong to the same orbit of the translation flow, then they are evidently conjugate, so that $\rho_1$ and $\rho_2$ are semi-conjugate. Conversely if $\rho_1$ and $\rho_2$ are semi-conjugate, then so are $r_\mathcal{D}(\rho_1)$ and $r_\mathcal{D}(\rho_2)$, and since actions in $\mathcal{D}$ are minimal or cyclic, this implies that they are actually conjugate. So some conjugate of $r_\mathcal{D}(\rho_1)$ by a translation is actually pointed conjugate to $r_\mathcal{D}(\rho_2)$, and by uniqueness this is possible only if  it is equal, so $r_\mathcal{D}(\rho_1)$ and $r_\mathcal{D}(\rho_2)$ belong to the same $\Psi$-orbit.
\end{proof}

\begin{dfn}
Let $(Y_1, \Phi_1), (Y_2, \Phi_2)$ be spaces with a flow. A \emph{flow equivalence} is a homeomorphism $\mathfrak{h}\colon Y_1\to Y_2$ such that for every $y\in Y_1$, there exists an orientation-preserving homeomorphism $\tau\colon \R \to \R$ fixing $0$ such that $\mathfrak{h}(\Phi_1^s(y))=\Phi_2^{\tau(s)}(\mathfrak{h}(y))$ for every $s\in \R$.
\end{dfn}
\begin{thm}[Existence and uniqueness \cite{DKNP, BMRT}]\label{t-existenceDeroin}
	Every finitely generated group has a Deroin space. 

	Moreover, such a space is unique in the following sense: if $\mathcal{D}_1,\mathcal{D}_2\subset\rep(G;\R)$ are Deroin spaces of $G$, there exists a $G$-equivariant flow equivalence $\mathfrak{h}:(\mathcal{D}_1, \Psi|_{\mathcal{D}_1})\to(\mathcal{D}_2, \Psi|_{\mathcal{D}_2})$.
\end{thm}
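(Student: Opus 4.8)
The plan is to treat existence and uniqueness by rather different means: existence is essentially imported from \cite{DKNP}, while uniqueness is a formal consequence of the retraction property recorded in Proposition \ref{l-rprop}.

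\emph{Existence.} I would build a Deroin space out of normalised harmonic representations. Fix a symmetric, finitely supported probability measure $\mu$ on $G$ whose support generates $G$. By Deroin--Kleptsyn--Navas--Parwani \cite{DKNP}, every $\rho\in\rep(G;\R)$ is semi-conjugate --- by a map which, after composing with a translation, sends $0$ to $0$ --- to a representation carrying a harmonic measure, which can be normalised (say, to be Lebesgue measure on $\R$), and this normalised representative is unique once the marked point is fixed. Let $\mathcal{D}$ be the set of all normalised $\mu$-harmonic representations. Then $\mathcal{D}$ is $\Psi$-invariant (translating the marked point preserves normalised harmonicity), every element of $\mathcal{D}$ is minimal or cyclic (the harmonic measure has full support, so there is no wandering interval), and every $\rho\in\rep(G;\R)$ is pointed semi-conjugate to a unique element of $\mathcal{D}$ by the uniqueness just quoted; compactness of $\mathcal{D}$ follows from the equicontinuity estimates of \cite{DKNP}. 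This is carried out in detail in \cite[\S 3]{BMRT}, and I would simply cite it, flagging compactness as the one genuinely non-formal input. So $\mathcal{D}$ satisfies Definition \ref{d-Deroin}.

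\emph{Uniqueness.} Let $\mathcal{D}_1,\mathcal{D}_2\subset\rep(G;\R)$ be two Deroin spaces and apply Proposition \ref{l-rprop} to each, obtaining continuous, $G$-equivariant retractions $r_{\mathcal{D}_i}\colon\rep(G;\R)\to\mathcal{D}_i$ which preserve pointed semi-conjugacy classes. Set $\mathfrak{h}:=r_{\mathcal{D}_2}|_{\mathcal{D}_1}$ and $\mathfrak{g}:=r_{\mathcal{D}_1}|_{\mathcal{D}_2}$. For $\rho\in\mathcal{D}_1$, the representation $r_{\mathcal{D}_2}(\rho)$ is pointed semi-conjugate to $\rho$, so $\mathfrak{g}(\mathfrak{h}(\rho))=r_{\mathcal{D}_1}(r_{\mathcal{D}_2}(\rho))$ is the unique element of $\mathcal{D}_1$ pointed semi-conjugate to $r_{\mathcal{D}_2}(\rho)$, hence --- by transitivity of the relation --- to $\rho$ itself; since $\rho\in\mathcal{D}_1$, we get $\mathfrak{g}\circ\mathfrak{h}=\id_{\mathcal{D}_1}$, and symmetrically $\mathfrak{h}\circ\mathfrak{g}=\id_{\mathcal{D}_2}$. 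Thus $\mathfrak{h}\colon\mathcal{D}_1\to\mathcal{D}_2$ is a $G$-equivariant homeomorphism.

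\emph{From homeomorphism to flow equivalence.} It remains to show $\mathfrak{h}$ respects the flows with a reparametrisation fixing $0$. Fix $\rho\in\mathcal{D}_1$ and set $\rho':=\mathfrak{h}(\rho)\in\mathcal{D}_2$. Both are minimal or cyclic and, lying in a common semi-conjugacy class, are of the same type; since they are pointed semi-conjugate, Remark \ref{rem-pointedsemicon} gives an orientation-preserving homeomorphism $\varphi\colon\R\to\R$ with $\varphi(0)=0$ and $\varphi\rho\varphi^{-1}=\rho'$. For $s\in\R$, the translation $T_s$ is a (non-decreasing, pointed) semi-conjugacy from $\Psi^s(\rho)=T_{-s}\rho T_s$ to $\rho$ sending $0\mapsto s$, while $\varphi^{-1}\circ T_{\varphi(s)}$ is a semi-conjugacy from $\Psi^{\varphi(s)}(\rho')$ to $\rho$ sending $0\mapsto\varphi^{-1}(\varphi(s))=s$. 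Hence $\Psi^s(\rho)$ and $\Psi^{\varphi(s)}(\rho')$ are pointed semi-conjugate; as $\Psi^{\varphi(s)}(\rho')\in\mathcal{D}_2$ and $\mathfrak{h}(\Psi^s(\rho))=r_{\mathcal{D}_2}(\Psi^s(\rho))$ is the unique element of $\mathcal{D}_2$ in that class, we conclude $\mathfrak{h}(\Psi^s(\rho))=\Psi^{\varphi(s)}(\mathfrak{h}(\rho))$ for all $s$, so $\varphi$ serves as the required reparametrisation at $\rho$ and $\mathfrak{h}$ is a flow equivalence. The only genuine obstacle in the whole argument is thus the compactness of the harmonic model in the existence half, which I would take on faith from \cite{DKNP,BMRT}; once Proposition \ref{l-rprop} is available, uniqueness --- including the upgrade to a flow equivalence --- is purely formal, the only mild care being to arrange the conjugacy $\varphi$ (via Remark \ref{rem-pointedsemicon}) so that it fixes the origin.
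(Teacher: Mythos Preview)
Your proof is correct and follows essentially the same route as the paper: existence is delegated to \cite{DKNP} (via the normalised harmonic model, as in \cite[\S3]{BMRT}), and uniqueness is obtained by taking $\mathfrak{h}=r_{\mathcal{D}_2}|_{\mathcal{D}_1}$, invoking Proposition \ref{l-rprop} and Remark \ref{rem-pointedsemicon} exactly as the paper does. Your version is slightly more explicit --- you verify bijectivity via the two-sided inverse $\mathfrak{g}=r_{\mathcal{D}_1}|_{\mathcal{D}_2}$ rather than by a one-line appeal to uniqueness of representatives, and you spell out the pointed semi-conjugacy between $\Psi^s(\rho)$ and $\Psi^{\varphi(s)}(\rho')$ that the paper leaves implicit --- but the argument is the same.
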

\begin{proof}
	Existence follows from the results of Deroin, Kleptsyn, Navas, Parwani \cite{DKNP}; see \cite[Theorems 14.1.6 and 14.2.1]{BMRT}. For uniqueness, consider the map $\mathfrak{h}:\mathcal{D}_1\to\mathcal{D}_2$ defined as the restriction of $r_{\mathcal{D}_2}$ to the Deroin space $\mathcal{D}_1$. By Proposition \ref{l-rprop} this map is $G$-equivariant, continuous, and preserves pointed-semi-conjugacy classes. Since each pointed-semi-conjugacy class has exactly one representative in each Deroin space, the map $\mathfrak{h}$ is bijective. It remains to check that $\mathfrak{h}$ is a flow equivalence. For this, consider $\rho_1\in\mathcal{D}_1$ and $\rho_2:=\mathfrak{h}(\rho_1)$. Since $\rho_1$ and $\rho_2$ are pointed semi-conjugate, and they are minimal or cyclic, by Remark \ref{rem-pointedsemicon}, there exists an orientation preserving homeomorphism $\tau:\R\to\R$ which conjugates $\rho_1$ and $\rho_2$, and sends $0$ to itself. Thus, since $\mathfrak{h}$ preserves pointed-semi-conjugacy classes, it must hold that $\mathfrak{h}(\Psi^s(\rho_1))=\Psi^{\tau(s)}(\rho_2)$. 
\end{proof} 
In view of the previous theorem, we can abuse terminology and speak  of \emph{the} Deroin space of $G$ as a  compact space with a flow $(\mathcal D, \Psi)$ and a $G$-action preserving $\Psi$-orbits, which is well defined up to $G$-equivariant flow equivalence. Note that the space $\mathcal{D}$ can be more abstractly described, up to homeomorphism, as the quotient of $\rep(G; \R)$ by the pointed-semi-conjugacy relation. It can also be identified with a quotient of the space of left-invariant preorders on $G$, see \cite[Theorem 14.3.16]{BMRT}.

 \subsection{First properties} We conclude with some basic properties of the Deroin space, which follow easily from the definition and Proposition \ref{l-rprop}.

\begin{dfn}
A flow  $(Y, \Phi)$ is \emph{freely reversible} if there is a fixed-point-free involution (that is, a homeomorphism of order 2) $\iota\colon Y\to Y$ which establishes a flow equivalence between the flow $(Y, \Phi)$ and its time-reverse, denoted $(Y, \Phi^{-1})$. 
\end{dfn}

If $G$ is a finitely generated group, let $\hat\iota \colon \rep(G; \R)\to \rep(G; \R)$ be the conjugation map by the reflection $x\to -x$.  If $\mathcal{D}$ is a Deroin space, then $\hat{\iota}$ descends to a map $\iota\colon \mathcal{D} \to \mathcal{D}$, given by $\iota=r_\mathcal{D}\circ \hat{\iota}$. 

\begin{prop} \label{p-Deroin-freely-reversible}
Let $\mathcal{D}\subset \rep(G; \R)$ be a Deroin space for $G$. Then the map $\iota\colon \mathcal{D}\to \mathcal{D}$ is a fixed-point-free involution which establishes a $G$-equivariant flow-equivalence between $(\mathcal{D}, \Psi)$ and $(\mathcal{D}, \Psi^{-1})$. In particular, $(\mathcal{D}, \Psi)$ is freely reversible.  
\end{prop}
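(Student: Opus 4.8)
The plan is to prove all the properties by first isolating the elementary behaviour of the conjugation involution $\hat\iota$ on $\rep(G;\R)$ and then transporting it to $\mathcal D$ through the retraction $r_{\mathcal D}$, using its properties recorded in Proposition \ref{l-rprop}. The facts about $\hat\iota$ that I would establish up front are: it is a continuous involution of $\rep(G;\R)$; it intertwines the translation flow with its time-reverse, $\hat\iota\circ\Psi^s=\Psi^{-s}\circ\hat\iota$ (immediate from $rT_sr=T_{-s}$, where $r(x)=-x$); it is equivariant for the translation $G$-action, since $\hat\iota(\rho)(g)(0)=-\rho(g)(0)$ together with the previous identity gives $\hat\iota(g.\rho)=g.\hat\iota(\rho)$; and it carries (pointed) semi-conjugacies to (pointed) semi-conjugacies — if $h$ semi-conjugates $\rho_1$ to $\rho_2$ then $rhr$ semi-conjugates $\hat\iota(\rho_1)$ to $\hat\iota(\rho_2)$ and $rhr(0)=-h(0)$ — and maps minimal (resp.\ cyclic) representations to minimal (resp.\ cyclic) ones. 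Granting these, continuity and $G$-equivariance of $\iota:=r_{\mathcal D}\circ\hat\iota|_{\mathcal D}$ follow at once from Proposition \ref{l-rprop}.

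Next I would check that $\iota$ is an involution. For $\rho\in\mathcal D$, the point $\iota(\rho)$ is pointed semi-conjugate to $\hat\iota(\rho)$ by definition of $r_{\mathcal D}$; applying $\hat\iota$ and using the preservation of pointed semi-conjugacy together with $\hat\iota^2=\mathsf{id}$, we get that $\hat\iota(\iota(\rho))$ is pointed semi-conjugate to $\rho\in\mathcal D$, so uniqueness in Definition \ref{d-Deroin}(3) forces $\iota(\iota(\rho))=r_{\mathcal D}(\hat\iota(\iota(\rho)))=\rho$. A continuous involution of a compact Hausdorff space is a homeomorphism, so $\iota$ is a homeomorphism of $\mathcal D$.

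For the flow-equivalence statement, the key auxiliary step is a \emph{transfer lemma}: if $\rho_1\in\rep(G;\R)$ and $\rho_2\in\mathcal D$ are conjugated by an orientation-preserving homeomorphism $\tau$ of $\R$ with $\tau(0)=0$, then $r_{\mathcal D}(\Psi^s(\rho_1))=\Psi^{\tau(s)}(\rho_2)$ for every $s$. This is proved by verifying that $T_{-\tau(s)}\circ\tau\circ T_s$ is an orientation-preserving conjugacy between $\Psi^s(\rho_1)$ and $\Psi^{\tau(s)}(\rho_2)\in\mathcal D$ fixing $0$, so the two are pointed conjugate, and then invoking uniqueness in Definition \ref{d-Deroin}(3). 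Now fix $\rho\in\mathcal D$. Since $\hat\iota(\rho)$ is minimal or cyclic and pointed semi-conjugate to $\iota(\rho)\in\mathcal D$, Remark \ref{rem-pointedsemicon} yields such a $\tau$ conjugating $\hat\iota(\rho)$ to $\iota(\rho)$. The transfer lemma gives $r_{\mathcal D}(\Psi^s(\hat\iota(\rho)))=\Psi^{\tau(s)}(\iota(\rho))$; rewriting $\Psi^s(\hat\iota(\rho))=\hat\iota(\Psi^{-s}(\rho))$ and using $\Psi$-invariance of $\mathcal D$, the left side is $\iota(\Psi^{-s}(\rho))$, so $\iota(\Psi^{-s}(\rho))=\Psi^{\tau(s)}(\iota(\rho))$, i.e.\ $\iota(\Psi^{s}(\rho))=(\Psi^{-1})^{\tilde\tau(s)}(\iota(\rho))$ with $\tilde\tau(s):=-\tau(-s)$ an orientation-preserving homeomorphism fixing $0$. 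This is exactly the assertion that $\iota$ is a $G$-equivariant flow equivalence between $(\mathcal D,\Psi)$ and $(\mathcal D,\Psi^{-1})$.

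The only genuinely non-formal point — and the place I would expect to need care rather than bookkeeping — is freeness. Suppose $\iota(\rho)=\rho$ for some $\rho\in\mathcal D$. Then $\rho$ and $\hat\iota(\rho)$ are pointed semi-conjugate and both minimal or cyclic, so Remark \ref{rem-pointedsemicon} provides an orientation-preserving homeomorphism $\tau$ with $\tau(0)=0$ conjugating $\hat\iota(\rho)$ to $\rho$; unwinding $\hat\iota(\rho)(g)=r\rho(g)r$, the homeomorphism $\kappa:=\tau\circ r$ commutes with $\rho(G)$, is orientation-reversing, and satisfies $\kappa(0)=0$. But an orientation-reversing homeomorphism of $\R$ has a unique fixed point ($x\mapsto\kappa(x)-x$ is strictly decreasing and changes sign), so $0$ is its only fixed point; since $\kappa$ commutes with $\rho(G)$, each $\rho(g)(0)$ is also fixed by $\kappa$, whence $\rho(g)(0)=0$ for all $g$, contradicting irreducibility of $\rho$. (One could instead quote Theorem \ref{t-centraliser} to rule out orientation-reversing elements commuting with a minimal action, but the above avoids the case analysis.) This contradiction shows $\iota$ is fixed-point-free, completing the proof; overall the main difficulty is simply tracking orientation reversals when composing conjugacies with $r$ and with translations.
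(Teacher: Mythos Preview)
Your proof is correct and follows the same approach as the paper. The paper's argument is much terser: it simply asserts that the $G$-equivariant flow-equivalence property ``follows from Proposition \ref{l-rprop}'' (the transfer lemma you spell out is essentially the one appearing in the proof of Theorem \ref{t-existenceDeroin}), and then gives the identical fixed-point-free argument via an orientation-reversing homeomorphism with a unique fixed point at $0$.
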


\begin{proof}
The fact that $\iota$ is a $G$-equivariant flow equivalence between $(\mathcal{D}, \Psi)$ and $(\mathcal{D}, \Psi^{-1})$ follows from Proposition \ref{l-rprop}. To observe that it is fixed-point free, assume by contradiction that $\iota(\rho)=\rho$ for some $\rho\in \mathcal{D}$. This implies that every element $\rho(g)$  commutes with an orientation-reversing homeomorphism $h\colon \R\to \R$ fixing $0$. But $0$ is the unique fixed point of $h$, so it must also be fixed by $\rho(G)$, contradicting that $\rho$ is irreducible. \qedhere
\end{proof}

The next proposition gives a precise dictionary with the statement of Theorem \ref{t-centraliser}. It will not be used elsewhere in this paper, but we include it for completeness.
\begin{prop}[Structure of orbits and $G$-invariant measures] \label{p-fixed-points}
Let $\mathcal{D}$ be a Deroin space for the finitely generated group $G$ with flow $\Psi$. Fix $\rho\in \mathcal D$, and write $\ell\subset \mathcal D$ for its $\Psi$-orbit. Then we have the following alternative. 
\begin{enumerate}[label=(\Roman*)]
	
	\item \label{i-Deroin-typeI} The orbit $\ell$ is a point if and only if $\rho$ is semi-conjugate to an action by translation.  In particular, the set of fixed points of $\Psi$  is  homeomorphic to the sphere $\mathbb{S}^{b_1(G)-1}$, where $b_1(G)=\mathsf{rk}(H^1(G,\Z))$ (with $\mathbb{S}^{-1}=\varnothing$). 

\item \label{i-closed-orbit} The orbit $\ell$ is a topological circle if and only if $\rho$ is locally proximal, but not proximal.

\item \label{i-non-closed} The orbit $\ell$ is not closed if and only if $\rho$ is proximal.

\end{enumerate}
Moreover, every $G$-invariant probability measure on $\mathcal{D}$ must be supported on the set of fixed points of $\Psi$. 
\end{prop}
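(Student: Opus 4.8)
The plan is to describe the $\Psi$-orbit of an arbitrary point of $\mathcal{D}$ explicitly in terms of the trichotomy of Theorem \ref{t-centraliser}, and then read off all four assertions. First I would record the trivial half of (I): for $\rho\in\mathcal{D}$ the orbit $\ell$ is a point exactly when $\Psi^s\rho=\rho$ for all $s$, i.e.\ when each $\rho(g)$ commutes with each translation $T_s$, i.e.\ when $\rho$ is an action by translations; hence $\fix(\Psi)$ is precisely the set of actions by translations lying in $\mathcal{D}$. Now fix $\rho\in\mathcal{D}$. If $\rho$ is cyclic it is already an action by translations and we are in case (I), so assume $\rho$ minimal, and let $\rho_0$ be the minimal representative of its semi-conjugacy class, normalised — using Theorem \ref{t-centraliser} together with the fact that a free one-parameter group (resp.\ infinite cyclic group, resp.\ trivial group) of fixed-point-free homeomorphisms of $\R$ is conjugate to the full translation group (resp.\ to $\langle T_1\rangle$, resp.\ $\{\id\}$) — so that $\mathcal{Z}(\rho_0)$ is a group of translations. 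Put $\Gamma_0=\{t\in\R:T_t\in\mathcal{Z}(\rho_0)\}$, so that $\Gamma_0=\R$ in type (I) (and then $\rho_0$ is itself by translations), $\Gamma_0=\Z$ in type (II), and $\Gamma_0=\{0\}$ in type (III). The key step is that
\[
F\colon \R/\Gamma_0\longrightarrow \mathcal{D},\qquad [u]\longmapsto r_{\mathcal{D}}(\Psi^u\rho_0),
\]
is a well-defined continuous injection with image $\ell$. Well-definedness is immediate ($\Psi^u\rho_0=\rho_0\iff u\in\Gamma_0$) and continuity follows from Proposition \ref{l-rprop}; the substance is that every $\sigma$ in the semi-conjugacy class of $\rho_0$ is pointed semi-conjugate to $\Psi^u\rho_0$ for a \emph{unique} $u\in\R/\Gamma_0$, because a semi-conjugacy $\sigma\to\rho_0$ is unique up to post-composition with $\mathcal{Z}(\rho_0)=\{T_\gamma:\gamma\in\Gamma_0\}$, so the class of its value at $0$ in $\R/\Gamma_0$ is a well-defined complete invariant ($\Psi^u\rho_0$ being the representative of the class $u$, since $T_u$ is a semi-conjugacy $\Psi^u\rho_0\to\rho_0$ sending $0$ to $u$). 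Injectivity of $F$ then follows, and by Proposition \ref{l-rprop} its image is the set of $r_{\mathcal{D}}(\sigma)$ with $\sigma$ semi-conjugate to $\rho$, which is exactly $\ell$.

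From this I would read off (I)--(III) at once: $\R/\Gamma_0$ is compact when $\Gamma_0=\R$ or $\Z$, so $F$ is then a homeomorphism and $\ell$ is a point or a topological circle respectively; when $\Gamma_0=\{0\}$ the map $F\colon\R\to\ell\subset\mathcal{D}$ is a continuous bijection that cannot be a homeomorphism (else $\R$ would be compact), so $\ell$ is not closed. Matching this trichotomy for $\Gamma_0$ with the mutually exclusive trichotomy of Theorem \ref{t-centraliser} proves (I), (II) and (III). For the sphere statement, combining with the identification of $\fix(\Psi)$ above, two actions by translations $\rho_c,\rho_{c'}$ (with $\rho_c(g)=T_{c(g)}$, $c\in\mathrm{Hom}(G,\R)\setminus\{0\}$) have the same image in $\mathcal{D}$ iff they are pointed semi-conjugate iff $c'=\lambda c$ for some $\lambda>0$ — a short computation, since a conjugacy between two minimal actions by translations, or between two cyclic ones, is forced to be linear. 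As $G$ is finitely generated, $\mathrm{Hom}(G,\R)\cong\R^{b_1(G)}$, so $\fix(\Psi)$ is in bijection with $(\R^{b_1(G)}\setminus\{0\})/\R_{>0}\cong\mathbb{S}^{b_1(G)-1}$ (empty when $b_1(G)=0$); the induced map $c\mapsto r_{\mathcal{D}}(\rho_c)$ from $\mathbb{S}^{b_1(G)-1}$ is continuous (because $c\mapsto\rho_c$ is, for the compact-open topology) and bijective onto $\fix(\Psi)$, hence a homeomorphism by compactness.

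For the statement on invariant measures I would reduce, by ergodic decomposition, to an ergodic $G$-invariant probability measure $m$ on $\mathcal{D}$, and suppose for contradiction that $m(\mathcal{D}\setminus\fix(\Psi))=1$. By ergodicity all $m$-generic points share a common type (II) or (III), and for such a generic $\rho$ one has $\supp(m)=\overline{G.\rho}=\overline{\ell}$ (density of $G.\rho$ in $\ell$, from the Remark following Definition \ref{d-Deroin}); moreover, through $F$ and the $G$-equivariance of $r_{\mathcal{D}}$, the $G$-action on $\ell$ is identified with the $\rho_0$-action on $\R/\Gamma_0$. In type (II), $\ell\cong\R/\Gamma_0$ is a circle, hence closed, so $\supp(m)=\ell$ and $m$ is a $G$-invariant probability on the circle $\ell=\R/\mathcal{Z}(\rho_0)$ on which $G$ acts minimally and proximally (Theorem \ref{t-centraliser}(II)) — impossible, as a minimal proximal action on $\mathbb{S}^1$ admits no invariant probability measure. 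In type (III), $\ell$ is a $G$-invariant Borel set with $m(\ell)\in\{0,1\}$; if $m(\ell)=1$ then $m$ transports to a $\rho_0$-invariant probability on $\R$, impossible for a minimal proximal action on $\R$ (it would give a semi-conjugacy, hence a conjugacy, to an action by translations); hence $m(\ell)=0$, i.e.\ $m$ is carried by the ``boundary at infinity'' $\overline{\ell}\setminus\ell$ of the non-closed orbit.

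Ruling out this last possibility is where I expect the real work to lie. The plan there is to show that the $G$-action on $\overline{\ell}$ is strongly proximal, and so carries no invariant probability measure: proximality of $\rho_0$ on $\R$ furnishes $g_n\in G$ and $p\in\R$ with $\rho_0(g_n)\to p$ uniformly on compacta, so through $F$ the $g_n$ contract every compact piece of the dense orbit $\ell$ uniformly onto the single point $q:=F(p)$, and the delicate point is to propagate this contraction across the boundary $\overline{\ell}\setminus\ell$ so as to conclude $(g_n)_*\nu\to\delta_q$ for every probability measure $\nu$, contradicting $(g_n)_*m=m\neq\delta_q$. As a fallback I would instead pass to the explicit realisation of $\mathcal{D}$ by normalised $\mu$-harmonic measures \cite{DKNP}, where the claim becomes the standard non-existence of $\mu$-stationary probability measures supported on a non-elementary part of the flow. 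Everything else above is formal, using only the properties collected in Proposition \ref{l-rprop} and Theorem \ref{t-centraliser}.
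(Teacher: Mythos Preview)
Your treatment of (I)--(III) and of the sphere identification is correct and is essentially the paper's argument, spelled out in more detail: the paper simply observes that $\Psi^s\rho$ is pointed conjugate to $\rho$ iff $T_s\in\mathcal{Z}(\rho)$, and then reads off the trichotomy directly from Theorem~\ref{t-centraliser}. Your map $F$ packages this more explicitly, but the content is the same.

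For the invariant-measure statement your approach diverges from the paper's and, as you yourself flag, has a real gap in the type-(III) boundary case. The proposed strong-proximality argument is not worked out, and there is no evident reason why contraction along the \emph{parametrising} copy of $\R$ (via $\rho_0(g_n)\to p$ on compacta) should propagate through $F$ to $\overline{\ell}\setminus\ell$: in type (III) the map $F$ is not a homeomorphism onto $\ell$, and boundary points are representations in their own right, possibly of a different dynamical type, so uniform contraction on compact pieces of $\ell$ says nothing about them. The fallback to the harmonic realisation is a change of statement rather than a proof of the proposition as formulated.

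The paper's argument for this part is much shorter and bypasses all of this. There is no ergodic decomposition and no case split by type. One observes that $\supp\mu$ is $\Psi$-invariant (because $G$-orbits are dense in $\Psi$-orbits), picks any non-fixed $\rho\in\supp\mu$, and uses only \emph{local} proximality --- common to types (II) and (III) --- to find $g\in G$ and an interval $I=(-a,a)$ with $\rho(g)(\overline{I})\subset J\Subset I$. Setting $U=\{\rho':\rho'(g)(\overline{I})\subset J\}$ and $U_I=\bigcup_{t\in I}\Psi^t(U)$, the definition of the translation action gives $g.U_I\subset U_J\subset U_I$, whence $\mu(U_I)=\mu(U_J)$ and the open set $U_I\setminus\overline{U_J}$ is $\mu$-null; but this set meets the $\Psi$-orbit of $\rho$, contradicting $\Psi$-invariance of $\supp\mu$. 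This single contraction-against-support step replaces your entire case analysis and handles the ``boundary'' difficulty automatically, since it never leaves the ambient space $\mathcal{D}$.
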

\begin{proof}
By definition,   $\rho$ is fixed by $\Psi$ if and only $\rho(G)$ commutes with all translations, which happens if and only $\rho$ is an action by translations. Conversely, assume that $\rho\in \mathcal{D}$  is semi-conjugate to an action by translations, and thus actually conjugate to it (since all actions in $\mathcal{D}$ are minimal or cyclic). Then all $\Psi$-translates of $\rho$  are pointed conjugate to it. Since $\mathcal{D}$ contains a unique representative of each pointed-semi-conjugacy class, we have that $\rho$ is actually fixed by $\Psi$. This shows the first part of \ref{i-Deroin-typeI}, and a similar argument gives \ref{i-closed-orbit}, and thus \ref{i-non-closed}, using Theorem \ref{t-centraliser}. Now, the set of fixed points is homeomorphic to the space of non-trivial homomorphisms of $G$ to $(\R, +)$ up to positive constant, which is either empty or homeomorphic to $\mathbb{S}^n$, where $n$ is the torsion-free rank of the abelianisation of $G$ minus 1.

To show the last statement, let $\mu$ be a $G$-invariant probability measure on $\mathcal{D}$. Then its support $\supp \mu$ is a closed $G$-invariant subset, and since the $G$-orbit of every $\rho\in \mathcal{D}$ is  dense in its $\Psi$-orbit,  $\supp \mu$ is also $\Psi$-invariant. Assume  by contradiction that $\rho\in \supp \mu$ is not a fixed point, then it is a locally proximal action by Theorem \ref{t-centraliser}. Hence we can find $g\in G$  and an interval $I=(-a, a)$ and $0<\varepsilon<a$ such that $\rho(g)(\overline{I})\subset J:=(-a+\varepsilon, a-\varepsilon)$. Then $U=\{\rho' : \rho'(\overline{I})\subset J\}$ is an open neighbourhood of $\rho$ in  $\mathcal{D}$. Let $U_I=\bigcup_{t\in I} \Psi^t(U)$, and $U_J$ be defined similarly. Then by definition of the translation action we have $g^n. U_I\subset U_J$ for every $n\ge 0$, and thus $\mu(U_I \setminus \overline{U_J})=0$. Hence $U_I\setminus \overline{U_J}$ is an open subset avoiding the support of $\mu$. However, it contains points in the orbit of $\rho$, namely $\Phi^t(\rho)$ for every $t\in I\setminus \overline{J}$. This contradicts the $\Psi$-invariance of $\supp \mu$. \qedhere

\end{proof}

\begin{rem}
The last statement in Proposition \ref{p-fixed-points} implies the theorem of Witte Morris \cite{Witte}, which states that a finitely generated amenable group has a non-trivial representation to $\homeo_0(\R)$ if and only if it has a non-trivial homomorphism to  $(\R, +)$. This proof is  close  to the one given by Deroin \cite{Deroin} using compact $\Psi$-invariant subsets of $\rep(G; \R)$, but avoids the machinery of disintegration of measures along $\Psi$-orbits.
\end{rem}
 
 \begin{rem}
 	From Proposition \ref{p-fixed-points}, one can also deduce that a finitely generated group has the property that its Deroin space contains no closed $\Psi$-orbit if and only if any action on the circle has a fixed point. The groups $\Tsf(\varphi)$ and $\Tsf(\varphi,\sigma)$ (for minimal subshifts $\varphi$) were the first known groups displaying the latter property \cite{MB-T,MR4177294}.
 \end{rem}
 
 Finally we record the following straightforward consequence of Propositions \ref{l-rprop} and \ref{p-path-connected-semiconjugacy}, which further clarifies the relevance of Deroin space from the perspective described in the introduction.
\begin{prop} \label{p-deroin-correspondence}
Let $(\mathcal{D}, \Psi)$ be a Deroin space for a finitely generated group $G$ and $r_\mathcal{D}\colon \rep(G; \R)\to \mathcal{D}$ the associated retraction. 
\begin{enumerate}

\item The open semi-conjugacy-saturated subsets of $\rep(G; \R)$ are precisely the preimages under $r_\mathcal{D}$ of the open $\Psi$-invariant subsets of $\mathcal{D}$. In particular a representation $\rho\in \rep(G; \R)$ is rigid if and only if $r_\mathcal{D}(\rho)$ belongs to an open $\Psi$-orbit.

\item The connected and the path-connected components of $\rep(G; \R)$ coincide with their corresponding preimages of those of $\mathcal{D}$.

\end{enumerate}

\end{prop}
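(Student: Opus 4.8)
The plan is to deduce both statements from Proposition~\ref{l-rprop} --- that $r_\mathcal{D}$ is a continuous $G$-equivariant retraction onto $\mathcal{D}$, and that two representations are semi-conjugate precisely when their images lie on a common $\Psi$-orbit in $\mathcal{D}$ --- together with the path-connectedness of semi-conjugacy classes (Proposition~\ref{p-path-connected-semiconjugacy}). Everything else is point-set topology organised around the retraction $r_\mathcal{D}$, so I do not anticipate a serious obstacle; the only delicate point is to keep the argument honest in the path-connected case, where one cannot simply invoke ``a continuous image of a connected set is connected'' but must chain explicit paths and check they stay in the relevant set.

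For part~(1), I would first record that a subset $A\subseteq\rep(G;\R)$ is semi-conjugacy-saturated if and only if $A=r_\mathcal{D}^{-1}(B)$ for some $\Psi$-invariant $B\subseteq\mathcal{D}$. One direction is immediate from Proposition~\ref{l-rprop}: if $B$ is $\Psi$-invariant, $r_\mathcal{D}(\rho)\in B$, and $\rho'$ is semi-conjugate to $\rho$, then $r_\mathcal{D}(\rho')$ lies on the $\Psi$-orbit of $r_\mathcal{D}(\rho)$, hence in $B$. Conversely, if $A$ is saturated then $A=r_\mathcal{D}^{-1}(A\cap\mathcal{D})$ (since $\rho$ and $r_\mathcal{D}(\rho)$ are always semi-conjugate), and $A\cap\mathcal{D}$ is $\Psi$-invariant because a $\Psi$-orbit inside $\mathcal{D}$ is a single conjugacy --- in particular semi-conjugacy --- class. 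As $r_\mathcal{D}$ is continuous and restricts to the identity on $\mathcal{D}$, preimages of open sets are open and $A\cap\mathcal{D}$ is open in $\mathcal{D}$ whenever $A$ is open in $\rep(G;\R)$; this yields the claimed bijection between open semi-conjugacy-saturated subsets of $\rep(G;\R)$ and open $\Psi$-invariant subsets of $\mathcal{D}$. The rigidity statement then drops out: the semi-conjugacy class of $\rho$ equals $r_\mathcal{D}^{-1}$ of the $\Psi$-orbit of $r_\mathcal{D}(\rho)$, and by the bijection it is open in $\rep(G;\R)$ exactly when that orbit is open in $\mathcal{D}$.

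For part~(2), I would show that $C\mapsto r_\mathcal{D}^{-1}(C)$ and $\mathcal{C}\mapsto\mathcal{C}\cap\mathcal{D}$ are mutually inverse bijections between the connected (resp.\ path-connected) components of $\mathcal{D}$ and those of $\rep(G;\R)$. The starting remark is that every component of $\rep(G;\R)$ is semi-conjugacy-saturated, since each semi-conjugacy class is path-connected (Proposition~\ref{p-path-connected-semiconjugacy}), hence contained in a single component. Given a component $C$ of $\mathcal{D}$, it is automatically $\Psi$-invariant because every $\Psi$-orbit is path-connected, so $r_\mathcal{D}^{-1}(C)$ is the union of the semi-conjugacy classes of the points of $C$; each such class is connected and contains its base point in $C$, and $C\subseteq r_\mathcal{D}^{-1}(C)$ is connected, so $r_\mathcal{D}^{-1}(C)$ is connected, and it is maximal because any connected subset of $\rep(G;\R)$ meeting it has connected image under $r_\mathcal{D}$ meeting $C$, hence contained in $C$. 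Conversely, if $\mathcal{C}$ is a component of $\rep(G;\R)$ then $r_\mathcal{D}(\mathcal{C})=\mathcal{C}\cap\mathcal{D}$ is connected, and taking the component $C$ of $\mathcal{D}$ containing it gives $\mathcal{C}\subseteq r_\mathcal{D}^{-1}(C)$ with $r_\mathcal{D}^{-1}(C)$ a component, so $\mathcal{C}=r_\mathcal{D}^{-1}(C)$. For path-components the scheme is identical, except that instead of taking images of connected sets I would connect $\rho$ to $r_\mathcal{D}(\rho)$ by a path inside its semi-conjugacy class (which is path-connected and, once $C$ is a path-component hence $\Psi$-invariant, contained in $r_\mathcal{D}^{-1}(C)$) and concatenate with a path inside $C$ to see $r_\mathcal{D}^{-1}(C)$ is path-connected, while composing an arbitrary path with $r_\mathcal{D}$ gives maximality as before.

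In short, there is no substantive obstacle: the two genuine inputs (Propositions~\ref{l-rprop} and~\ref{p-path-connected-semiconjugacy}) are already available, and what remains is bookkeeping with the retraction $r_\mathcal{D}$. The one spot demanding care is the path-connected half of part~(2), where the shortcut ``continuous image of connected is connected'' is unavailable and must be replaced by the explicit chaining of paths described above, together with the verification that these paths remain inside $r_\mathcal{D}^{-1}(C)$.
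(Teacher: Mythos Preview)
Your proposal is correct and follows exactly the approach the paper indicates: the paper does not give a proof at all, merely stating that the proposition is a ``straightforward consequence of Propositions~\ref{l-rprop} and~\ref{p-path-connected-semiconjugacy}'', and you have carefully filled in precisely that straightforward argument. One minor remark: your caution in the path-connected case is unnecessary, since the continuous image of a path-connected set \emph{is} path-connected, so the connected-component argument carries over verbatim; your explicit path-chaining is valid but does a little more work than required.
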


\section{Finitely generated groups of homeomorphisms of suspension spaces}

\subsection{Groups of homeomorphisms of suspension spaces}
Let $X$ be a topological space, and $\varphi\colon X\to X$  a homeomorphism. Recall that we denote by $Y$ the suspension space of $(X, \varphi)$, namely $Y=(X\times \R)/\Z$, where $\Z$ acts on $X\times \R$ by $n\cdot (x, t)=(\varphi^n(x), t-n)$. We let $\pi_Y\colon X\times \R\to Y$ be the quotient projection, and denote by $\Phi$ the suspension flow on $Y$.

 Assume that $(X, \varphi)$ is reversible, with associated involution $\sigma\colon X\to X$ such that $\sigma\varphi\sigma=\varphi^{-1}$. The pair $(\varphi, \sigma)$  then determines an action of the infinite dihedral group $D_\infty$ on $X$, given by $(n, j)\cdot x=\varphi^n \sigma^j(x)$.  Here we identify $D_\infty$ with the semi-direct product $\Z \rtimes (\Z/2\Z)$, and write accordingly its elements as pairs $(n, j)$ with $n\in \Z$ and $j\in \Z/2\Z$. 
We can then consider a smaller suspension space (a ``mapping Klein bottle''). To see this, consider the standard isometric action of $D_\infty$ on $\R$ given by $(n, j)\cdot t= (-1)^jx-n$. Then the quotient 
$Z:=(X\times \R)/D_\infty$
with respect to the associated diagonal action is called the \emph{dihedral suspension} of $(\varphi, \sigma)$. We will denote by $\pi_Z\colon X\times \R \to Z$ the quotient projection. The space $Z$ is a 2-to-1 quotient of the suspension $Y$ of $\varphi$. More precisely, we have $Z= Y/\langle\hat{\sigma}\rangle$, where $\hat\sigma:Y\to Y$ is the involution given by 
\[\hat{\sigma} \colon\, \pi_Y(x, t) \mapsto \pi_Y(\sigma(x), -t).\]
Writing $p\colon Y\to Z$ for the quotient map, we have a commutative diagram:
\begin{center}
\begin{tikzcd}
X\times \R \arrow[rd, "\pi_Z"] \arrow[r, "\pi_Y"] & Y \arrow[d, "p"] \\
& Z=Y/\langle \hat{\sigma}\rangle
\end{tikzcd}
\end{center}
Note that since the involution $\sigma$ normalizes the cyclic group $\langle \varphi \rangle$, it must necessarily send $\varphi$-orbits to $\varphi$-orbits.  The next elementary result clarifies the assumption that we need to put on the involution $\sigma$.
\begin{lem} \label{l-flipping-involution}
	Let $(X,\varphi, \sigma)$ be as above. Then, the following are equivalent:
	\begin{enumerate}
	\item \label{i-sigma-1} $\sigma$ does not preserve any $\varphi$-orbit;
	\item \label{i-sigma-2} $\sigma(x)\notin  \{x, \varphi(x)\}$ for every $x\in X$;
	\item \label{i-sigma-3} every element of order 2 in $D_\infty$ acts on $X$ without fixed points;
	\item \label{i-sigma-4} the map $\hat{\sigma} \colon Y\to Y$ has no fixed point;
	\item \label{i-sigma-5} the diagonal action of $D_\infty$ on $X\times \R$ is free.
	\end{enumerate}
\end{lem}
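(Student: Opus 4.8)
The plan is to prove the chain of equivalences essentially in the cyclic order $(1)\Rightarrow(2)\Rightarrow(3)\Rightarrow(5)\Rightarrow(4)\Rightarrow(1)$, or rather to establish a few key implications and deduce the rest, since several of these statements are nearly tautological unfoldings of the others. First I would unwind what the elements of order $2$ in $D_\infty=\Z\rtimes(\Z/2\Z)$ are: an element $(n,1)$ is always an involution (since $(n,1)(n,1)=(n-n,0)=(0,0)$), while $(n,0)$ has order $2$ only if $n=0$, i.e.\ only the identity. So the involutions are exactly the elements $(n,1)$, $n\in\Z$, acting on $X$ by $x\mapsto\varphi^n\sigma(x)$. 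This immediately makes the equivalence $(2)\Leftrightarrow(3)$ a matter of bookkeeping: $(n,1)$ has a fixed point iff $\varphi^n\sigma(x)=x$ for some $x$, i.e.\ $\sigma(x)=\varphi^{-n}(x)$; and one checks that the condition ``$\sigma(x)\in\{x,\varphi(x)\}$ for some $x$'' is equivalent to ``$\sigma(x)=\varphi^{-n}(x)$ for some $x$ and some $n$'' — the non-obvious direction uses that $\sigma\varphi\sigma=\varphi^{-1}$, so if $\sigma(x)=\varphi^m(x)$ then applying $\sigma$ and using the relation gives $x=\sigma\varphi^m(x)=\varphi^{-m}\sigma(x)=\varphi^{-m}\varphi^m(x)$, consistent, but more usefully one can translate any fixed point of $(n,1)$ along the $\varphi$-orbit to reduce $n$ to $0$ or $1$: if $\varphi^n\sigma(x)=x$ then for $y=\varphi^k(x)$ one computes $\varphi^{n-2k}\sigma(y)=\varphi^{n-2k}\sigma\varphi^k(x)=\varphi^{n-2k}\varphi^{-k}\sigma(x)=\varphi^{n-2k-k}\varphi^{-n}(x)$... so I would instead argue directly that the exponents $n$ for which $(n,1)$ has a fixed point, shifted by replacing $x$ with $\varphi^k(x)$, change by $2k$; hence if some even (resp.\ odd) $n$ works, then $n=0$ (resp.\ $n=1$) works, which is exactly $(2)$.

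Next, $(1)\Leftrightarrow(2)$ (equivalently $(1)\Leftrightarrow(3)$): ``$\sigma$ preserves some $\varphi$-orbit $\mathcal{O}(x)$'' means $\sigma(x)=\varphi^m(x)$ for some $m$, which by the shift-by-$2k$ observation above is equivalent to saying some involution $(n,1)$ fixes $x$ — the parity of $m$ is preserved under $x\mapsto\varphi^k(x)$, so ``$\sigma$ preserves $\mathcal O(x)$'' forces $\sigma$ to fix either $x$ or $\varphi(x)$ (depending on the parity of $m$), giving $(2)$; the converse $(2)\Rightarrow$ ``$\sigma$ preserves some orbit'' is trivial. Then $(3)\Leftrightarrow(5)$: the diagonal $D_\infty$-action on $X\times\R$ is free iff no non-trivial $(n,j)$ has a fixed point; since $D_\infty$ acts on $\R$ by $(n,j)\cdot t=(-1)^jt-n$, the elements of infinite order $(n,0)$ with $n\neq0$ already act freely on the $\R$-factor (translations by $-n\neq0$), so freeness on $X\times\R$ is equivalent to the involutions $(n,1)$ acting freely on $X\times\R$; but $(n,1)\cdot(x,t)=(\varphi^n\sigma(x),-t-n)$ fixes $(x,t)$ iff $\varphi^n\sigma(x)=x$ and $2t=-n$, and the second equation is always solvable for $t$, so this happens for some $(x,t)$ iff $(n,1)$ fixes some $x\in X$ — giving $(3)\Leftrightarrow(5)$. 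Finally $(5)\Leftrightarrow(4)$: $Y=(X\times\R)/\Z$ where $\Z=\langle(1,0)\rangle$ acts freely, and $\hat\sigma$ on $Y$ is induced by the involution $(0,1)$ on $X\times\R$; so $\hat\sigma$ has a fixed point in $Y$ iff there exist $(x,t)$ and $n\in\Z$ with $(0,1)\cdot(x,t)=(n,0)\cdot(x,t)$, i.e.\ $(n,1)\cdot(x,t)=(x,t)$ — that is, iff the $D_\infty$-action on $X\times\R$ is not free. This closes the loop.

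I expect the only mildly delicate point to be the ``shift the exponent by $2k$'' bookkeeping that links the orbit-preservation condition $(1)$, the pointwise condition $(2)$, and the fixed-point-of-involutions condition $(3)$ — specifically making sure one correctly tracks how $\sigma(x)=\varphi^m(x)$ transforms under $x\mapsto\varphi^k(x)$ using $\sigma\varphi\sigma=\varphi^{-1}$ (which gives $\sigma\varphi^k=\varphi^{-k}\sigma$), and that this shows only the parity of $m$ is an invariant of the orbit, hence reduces $m$ to $0$ or $1$. Everything else is a direct unwinding of the definitions of the quotient spaces $Y$ and $Z$ and of the $D_\infty$-action, so I would present those implications in one or two lines each and spend the bulk of the write-up on the exponent-parity argument.
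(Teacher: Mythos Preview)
Your proposal is correct and follows essentially the same route as the paper. The paper's proof is more terse: for $(2)\Rightarrow(3)$ it simply notes that every order-$2$ element of $D_\infty$ is conjugate to $\sigma$ or $\sigma\varphi$, which is exactly what your ``shift the exponent by $2k$'' computation establishes (conjugation by $(k,0)$ sends $(n,1)$ to $(n+2k,1)$); and rather than closing the loop via $(5)\Rightarrow(4)\Rightarrow(1)$, it shows $(3)\Leftrightarrow(4)$ and $(5)\Rightarrow(1)$ directly, but these are the same one-line unwindings of the quotient map that you give.
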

\begin{proof}
	It is clear that \eqref{i-sigma-1}$\Rightarrow$\eqref{i-sigma-2}. Every element  of order 2 in $D_\infty$ is conjugate to either $\sigma$ or $\sigma\varphi$, so \eqref{i-sigma-2}$\Rightarrow$ \eqref{i-sigma-3}.  The point $\pi_Y(x, t)$ is fixed by $\hat{\sigma}$ if and only if there exists $n\in \Z$ such that $\varphi^{n}\sigma$ fixes $x$ and $t=-n/2$, so  \eqref{i-sigma-3}$\Leftrightarrow$\eqref{i-sigma-4}. Since elements of infinite order already act without fixed points on $\R$, we have \eqref{i-sigma-3}$\Rightarrow$\eqref{i-sigma-5}. Finally if \eqref{i-sigma-1} does not hold, then there exist $x\in X$ and $n\in \Z$ such that $\varphi^{-n}\sigma(x)=x$, and thus the element $\gamma=(-n, 1)\in D_\infty$ fixes the point $(x, -n/2)$, so that \eqref{i-sigma-5}$\Rightarrow$\eqref{i-sigma-1}. \qedhere
\end{proof}

From now on, we will work in the following setting.

\begin{standing-assumption}\label{standing}
	We let $X$ be a totally disconnected, metrisable, compact space, and $(\varphi, \sigma)$ homeomorphisms of $X$ such that $\sigma^2=\id$ and $\sigma \varphi \sigma=\varphi^{-1}$. We further assume that: 
	\begin{itemize}
	\item the action of $\Z$ on $X$ determined by $\varphi$ is topologically free, that is, the subset of points which are not periodic for $\varphi$ is dense in $X$;
	\item the homeomorphism $\sigma$ satisfies the equivalent conditions in Lemma \ref{l-flipping-involution}.
	\end{itemize}
	As above, we always denote by $Y$ the suspension of $\varphi$, with suspension flow $\Phi$, and by $Z$ the dihedral suspension of $(\varphi, \sigma)$. 
\end{standing-assumption}
\begin{ex} \label{e-doubling}
As a basic example, start with any homeomorphism $\varphi_0\colon X_0\to X_0$ of a compact totally disconnected metrisable space. Let $X$ be the disjoint union of two copies of $X$, and define $\varphi$ to be equal to $\varphi_0$ on one copy and to $\varphi_0^{-1}$ on the other. Finally let $\sigma$ exchange the two copies of $X_0$. This class of examples is somewhat degenerate because $\varphi$ preserves a partition into two clopen sets (in particular it can never be topologically transitive).

\end{ex}

\begin{ex}
Let $\varphi$ be an irrational rotation on $\R/\Z$, and $\sigma$ the reflection $x\mapsto -x$. Then the pair $(\varphi, \sigma)$ defines an action of $D_\infty$ on $\R/\Z$. The fixed points of all involutions  in $D_\infty$ belong to exactly 4 orbits (as there are 2 conjugacy classes of reflections and each has two fixed points). Blow up these orbits to obtain a minimal invariant Cantor set $X$. Then $(X, \varphi, \sigma)$ satisfies Assumption \ref{standing}. 
\end{ex}
\begin{ex}\label{ex.reversible}
	The following source of examples is inspired from Hyde and Lodha \cite{HL}, see \cite[Remark 4.19]{confined}. Take  a finite alphabet $A$ of even cardinality $|A|\ge 4$, and assume that $A$ is endowed with a map $a\mapsto a^{-1}$ which is an involution without fixed points. Let $\varphi$ be the shift map on $A^\Z$, and consider also the map $\sigma((a_n)_n)=(a_{-(n+1)}^{-1})_n$ (namely the formal inverse of a bi-infinite word in $A$). It is easy to verify that $\sigma^2=\id$ and $\sigma \varphi \sigma=\varphi^{-1}$, so $(\varphi,\sigma)$ determines an action of $D_\infty$ on $\Omega$. This action does not satisfy the conditions in Assumption \ref{standing}, as $\sigma$ has fixed points.  To avoid this problem, we can consider the subshift of finite type $X_{\text{red}}=\{(a_n)_n\in A^\Z:a_{n+1}\neq a^{-1}_n\}$ of \emph{reduced}  words in $A$, which is closed and $D_\infty$-invariant. Then,  Assumption \ref{standing} is satisfied by $(X_{\text{red}},\varphi,\sigma)$. One can then obtain many more examples as $D_\infty$-invariant closed subsets $X\subset X_{\text{red}}$. 
\end{ex}

The set $Z$ is naturally partitioned into subsets of the form $\ell=\pi_Z(\{x\} \times \R)$, that we call the \emph{leaves} of $Z$. As $X$ is totally disconnected, these coincide with the path components of $Z$. By Lemma \ref{l-flipping-involution}, the map $p\colon Y\to Z$ is a local homeomorphism, and is injective in restriction to each $\Phi$-orbit. 
Take now  a clopen subset $C\subset X$, and an open interval $I\subset \R$. If the restriction of $\pi_Y$ to $C\times I$ is injective, we denote by  $Y_{C, I}$ its image. Then the map 
\[\pi_Y\colon C\times I \to Y_{C, I}\subset Y\] 
is a homeomorphism, and is called a  \emph{chart} of $Y$.
Similarly, if the restriction of $\pi_Z$ to $C\times I$ is injective, we denote its image by $Z_{C, I}$ and call the map
\[\pi_Z\colon C\times I \to Z_{C, I}\subset Z\] 
a chart of $Z$. Most of the time, we will  just refer to the sets $Y_{C, I}$ and $Z_{C, I}$ as charts, with an implicit identification with $C\times I$. 
Since the actions of $\Z$ and  $D_\infty$ on $X\times \R$ are free and properly discontinuous, both maps $\pi_Y$ and $\pi_Z$ are local homeomorphisms, and thus the spaces $Y$ and $Z$ can be covered by charts.

We now consider the group $\mathsf{H}_0(\varphi)$ (respectively, $\mathsf{H}_0(\varphi, \sigma)$)  of all homeomorphisms of $Y$  (respectively, $Z$) isotopic to the identity.
Note that this condition forces all elements of  $\mathsf{H}_0(\varphi)$ (respectively, $\mathsf{H}_0(\varphi, \sigma)$) to preserve each $\Phi$-orbit in $Y$ (respectively, each leaf of $Z$), since these are exactly the path components of the corresponding space.
By \cite[Theorem 3.1]{BCE}, the group $\mathsf{H}_0(\varphi)$ is exactly the group of homeomorphisms $h\colon Y\to Y$ for which there exists a continuous function $\tau_h\colon Y\to \R$ such that 
\begin{equation}\label{e-cocycle}
	h(y)= \Phi^{\tau_h(y)}(y).
\end{equation}
It follows from Assumption \ref{standing} that such a function $\tau_h$ is necessarily unique. Indeed, the value $\tau_h(y)$ is uniquely determined by $h(y)$ provided $y$ does not belong to a $\Phi$-periodic orbit. By our assumption such points are dense, and thus $\tau_h$ is uniquely determined everywhere. The function $\tau_h$ will be called the \emph{translation cocycle} of $h$.

\begin{lem} \label{l-lifting-H0}
	Let $(X,\varphi,\sigma)$ be as in Assumption \ref{standing}. Then, the action of $\mathsf{H}_0(\varphi, \sigma)$ on $Z$ lifts through the map $p\colon Y \to Z$ to a unique action on $Y$ which preserves every $\Phi$-orbit. This action identifies $\mathsf{H}_0(\varphi, \sigma)$ with the centralizer of $\hat{\sigma}$ in $\mathsf{H}_0(\varphi)$, which  consists of all elements $h$ such that $\tau_h \circ \hat{\sigma}=-\tau_h$. 
\end{lem}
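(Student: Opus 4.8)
The plan is to exploit that $p\colon Y\to Z$ is a covering map. Indeed, by Lemma \ref{l-flipping-involution} the involution $\hat\sigma$ is fixed-point-free on the compact metrisable space $Y$, so $p$ is a two-sheeted covering with deck transformation group $\langle\hat\sigma\rangle$. First I would produce, for every $g\in\mathsf{H}_0(\varphi,\sigma)$, a distinguished lift. Pick an isotopy $(g_t)_{t\in[0,1]}$ inside $\homeo(Z)$ from $\id_Z$ to $g$; since coverings are fibrations, the homotopy $(y,t)\mapsto g_t(p(y))$ lifts uniquely through $p$ to a homotopy $(y,t)\mapsto\tilde g_t(y)$ with $\tilde g_0=\id_Y$ and $p\circ\tilde g_t=g_t\circ p$. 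Lifting the isotopy $(g_t^{-1})$ as well and invoking uniqueness of lifts of the constant homotopy $(y,t)\mapsto p(y)$ shows that each $\tilde g_t$ is a homeomorphism (with inverse the lift of $g_t^{-1}$), so $\tilde g:=\tilde g_1$ is a homeomorphism with $p\circ\tilde g=g\circ p$ which is isotopic to $\id_Y$; in particular $\tilde g\in\mathsf{H}_0(\varphi)$ and preserves each $\Phi$-orbit. Any other lift of $g$ is $\hat\sigma\tilde g$, and $\hat\sigma$ — hence $\hat\sigma\tilde g$ — preserves no $\Phi$-orbit, because $\hat\sigma$ carries the orbit lying over the $\varphi$-orbit of $x$ to the one over the $\varphi$-orbit of $\sigma(x)$, which is different by Lemma \ref{l-flipping-involution}\eqref{i-sigma-1}. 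So $\tilde g$ is the \emph{unique} $\Phi$-orbit-preserving lift of $g$. By this uniqueness, $g\mapsto\tilde g$ is a homomorphism (both $\widetilde{g_1g_2}$ and $\tilde g_1\tilde g_2$ are $\Phi$-orbit-preserving lifts of $g_1g_2$), obviously injective, and it is the only action of $\mathsf{H}_0(\varphi,\sigma)$ on $Y$ lifting the given action and preserving $\Phi$-orbits.

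Next I would identify the image with $C:=\{h\in\mathsf{H}_0(\varphi): h\hat\sigma=\hat\sigma h\}$. For $g\in\mathsf{H}_0(\varphi,\sigma)$, the map $\hat\sigma\tilde g\hat\sigma$ is again a lift of $g$ and still preserves each $\Phi$-orbit (it sends the orbit over $x$ to itself, through the orbit over $\sigma(x)$, using $\sigma^2=\id$), so $\hat\sigma\tilde g\hat\sigma=\tilde g$ by uniqueness; hence $\tilde g\in C$. For the description of $C$ through the cocycle: any $h\in\mathsf{H}_0(\varphi)$ has the form $h=\Phi^{\tau_h(\cdot)}$ by \cite[Theorem 3.1]{BCE}, and since $\hat\sigma$ reverses the flow we have $\hat\sigma(\Phi^s(w))=\Phi^{-s}(\hat\sigma w)$; comparing $h(\hat\sigma y)=\Phi^{\tau_h(\hat\sigma y)}(\hat\sigma y)$ with $\hat\sigma(h(y))=\Phi^{-\tau_h(y)}(\hat\sigma y)$ and using uniqueness of the translation cocycle (Assumption \ref{standing}) gives $h\hat\sigma=\hat\sigma h\iff\tau_h\circ\hat\sigma=-\tau_h$.

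It remains to see that $C$ is contained in the image, i.e.\ that every $h\in C$ descends to some $g\in\mathsf{H}_0(\varphi,\sigma)$ with $\tilde g=h$. Since $h$ commutes with the deck transformation $\hat\sigma$, it induces a homeomorphism $g$ of $Z=Y/\langle\hat\sigma\rangle$ with $p\circ h=g\circ p$; as $h$ preserves $\Phi$-orbits, once we know $g\in\mathsf{H}_0(\varphi,\sigma)$ the uniqueness above forces $\tilde g=h$. To check that $g$ is isotopic to $\id_Z$, set $h_t(y):=\Phi^{t\tau_h(y)}(y)$. On any $\Phi$-orbit (a line or a circle, parametrised by the flow) $h_t$ acts as $a\mapsto a+t\,\widehat\tau(a)$ for a bounded continuous function $\widehat\tau$; since $h=h_1$ restricts there to an orientation-preserving homeomorphism, $a\mapsto a+\widehat\tau(a)$ is increasing, and this forces $a\mapsto a+t\,\widehat\tau(a)$ to be increasing, hence a homeomorphism of the orbit, for every $t\in[0,1]$. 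Thus each $h_t$ is a continuous bijection of the compact space $Y$, so a homeomorphism, and $(h_t)$ is an isotopy from $\id_Y$ to $h$ inside $\mathsf{H}_0(\varphi)$. Because $\tau_h\circ\hat\sigma=-\tau_h$, each $h_t$ commutes with $\hat\sigma$ and therefore descends to a homeomorphism $g_t$ of $Z$; the family $(g_t)$ is an isotopy from $\id_Z$ to $g$, so $g\in\mathsf{H}_0(\varphi,\sigma)$, which concludes the argument.

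The main obstacle is this last step. An $h\in\mathsf{H}_0(\varphi)$ commuting with $\hat\sigma$ visibly induces only a \emph{leaf-preserving} homeomorphism of $Z$, and one must upgrade this to a genuine isotopy to $\id_Z$. The delicate point is verifying that the rescaled flow push $h_t=\Phi^{t\tau_h(\cdot)}$ remains a homeomorphism of $Y$ for all $t$, which relies on the monotonicity estimate above together with boundedness of the continuous cocycle $\tau_h$ on the compact space $Y$; in addition, one has to be a little careful with the covering-space arguments, since $Y$ and $Z$ need not be connected.
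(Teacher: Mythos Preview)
Your proof is correct and follows essentially the same strategy as the paper's: lift the $Z$-action to a $\Phi$-orbit-preserving action on $Y$, show the lift lands in the centraliser of $\hat\sigma$, and for the converse use the explicit isotopy $h_t=\Phi^{t\tau_h(\cdot)}$ (which is exactly the paper's $h_s=\Phi^{(1-s)\tau_h(\cdot)}$) to show that centraliser elements descend to $\mathsf{H}_0(\varphi,\sigma)$. Your covering-theoretic phrasing of the lift and your convexity argument that each $h_t$ is a homeomorphism supply details the paper leaves implicit; as for your flagged concern about disconnectedness, note that any lift $h'$ of $g$ satisfies $h'(y)\in\{\tilde g(y),\hat\sigma\tilde g(y)\}$ pointwise, and if $h'$ preserves $\Phi$-orbits then the second option is ruled out everywhere by Lemma~\ref{l-flipping-involution}, so the uniqueness step needs no connectedness hypothesis.
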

\begin{proof}
	By Lemma \ref{l-flipping-involution}, the map $p$ sends each $\Phi$-orbit bijectively onto a leaf $\ell$ of $Z$.  Hence, at least at a set-theoretic level, the action of $\mathsf{H}_0(\varphi, \sigma)$  on $Z$ can be leafwise lifted   to  a unique action on $Y$ which preserves every $\Phi$-orbit, and such that the map $p$ is equivariant. To check that this action is continuous, fix a finite cover by charts $Z=\bigcup_{i=1}^r Z_{C_i, I_i}$; each chart $Z_{C_i, I_i}$ lifts to a pair of disjoint charts in $Y$, mapped homeomorphically onto it by $p$. Take $\varepsilon>0$ such that every arc of length at most $\varepsilon$ in a leaf of $Z$ is contained in one of  the charts  $Z_{C_i, I_i}$. If  $g\in \mathsf{H}_0(\varphi, \sigma)$ is an element that displaces any point of $Z$ by a distance less than $\varepsilon$ on the corresponding leaf, then one can check on charts that the lift of $g$ to $Y$ defined above is continuous. But such elements generate $\mathsf{H}_0(\varphi, \sigma)$, as all elements are isotopic to the identity. This implies that the lift of any element is continuous. By the same reasoning, every isotopy of an element $g\in\mathsf{H}_0(\varphi, \sigma)$  to the identity in the group of homeomorphism of $Z$ lifts to an isotopy in the group of homeomorphisms of $Y$. Therefore $\mathsf{H}_0(\varphi, \sigma)$ lifts to a subgroup of $\mathsf{H}_0(\varphi)$. Moreover,  its action clearly commutes with $\hat{\sigma}$. 
	Conversely, assume that $h\in \mathsf{H}_0(\varphi)$ commutes with $\hat{\sigma}$. It is easily checked that this is equivalent to  $\tau_h\circ \hat{\sigma}=-\tau_h$. Choose the isotopy $h_s$ of $h$ to the identity given by
	$h_s(y)=\Phi^{(1-s)\tau_h(y)}(y)$.
	Then each element $h_s$ commutes with $\hat{\sigma}$ and thus descends to a homeomorphism of $Z$. This shows that $h$ defines indeed an element of $\mathsf{H}_0(\varphi, \sigma)$. \qedhere
\end{proof}

\subsection{Perfect germ extensions of (solenoidal) Thompson's groups} \label{s-Thompson}
Here we consider variations of the classical Thompson's groups. We will use some of the standard properties of these groups (such as finite generation) that the reader can find explained in standard references, such as the text by Cannon, Floyd, and Parry \cite{CFP}.

Let $I,J\subset \R$ be open dyadic intervals. We say that a homeomorphism $f\colon I\to J$ is \emph{dyadic piecewise linear (PL)} if there exists a finite subset $\Sigma\subset I$ so that:
\begin{itemize}
	\item $\Sigma\subset\Z[\frac{1}{2}]$, and
	\item $f$ is locally dyadic affine outside $\Sigma$, that is, on each connected component of $I\setminus\Sigma$, the map is defined as $x\mapsto 2^nx+b$ for some $n\in\Z$ and $b\in\Z[\frac{1}{2}]$.
\end{itemize} 
We are interested in the family of maps which are locally dyadic PL outside a finite subset, and satisfy an extra local condition. To define this properly, given a dyadic rational, let $h_{x_0}\colon \R\to\R$ be the map defined by $h_{x_0}(x)=2(x-x_0)+x_0$. 

\begin{dfn}
	Say that a map $f\colon I\to J$ is \emph{of type} $\nice$ if there exists a finite subset $\mathsf{BP}^2(f)\subset I$ such that
	\begin{itemize}
		\item every $x\in I\setminus \mathsf{BP}^2(f)$ has an open neighborhood $U$ so that $f|_{U}$ is dyadic PL, and
		\item every $x_0\in\mathsf{BP}^2(f)$ has a neighborhood $U$ so that $f\circ h_{x_0}=h_{f(x_0)}\circ f$ for $x\in U$.
	\end{itemize}
\end{dfn} 

Given an interval $I\subset \R$, we denote by $\Fsing_I$ the group of homeomorphisms of $\overline I$ of type $\nice$, and $\Fsing:=\Fsing_{(0, 1)}$. These are all isomorphic if $I$ is dyadic. Note that the subgroup of all elements $f\in \Fsing$ such that $\BP^2(f)=\varnothing$ is the standard Thompson's group $F$.
Given a dyadic $x\in [0,1]$, denote by $\mathcal{D}_x^+$ (respectively, $\mathcal{D}_x^-$) the group of right (respectively, left) germs of $\Fsing$ at $x$. Recall that these are defined as the quotient of the stabiliser of $\Fsing$ by the normal subgroup of elements acting trivially on a right (respectively, left) neighbourhood. Write $\tilde{T}\subseteq \homeo_0(\R)$ for the $\Z$-central extension of Thompson's group $T$, obtained by lifting the action of $T$ on the circle. Explicitly, $\tilde{T}$ is the group of all (locally) dyadic PL homeomorphisms of $\R$ commuting with the unit translation $t_1\colon x\mapsto x+1$.

\begin{lem}\label{lem-Ttilde}
	The groups $\mathcal{D}^+_x$ and $\mathcal{D}^-_x$ are isomorphic to $\tilde{T}$ for every $x\in\Z[\frac{1}{2}] \cap [0, 1]$. \end{lem}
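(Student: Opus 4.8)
The plan is to identify the right germ group $\mathcal{D}^+_x$ (the left case being symmetric, via $t\mapsto -t$) with $\tilde T$ by exhibiting a natural ``blow up the germ at $x$'' correspondence. First I would reduce to the case $x=0$ (conjugating by a dyadic affine map, which preserves type $\nice$ and sends $\Z[\tfrac12]$ to itself), so that we study right germs at $0$ of homeomorphisms of $[0,1]$ of type $\nice$. An element $f\in\Fsing$ fixing $0$ has, by definition of type $\nice$, a neighbourhood of $0$ on which either $f$ is dyadic PL (if $0\notin\BP^2(f)$), in which case its right germ at $0$ is the germ of a dyadic affine map $x\mapsto 2^n x$, or else $0\in\BP^2(f)$ and $f$ commutes with the doubling map $h_0\colon x\mapsto 2x$ near $0$. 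I would observe that a germ commuting with $h_0$ at $0$ is exactly the same data as a $t_1$-equivariant germ at $-\infty$ of a dyadic PL homeomorphism of $\R$, via the logarithmic change of coordinates $s=-\log_2 x$ (which conjugates $h_0$ to the unit translation $t_1$ and sends the right neighbourhood of $0$ to a neighbourhood of $+\infty$): under this change of coordinates, the local dyadic PL condition on $f$ near $0$ becomes the local dyadic PL condition defining $\tilde T$, and the finitely many breakpoints of $f$ accumulating at $0$ become a $t_1$-orbit of breakpoints going to $+\infty$, i.e. the periodic breakpoint pattern characterising an element of $\tilde T$. This is the heart of the matter and is exactly the content of the analogue \cite[Lemma 12.22]{BMRT}, so the argument should run in parallel.

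\smallskip

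Concretely, I would define a map $\Theta\colon \mathcal{D}^+_0\to \tilde T$ as follows. Given $f\in\Fsing$ fixing $0$, pick a representative germ; after possibly composing with a power of $h_0$ (whose right germ at $0$ is the germ of $x\mapsto 2x$, corresponding under the coordinate change to $t_1$), we may assume that near $0$ the map $f$ is type $\nice$ with a breakpoint structure periodic under $h_0$; transporting through $s=-\log_2 x$ yields a germ at $+\infty$ of a dyadic PL homeomorphism commuting with $t_1$, which extends uniquely to an element of $\tilde T$ because an element of $\tilde T$ is determined by its germ at $+\infty$ (its breakpoint combinatorics are $t_1$-periodic, so the germ at $+\infty$ determines everything). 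I would check $\Theta$ is a well-defined group homomorphism (independence of representative is exactly the definition of ``germ''; multiplicativity follows since the coordinate change is a fixed conjugation and composition of germs at $0$ corresponds to composition of germs at $+\infty$). For surjectivity, given $g\in\tilde T$, transport its germ at $+\infty$ back through $x=2^{-s}$ to a germ at $0^+$; this germ is realised by some homeomorphism of $[0,1]$ of type $\nice$ (extend it arbitrarily to a type-$\nice$ homeomorphism of the whole interval, which is possible since $\Fsing$ acts transitively enough — or simply because one can glue with the identity on a clopen-like fashion after a dyadic subdivision), so $g$ is in the image. For injectivity, if $\Theta([f])$ is trivial then the transported germ of $f$ at $+\infty$ is trivial, hence $f$ is the identity on a right neighbourhood of $0$, i.e. $[f]$ is trivial in $\mathcal{D}^+_0$.

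\smallskip

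The main obstacle I anticipate is the careful bookkeeping in the surjectivity and well-definedness steps: one must make sure that an arbitrary germ commuting with $h_0$ at $0$, after the logarithmic change of coordinates, genuinely lands among \emph{dyadic} PL homeomorphisms commuting with $t_1$ — i.e. that the local dyadic-affine pieces $x\mapsto 2^n x + b$ with $b\in\Z[\tfrac12]$ transform to pieces of the form $s\mapsto s+c$ with $c$ a dyadic rational plus possibly an integer shift and that breakpoints land in $\Z[\tfrac12]$ — and conversely. This is a routine but slightly delicate computation with the change of coordinates $s=-\log_2 x$ applied to dyadic affine maps and to the doubling map, and it is precisely the point where the definition of type $\nice$ (the commutation $f\circ h_{x_0}=h_{f(x_0)}\circ f$, which guarantees $f(x_0)$ is again dyadic and the local picture is genuinely $h_0$-periodic) is used. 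Since this mirrors \cite[Lemma 12.22]{BMRT} essentially verbatim, I expect no new difficulty beyond adapting the notation, and I would simply state that the proof is ``very similar'' and indicate the coordinate change, filling in only the points where the singular breakpoints $\BP^2$ enter.
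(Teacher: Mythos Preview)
Your overall strategy --- conjugate the doubling map $h_x$ to the unit translation $t_1$, thereby identifying the germ group at $x$ with the germ group of $\tilde T$ at infinity --- is exactly the paper's approach. However, your choice of conjugacy is wrong, and this is not just bookkeeping.

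The logarithmic change of coordinates $s=-\log_2 x$ does \emph{not} carry dyadic PL maps to dyadic PL maps. A piece $x\mapsto 2^n x+b$ with $b\neq 0$ becomes $s\mapsto -\log_2(2^{n-s}+b)$, which is not affine (its derivative $2^{n-s}/(2^{n-s}+b)$ is nonconstant). Such pieces genuinely occur: an element of $\Fsing$ with $0\in\BP^2(f)$ is dyadic PL on a fundamental domain for $h_0$, and there it may have affine pieces with nonzero constant term. So the ``routine but slightly delicate computation'' you anticipate would in fact fail; the image of $\Theta$ would not land in $\tilde T$.

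The paper fixes this by using a \emph{dyadic PL} conjugacy instead: one observes that $h_x|_{(-\infty,x)}$ is conjugate to $t_1$ by a dyadic PL homeomorphism $f\colon(-\infty,x)\to\R$ (define $f$ affinely on a fundamental domain $[x-2,x-1]\to[0,1]$ and extend by equivariance; the result is dyadic PL with breakpoints at $x-2^n$). Since $f$ is dyadic PL on compact sets, conjugation by $f$ manifestly preserves the dyadic PL condition, and the argument goes through without any computation. Once you replace the logarithm by this PL conjugacy, the rest of your outline (well-definedness, surjectivity, injectivity via germs at $+\infty$) is correct and matches the paper.
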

\begin{proof}
	Take $x\in (0,1)$ dyadic and notice that the map $h_{x}|_{(-\infty,x)}$  is  conjugate to the translation $t_1$ by a  dyadic PL homeomorphism $f\colon (-\infty, x)\to \R$. This establishes an isomorphism between $\mathcal{D}_{x}^-$ and the group of germs of $\tilde{T}$ at $+\infty$, which is isomorphic to $\tilde{T}$ itself. \end{proof}

\begin{lem}\label{l-fingenperfect}
	The group $\Fsing$ is finitely generated and perfect.
\end{lem}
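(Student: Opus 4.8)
The plan is to treat the two assertions separately, using the comparison with Thompson's group $F$ for finite generation and the germ structure from Lemma \ref{lem-Ttilde} for perfectness.

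For \emph{finite generation}, I would argue that $\Fsing$ is generated by the copy of Thompson's group $F$ it contains (the elements with $\BP^2(f)=\emptyset$) together with a single extra generator. The key point is that any $f\in\Fsing$ has a finite set $\BP^2(f)=\{x_1,\dots,x_k\}$ of ``higher order'' singularities, near each of which $f$ conjugates the doubling map $h_{x_i}$ to $h_{f(x_i)}$. Since $F$ acts transitively on dyadic points of $(0,1)$ (and, suitably interpreted, on germs of dyadic intervals at dyadic points), one can use elements of $F$ to move all the $x_i$ to a common dyadic location, say $1/2$; concretely, after pre- and post-composing $f$ by elements of $F$ one reduces to an element whose only possible point of $\BP^2$ is $1/2$, and which equals the identity outside a small neighbourhood of $1/2$. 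The local model near $1/2$ is governed by $h_{1/2}$, which by the proof of Lemma \ref{lem-Ttilde} is conjugate (by a dyadic PL map) to the unit translation, so the relevant local group is a copy of $\tilde T$; picking one element $g_0\in\Fsing$ that realises a generator of $\tilde T$ at $1/2$ and the identity away from it, together with finitely many $F$-elements that shuffle dyadic intervals, one gets a finite generating set. More efficiently: since $F$ is finitely generated and $\Fsing/\langle\!\langle F\rangle\!\rangle$-type considerations reduce everything to the single ``germ at $1/2$'' coordinate which is a quotient of $\tilde T$ (itself finitely generated), $\Fsing$ is generated by a finite generating set of $F$ together with a finite generating set of one copy of $\tilde T$ realised by elements supported near $1/2$. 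I expect this bookkeeping — making precise that finitely many $F$-moves suffice to normalise an arbitrary element to its local model at one point — to be the main technical obstacle, though it is routine in the style of Higman--Thompson arguments.

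For \emph{perfectness}, I would use the local (germ) structure together with a standard commutator/fragmentation argument. First, any $f\in\Fsing$ supported in a proper dyadic subinterval $I\subsetneq(0,1)$ is a commutator: this is the usual Thompson-group trick, using an element $g\in F$ with $g^n(I)$ pairwise disjoint and $g^n(I)\to$ an endpoint, so that the infinite ``swindle'' product $F=\prod g^n f g^{-n}$ lies in $\Fsing_{I'}$ for a slightly larger interval and satisfies $[g,F]=\cdots$, giving $f$ as a single commutator; this shows the subgroup $\Fsing_c$ of compactly-supported-in-$(0,1)$ elements lies in $[\Fsing,\Fsing]$. It then remains to handle the germs at the endpoints $0$ and $1$. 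An arbitrary $f\in\Fsing$ differs from an element of $\Fsing_c$ by something determined by its germs at $0^+$ and $1^-$, which by Lemma \ref{lem-Ttilde} live in $\tilde T$. Now $\tilde T$ is \emph{not} perfect (it is the central extension of $T$, so $H_1(\tilde T)$ is infinite cyclic) — so the argument must instead use that \emph{within $\Fsing$} these germ contributions become trivial in homology. The mechanism: the doubling map $h_{x_0}$ and the maps realising germs at $0$ and $1$ can be conjugated \emph{inside $\Fsing$} to maps supported in a proper subinterval (there is enough room because the higher-order singularities may be freely created by type-$\nice$ maps), expressing the relevant germ generators as products of conjugates of compactly supported elements, which we already know are commutators. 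Concretely, I would pick an element $\delta\in\Fsing$ with $\BP^2(\delta)=\{1/2\}$ realising a chosen generator of the germ group $\mathcal D^+_{1/2}\cong\tilde T$ and identity near $0,1$; such $\delta$ is supported in a proper subinterval hence a commutator, and using the transitivity of $F$ on dyadic points one transports this to show every element of every germ group $\mathcal D^\pm_x$ — including $x=0,1$ — is hit by commutators. Combining: $f$ can be written modulo $[\Fsing,\Fsing]$ as a product of endpoint-germ contributions, each of which is itself a commutator, so $f\in[\Fsing,\Fsing]$ and $\Fsing$ is perfect.

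The step I expect to be the genuine obstacle is this last one — showing the endpoint germs $\mathcal D^\pm_0,\mathcal D^\pm_1$ (isomorphic to $\tilde T$, which has nontrivial abelianisation on its own) contribute nothing to $H_1(\Fsing)$. The resolution is exactly the ``$\mathfrak D$-type'' local condition: because type-$\nice$ maps are allowed a dense (non-discrete) set of breakpoints accumulating on the higher-order singularities, one has enough flexibility to conjugate any given endpoint-germ element to one supported away from that endpoint, pushing the obstruction into the compactly-supported subgroup where it dies. Everything else (the swindle for compactly supported elements; $F\leq\Fsing$; transitivity of $F$ on dyadics) is standard. I would present the finite generation proof first, then perfectness, and flag that the proof runs parallel to \cite[Lemma 12.22]{BMRT} and the classical treatment of $F$ and $T$ in \cite{CFP}.
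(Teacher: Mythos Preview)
Your proposal contains a factual error that derails the perfectness argument and also leaves a gap in the finite generation argument.

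The central mistake is the assertion that $\tilde T$ is \emph{not} perfect. In fact $\tilde T$ \emph{is} perfect; this is well known and is exactly what the paper uses. Once you know this, perfectness of $\Fsing$ is almost immediate: you write $\Fsing=\langle F', S, S_0, S_1\rangle$ where $S$ generates the germ group $\mathcal D_x\cong\tilde T\times\tilde T$ at some interior dyadic $x$, and $S_0,S_1$ generate $\mathcal D_0^+,\mathcal D_1^-\cong\tilde T$; since $\tilde T$ is perfect you may take all of $S,S_0,S_1$ to consist of products of commutators, and since $F'$ is perfect you are done. Your elaborate workaround (swindle for compactly supported elements, then trying to kill endpoint-germ contributions by conjugating them into the interior) is unnecessary, and the step you yourself flag as ``the genuine obstacle'' dissolves once the mistaken premise is removed.

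The same oversight bites in your finite generation sketch. You claim that after composing with elements of $F$ one can reduce an arbitrary $f\in\Fsing$ to something supported near a single interior dyadic point. This is false: Lemma~\ref{lem-Ttilde} applies to the endpoints $0$ and $1$ as well, so $\mathcal D_0^+$ and $\mathcal D_1^-$ are copies of $\tilde T$, whereas the germs of $F$ at $0$ and $1$ form only a copy of $\Z$ (the slope). Composing by elements of $F$ cannot trivialise a general $\tilde T$-germ at an endpoint. The fix, as in the paper, is simply to throw in finite generating sets $S_0,S_1$ for these endpoint germ groups; since $\tilde T$ is finitely generated this costs only finitely many extra generators.
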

\begin{proof}
	Fix a dyadic $x\in (0,1)$. Notice that the group of germs at $x$, that we denote by $\mathcal{D}_x$, is isomorphic to the direct product $\mathcal{D}_x^-\times\mathcal{D}_x^+$. Thus, by Lemma \ref{lem-Ttilde}, we have that $\mathcal{D}_x$ is isomorphic to $\tilde{T}\times\tilde{T}$. In particular it is finitely generated, so we can choose a finite subset $S\subset\Fsing$ that generates $\mathcal{D}_x$. Moreover, we can ask every $f\in S$ to satisfy $\mathsf{BP}^2(f)=\{x\}$. Thus, given $g\in\Fsing$, we can find $g_\ast$ in the group $\langle F',S\rangle$ so that $\mathsf{BP}^2(gg_\ast)\cap (0,1)=\emptyset$.  Analogously, we can also consider finite subsets $S_0$ and $S_1$ generating $\mathcal{D}_0^+$ and $\mathcal{D}_1^-$, respectively. Therefore we can write
	\[\Fsing=\langle F',S,S_0,S_1\rangle=\langle F,S,S_0,S_1\rangle.\]
	Since Thompson's group $F$ is finitely generated, we deduce that so is $\Fsing$. We next prove that $\Fsing$ is perfect. For this, notice that, since $\tilde{T}$ is perfect, we can find $S$, $S_0$ and $S_1$ consisting of products of commutators. Finally, since $F'$ is also perfect it follows that $\Fsing$ is perfect. 
\end{proof}

We now define two countable subgroups of the groups $\mathsf{H}_0(\varphi)$ and $\mathsf{H}_0(\varphi, \sigma)$, respectively, which are type-$\nice$ analogues of the group $\Tsf(\varphi)$ defined in \cite{MB-T}. 

\begin{dfn}
	We let $\Tsingsf(\varphi)$ be the group of homeomorphisms $g$ of $Y$ such that for every $y\in Y$, there exist charts $Y_{C, I}, Y_{C, J}$ containing $y$ and $g(y)$, respectively, and a homeomorphism $f\colon I\to J$ of type $\nice$ such that  $g(Y_{C, I})=Y_{C, J}$, and $g|_{Y_{C, I}}$ is given by
	\[g(\pi_Y(x, t))=\pi_Y(x, f(t)) \quad \text{for any }(x, t)\in C\times Y.\]
	
	Similarly, we let  $\Tsingsf(\varphi, \sigma)$ be the group of homeomorphisms $g$ of $Z$ such that for every $y\in Y$, there exist charts $Z_{C, I}, Z_{C, J}$ containing $y$ and $g(y)$, respectively, and a homeomorphism $f\colon I\to J$ of type $\nice$ such that  $g(Z_{C, I})=Z_{C, J}$, and $g|_{Z_{C, I}}$ is given by
	\[g(\pi_Z(x, t))=\pi_Z(x, f(t)) \quad  \text{for any } (x, t)\in C\times Y.\]
\end{dfn}

\begin{lem} 
	We have $\Tsingsf(\varphi)\le \mathsf{H}_0(\varphi)$ and $\Tsingsf(\varphi, \sigma)\le \mathsf{H}_0(\varphi, \sigma)$.  The group $\Tsingsf(\varphi, \sigma)$ coincides with the centralizer of $\hat{\sigma}$ in $\Tsingsf(\varphi)$. 
\end{lem}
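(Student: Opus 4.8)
The plan is to verify the three assertions in turn, all by unravelling the definitions. First I would check that $\Tsingsf(\varphi)\le \mathsf{H}_0(\varphi)$. Given $g\in\Tsingsf(\varphi)$, I want to produce the translation cocycle $\tau_g\colon Y\to\R$ with $g(y)=\Phi^{\tau_g(y)}(y)$ and check it is continuous; by the remark following \eqref{e-cocycle} such a cocycle, if continuous, automatically places $g$ in $\mathsf{H}_0(\varphi)$ (one also needs that $g$ is isotopic to the identity, but the isotopy $h_s(y)=\Phi^{(1-s)\tau_g(y)}(y)$ used in the proof of Lemma \ref{l-lifting-H0} does the job once continuity of $\tau_g$ is known). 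On a chart $Y_{C,I}$ where $g$ acts by $\pi_Y(x,t)\mapsto\pi_Y(x,f(t))$ for a type-$\nice$ homeomorphism $f\colon I\to J$, one simply sets $\tau_g(\pi_Y(x,t))=f(t)-t$; this is continuous on the chart since $f$ is a homeomorphism (in particular continuous) of intervals. Because $Y$ is covered by charts and the local expressions agree on overlaps (they must, since the value $\tau_g(y)$ is determined by $g(y)$ at non-periodic points, which are dense), these glue to a globally continuous $\tau_g$. The same argument word-for-word, replacing $Y$, $\pi_Y$, charts $Y_{C,I}$ by $Z$, $\pi_Z$, charts $Z_{C,I}$, gives $\Tsingsf(\varphi,\sigma)\le\mathsf{H}_0(\varphi,\sigma)$ — here one uses that $\mathsf{H}_0(\varphi,\sigma)$ is the group of homeomorphisms of $Z$ isotopic to the identity, and charts of $Z$ exist by the discussion following Assumption \ref{standing}.

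For the last sentence I would show the two inclusions. By Lemma \ref{l-lifting-H0}, the action of $\mathsf{H}_0(\varphi,\sigma)$ lifts through $p\colon Y\to Z$ to the centralizer of $\hat\sigma$ in $\mathsf{H}_0(\varphi)$, so it suffices to identify, inside $\mathsf{H}_0(\varphi)$, the lift of $\Tsingsf(\varphi,\sigma)$ with the elements of $\Tsingsf(\varphi)$ commuting with $\hat\sigma$. If $g\in\Tsingsf(\varphi,\sigma)$, its lift $\tilde g$ to $Y$ commutes with $\hat\sigma$ by construction; and locally, over a chart $Z_{C,I}$ of $Z$ on which $g$ acts by $\pi_Z(x,t)\mapsto\pi_Z(x,f(t))$, the map $p$ restricts to a homeomorphism from each of the two disjoint charts $Y_{C,I'}$ of $Y$ lying above it (for suitable intervals $I'$ obtained from $I$ via the $D_\infty$-action), so $\tilde g$ acts on $Y_{C,I'}$ by the same formula $\pi_Y(x,t)\mapsto\pi_Y(x,f'(t))$ with $f'$ a type-$\nice$ homeomorphism conjugate to $f$ by the isometry of $\R$ relating the two charts — note that conjugation by an element $(n,j)\cdot t=(-1)^j t-n$ of $D_\infty$ preserves the class of type-$\nice$ maps, since it is dyadic affine and hence conjugates the local models $h_{x_0}$ to $h_{\pm x_0 - n}$ and dyadic PL maps to dyadic PL maps. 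Hence $\tilde g\in\Tsingsf(\varphi)$. Conversely, if $g\in\Tsingsf(\varphi)$ commutes with $\hat\sigma$, then it descends to a homeomorphism $\bar g$ of $Z$, and pushing its local chart-expressions down through the local homeomorphism $p$ shows $\bar g\in\Tsingsf(\varphi,\sigma)$; again the key point is that $p$ restricted to a chart is a homeomorphism intertwining the two chart coordinates by an isometry of $\R$, which preserves type-$\nice$ maps.

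The main obstacle — such as it is, this being a routine bookkeeping lemma — is the verification that the class of type-$\nice$ maps is stable under pre- and post-composition with dyadic affine isometries of $\R$ (equivalently, with the $D_\infty$-translations on the $\R$-factor), which is what lets one transport chart expressions between $Y$ and $Z$ and between overlapping charts. This reduces to checking that for a dyadic affine map $\phi(t)=\pm t-n$ one has $\phi\circ h_{x_0}\circ\phi^{-1}=h_{\phi(x_0)}$, a one-line computation, together with the evident fact that $\phi$ sends dyadic PL maps to dyadic PL maps. One must also be mildly careful that the finiteness of the relevant singular sets ($\BP^2(f)$ and the derivative-breakpoints) is preserved under these operations and under restriction to smaller charts, but this is immediate.
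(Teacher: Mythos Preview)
Your proposal is correct and follows essentially the same approach as the paper: define $\tau_g$ locally on charts by $\tau_g(\pi_Y(x,t))=f(t)-t$, glue using density of non-periodic points, and handle the centralizer statement by transporting chart expressions through the local homeomorphism $p$ as in Lemma \ref{l-lifting-H0}. The paper's own proof is extremely terse (it writes out only the first inclusion and then says ``the rest of the proof is similar to that of Lemma \ref{l-lifting-H0}''), whereas you spell out the extra bookkeeping --- in particular the stability of type-$\nice$ maps under conjugation by the dyadic affine isometries $t\mapsto \pm t-n$ --- that is implicitly needed to pass between charts of $Y$ and $Z$.
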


\begin{proof}
	For $g\in \Tsingsf(\varphi)$, one can define a continuous function $\tau_g$ as in \eqref{e-cocycle} locally around any point  $y\in Y$, by choosing charts $Y_{C, I}, Y_{C, J}$, and $f\colon I\to J$ as in the definition, and setting $\tau_g(p(x, t))=f(t)-t$. This definition must agree  on overlapping charts,  since $\tau_g(y)$ satisfying \eqref{e-cocycle} is uniquely determined for $y$ in a dense set. Thus $\Tsingsf(\varphi)\le \mathsf{H}_0(\varphi)$. The rest of the proof is similar to that of Lemma \ref{l-lifting-H0}.
\end{proof}
The following shows that the group $\Tsingsf(\varphi, \sigma)$ is finitely generated provided $\varphi\colon X\to X$ is conjugate to a subshift (or equivalently, if it is {expansive}).

\begin{thm}\label{t:finitely_generated}
	Let $(X,\varphi, \sigma)$  be a reversible subshift satisfying Assumption \ref{standing}.  Then, $\Tsingsf(\varphi, \sigma)$ is finitely generated.
\end{thm}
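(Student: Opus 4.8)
The strategy is to exhibit an explicit finite generating set for $\Tsingsf(\varphi,\sigma)$, exploiting that $(X,\varphi)$ is a subshift, so that $X$ admits a clopen partition into ``cylinder'' sets indexed by a finite alphabet and its finite words, and that the shift dynamics can be encoded by finitely many ``prefix-exchange'' moves. First I would set up a convenient family of charts: since $\varphi$ is a subshift on $A^\Z$, for each letter $a\in A$ the cylinder $[a]_0=\{x : x_0=a\}$ is clopen, and the sets $Z_{[a]_0,(0,1)}$ (or their $Y$-analogues) together with a single chart ``straddling'' the return map along the flow cover $Z$. The key observation, exactly as in \cite{MB-T}, is that an element of $\Tsingsf(\varphi,\sigma)$ is determined by a finite amount of combinatorial data once one knows how it permutes a finite clopen partition of a cross-section and what type-$\nice$ map it induces in the flow direction on each piece; finite generation then amounts to decomposing an arbitrary such element into ``flow-direction'' moves and ``combinatorial/shift-direction'' moves, each drawn from a finite set.

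\textbf{Key steps.} (1) Identify a copy of $\Fsing$ (or finitely many copies $\Fsing_{I}$ for dyadic $I$) sitting inside $\Tsingsf(\varphi,\sigma)$ as the subgroup supported in a single chart and acting trivially on $X$; by Lemma \ref{l-fingenperfect} this is finitely generated (and perfect), and it provides all the ``local flow-direction'' flexibility. (2) Identify a ``Thompson-like'' subgroup that rescales and subdivides charts along the flow, analogous to $\tilde T$ acting on the real line modulo the unit translation — i.e. the part of $\Tsingsf(\varphi)$ that moves the marked cross-section $X\times\{0\}$ around the suspension circle-direction; this is also finitely generated by standard Thompson-group facts. (3) Add finitely many ``shift moves'': elements that realise the combinatorics of $\varphi$ by sending a chart over a cylinder $[a]_0$ to a chart over $\varphi([a]_0)$, using that $\varphi$ is a subshift so that the collection of such required moves is finite (one per letter, or per edge of the associated graph if $\varphi$ is a subshift of finite type; in general one uses that clopen sets are finite unions of cylinders and a compactness/finiteness argument to reduce to finitely many generators). (4) Incorporate the involution constraint: work on $Z=Y/\langle\hat\sigma\rangle$ directly, or equivalently take $\hat\sigma$-symmetric generators in $\Tsingsf(\varphi)$; since $\Tsingsf(\varphi,\sigma)$ is by the previous lemma the centralizer of $\hat\sigma$, it suffices to choose the generators above so that each commutes with $\hat\sigma$ (possible because the charts and moves can be chosen $\sigma$-equivariantly, or replaced by $g\hat\sigma g\hat\sigma^{-1}$). (5) Prove the generation claim: given arbitrary $g\in\Tsingsf(\varphi,\sigma)$, first postcompose with shift moves and Thompson-like moves to arrange that $g$ fixes (setwise, then pointwise) the cross-section $X\times\{0\}$ and acts on each piece of a fixed finite clopen partition by a type-$\nice$ self-map of an interval fixing endpoints — i.e. lands in the product of finitely many copies of $\Fsing$; then invoke step (1). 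The compactness of $X$ (equivalently, finiteness of the alphabet and the subshift description) is what makes all the ``finitely many'' claims legitimate: one covers $Z$ by finitely many charts over cylinders, and a type-$\nice$ element has only finitely many break points of each order, so only finitely many cylinders are ``involved''.

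\textbf{Main obstacle.} The delicate point is step (5): showing that the ``straightening'' procedure terminates after finitely many moves from a fixed finite set, i.e. that an arbitrary $g\in\Tsingsf(\varphi,\sigma)$ can be written as a bounded-type product despite $g$ possibly involving deep cylinders (long words in $A$) and many break points. This requires a careful normal-form / ``fragmentation'' argument: one must show that subdividing a chart finely enough — which the Thompson-like subgroup and the copies of $\Fsing$ let us do — reduces any element to a product of elements each supported in a single small chart and each being either a shift move or an element of $\Fsing$, and that the subdivision itself is achieved by the chosen generators. This is precisely where the subshift (expansiveness) hypothesis is used in an essential way, via the fact — as recalled just before the theorem — that $\varphi$ being expansive is equivalent to it being a subshift: expansiveness gives a finite generating partition whose refinements under $\varphi$ separate points, which bounds the combinatorial complexity that the generators must absorb. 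I expect the proof to follow the template of \cite[proof that $\Tsf(\varphi)$ is finitely generated]{MB-T} essentially verbatim, with the only new ingredient being the replacement of the dyadic-PL local model by the type-$\nice$ local model and the use of Lemma \ref{l-fingenperfect} in place of the corresponding statement for $F$; the involution $\sigma$ is handled throughout by working on $Z$ or by symmetrisation, as in Lemma \ref{l-lifting-H0}.
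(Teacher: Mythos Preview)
Your overall architecture is right in spirit (finitely many copies of $\Fsing$ over charts indexed by a generating clopen partition, expansiveness to guarantee such a partition exists, then a fragmentation/normalisation argument), but two points in your plan do not work as stated and the paper's proof circumvents them in a specific way.

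First, your step (3) ``shift moves'' are not available as stated. Every element of $\Tsingsf(\varphi,\sigma)$ preserves each leaf (equivalently each $\Phi$-orbit), so there is no element that literally sends a point with $X$-coordinate in $[a]_0$ to one with $X$-coordinate in $\varphi([a]_0)$. The shift enters only through the suspension identification: in charts one has $\Fsing^c_{D,L-1}=\Fsing^c_{\varphi^{-1}(D),L}$, because $(\varphi(x),t)\sim(x,t+1)$. The paper exploits exactly this, taking generators $\Fsing_{C_i,I_\omega}$ for three overlapping intervals $I_{-1},I_0,I_1$ around $-1,0,1$; conjugating inside $\Fsing_{C_i,I_0}$ slides the support into $I_0\cap I_{\pm 1}$, and then the chart identification with a $\pm 1$ translate \emph{is} the ``shift move''. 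An intersection lemma (commutators of $\Fsing^c_{C,I}$ and $\Fsing^c_{D,J}$ produce $\Fsing^c_{C\cap D,L}$) then lets one reach arbitrary cylinders $D=\bigcap_j\varphi^j(C_{i_j})$ by induction on the word length.

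Second, and more seriously, your step (5) relies on normalising an arbitrary $g$ so that it fixes the cross-section $X\times\{0\}$, by postcomposing with elements that push all points along the flow. Such global ``push'' elements exist in $\Tsf(\varphi)$ acting on $Y$, but \emph{not} in the $\hat\sigma$-centraliser: since $\hat\sigma$ reverses the flow direction, any $h$ commuting with $\hat\sigma$ satisfies $\tau_h\circ\hat\sigma=-\tau_h$, hence $\tau_h$ cannot be everywhere positive. The paper flags this explicitly as the point where the argument for $\Tsf(\varphi)$ must be modified. Its substitute is to avoid any global normalisation: once the inductive claim gives $\Fsing^c_{D,J}\le H$ for all cylinders $D$ and small $J$, one gets $G_{C,J}\le H$ for every chart, and then the fragmentation lemma (Proposition \ref{p-fragmentation}) directly yields $G\le H$. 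So the endgame is ``fragmentation $+$ local generation'', not ``normalise to the cross-section''.

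Your remark that the proof should follow \cite{MB-T} ``essentially verbatim'' is therefore too optimistic on exactly the point the paper singles out as needing change.
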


This is the analogue of \cite[Theorem A]{MB-T}. The argument in the proof is essentially the same but needs to be  slightly modified, as it makes use of the existence of elements in $\Tsf(\varphi)$ that move all points in the suspension space $Y$ strictly in the direction of the flow.  This is not possible here as there is no globally well-defined direction on the leaves of $Z$. We provide details in Appendix \ref{appendix}.

\begin{rem}
For any $(X, \varphi)$, the group $\Tsingsf(\varphi)$ can be identified with $\Tsingsf(\varphi_1, \sigma_1)$, where $(X_1, \varphi_1, \sigma_1)$ is obtained from $(X, \varphi)$ through the doubling construction in Example \ref{e-doubling}. Thus any result on the groups $\Tsingsf(\varphi, \sigma)$ can be translated to the groups $\Tsingsf(\varphi)$. 
\end{rem}

\section{Actions on the line of $\Tsingsf(\varphi, \sigma)$}
\subsection{Identifying the Deroin space} \label{s-Deroin-identification}
In this section we analyse actions on the line of the group $G=\Tsingsf(\varphi, \sigma)$, where $(X,\varphi,\sigma)$ is as in Assumption \ref{standing}. Our ultimate goal is to show that, when $G$ is finitely generated, the Deroin space of $G$ can be identified with $(Y, \Phi)$. While finite generation is necessary for the Deroin space to be well-defined, the reader will see that the main result of the section does not require that $G$ be finitely generated: we will prove that any irreducible action of $G$ on the line, admitting a minimal set, comes from $Y$. Thus we do not assume here that $(X, \varphi)$ is conjugate to a subshift.

Recall that given $g\in G$, we use the notation $\tau_g\colon Y\to \R$ for the translation cocycle of $g$. 
Given $y\in Y$, let $\rho_y\in \rep(G;\R)$ be the action given by
\begin{equation} \label{e-action} \rho_y(g)(t)=t+\tau_g(\Phi^t(y)).\end{equation}
When $y$ is not periodic for $\Phi$, this is simply the action of $G$ on the $\Phi$-orbit of $y$, if we identify the orbit with $\R$ using the flow, in such a way that $0$ corresponds to the point $y$. When $y$ is periodic, its orbit is homeomorphic to a circle, and the action $\rho_y$ is a lift of the action of $G$ under the covering map $\R\to \mathbb{S}^1$ determined by the flow and which maps $0$ to $y$. Then we have the following.

\begin{thm}\label{t-classification-actions}
	Let $(X,\varphi,\sigma)$ be as in Assumption \ref{standing}, and write $G:=\Tsingsf(\varphi,\sigma)$. Let $\rho\in \rep(G;\R)$ be a minimal action. Then, there exists $y\in Y$ such that $\rho$ and $\rho_y$ are pointed conjugate.
\end{thm}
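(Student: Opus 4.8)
The goal is to show that every minimal action $\rho \in \rep(G;\R)$ of $G = \Tsingsf(\varphi,\sigma)$ is pointed conjugate to one of the ``tautological'' actions $\rho_y$ on a flow-leaf. The strategy is to recover the space $Y$ intrinsically from the dynamics of $\rho$, using the rich supply of subgroups of $G$ supported in charts. The first step is to understand the \emph{local structure} of $G$: for any chart $Y_{C,I}\cong C\times I$, the subgroup $G_{C,I}$ of elements supported in $Y_{C,I}$ acts on $C\times I$ and, restricting attention to a single leaf $\{x\}\times I$, gives a copy of a Thompson-like group $\Fsing_I$ (or rather the subgroup of elements with the prescribed transverse-constancy). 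A key algebraic input will be that $\Fsing$ is perfect (Lemma \ref{l-fingenperfect}) and, more importantly, that it has no nontrivial actions on the line with a global fixed-point-free quotient behaving badly — concretely, one wants a Rubin-type or Ghys-type rigidity statement saying that any minimal action of such a chart subgroup on the line is semi-conjugate to its natural action on $I$. This forces the minimal set of $\rho$ restricted to each chart subgroup to be ``modelled on'' the interval $I$.

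\textbf{Key steps.} First I would analyze the minimal set $\Lambda \subset \R$ of $\rho$: since $G$ is (in the cases of interest) not virtually cyclic and contains many commuting and non-commuting chart subgroups, $\Lambda$ is a perfect set, the unique minimal set, so after collapsing the complementary gaps we may assume $\rho$ is minimal with $\Lambda = \R$ (recording a semi-conjugacy, which we will promote to a conjugacy at the end via \cite[Lemma A.4]{KKMj}). Second, I would use the germ structure: by Lemma \ref{lem-Ttilde} the germ groups at dyadic points are copies of $\tilde T$, the central extension of Thompson's $T$, which has strong rigidity properties for its actions on the line (every action on $\R$ either has a global fixed point or, modulo the center, comes from the circle action). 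Analyzing how these germ subgroups act under $\rho$ will pin down the ``dyadic structure'' along a putative leaf. Third — the heart of the argument — I would build a $G$-equivariant identification between $\R$ (with the $\rho$-action) and a single flow-leaf of $Y$: one selects a point $t_0 \in \R$, looks at the collection of chart subgroups ``active'' near $t_0$, and shows that the combinatorics of their supports and fixed-point sets in $\R$ reproduces exactly the combinatorics of the leaf through some $y \in Y$ with $\Phi^0(y)$-marker at $t_0$. This is where one uses that $G$ acts on $Y$ with locally constant transverse displacement, so that the germ of the $G$-action at a point of $Y$ only depends on the point $x \in X$ and the height — and conversely, the orbit of germs recovers $x$ by expansiveness/total disconnectedness of $X$. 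Finally, one checks the reconstructed map $h\colon \R \to (\text{leaf through }y) \cong \R$ is an orientation-preserving homeomorphism with $h(t_0) = $ (the marked point), hence a pointed conjugacy between $\rho$ and $\rho_y$.

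\textbf{Main obstacle.} I expect the hardest part to be the reconstruction step: proving that the $\rho$-action on $\R$, which \emph{a priori} could be any minimal action of the abstract group $G$, must in fact be ``internal'' to $Y$. The difficulty is that $G$ is a large, flexible group, and one must exclude exotic minimal actions not coming from the flow. The tool is a careful confined-subgroups / germ-rigidity analysis à la \cite{MB-T, confined}: one shows that the pointwise stabilizer in $\rho$ of a point of $\R$ must be a ``large'' subgroup of $G$ of a very specific form (the stabilizer of a point of $Y$, i.e., of a pair $(x,t)$), and that these are precisely the subgroups arising from points of $Y$. Equivalently, one identifies the space of minimal actions with the space of such stabilizers, and the latter is $Y$ itself. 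Controlling these stabilizers — showing there are no others — requires knowing that $\Fsing$ and $\tilde T$ have enough rigidity (no proper confined subgroups beyond the obvious ones), which should follow from the corresponding facts about Thompson's groups together with the perfectness of $\Fsing$; making this precise while tracking the transverse ($X$-)direction is the technical crux.
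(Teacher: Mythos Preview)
Your overall strategy — reconstruct a point of the suspension from the pattern of which chart subgroups act nontrivially near a given $\xi\in\R$, and thereby build an equivariant map — is the right one and matches the paper's architecture. However, several of your key technical claims are either wrong or miss the actual mechanism, and as written the proposal would not go through.

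First, your asserted ``Rubin-type rigidity'' for chart subgroups is false as stated: the group $G_{C,I}\cong\mathcal{C}(C,\Fsing_I)$ does \emph{not} have a unique minimal action on the line up to semi-conjugacy --- it has one for every $x\in C$ (the leaf actions). What the paper actually uses is much softer: one only needs that the $\rho$-action of $G_{C,I}$ on each component of its support admits a perfect minimal set. This is nontrivial because $\mathcal{C}(C,\Fsing_I)$ is not finitely generated, and it is precisely here that perfectness of $\Fsing$ enters (Proposition~\ref{p-semicon2} and Lemma~\ref{l-minimal-CIxi}): perfectness forces the minimal sets of $\rho|_{H^{(\mathcal P_n)}}$ to coincide for all refining partitions $\mathcal P_n$ of $C$, via Corollary~\ref{c-fixinglambda}. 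Your plan invokes perfectness only in connection with germ rigidity of $\tilde T$, which is not the point.

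Second, the reconstruction map naturally lands in $Z$, not $Y$: from the $\rho$-action alone you cannot distinguish a leaf from its $\hat\sigma$-image, so you first build $h\colon\R\to Z$ (Lemma~\ref{lem-exanduni} and Proposition~\ref{t-hmap}) and then lift to $Y$ by choosing the orientation-preserving lift. The actual definition of $h$ is that $h(\xi)$ is the unique $z\in Z$ such that every chart around $z$ has its subgroup moving $\xi$; uniqueness comes from the commutation of disjointly supported chart subgroups together with the minimal-set lemma above, and existence from fragmentation. The hardest step (Lemma~\ref{l-intervals}) is to show that the nested intervals $I(C_n,I_n,\xi)$ shrink to $\{\xi\}$ as the charts shrink to a point --- this is what rules out ``exotic'' actions, and it is proved by an order-tree argument on the family $\mathfrak I$, not by confined-subgroup classification.

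Finally, you gloss over the periodic-leaf case. When the target leaf is a circle, $h$ is only a covering map, and you must show $\rho$ is the \emph{correct} lift among the $\Z$-family of lifts; this requires the algebraic Lemma~\ref{l-tilde-T} on self-maps of $\tilde\Tsing$, together with the fact (from fragmentation) that the kernel $H_0$ of $\rho_y$ is perfect and hence killed by $\rho$.
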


Before proving Theorem \ref{t-classification-actions}, let us  deduce Theorem \ref{t-realisation}, which is based on the following consequence. 

\begin{cor}\label{c-mainDeroin}
	Let $(X,\varphi,\sigma)$ be as in Assumption \ref{standing}. Consider the map $q \colon Y\to \rep(G;\R)$ given by $q\colon y\mapsto \rho_y$. Then $q$ is a $G$-equivariant homeomorphism onto its image, which conjugates the flow $\Phi$ to the translation flow $\Psi$ on $q(Y)$. In particular if $(X, \varphi)$ is a subshift, then $(Y, \Phi)$ identifies with a Deroin space for $G$. 
	
\end{cor}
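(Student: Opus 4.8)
\textbf{Proof proposal for Corollary \ref{c-mainDeroin}.}

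The plan is to leverage Theorem \ref{t-classification-actions} together with the basic structural facts about $\mathsf{H}_0(\varphi,\sigma)$ and translation cocycles established in \S 3, and then verify that $q(Y)$ meets the three axioms of a Deroin space from Definition \ref{d-Deroin}. First I would check that $q$ is continuous: the map $y\mapsto \rho_y$ is given by $\rho_y(g)(t) = t + \tau_g(\Phi^t(y))$, and since each $\tau_g\colon Y\to\R$ is continuous and $\Phi$ is continuous, the assignment $y\mapsto \rho_y(g)$ is continuous into $\homeo_0(\R)$ with the compact-open topology (uniform continuity of $\tau_g$ on compact sets lets one control $\rho_y(g)$ uniformly on compact subsets of $\R$), for every fixed $g\in G$; hence $q$ is continuous into $\rep(G;\R)$ with the product topology. $G$-equivariance is the computation that $q(g\cdot y) = g.q(y)$ for the translation action of $G$: one verifies directly from \eqref{e-action} and the cocycle identity $\tau_{gh} = \tau_g\circ h + \tau_h$ that conjugating $\rho_y$ by the translation by $\rho_y(g)(0) = \tau_g(y)$ yields $\rho_{\Phi^{\tau_g(y)}(y)} = \rho_{g(y)}$. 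Similarly $q\circ\Phi^s = \Psi^s\circ q$ is immediate from \eqref{e-action}, since reparametrising $y\mapsto\Phi^s(y)$ shifts the marked point by $s$, which is exactly conjugation by $T_s$.

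Next I would establish that $q$ is injective and a homeomorphism onto its image. Injectivity: if $\rho_y = \rho_{y'}$ then in particular the marked points are identified compatibly with all the dynamics; using topological freeness of $\varphi$ (Assumption \ref{standing}) one finds a non-periodic point in the orbit closure and the faithfulness of the $G$-action on non-periodic leaves (which underlies the whole construction) forces $y = y'$. More carefully: distinct leaves give non-conjugate-in-a-pointed-way actions because $G$ acts with distinct germ data at the marked points, and within a single leaf the flow parameter is recovered from $\rho_y$ as $\rho_y(g)(0)=\tau_g(y)$, which separates points of a non-periodic leaf; the periodic leaves are handled by the same cocycle computation since $\rho_y$ remembers the genuine real displacement, not its reduction mod the period. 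Since $Y$ is compact and $\rep(G;\R)$ is Hausdorff, a continuous injection is automatically a homeomorphism onto its image, so $q(Y)$ is compact and, being the image of $Y$, carries the flow conjugate to $\Phi$.

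Finally, to conclude that $q(Y)$ \emph{is} a Deroin space when $(X,\varphi)$ is a subshift, I would verify the three conditions of Definition \ref{d-Deroin}. Compactness and $\Psi$-invariance of $q(Y)$ follow from the previous paragraph (compactness of $Y$ and $q\circ\Phi^s = \Psi^s\circ q$). For condition (2), that every $\rho_y$ is minimal or cyclic: when $y$ is non-periodic $\rho_y$ is the action on a single leaf which is minimal because the leaf dynamics is — this should be either known from the analysis preceding Theorem \ref{t-classification-actions} or provable by a Thompson-like transitivity argument on charts; when $y$ is periodic, $\rho_y$ factors through $\R\to\mathbb S^1$ covering a circle action, and one checks the relevant action is cyclic (it factors through a cyclic quotient, as the displacement cocycle on a periodic orbit is cohomologous to a constant). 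For condition (3), uniqueness of the pointed-semi-conjugacy representative: existence is exactly Theorem \ref{t-classification-actions} (every minimal $\rho$ is pointed conjugate, hence pointed semi-conjugate, to some $\rho_y$), combined with the remark that when $G$ is finitely generated every semi-conjugacy class contains a minimal or cyclic representative, so every $\rho\in\rep(G;\R)$ is pointed semi-conjugate to some $\rho_y$; uniqueness follows from injectivity of $q$ together with the observation that two $\rho_y$, $\rho_{y'}$ lying in the same pointed-semi-conjugacy class must have $y = y'$ (distinct leaves are not semi-conjugate, and within a leaf a pointed conjugacy fixing $0$ forces equal flow parameter). The subtle point, and the one I expect to require the most care, is handling the \emph{cyclic/periodic} leaves uniformly with the minimal ones — both in checking condition (2) (that the lift of the circle action is genuinely cyclic, i.e.\ factors through translations, which uses that the action of $G$ on a periodic orbit factors through a finite cyclic group so its central lift is by translations) and in the injectivity/uniqueness argument, where one must ensure $\rho_y$ on a periodic leaf still remembers enough to pin down $y$; the topological freeness hypothesis and the density of non-periodic points are what make this work, but the bookkeeping between $Y$, $\rho$, and the marked point needs to be done carefully.
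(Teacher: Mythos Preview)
Your overall architecture is right, but there are two genuine gaps.

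\textbf{Injectivity misses the $\hat\sigma$-pair case.} Your argument distinguishes $\rho_{y_1}$ from $\rho_{y_2}$ by stabilisers or ``germ data'' at the marked point. But recall that $G=\Tsing(\varphi,\sigma)$ is the centraliser of $\hat\sigma$ in $\Tsing(\varphi)$, so $y$ and $\hat\sigma(y)$ have \emph{the same} stabiliser in $G$ (they project to the same point of $Z$). Since $\hat\sigma$ does not preserve any $\Phi$-orbit (Lemma~\ref{l-flipping-involution}), the points $y$ and $\hat\sigma(y)$ lie on distinct $\Phi$-orbits, yet your stabiliser/germ argument cannot tell $\rho_y$ and $\rho_{\hat\sigma(y)}$ apart. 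The paper handles this separately: using $\tau_g\circ\hat\sigma=-\tau_g$, one sees that $\rho_{\hat\sigma(y)}$ is the conjugate of $\rho_y$ by the reflection $x\mapsto -x$, so whenever $\rho_y(g)(0)>0$ one has $\rho_{\hat\sigma(y)}(g)(0)<0$; this rules out pointed semi-conjugacy.

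\textbf{Periodic orbits do not give cyclic actions.} You assert that when $y$ is periodic, $\rho_y$ ``factors through a cyclic quotient''. This is false: the $G$-action on a periodic $\Phi$-orbit is a Thompson-$\Tsing$-type action on the circle, far from cyclic, and its lift $\rho_y$ is minimal. The correct argument is the opposite: since $G$ is \emph{perfect} (Proposition~\ref{p-fragmentation}), it admits no cyclic action at all. Hence every semi-conjugacy class in $\rep(G;\R)$ has a minimal representative, to which Theorem~\ref{t-classification-actions} applies directly; all $\rho_y$ are minimal and condition~(2) of Definition~\ref{d-Deroin} holds. Your anticipated ``subtle point'' about periodic leaves dissolves once perfectness is invoked.
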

\begin{proof}
	From the explicit formula defining the action $\rho_y$, it is clear that $q$ is continuous. Pick two distinct points $y_1, y_2\in Y$. If $y_1\neq \hat{\sigma}(y_2)$, then also their projections to $Z$ are distinct. It follows that they have distinct stabilisers in $G$. As a consequence, the  actions $\rho_{y_1}$ and $\rho_{y_2}$ are not pointed conjugate, and since they are minimal, they are not pointed semi-conjugate either (see Remark \ref{rem-pointedsemicon}). If $y_1=\hat{\sigma}(y_2)$, then every element $g\in G$ such that $\rho_{y_1}(g)(0)>0$, must satisfy $\rho_{y_2}(g)(0)<0$, and thus also the actions $\rho_{y_i}$ are not pointed semi-conjugate. In particular the map $q$ is injective and thus a homeomorphism onto its image. It is straightforward that $q$  conjugates $\Phi$ to the translation flow and is $G$-equivariant. 
	
	For the last statement, note that after Theorem \ref{t:finitely_generated}, the group $G$ is finitely generated, so that any irreducible action admits a minimal set. Moreover, since the group $G$ is perfect, it does not admit any cyclic action. It thus follows from Theorem \ref{t-classification-actions} that $q(Y)$ contains exactly one representative for each pointed-semi-conjugacy class.\qedhere
\end{proof}

 Corollary \ref{c-mainDeroin} implies Theorem \ref{t-realisation}, using the results of  Bowen and Walters \cite{expansive}. We first recall all the necessary terminology.

\begin{dfn}
Let $(Y, \Phi)$ be a flow on a compact metrisable space, and $d$ a metric on $Y$. Then $\Phi$ is \emph{expansive} if for every $\varepsilon>0$ there exists $\delta>0$ with the property that if $x, y\in Y$ satisfy  $d(\Phi^t(x), \Phi^{s(t)}(y))<\delta$ for every $t\in \R$ and for some continuous map $s\colon \R\to \R$, then $y=\Phi^t(x)$ with $|t|<\varepsilon$. 
\end{dfn}
This property does not depend on the choice of the metric $d$.
Recall also that a space $Y$ has topological dimension $d$ if every open cover $\mathcal{U}$ has a refinement $\mathcal{U'}$ such that every point of $Y$ is contained in at most $d+1$  elements of $\mathcal{U}'$, and $d$ is the least integer with this property. 

\begin{proof}[Proof of Theorem \ref{t-realisation}]

Let $(Y, \Phi)$ be as in Theorem \ref{t-realisation}. It is shown in \cite{expansive} that $\Phi$ admits a cross-section $X\subset Y$ (namely a closed subset which intersects every $\Phi$-orbit in a discrete set of times) and that for every such cross-section,   the first-return map $\varphi$ to $X$ is conjugate to a subshift, so that  $(Y, \Phi)$ is flow-equivalent to the suspension flow of $(X, \varphi)$. The only minor point to address is that we want $(X, \varphi)$ to be reversible and to satisfy Assumption \ref{standing}. Since $(Y, \Phi)$ is freely reversible, there exists  a fixed-point-free involution $\hat{\sigma}\colon Y\to Y$ sending any $\Phi$-orbit to a $\Phi$-orbit in an orientation preserving way.  Let $X_0$ be any cross-section and set $X=X_0\cup \hat{\sigma}(X_0)$. Then $X$ is still a cross-section. Denoting by $\varphi$ the first-return map and $\sigma=\hat{\sigma}|_X$, then $(X, \varphi, \sigma)$ is a reversible subshift. Moreover $\hat{\sigma}$ cannot preserve any $\Phi$-orbit (or it would have a fixed point), so neither does $\sigma$ and by Lemma \ref{l-flipping-involution} we have that $(X, \varphi, \sigma)$ satisfies Assumption \ref{standing}. 
\end{proof}
  The rest of the section is devoted to the proof of Theorem \ref{t-classification-actions}.

\subsection{Finding minimal sets for non-finitely generated groups}\label{ssc:centralizers}
The first step is to show Proposition \ref{p-semicon2} below.  We will make repeated use of the following consequence of Theorem \ref{t-centraliser}.

\begin{cor}\label{c-fixinglambda}
	Let $G$ be a group and $\rho\in \rep (G;\R)$ an irreducible action admitting a perfect minimal set $\Lambda$. Then, every perfect subgroup of $\mathcal{Z}(\rho)$ pointwise fixes $\Lambda$. 
\end{cor}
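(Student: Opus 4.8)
Let $\rho \in \rep(G;\R)$ be an irreducible action with a perfect minimal set $\Lambda$, and let $P \le \mathcal{Z}(\rho)$ be a perfect subgroup. The idea is to apply Theorem \ref{t-centraliser} to the minimal action obtained by collapsing the gaps of $\Lambda$. More precisely, let $\bar\rho$ be the action on $\R \cong \R$ obtained from $\rho$ by collapsing the closure of each connected component of $\R \setminus \Lambda$ to a point; this is a minimal action of $G$, and there is a monotone $G$-equivariant surjection $h\colon \R \to \R$ semi-conjugating $\rho$ to $\bar\rho$. Since $\Lambda$ is perfect, $h$ is injective in restriction to $\Lambda$ up to the gap-collapsing, so the dynamics on $\Lambda$ is faithfully recorded by $\bar\rho$.

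First I would check that the centraliser $\mathcal{Z}(\rho)$ descends to a subgroup of $\mathcal{Z}(\bar\rho)$: any $z \in \mathcal{Z}(\rho)$ permutes the connected components of $\R \setminus \Lambda$ (as $\Lambda$ is $\rho(G)$-invariant and minimal, hence $z$-invariant since $z$ commutes with $\rho(G)$ and $\Lambda$ is \emph{the} unique minimal set), so $z$ induces a well-defined homeomorphism $\bar z$ on the collapsed line, and $z \mapsto \bar z$ is a homomorphism into $\mathcal{Z}(\bar\rho)$. Now $\bar\rho$ is a minimal action, so Theorem \ref{t-centraliser} applies: $\mathcal{Z}(\bar\rho)$ is abelian (it is either $(\R,+)$, an infinite cyclic group, or trivial). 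Hence the image of $P$ in $\mathcal{Z}(\bar\rho)$, being a quotient of a perfect group inside an abelian group, is trivial. Therefore every $z \in P$ induces the identity on the collapsed line, which means $z$ fixes (setwise) each gap of $\Lambda$ and acts as the identity modulo collapsing — equivalently, $z$ fixes every point of $\Lambda$ (points of $\Lambda$ that are not endpoints of gaps are determined by their image, and endpoints of gaps are fixed because a gap is sent to itself and $z$ is orientation-preserving, so it cannot swap the two endpoints... one must argue $z$ fixes each endpoint: since $\bar z = \id$, the point of $\bar\R$ corresponding to a gap is fixed, and $z$ preserves orientation, so both endpoints of that gap are fixed).

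The one subtlety to handle carefully is the passage from "$\bar z = \id$" back to "$z$ pointwise fixes $\Lambda$". If a point $p \in \Lambda$ is not an endpoint of any gap, then $h^{-1}(h(p)) = \{p\}$ and $h(z(p)) = \bar z(h(p)) = h(p)$ forces $z(p) = p$. If $p$ is an endpoint of a gap $(a,b)$ with, say, $p = a$, then $z$ maps $(a,b)$ to the gap $\bar z$-corresponding to $h((a,b))$, which is $(a,b)$ itself; an orientation-preserving homeomorphism sending $(a,b)$ to itself fixes both endpoints, so $z(a) = a$. Thus $z$ fixes $\Lambda$ pointwise, as claimed.

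\textbf{Main obstacle.} The genuinely delicate point is ensuring that the collapsed action $\bar\rho$ is well-defined and minimal and that $\mathcal{Z}(\rho)$ really does inject (or at least map) into $\mathcal{Z}(\bar\rho)$ — this relies on $\Lambda$ being \emph{the unique} minimal set (so that every element of $\mathcal{Z}(\rho)$ preserves $\Lambda$), which is guaranteed by the trichotomy for minimal sets recalled in Section 2 precisely because $\Lambda$ is perfect. Everything else is bookkeeping about monotone maps and endpoints of gaps.
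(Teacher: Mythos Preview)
Your proof is correct and follows essentially the same route as the paper's: collapse the gaps of $\Lambda$ to obtain a minimal action, observe that $\mathcal{Z}(\rho)$ descends to the centraliser of the collapsed action (using uniqueness of the perfect minimal set), apply Theorem \ref{t-centraliser} to conclude that this centraliser is abelian, and deduce that any perfect subgroup maps trivially. The paper's version is terser and omits the endpoint-of-gap verification you spell out, but the argument is the same.
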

\begin{proof}
	Since $\Lambda$ is the unique minimal set for $\rho$, it must be preserved by $\mathcal{Z}(\rho)$. Moreover, by collapsing connected components of the complement of $\Lambda$, we can semi-conjugate $\rho$ to a minimal action $\hat\rho\colon H\to\homeo_0(\R)$. Since $\mathcal Z(\rho)$ preserves $\Lambda$, it projects to $\mathcal{Z}(\hat\rho)$. By Theorem \ref{t-centraliser}, we get that any perfect subgroup of $\mathcal{Z}(\rho)$ projects to the trivial group, implying that it pointwise fixes $\Lambda$.
\end{proof}

Given a group $H$ and an integer $n\ge 1$, write $H^n=H_1\times\cdots\times H_n$ for the direct product of $n$ copies of $H$. Denote by $\pi_i\colon H^n\to H_i$ the projection from $H^n$ onto $H_i$.

\begin{lem}\label{l-minimalcomun}
	Let $H$ be a finitely generated perfect group. Consider also an action $\rho\colon H^n\to\homeo_0(\R)$ and a subgroup $K\subseteq H^n$ for which the projections $\pi_i|_{K}$ are surjective for any $i\in \{1,\ldots,n\}$. Then we have the following.
	\begin{enumerate}
		\item\label{i-1} $\fix(\rho|_{K})=\fix(\rho)$.
		\item\label{i-2} If $\rho$ is irreducible, then $\rho|_{K}$ and $\rho$  have the same unique  perfect minimal set.
	\end{enumerate}
\end{lem}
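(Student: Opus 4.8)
I will prove \eqref{i-1} first; the inclusion $\fix(\rho)\subseteq\fix(\rho|_K)$ is trivial, so only the reverse needs work. Since $H^n=\langle H_1,\dots,H_n\rangle$ we have $\fix(\rho)=\bigcap_i\fix(\rho|_{H_i})$, so it suffices to fix $i$ and prove $\fix(\rho|_K)\subseteq\fix(\rho|_{H_i})$. Arguing by contradiction, suppose $x\in\fix(\rho|_K)$ with $x\notin\fix(\rho|_{H_i})$, and set $A:=\rho(H_i)$ and $B:=\rho(\hat H_i)$ where $\hat H_i:=\prod_{j\ne i}H_j$; these are commuting subgroups of $\homeo_0(\R)$, $A$ is finitely generated and perfect, and $B$ is perfect. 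Let $J$ be the connected component of $\R\setminus\fix(A)$ containing $x$. Each element of $A$ fixes $\fix(A)$ pointwise, hence preserves $J$, so $A|_J$ is an \emph{irreducible} action of a finitely generated perfect group on $J\cong\R$; therefore it has a (unique, perfect) minimal set and, by Theorem \ref{t-centraliser}, its centraliser $Z:=\mathcal{Z}(A|_J)$ in $\homeo_0(J)$ is abelian. Since $B$ centralises $A$ it preserves $\fix(A)$ and permutes the components of $\R\setminus\fix(A)$; and if for $c\in\pi_{\hat H_i}(K)=:\hat K_i$ we pick $k\in K$ with $\pi_{\hat H_i}(k)=c$ and write $k=h_k c_k$ in $H^n=H_i\times\hat H_i$, then $\rho(k)(x)=x$ together with $\rho(h_k)\in A$ preserving $J$ forces $\rho(c_k)(x)=\rho(c)(x)\in J$, so $\rho(c)$ preserves $J$. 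Hence restriction to $J$ gives a homomorphism $\psi\colon\hat K_i\to Z$. Now the key use of perfectness: given $h\in H_i$, write $h=\prod_\ell[g_\ell,g'_\ell]$, lift each $g_\ell,g'_\ell$ to elements of $K$, and take the product of the corresponding commutators to get $k\in[K,K]\subseteq K$ with $h_k=h$ and $c_k\in[\hat K_i,\hat K_i]$; then $\psi(c_k)\in[Z,Z]=1$, so $\rho(k)|_J=\rho(h)|_J$, and $\rho(k)(x)=x$ with $x\in J$ gives $\rho(h)(x)=x$. As $h\in H_i$ was arbitrary, $x\in\fix(\rho|_{H_i})$, contradicting $x\in J$.

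For \eqref{i-2}: since $H^n$ is finitely generated and perfect, the irreducible action $\rho$ has a minimal set, which, having no infinite cyclic quotient, must be a perfect set $\Lambda$ — the unique minimal set of $\rho$. It suffices to show that $\rho(K)$ acts minimally on $\Lambda$, for then $\Lambda$ is a perfect minimal set of $\rho|_K$, hence its unique one. For each $i$, the closed set $\fix(\rho|_{H_i})$ is invariant under $\rho(H_i)$ and, as $\rho(\hat H_i)$ centralises $\rho(H_i)$, also under $\rho(\hat H_i)$; being $\rho(H^n)$-invariant it is either empty or contains $\Lambda$. So for each $i$, either $\rho(H_i)$ fixes $\Lambda$ pointwise, or $\rho|_{H_i}$ is irreducible, in which case its unique perfect minimal set is $\rho(H^n)$-invariant (via the commuting action of $\rho(\hat H_i)$), hence contains $\Lambda$, hence equals $\Lambda$ by minimality, so $\rho|_{H_i}$ is minimal on $\Lambda$. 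Not all $\rho(H_i)$ can fix $\Lambda$ pointwise (otherwise $\rho$ would), so some $\rho|_{H_{i_0}}$ is minimal on $\Lambda$. Collapsing the complementary intervals of $\Lambda$ (recall $\Lambda$ is perfect) turns $\rho|_{H_{i_0}}|_\Lambda$ into a minimal action on $\R$, which by Theorem \ref{t-centraliser} has abelian centraliser; the group $\rho(\hat H_{i_0})$ preserves $\Lambda$ and centralises $\rho(H_{i_0})$, so it descends to a subgroup of that centraliser, and being a quotient of the perfect group $\hat H_{i_0}$ it is perfect, hence trivial. Thus every $\rho(H_j)$ with $j\ne i_0$ fixes $\Lambda$ pointwise, so $\rho|_\Lambda$ factors through $\pi_{i_0}$; since $\pi_{i_0}|_K\colon K\to H_{i_0}$ is onto, $\rho(K)$ and $\rho(H_{i_0})$ have the same image on $\Lambda$, which acts minimally on $\Lambda$ — as required.

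The main obstacle, and the reason the hypotheses enter the way they do, is that a subdirect subgroup $K$ of a product of perfect groups need \emph{not} be perfect (for instance inside $\tilde T\times\tilde T$), so one cannot simply transfer finiteness/perfectness properties from $H^n$ to $\rho|_K$. Both parts circumvent this by the same two ingredients: abelianity of the centraliser of a minimal action (Theorem \ref{t-centraliser}), which controls the ``central discrepancy'' between $K$ and a diagonal, and the fact that in a perfect group every element is a product of commutators, which allows us always to choose lifts of elements of $H_i$ inside $[K,K]$, thereby absorbing those discrepancies. I expect the remaining care to go into the elementary bookkeeping around the splitting $H^n=H_i\times\hat H_i$, and around the collapsing map used in \eqref{i-2} (so that the descended action of $\rho(\hat H_{i_0})$ is indeed identified with a subgroup of the centraliser, and triviality there really means acting trivially on $\Lambda$).
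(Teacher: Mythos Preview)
Your argument for \eqref{i-2} is correct and essentially identical to the paper's: pick a factor $H_{i_0}$ acting irreducibly, its perfect minimal set is $\Lambda$, the other factors fix $\Lambda$ pointwise (via Corollary~\ref{c-fixinglambda}), and surjectivity of $\pi_{i_0}|_K$ gives the conclusion.

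Your independent argument for \eqref{i-1}, however, has a genuine gap. You invoke Theorem~\ref{t-centraliser} to assert that $Z:=\mathcal{Z}(A|_J)$ is abelian, but that theorem applies only to \emph{minimal} actions, whereas $A|_J$ is merely irreducible. When the perfect minimal set $\Lambda_J$ is a proper subset of $J$, the centraliser $Z$ can be highly non-abelian: for instance, if one blows up a free orbit of a proximal action, the resulting centraliser contains a full copy of $\homeo_0$ of a blown-up interval. Your commutator trick only yields $\psi(c_k)\in[Z,Z]$, which then need not be trivial; at best, collapsing onto $\Lambda_J$ shows $\psi(c_k)$ fixes $\Lambda_J$ pointwise, but the point $x$ you chose has no reason to lie in $\Lambda_J$, so you cannot conclude $\rho(h)(x)=x$.

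The paper sidesteps this by reversing the order: it proves \eqref{i-2} first (your argument) and then deduces \eqref{i-1} by restricting to each connected component $J$ of $\R\setminus\fix(\rho)$. Since by \eqref{i-2} the groups $K$ and $H^n$ share the same perfect minimal set $\Lambda$ on $J$, the subgroup $K$ cannot fix any point of $J$: a fixed point would force $\Lambda$ to lie on one side of it, contradicting that $\Lambda$ accumulates at both ends of $J$ (because $\rho$ is irreducible on $J$). Your proof of \eqref{i-2} already contains everything needed to run this, so the fix is simply to reorder.
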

\begin{proof}
	We will first prove \eqref{i-2}. Notice that there is $m\in\{1,\ldots,n\}$ so that $\rho|_{H_m}$ is irreducible, and without loss of generality we can assume $m=n$.  Indeed, if this was not the case, $\rho$ would have a fixed point, contradicting its irreducibility. Moreover, since $H$ is finitely generated, the action $\rho|_{H_n}$ preserves a minimal set that we denote by $\Lambda$.  This set cannot be a single closed orbit, since $H_n$ is perfect, and thus $\Lambda$ is a perfect set. Thus, by Corollary \ref{c-fixinglambda}, we have that the subgroup $H_1\times\cdots\times H_{n-1}$ must  fix $\Lambda$ pointwise. This implies that the action of $\rho$ on $\Lambda$ (that we denote by $\rho_\ast\colon H^n\to \homeo(\Lambda)$) factors through the projection $\pi_n$, that is, $\rho_\ast=\Psi_\ast\circ\pi_n$. On the other hand, by hypothesis $\pi_n|_K$ surjects to $H_n$ and therefore $\rho_\ast|_{K}\colon K\to\homeo(\Lambda)$ has the same image as $\rho_\ast$, implying that it is also a minimal action. This implies that $\Lambda$ is a minimal set of $\rho|_{K}$, as desired. 
	
	We next prove \eqref{i-1}. For this, take  a connected component $J$ of $\R\setminus \fix(\rho)$.  By  part \eqref{i-2} the actions of $H^n$ and $K$ on $J$  have the same minimal set, and in particular $J$ is a connected component of the support of $\rho|_{K}$. Since the component $J$ was arbitrary this implies that the support of $\rho$ is contained in that of $\rho|_K$, which taking complements gives the reverse inclusion $\fix(\rho|_{K})\subseteq\fix(\rho)$. Since the other inclusion is trivial, equality in \eqref{i-1} follows.
\end{proof}

Given a space $X$ and a group $H$, denote by $\Ccal(X, H)$ the group of continuous functions from $X$ to $H$.  Note that this group is not finitely generated in general thus its actions on the line may \textit{a priori} fail to admit a minimal set. 
However, this is the case when $X$ is as in Assumption \ref{standing}, and $H$ is perfect and finitely generated:

\begin{prop}\label{p-semicon2}
	Let $X$ be a metrisable, compact, totally disconnected space, and $H$ a finitely generated perfect group. Then, every action $\rho\in \rep(\Ccal(X, H);\R)$  admits a perfect minimal set. 
\end{prop}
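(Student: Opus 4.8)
The plan is to exploit the total disconnectedness of $X$ to reduce an arbitrary action of $\Ccal(X,H)$ to an action of a finite power $H^n$, to which Lemma \ref{l-minimalcomun} applies. First I would argue that it suffices to find a \emph{bounded orbit}: if some point $t_0 \in \R$ has bounded $\rho(\Ccal(X,H))$-orbit, then the closure of that orbit is a compact $\rho$-invariant set, which contains a closed $\rho$-invariant set minimal for inclusion; by the dichotomy recalled in \S2, such a minimal set is either a discrete orbit — impossible here since $\Ccal(X,H)$ is perfect (being a product/limit of copies of the perfect group $H$, or at least generated by commutators) — or a perfect set, as desired. So the real content is producing a bounded orbit.

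To produce a bounded orbit, fix any $t_0 \in \R$, and for each clopen partition $\Pcal = \{C_1, \dots, C_n\}$ of $X$ let $\Ccal_\Pcal \le \Ccal(X,H)$ be the subgroup of functions constant on each $C_i$, so $\Ccal_\Pcal \cong H^n$ and $\Ccal(X,H) = \bigcup_\Pcal \Ccal_\Pcal$ is the directed union over refinements (using that $X$ is compact and totally disconnected, so clopen partitions form a neighbourhood basis and are directed under refinement). Each $\Ccal_\Pcal$ is finitely generated (as $H$ is), so $\rho|_{\Ccal_\Pcal}$ admits a minimal set, which by perfectness of $H^n$ is a perfect set $\Lambda_\Pcal$; in particular each $\Ccal_\Pcal$ has a bounded orbit. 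The key step is then a \emph{compatibility/stabilisation} argument: I want to show that the perfect minimal sets $\Lambda_\Pcal$ (or the corresponding supports) stabilise as $\Pcal$ is refined, so that one gets a single perfect set invariant under all of $\Ccal(X,H)$. This is exactly where Lemma \ref{l-minimalcomun} enters: given a refinement $\Pcal' \succ \Pcal$, the inclusion $\Ccal_\Pcal \hookrightarrow \Ccal_{\Pcal'}$ realises $H^n$ as a subgroup $K$ of $H^m$ ($m = |\Pcal'|$) onto which every coordinate projection $\pi_i$ is surjective (each block of $\Pcal$ is a union of blocks of $\Pcal'$, so the corresponding coordinate of $H^n$ surjects onto the relevant coordinates of $H^m$ — one should set this up by grouping coordinates of $H^m$ according to which block of $\Pcal$ they sit in and applying the lemma blockwise). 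Hence $\fix(\rho|_{\Ccal_\Pcal}) = \fix(\rho|_{\Ccal_{\Pcal'}})$ and $\rho|_{\Ccal_\Pcal}$, $\rho|_{\Ccal_{\Pcal'}}$ share the same perfect minimal set. Since the family is directed, all the $\Ccal_\Pcal$ share one common perfect minimal set $\Lambda$, and on each component $J$ of $\R \setminus \fix(\rho)$ this $\Lambda \cap J$ is the unique minimal set for every $\Ccal_\Pcal$.

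Finally I would check $\Lambda$ is invariant (indeed minimal) under the whole group $\Ccal(X,H)$: any $g \in \Ccal(X,H)$ lies in some $\Ccal_\Pcal$, hence preserves $\Lambda$ (the common minimal set), so $\Lambda$ is a closed $\Ccal(X,H)$-invariant set; and minimality of $\rho|_{\Ccal_\Pcal}$ on $\Lambda$ for the directed family forces $\rho|_{\Ccal(X,H)}$ to act minimally on $\Lambda$ as well. Also one must confirm $\rho$ itself is irreducible so that $\R\setminus\fix(\rho)\neq\emptyset$ — this is given, since $\rho\in\rep$. I expect the main obstacle to be the bookkeeping in the stabilisation step: one needs the supports $\supp(\rho|_{\Ccal_\Pcal})$ to be \emph{equal} (not merely nested) along refinements, which is precisely the payoff of part \eqref{i-1} of Lemma \ref{l-minimalcomun}, and then to pass this equality through the directed union cleanly; a secondary subtlety is making sure the perfectness of $\Ccal_\Pcal \cong H^n$ (hence of the relevant subgroups $K$) is correctly invoked so that no minimal set degenerates to a discrete orbit.
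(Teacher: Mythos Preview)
Your proposal is correct and follows essentially the same route as the paper: exhaust $\Ccal(X,H)$ by the finite-power subgroups $\Ccal_\Pcal \cong H^n$ and use both parts of Lemma~\ref{l-minimalcomun} to show they all share a common (empty) fixed-point set and hence a common perfect minimal set $\Lambda$, which is then minimal for the whole group. The paper's version is marginally more streamlined in that it always takes $K = H_X$ (the constant-function copy of $H$, i.e.\ the trivial partition) rather than comparing successive refinements pairwise, and uses a single countable chain of partitions via metrisability; in your write-up the claim that each $\Lambda_\Pcal$ is perfect should logically be placed \emph{after} you deduce $\fix(\rho|_{\Ccal_\Pcal}) = \varnothing$ from part~\eqref{i-1}, and the opening detour through bounded orbits is superfluous since you end up constructing $\Lambda$ directly.
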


Before proving this result, let us introduce some further notation. Assume $X$ is a compact, totally disconnected space. For a clopen subset $C\subseteq X$, let $H_C\le \mathcal C(X,H)$ be the subgroup (isomorphic to $H$) of functions $\sigma\colon X\to H$ which are constants on $C$ and equal to the identity on $X\setminus C$. Also, if $\mathcal{P}$ is a clopen partition of $X$, write $H^{(\mathcal{P})}:= \langle\bigcup_{C\in \mathcal P} H_C\rangle$, which is naturally isomorphic to the direct product $H^{|\mathcal{P}|}$. 
Recall also that if $\mathcal{P}_1,\mathcal{P}_2$ are clopen partitions of $X$, one says that $\mathcal{P}_2$ is \emph{finer} than $\mathcal{P}_1$ if for every $C\in\mathcal{P}_2$, there exists $D\in\mathcal{P}_1$ so that $C\subseteq D$.

\begin{rem}\label{rem-inclusion}
	When $\mathcal{P}_2$ is finer that $\mathcal{P}_1$, there exists a natural injective morphism $\iota\colon H^{(\mathcal{P}_1)}\to H^{(\mathcal{P}_2)}$, and this morphism satisfies that $\pi\circ\iota$ is surjective for every projection $\pi\colon H^{(\mathcal{P}_2)}\to H_{C}$ given by any clopen subset $C\in \mathcal{P}_2$. This holds in particular when $\mathcal P_1=\{X\}$ is the trivial partition.
\end{rem}

\begin{proof}[Proof of Proposition \ref{p-semicon2}]
	As $X$ is metrisable, we can find an increasingly finer sequence of partitions $\mathcal{P}_n$ of $X$ that separates points. Then we can write $\mathcal C(X,H)$ as an increasing union
	\begin{equation}\label{eq-union}
	\textstyle	\mathcal C(X,H)=\bigcup_{n\in\N}H^{(\mathcal{P}_n)}.
	\end{equation}
	By Remark \ref{rem-inclusion}, for every projection $\pi\colon H^{(\mathcal{P}_n)}\to H_C$ associated with every $C\in \mathcal{P}_n$, the restriction $\pi|_{H_X}\colon H_X\to H_C$ is surjective. Write $\rho_n:=\rho|_{H^{(\mathcal{P}_n)}}$. Thus, applying the first part of Lemma \ref{l-minimalcomun} (with $\rho=\rho_n$ and $K=H_X$), we have the equality
	\begin{equation}\label{eq-equality}
		\fix(\rho_n|_{H_X})=\fix(\rho_n).
	\end{equation}
	Since $\rho_n|_{H_X}=\rho|_{H_X}$, \eqref{eq-equality} gives $\fix(\rho\vert_{H_X})=\fix(\rho_n)$ for every $n\in \N$, whereas \eqref{eq-union} gives $\fix(\rho)=\bigcap_{n\in \N}\fix(\rho_n)$. As $\rho$ is irreducible, we deduce that  $\fix(\rho|_{H_X})=\varnothing$, and thus, by \eqref{eq-equality} again, also $\rho_n$ is irreducible for any $n\in \N$. Therefore, we can apply the second part of Lemma \ref{l-minimalcomun} (applied to $\rho=\rho_n$ and $K=H_X$ again), and obtain that $\rho_n$ has a unique perfect minimal set $\Lambda$, which does not depend on $n\in \N$, so, by \eqref{eq-union}, $\Lambda$ is a minimal set for $\rho$.
\end{proof}

\begin{rem}
We will apply Proposition \ref{p-semicon2} to the group $H=\Fsing$, the perfect germ extension of Thompson's group $F$ introduced in \S \ref{s-Thompson}. We note that if instead we take $H$ to be the usual Thompson's group $F$, then the conclusion of Proposition \ref{p-semicon2} is far from true. Indeed the abelianisation of the group $\mathcal{C}(X, F)$  is  free abelian of infinite rank; one can use this to construct a wealth of faithful actions of $\mathcal{C}(X, F)$ without a minimal set, by defining orders on $\mathcal{C}(X, F)$ through lexicographic constructions similar to the one in \cite[\S 6.1]{BMRT}. This is the reason why we consider a germ extension of PL homeomorphisms to define the groups $\Tsing(\varphi, \sigma)$.
\end{rem}

\subsection{Building an equivariant map to a leaf}
The key step in proving Theorem \ref{t-classification-actions} is the following result, which says that any minimal action of $G$ comes from an action on a leaf of $Z$.

\begin{prop}\label{t-hmap}
	With assumptions as in Theorem \ref{t-classification-actions}, there exists a continuous map $h\colon \R \to Z$ which is $G$-equivariant (with respect to the $\rho$-action on $\R$ and the natural action on $Z$), locally injective, and takes values in a single leaf.
\end{prop}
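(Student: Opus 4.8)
The plan is to build the equivariant map $h$ by exploiting the large supply of "local" subgroups of $G = \Tsingsf(\varphi,\sigma)$ supported in small charts of $Z$, and to use the minimality of $\rho$ to pin down a single leaf. First I would fix a finite cover of $Z$ by charts $Z_{C_i, I_i}$ and, for each chart, identify the subgroup $G_{C_i, I_i} \le G$ of elements supported in that chart: such a subgroup is (isomorphic to) a group of the form $\Ccal(C_i, \Fsing_{I_i})$, or rather a $\hat\sigma$-symmetrized version thereof, so by Proposition \ref{p-semicon2} its restriction of $\rho$ has a perfect minimal set. The key structural input is that when two charts overlap, the corresponding local subgroups interact in a controlled way (one contains germ subgroups acting on the other's minimal set), so that by Corollary \ref{c-fixinglambda} the minimal sets of the various local pieces are forced to be compatible. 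The first real step is therefore to show that there is a \emph{global} perfect minimal set $\Lambda$ for $\rho$ (this will follow from finite generation of $G$, Theorem \ref{t:finitely_generated}, together with the classification of minimal sets recalled in \S2), and then to analyze how $\Lambda$ decomposes with respect to the local chart subgroups.

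Next, I would reconstruct the leaf. After collapsing the complementary intervals of $\Lambda$ we may assume $\rho$ is minimal with minimal set all of $\R$. The idea is that each chart subgroup $G_{C,I} \cong$ (a symmetrized) $\Ccal(C, \Fsing_I)$ acts on $\R$, and inside it the ``evaluation at a point $x \in C$'' quotients onto a single copy of $\Fsing_I \cong \Fsing$; using that $\Fsing$ contains Thompson-like dynamics and the germ groups $\mathcal{D}^\pm_x \cong \tilde T$ (Lemmas \ref{lem-Ttilde}, \ref{l-fingenperfect}), I would show that the $\rho$-action of $G_{C,I}$ on its minimal set is semi-conjugate to the canonical action coming from the chart $C \times I \subset X \times \R$, and that this semi-conjugacy is \emph{essentially unique} — this is where the perfectness of $\Fsing$ (as opposed to the old $\Tsf(\varphi)$, whose chart subgroups have huge abelianization) does the heavy lifting, because it rules out the lexicographic/wreath-type deformations mentioned in the remark after Proposition \ref{p-semicon2}. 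Patching these local identifications over the finite cover, using the overlap compatibility from the previous step, produces a locally injective map $h_0$ from $\R$ into $Z$ (landing \emph{a priori} in a union of leaves) which is equivariant for the subgroup generated by all the chart subgroups — but that subgroup is all of $G$.

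Finally I would show $h_0$ takes values in a single leaf. Since $X$ is totally disconnected, the leaves are exactly the path-components of $Z$, and a locally injective continuous map from the connected space $\R$ must land in one path-component; so the real content is continuity of $h_0$, which I would get from the fact that the local models glue continuously on chart overlaps (the same kind of chart-bookkeeping as in Lemma \ref{l-lifting-H0}). Then $h := h_0$ composed with the collapsing from the reduction step gives the desired map $\R \to Z$.

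\textbf{Main obstacle.} The delicate point is the \emph{rigidity of the local identifications}: showing that the $\rho$-action of a chart subgroup $\Ccal(C, \Fsing)$ on its perfect minimal set is not merely abstractly related to, but genuinely conjugate to (a collapse of) the tautological action on $C \times I$, with no room for exotic orderings. This requires understanding faithful actions of $\Ccal(C, \Fsing)$ and of $\Fsing$ itself on the line well enough to see that the germ subgroups $\tilde T$ force the correct local dynamics, and that the ``evaluation'' direct-product structure $\Ccal(C,\Fsing) \approx \Fsing^{(\mathcal P)}$ forces the correct transverse (Cantor) structure — this is precisely the step where all the earlier preparation (Lemma \ref{l-minimalcomun}, Proposition \ref{p-semicon2}, perfectness of $\Fsing$) is consumed, and where I expect the technical heart of the argument to lie.
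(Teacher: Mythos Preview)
Your proposal identifies the right ingredients (chart subgroups $G_{C,I}\cong\Ccal(C,\Fsing_I)$, Proposition~\ref{p-semicon2}, Corollary~\ref{c-fixinglambda}), but the strategy diverges from the paper's at the crucial step, and the obstacle you name is a genuine gap in your plan rather than just a technicality.

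You want to show that the $\rho$-action of each $G_{C,I}$ on its minimal set is (semi\nobreakdash-)conjugate to the tautological action on $C\times I$, and then glue. This \emph{local rigidity} is never proved in the paper, and it is not clear it can be done directly without essentially classifying all minimal actions of $\Fsing$ on the line --- a result the paper does not have and does not need. Even granting that the action of $\Ccal(C,\Fsing)$ on its minimal set factors through evaluation at a single point of $C$ (which one can extract from the proof of Lemma~\ref{l-minimalcomun}), you would still need to know that the resulting $\Fsing$-action is the standard one, and you give no mechanism for this. (Two smaller points: $\rho$ is already assumed minimal in the hypotheses of Theorem~\ref{t-classification-actions}, so your collapsing step is superfluous; and $G_{C,I}$ is literally $\Ccal(C,\Fsing_I)$ with no $\hat\sigma$-symmetrisation, since the charts are already in $Z$.)

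The paper instead defines $h$ \emph{pointwise}, avoiding any classification. For each $\xi\in\R$ and each chart $Z_{C,I}$, let $I(C,I,\xi)$ be the component of $\supp(\rho|_{G_{C,I}})$ containing $\xi$ (or $\{\xi\}$ if $\xi$ is fixed), and for $z\in Z$ set $I(z,\xi)=\bigcap_n I(C_n,I_n,\xi)$ over a shrinking sequence of charts about $z$. The heart of the argument is Lemma~\ref{l-intervals}: every $I(z,\xi)$ is the singleton $\{\xi\}$. This is shown by observing that the intervals $I(z,\xi)$ are pairwise unlinked (disjoint charts give commuting subgroups, hence unlinked support components), that maximal increasing chains of them are bounded (using Lemma~\ref{l-minimal-CIxi} and Corollary~\ref{c-fixinglambda}), and that the whole family is $\rho$-invariant --- so by minimality of $\rho$ the union of the maximal chains must be empty. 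From this one deduces (Lemma~\ref{lem-exanduni}) that for each $\xi$ there is a \emph{unique} $z_\xi\in Z$ such that every chart subgroup around $z_\xi$ moves $\xi$; one sets $h(\xi)=z_\xi$ and checks continuity and local injectivity directly from the lemmas.

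So the paper's trick is to extract from the chart subgroups only the information ``which charts move $\xi$?'' rather than ``\emph{how} do they move $\xi$?''. This sidesteps your main obstacle entirely: one never needs to know what the local $\Fsing$-dynamics look like, only which local subgroups act nontrivially near a given point.
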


The main difference from the statement of Theorem \ref{t-classification-actions} is that the map is to a leaf of $Z$ (instead of $Y$), and moreover the leaf can be closed. In the latter case, we want to prove that the minimal action $\rho$ is conjugate to the lift of the action on the leaf to its universal cover. For this, we introduce the group of circle homeomorphisms $\Tsing$, defined to be the perfect germ extension of Thompson's group $T$ acting on the circle, namely the group of all homeomorphisms of $\mathbb{S}^1$ of type $\nice$. We further let $\tilde{\Tsing}$ denote its lift to $\R$, namely the group of all homeomorphisms of $\R$ which are locally of type $\nice$ and commute with integer translations. Thus,  $\tilde{\Tsing}$ has an infinite cyclic center $\mathcal{Z}(\tilde{\Tsing})$ and $\tilde{\Tsing}/\mathcal{Z}(\tilde{\Tsing})=\Tsing$. We will need the following lemma. 

\begin{lem}\label{l-tilde-T}
	Let $f\colon \tilde{\Tsing} \to \tilde{\Tsing}$ be a group homomorphism such that $f(\mathcal{Z}(\tilde{\Tsing}))\subset \mathcal{Z}(\tilde{\Tsing})$ and such that $f$ induces in the quotient the identity map $\Tsing\to \Tsing$. Then $f$ is the identity. 
\end{lem}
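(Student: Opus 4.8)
The statement asserts that a lift-level endomorphism of $\tilde{\Tsing}$ that covers the identity on $\Tsing$ must itself be the identity. The natural strategy is to show that $f$ fixes every element of $\tilde{\Tsing}$ by first controlling what $f$ does modulo the center, and then pinning down the central ambiguity using the abstract structure of the extension. Concretely, write $Z:=\mathcal{Z}(\tilde{\Tsing})\cong\Z$, generated by the unit translation $t_1$. For any $g\in\tilde{\Tsing}$, the hypothesis that $f$ induces the identity on $\Tsing=\tilde{\Tsing}/Z$ means $f(g)g^{-1}\in Z$, so there is a function $c\colon \tilde{\Tsing}\to\Z$ with $f(g)=g\cdot t_1^{c(g)}$ (using that $t_1$ is central, left and right translates agree). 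The first step is to check that $c$ is a homomorphism from $\tilde{\Tsing}$ to $\Z$: since $f(gh)=f(g)f(h)$ and everything central commutes, one gets $t_1^{c(gh)}=t_1^{c(g)+c(h)}$, hence $c(gh)=c(g)+c(h)$.

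\textbf{Key step: $c$ is trivial.} So it remains to show $\mathrm{Hom}(\tilde{\Tsing},\Z)=0$, i.e.\ that $\tilde{\Tsing}$ has trivial abelianization (equivalently, is perfect, or at least has finite abelianization with no infinite cyclic quotient). This should follow from the perfectness already established for the building blocks: $\Tsing$ is the perfect germ extension of Thompson's group $T$, and one expects $\Tsing$ to be perfect by the same kind of argument used for $\Fsing$ in Lemma~\ref{l-fingenperfect} (the germ groups $\mathcal{D}_x^\pm$ contribute $\tilde{T}$, which is perfect, and $T'$ is perfect). Then $\tilde{\Tsing}$ is a central extension $1\to\Z\to\tilde{\Tsing}\to\Tsing\to1$ with $\Tsing$ perfect; such an extension has $H_1(\tilde{\Tsing};\Z)$ a quotient of $\Z$, but in fact it is a quotient of $H_2(\Tsing;\Z)$ via the five-term exact sequence, so one needs a little more. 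The cleanest route is to argue directly that $\tilde{\Tsing}$ itself is perfect: any element of $\tilde{\Tsing}$ can be multiplied by an element of $\tilde{T}\le\tilde{\Tsing}$ (which is perfect, so a product of commutators) to kill all its higher-order breakpoints, landing inside $\tilde{T}$ or a closely related perfect subgroup; combined with perfectness of the relevant ``finite-support'' part, this gives $\tilde{\Tsing}=[\tilde{\Tsing},\tilde{\Tsing}]$. Granting this, $c\equiv 0$, so $f(g)=g$ for all $g$, which is the claim.

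\textbf{Main obstacle.} The one genuinely nontrivial point is establishing that $\tilde{\Tsing}$ (and hence $\Tsing$) has no nonzero homomorphism to $\Z$ — i.e.\ the perfectness of these germ-extended Thompson groups of the circle. The finite-support and germ pieces are handled exactly as in Lemma~\ref{l-fingenperfect}, but one must be a little careful with the central $\Z$: it is not enough that $\Tsing$ is perfect, one really wants $H_1(\tilde{\Tsing};\Z)$ to vanish, which is why I would prove perfectness of $\tilde{\Tsing}$ on the nose rather than deduce it from $\Tsing$. If instead one only has $\Tsing$ perfect, the five-term sequence gives $H_1(\tilde{\Tsing};\Z)\cong\mathrm{coker}(H_2(\Tsing;\Z)\to\Z)$, and one would then need the extension class to be nontorsion in $H^2(\Tsing;\Z)$ — true for $\tilde T\to T$, but I would avoid this detour by the direct argument. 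Everything else (that $c$ is a well-defined homomorphism, that $t_1$ being central lets one write $f(g)=g\,t_1^{c(g)}$ unambiguously) is a routine diagram chase.
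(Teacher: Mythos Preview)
Your proposal is correct and is essentially the standard argument: writing $f(g)=g\,t_1^{c(g)}$ reduces the statement to showing that every homomorphism $\tilde{\Tsing}\to\Z$ vanishes, which follows from perfectness of $\tilde{\Tsing}$; and your sketch of perfectness (kill higher-order breakpoints using the perfect germ groups $\mathcal{D}_x\cong\tilde{T}\times\tilde{T}$, then land in the perfect group $\tilde{T}$) is exactly the adaptation of Lemma~\ref{l-fingenperfect} that works here. The paper's proof is just a pointer to the analogous computation for $\tilde{T}$ in \cite{MB-T}, so you have in fact supplied more detail than the paper itself; the underlying mechanism is the same.
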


\begin{proof}
	The analogous statement for the usual Thompson's group $T$ this is proven at the end of the proof of \cite[Theorem 8.7]{MB-T}. The same proof works for $\Tsing$. \qedhere
\end{proof}

We will also use the fact that the group $G$ is perfect. For this, given an element $g\in G$, we call the set of points of $Z$ (respectively, $Y$) that are moved by $g$ the \emph{support} of $g$, and denote it by  $\supp_Z(g)$ (respectively, $\supp_Y(g)$). Note that the support is an open set. 
For an open subset $U\subset Z$ we let $G_U$ be the subgroup of elements with support contained in $U$; if $U=Z_{C, I}$ is a chart, we also denote $G_U$ by $G_{C, I}$, and use analogous notation for the action on $Y$. 
In order to prove that $G$ is perfect, we prove that it is generated by the perfect subgroups $G_{C,I}$ (see the proof of Lemma \ref{l-minimal-CIxi} below), provided that we take charts $Z_{C,I}$ covering $Z$. More precisely, we have the following fundamental property, which is the analogue of \cite[Lemma 4.8]{MB-T}. The proof is discussed in Appendix \ref{appendix}. 

\begin{prop}[Fragmentation lemma] \label{p-fragmentation}
	Let $(X,\varphi, \sigma)$  be as in Assumption \ref{standing}, and set $G=\Tsingsf(\varphi, \sigma)$. For any element $g\in G$ and open cover $\mathcal{U}$ of $\overline{\supp_Z(g)}$, we have
	$g\in \left \langle \bigcup_{U\in \mathcal U}G_U\right \rangle$.
	
	In particular, the group $G$ is perfect.
\end{prop}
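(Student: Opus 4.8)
The statement is the Fragmentation Lemma (Proposition \ref{p-fragmentation}), asserting that any $g \in G = \Tsingsf(\varphi,\sigma)$ lies in the subgroup generated by the fragments $G_U$ as $U$ ranges over any open cover $\mathcal{U}$ of $\overline{\supp_Z(g)}$, and that consequently $G$ is perfect. The plan is to follow the structure of \cite[Lemma 4.8]{MB-T}, adapting it to the dihedral suspension $Z$ and to type-$\nice$ maps, and then deduce perfectness from the fact that each $G_{C,I}$ is perfect (being built from copies of $\Fsing$ and $\tilde\Tsing$, which are perfect).

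\emph{Step 1: reduce to charts.} By compactness of $\overline{\supp_Z(g)}$, refine $\mathcal{U}$ to a finite subcover; by a Lebesgue-number argument (using $\varepsilon>0$ such that every arc of length $\le \varepsilon$ in a leaf of $Z$ is contained in some chart $Z_{C_i,I_i}$ subordinate to $\mathcal{U}$, as in the proof of Lemma \ref{l-lifting-H0}), it suffices to prove the statement when $\mathcal{U} = \{Z_{C_i, I_i}\}$ is a finite cover by charts. The goal then becomes: write $g$ as a product of elements, each supported in one chart of this cover.

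\emph{Step 2: cutting along the flow direction.} Working on charts, an element $g$ of $\Tsingsf(\varphi,\sigma)$ acts locally as $\pi_Z(x,t)\mapsto \pi_Z(x,f(t))$ for type-$\nice$ maps $f$. The key move is: given a point $y = \pi_Z(x,t)$, one wants to find an element $a \in G$ supported in a small chart around $y$ such that $ag$ agrees with $g$ away from a neighbourhood of $y$ but has been "straightened" near $y$ — the standard trick being to use elements supported in charts to push the orbit segment $[y, g(y)]$ (along the leaf) so that it fits into a single chart of the cover. Here is where the construction differs from \cite{MB-T}: there one uses elements of $\Tsf(\varphi)$ moving every point strictly in the flow direction, which is unavailable on $Z$ because the leaves of $Z$ have no global orientation. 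The fix (as flagged in the text before the statement and carried out in the appendix) is to work with the flipping involution and use that $G$ is the centralizer of $\hat\sigma$ in $\Tsingsf(\varphi)$: one performs the fragmentation argument on $Y$ for $\hat\sigma$-symmetric data and descends to $Z$, or equivalently uses pairs of charts symmetric under $\hat\sigma$. One also needs that type-$\nice$ maps admit enough elements supported on small intervals — but this is clear since $\Fsing_I$ for small dyadic $I$ already contains nontrivial compactly supported elements, and type-$\nice$ germs can be realised chart-locally.

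\emph{Step 3: iteration and conclusion of fragmentation.} By a finite induction on the number of "bad" points/charts — covering $\overline{\supp_Z(g)}$ by finitely many charts and successively absorbing the part of $g$ supported near each chart's worth of the support into a fragment $G_U$ — one reduces the support of the remaining factor until it is contained in a single chart, completing the decomposition. This is the routine but bookkeeping-heavy part, essentially identical in spirit to \cite[Lemma 4.8]{MB-T}, which is why the paper defers it to Appendix \ref{appendix}.

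\emph{Step 4: perfectness.} Given fragmentation, $G$ is generated by the subgroups $G_{C,I}$ over a finite cover of $Z$ by charts. Each such $G_{C,I}$ is (isomorphic to a subgroup of) a group built from $\mathcal{C}(C, \Fsing)$-type pieces together with the germ groups $\mathcal{D}_x^{\pm} \cong \tilde\Tsing$ at dyadic endpoints (cf.\ Lemmas \ref{lem-Ttilde}, \ref{l-fingenperfect}); since $\Fsing$ and $\tilde\Tsing$ are perfect, so is each $G_{C,I}$ — more directly, each $G_{C,I}$ is itself isomorphic to a group of the form $\Tsingsf$ or to $\Fsing_I \wr$-type object, all perfect by the same argument as Lemma \ref{l-fingenperfect}. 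Since $G$ is generated by perfect subgroups, $G = [G,G]$.

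\emph{Main obstacle.} The delicate point is Step 2: correctly handling the absence of a coherent orientation on the leaves of $Z$ when running the "push-into-one-chart" argument. On $Y$ one freely slides orbit segments forward along $\Phi$; on $Z$ one must instead use $\hat\sigma$-symmetric replacements and check that all the auxiliary elements one introduces genuinely lie in the centralizer of $\hat\sigma$, i.e.\ satisfy $\tau_h \circ \hat\sigma = -\tau_h$. Making this symmetrisation precise — and checking that the type-$\nice$ condition (the local commutation with the doubling maps $h_{x_0}$ at higher-order singularities) is preserved under all the chart manipulations — is the real content, and is exactly what is carried out in Appendix \ref{appendix}.
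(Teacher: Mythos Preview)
Your outline correctly identifies the overall shape --- reduce to a finite cover by charts, fragment, then deduce perfectness from the perfectness of the chart groups $G_{C,I}\cong \mathcal{C}(C,\Fsing_I)$ --- but Step~2 contains a genuine gap. The fix you propose for the orientation problem (perform the argument on $Y$ with $\hat\sigma$-symmetric data and descend) cannot work as stated: an element in the centraliser of $\hat\sigma$ satisfies $\tau_h\circ\hat\sigma = -\tau_h$, so it \emph{cannot} move every point strictly forward along the flow. That forward-displacement property is exactly what the \cite{MB-T} argument uses, and is why the paper warns that the proof does not carry over verbatim. Symmetrising does not recover it; it destroys it.

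The paper's actual mechanism is different and rests on two ingredients absent from your plan. First, a \emph{small-displacement} step: if $h$ moves every point by at most $\varepsilon/2$ along its leaf, then the sides of a fixed chart decomposition (Lemma~\ref{l-chart-decomposition}) are mapped into thin intersection charts $Z_{D_{ij},L_{ij}}$; one precomposes with an element $k\in K$ (built from the commuting groups $G_{D_{ij},L_{ij}}$, using transitivity on dyadic rationals) so that $kh$ fixes all the sides, and then $kh$ splits as a product over the commuting chart groups $G_{C_i,I_i}$. Second, for arbitrary $h$, one uses the specific isotopy with translation cocycle $(1-s)\tau_h$ (Lemma~\ref{lem-cocyclesupport}), which keeps $\overline{\supp_Z(h_s)}$ constant, to factor $h$ into small-displacement pieces in $\Hsf_0(\varphi,\sigma)$. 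These pieces need not lie in $G$, so a further \emph{approximation} step is required: density of $G_{C,I}$ in the full chart homeomorphism group lets one replace them by elements of $K$ up to a small-displacement error, to which the first step applies. Neither the isotopy decomposition nor the density argument appears in your plan, and your Step~3 (successive reduction of support chart-by-chart) is not how the argument is organised.
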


Let us see now how to deduce Theorem \ref{t-classification-actions}, assuming Proposition \ref{t-hmap}. 

\begin{proof}[Proof of Theorem \ref{t-classification-actions}]
	Let $h\colon \R\to Z$ be the locally injective map provided by Proposition \ref{t-hmap} and set $z:=h(0)$. Since the quotient map $q\colon Y\to Z$ identifies each $\Phi$-orbit to a leaf, and maps exactly two $\Phi$-orbits to each leaf,   the map $h$ admits two lifts $\tilde{h}_1, \tilde{h}_2\colon \R \to Y$ taking values in two different $\Phi$-orbits, with $\tilde{h}_i\circ \hat{\sigma}=\tilde{h}_{1-i}$. Moreover, since $\hat{\sigma}$ reverses the time  of the flow $\Phi$, exactly one of these two lifts is orientation preserving with respect to the  orientation of orbits given by the flow. Rename this lift by $\tilde{h}$ and set $y=\tilde{h}(0)$. If $y$ is not  a periodic orbit, the map $\tilde{h}$ is a bijection of $\R$ onto the orbit, and it follows that $\rho$ is pointed conjugate to $\rho_y$. 
	
	The case where $y$ is a periodic orbit requires some further inspection. Assume that the period of $y$ is $m\in \N$. Let $H$ be the subgroup of $G$ of elements that act trivially on the leaf $\ell$ of $z$, and let $H_0 \le H$ be the subgroup consisting of elements $g$ such that $\overline{\supp_Z(g)} \cap \ell =\varnothing$. Note that both are normal subgroups of $G$: the group $H$ coincides with the group of elements $g\in G$ such that $\rho_y(g)$ is a translation by an integer multiple of $m$, while $H_0$ is the  kernel of $\rho_y$, using that non-periodic orbits of $\Phi$ are dense. The action of $G$ on $\ell$ identifies $G/H$ with the natural copy of $\Tsing$ acting on $\ell=\R/m\Z \cong \mathbb{S}^1$.   Accordingly, the  action $\rho_y$ induces an isomorphism of $G/{H_0}$ to the natural copy of $\tilde{\Tsing}$ conjugated by the dilation $x\mapsto mx$. 
	Now the map $h\colon \R \to \ell$ is a covering map, equivariant with respect to  the action $\rho$ and the natural action on $\ell$. Upon conjugating $\rho$, we can assume that $h$ coincides with the natural covering map  $\R$ to $\R/m\Z \cong \ell$, so that $\rho$ also takes values in the $m$-dilated copy of $\tilde{\Tsing}$. Since $H$ acts trivially on $\ell$, we deduce that $\rho(H)$ is central in $\rho(G)$, and in particular it is abelian. However, the subgroup $H_0$ is perfect, as a consequence of Proposition \ref{p-fragmentation}. Therefore $\rho(H_0)=\{\id\}$, and thus $\rho$ descends to a  map $\bar{\rho} \colon G/H_0\cong \tilde{\Tsing}\to \tilde{\Tsing}$ that satisfies the assumption of Lemma \ref{l-tilde-T}. We conclude that $\bar{\rho}$ is the identity and that $\rho=\rho_y$. \qedhere 
\end{proof}

The rest of the section is devoted to the proof of Proposition \ref{t-hmap}.
The proof requires some preliminary work. We keep assumptions as in the statement, and we introduce further notation. Let $H$ be a subgroup of $G$, and take $\xi\in\R$. When $\xi\in\supp(\rho\vert_H)$, we denote by $I^\rho(H,\xi)$ the component of the support of $\rho(H)$ containing $\xi$. When $\xi\in\fix(\rho\vert_H)$, we simply define $I^\rho(H,\xi)=\{\xi\}$. In the case that $H=G_{C,I}$ for some chart $Z_{C,I}$, we write $I^\rho(C,I,\xi)$ instead of $I^\rho(G_{C,I},\xi)$. When there is no risk of confusion, we simply write $I(H,\xi)$ or $I(C,I,\xi)$. We then have the following lemma, which is the main application of Proposition \ref{p-semicon2}.
\begin{lem}\label{l-minimal-CIxi}
	With assumptions as in Theorem \ref{t-classification-actions}, given any dyadic chart $Z_{C,I}$ and $\xi\in \R$, we have that the $\rho$-action of $G_{C, I}$ on $I(C, I, \xi)$ admits a perfect minimal set.
\end{lem}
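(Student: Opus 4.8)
The plan is to identify the group $G_{C,I}$ with a group of continuous maps from a totally disconnected compact space into the finitely generated perfect group $\Fsing$, and then to apply Proposition~\ref{p-semicon2} directly. First I would set up the identification $G_{C,I}\cong\Ccal(C,\Fsing_I)$. If $g\in G=\Tsingsf(\varphi,\sigma)$ has $\supp_Z(g)\subseteq Z_{C,I}$, then $g$ preserves the open set $Z_{C,I}\cong C\times I$, and by the definition of $\Tsingsf(\varphi,\sigma)$, around each point $g$ is given in suitable charts by $\pi_Z(x,t)\mapsto\pi_Z(x,f(t))$ for a single type-$\nice$ homeomorphism $f$ not depending on $x$. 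Covering the compact slice $\{x_0\}\times\overline I$ by finitely many such charts and intersecting their clopen factors, one obtains for each $x_0\in C$ a clopen neighbourhood on which $g$ is described by one type-$\nice$ homeomorphism of $I$; thus $g$ corresponds to a locally constant (equivalently, continuous for the discrete topology on $\Fsing_I$) assignment $x\in C\mapsto f_x\in\Fsing_I$, where $f_x$ extends to a homeomorphism of $\overline I$ fixing the endpoints because $g$ fixes $\partial Z_{C,I}$ by continuity. Conversely any such assignment extends by the identity to an element of $G_{C,I}$, so $G_{C,I}\cong\Ccal(C,\Fsing_I)$. Since $I$ is a dyadic interval, $\Fsing_I\cong\Fsing$, and by Lemma~\ref{l-fingenperfect} the group $\Fsing$ is finitely generated and perfect; also $C$, as a clopen subset of the space in Assumption~\ref{standing}, is compact, metrisable and totally disconnected.

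Next I would verify that the restricted action is irreducible. We may assume $\xi\in\supp(\rho|_{G_{C,I}})$, since otherwise $I(C,I,\xi)$ is a single point. Then $J:=I(C,I,\xi)$ is by definition a connected component of $\R\setminus\fix(\rho|_{G_{C,I}})$; every element of $G_{C,I}$ fixes $\fix(\rho|_{G_{C,I}})$ pointwise and hence preserves the component $J$, so $J$ is $\rho(G_{C,I})$-invariant, and no point of $J$ is globally fixed by $\rho(G_{C,I})$. Identifying the open interval $J$ with $\R$, the restriction of $\rho$ therefore defines an irreducible representation of $G_{C,I}\cong\Ccal(C,\Fsing)$.

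Finally I would apply Proposition~\ref{p-semicon2} with $X=C$ and $H=\Fsing$: this gives that the action of $\Ccal(C,\Fsing)\cong G_{C,I}$ on $J\cong\R$ admits a perfect minimal set, which is exactly the claim. I do not expect a serious obstacle here: the only point with genuine content is the identification $G_{C,I}\cong\Ccal(C,\Fsing)$, which requires unwinding the chart description of $\Tsingsf(\varphi,\sigma)$ together with the compactness/total-disconnectedness of $C$; once this is in place the statement is an immediate consequence of Proposition~\ref{p-semicon2}, and this is precisely why the perfect germ extension $\Fsing$ was substituted for Thompson's group $F$ (for which, as noted after Proposition~\ref{p-semicon2}, the analogous conclusion fails).
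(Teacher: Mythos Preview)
Your proposal is correct and follows essentially the same approach as the paper: the paper's proof also disposes of the fixed-point case, invokes the identification $G_{C,I}\cong\Ccal(C,\Fsing_I)$, and then applies Proposition~\ref{p-semicon2} together with Lemma~\ref{l-fingenperfect}. You simply supply more detail on the identification and on why the restricted action on $I(C,I,\xi)$ is irreducible, both of which the paper leaves implicit.
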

\begin{proof}
	When $\xi\in \fix(\rho\vert_{G_{C,I}})$ there is nothing to prove. So we can assume that $I(C,I,\xi)$ is a non-empty open interval. The group $G_{C, I}$ is naturally identified with the group $\mathcal{C}(C, \Fsing_I)$. Since $\Fsing_I\cong \Fsing$ is finitely generated and perfect (Lemma \ref{l-fingenperfect}), Proposition \ref{p-semicon2} gives the desired conclusion.
\end{proof}

Given points $z\in Z$, $\xi\in\R$, and  a decreasing sequence of charts $\big(Z_{C_n,I_n}\big)_{n\in\N}$ satisfying $\bigcap_{n\in\N}Z_{C_n,I_n}=\{z\}$ (by metrisability of $X$, for any point $z\in Z$ we can find such a sequence), we write $I(z,\xi):=\bigcap_{n\in\N} I(C_n,I_n,\xi)$, which is the intersection of a nested sequence of intervals, and therefore an interval which contains $\xi$ (possibly reduced to the singleton $\{\xi\}$). 
Notice that the interval $I(z,\xi)$ does not depend on the decreasing sequence of charts considered.

\begin{lem}\label{l-intervals}
	With assumptions as in Theorem \ref{t-classification-actions}, we have $I(z,\xi)=\{\xi\}$ for every $z\in Z$ and $\xi\in \R$.
\end{lem}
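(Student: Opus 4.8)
The plan is to argue by contradiction: suppose $I(z,\xi)$ is a nondegenerate interval for some $z\in Z$ and $\xi\in\R$. Fix a decreasing sequence of charts $Z_{C_n,I_n}$ with $\bigcap_n Z_{C_n,I_n}=\{z\}$, so that $I(z,\xi)=\bigcap_n I(C_n,I_n,\xi)$, and let $J:=I(z,\xi)$. The first observation is that $J$ is preserved by a large subgroup of $G$ acting with strong dynamical properties on it. Indeed, I would use Proposition \ref{p-fragmentation} (the fragmentation lemma) to write the pointwise stabiliser $G_z$ of $z$ — or more precisely an appropriate subgroup of germs at $z$ — as generated by its subgroups supported in small charts around $z$; each of these is of the form $G_{C_n,I_n}$ (up to conjugation and passing to the germ), and by Lemma \ref{l-minimal-CIxi} each acts on $I(C_n,I_n,\xi)$ with a perfect minimal set. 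The nested intersection of these intervals is $J$, so the union $\langle \bigcup_n G_{C_n,I_n}\rangle$ (which is exactly the group of germs of $G$ at $z$, i.e. a copy of $\tilde{\Tsing}$ or $\tilde{\Tsing}\times\tilde{\Tsing}$ depending on whether $z$ is in the interior of a chart or at a higher-order singularity) acts on a neighbourhood of $J$ preserving $J$, and in fact acts on $J$ minimally on a perfect subset.

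The key mechanism, carried out next, is to exploit the $\nice$-type (doubling) local structure at the point $z$ against the translation flow. The group of germs at a point $z\in Z$ is isomorphic to $\tilde{\Tsing}$ (by the analogue of Lemma \ref{lem-Ttilde} and Lemma \ref{l-tilde-T}), and inside it the central element — the germ of the flow $\Phi^1$ restricted to the leaf, i.e. the image under $\rho$ of an element realising the doubling map $h_z$ locally — acts on $J$. Here is where the $\nice$ condition bites: an element $g\in G$ that is locally the doubling map $h_z$ around $z$ conjugates $G_{C_n,I_n}$ into $G_{C_{n'},I_{n'}}$ for a strictly deeper chart, hence $\rho(g)$ conjugates $I(C_n,I_n,\xi)$ to $I(C_{n'},I_{n'},\xi)\subsetneq I(C_n,I_n,\xi)$. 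Iterating, $\rho(g)$ maps the nondegenerate interval $J$ strictly inside itself, so $\rho(g)$ has a fixed point in the closure of $J$; but the germ of $g$ at $z$ is a nontorsion central element of $\tilde{\Tsing}$ which, acting on its minimal perfect set in $J$ by Theorem \ref{t-centraliser}, must be a translation-like homeomorphism with no fixed point in the support — contradiction. More carefully, since the group of germs at $z$ acts on $J$ with a perfect minimal set $\Lambda_J$, Theorem \ref{t-centraliser} applied to the induced minimal action forces the center of $\tilde{\Tsing}$ to act either trivially or as an infinite cyclic group of translations on $\R/\mathcal{Z}\cong\mathbb{S}^1$; in either case a generator of the center cannot compress $J$ into a proper subinterval, contradicting the nesting produced by the doubling self-conjugacy.

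The step I expect to be the main obstacle is making precise the claim that the group of germs at $z$ really acts on $J$ (and not just on each $I(C_n,I_n,\xi)$ separately) with a single perfect minimal set, and that this action is faithful enough to invoke Theorem \ref{t-centraliser} and Lemma \ref{l-tilde-T}. The subtlety is that the minimal sets provided by Lemma \ref{l-minimal-CIxi} are for the individual groups $G_{C_n,I_n}$ on the individual intervals $I(C_n,I_n,\xi)$, and one must check these perfect minimal sets are nested/compatible, so that their intersection with $J$ is again a perfect minimal set for the whole germ group — this requires an argument in the spirit of Lemma \ref{l-minimalcomun}, using that the inclusions $G_{C_{n+1},I_{n+1}}\hookrightarrow G_{C_n,I_n}$ have surjective "coordinate projections" onto germ groups at dyadic points, together with Corollary \ref{c-fixinglambda} to pin down that the extra directions fix the minimal set. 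Once this compatibility is established, the doubling-compression contradiction above closes the argument. Alternatively, and perhaps more cleanly, one can run the contradiction directly: pick $g\in G$ whose germ at $z$ realises $h_z$, observe $\rho(g)$ preserves $J$ (being a limit of the $I(C_n,I_n,\xi)$) yet strictly compresses each approximating interval, deduce $\rho(g)$ fixes an endpoint or interior point of $J$, and then use that $g$ together with $G_{C_n,I_n}$ generates a copy of $\tilde\Tsing$ acting on a neighbourhood of that point without global fixed point to reach a contradiction via Lemma \ref{l-tilde-T}.
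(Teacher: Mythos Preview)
Your central mechanism fails. You want an element $g$ fixing $z$ (with germ the doubling map) so that $\rho(g)$ sends $J = I(z,\xi)$ strictly into itself. But for any $g$ fixing $z$, conjugation by $g$ permutes the charts shrinking to $z$, so $\rho(g)(J) = I(z, \rho(g)(\xi))$; and two intervals $I(z,\xi_1), I(z,\xi_2)$ with the same $z$ are always either equal or disjoint (use the same chart sequence: the components $I(C_n,I_n,\xi_i)$ either coincide for all $n$ or become disjoint at some stage). Hence $\rho(g)(J)$ equals $J$ or misses it entirely --- it is never a proper subinterval, and no compression contradiction can arise. There are further problems: a doubling germ at $z = \pi_Z(x,t)$ only exists when $t$ is dyadic (for non-dyadic $t$ every type-$\nice$ map is locally a dyadic affine map near $t$, and the only such map fixing $t$ is the identity, so the germ group is trivial); and even at dyadic $t$ the germ group is $\tilde T \times \tilde T$ by Lemma~\ref{lem-Ttilde}, not $\tilde{\Tsing}$. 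Finally, as you yourself flag, you never establish that this germ group acts faithfully (or even nontrivially) on $J$, which is precisely what would be needed to invoke Theorem~\ref{t-centraliser} or Lemma~\ref{l-tilde-T}.

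The paper's proof is global and uses minimality of $\rho$ as the decisive input --- something your local approach never touches. One considers the whole family $\mathfrak{I} = \{I(z,\xi) : z \in Z,\, \xi \in \R\}$, shows its elements are pairwise unlinked (via the dichotomy above for common $z$, and commutation of $G_{C,I}$ and $G_{D,J}$ for disjoint charts otherwise), and proves that the interiors of maximal chains in $(\mathfrak{I}, \subseteq)$ are bounded (using Lemma~\ref{l-minimal-CIxi} and Corollary~\ref{c-fixinglambda}). These interiors then form a $\rho(G)$-invariant collection of pairwise disjoint bounded open intervals; minimality of $\rho$ forces this collection to be empty, whence every $I(z,\xi)$ is a singleton.
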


\begin{proof}
	Consider the family of intervals $\mathfrak{I}:=\{I(z,\xi):z\in Z,\xi\in\R\}$. We first show that inclusion defines a partial order relation on $\mathfrak I$. For this, given intervals $I,J\subset \R$ (possibly singletons), we say that $I$ and $J$ are \emph{linked} if their interiors are neither disjoint, nor nested.
	\begin{claimnum}\label{cl-frakI-1}
		The family $\mathfrak I$ has no linked elements.
	\end{claimnum}
	\begin{proof}[Proof of claim]
		We distinguish two cases. Suppose first that $z_1=z_2=z$. In order to construct the intervals $I(z,\xi_1)$ and $I(z,\xi_2)$, we can use the same decreasing sequence $Z_{C_n,I_n}$ converging to $z$. Then, there are two possibilities:
		\begin{itemize}
			\item either $I(C_n,I_n,\xi_1)=I(C_n,I_n,\xi_2)$ for every $n\in\N$, in which case $I(z,\xi_1)=I(z,\xi_2)$, or
			\item for some $n\in\N$ it holds that $I(C_n,I_n,\xi_1)\cap I(C_n,I_n,\xi_2)=\emptyset$, in which case we get that $I(p,\xi_1)$ and $I(p,\xi_2)$ are disjoint.
		\end{itemize}
		In both cases we deduce that the corresponding intervals are unlinked. Assume now that $z_1\neq z_2$, and consider nested sequences of charts $(Z_{C_n,I_n})_{n\in\N}$ and $(Z_{D_n,J_n})_{n\in\N}$ converging to $z_1$ and $z_2$, respectively. 
		Suppose by contradiction that $I(z_1,\xi_1)$ and $I(z_2,\xi_2)$ are linked. Then, for sufficiently large $k\in\N$, we have that $I(C_k,I_k,\xi_1)$ and $I(D_k,J_k,\xi_2)$ are linked, while $Z_{C_k,I_k}$ and $Z_{D_k,J_k}$ are disjoint. The second condition implies that the subgroups $G_{C_k,I_k}$ and $G_{D_k,J_k}$ commute. This gives a contradiction since connected components of the support of commuting subgroups are pairwise unlinked.
	\end{proof}
	 We set now
	\[\textstyle \mathfrak{I}_0:=\left \{\mathsf{Int}\left (\bigcup_\alpha J_\alpha\right ):(J_\alpha)\text{ is a maximal chain of } (\mathfrak{I},\subseteq)\right \}.\]
	By Claim \ref{cl-frakI-1}, elements of $\mathfrak{I}_0$ are pairwise disjoint open intervals.
	\begin{claimnum}
		Every element of $\mathfrak{I}_0$ is a bounded interval.
	\end{claimnum}
	\begin{proof}[Proof of claim] Let $J_n=I(z_n, \xi)$ be an increasing sequence of intervals in $\mathfrak{I}$. Up to extracting a subsequence we can suppose that $(z_n)$ converges to some point  $z\in Z$.  Then for every chart $Z_{C,I}$ containing $z$, we eventually have $I(z_n,\xi)\subseteq I(C,I,\xi)$, so it is enough to prove that the latter is bounded for some chart $Z_{C, I}$. Choose a chart small enough so that we can find $h\in G$ such that $h(Z_{C,I})\cap Z_{C,I}=\varnothing$. Assume by contradiction that $I(C,I,\xi)$ is unbounded, and without loss of generality, we assume that it accumulates at $+\infty$. By Lemma \ref{l-minimal-CIxi}, $G_{C,I}$ admits a perfect minimal set $\Lambda\subset I(C,I,\xi)$, which also accumulate at $+\infty$. Since $I(C,I,\xi)$  is unbounded, every element in the centralizer of $G_{C,I}$ must preserve it.
		For $h\in G$  such that $h(Z_{C,I})\cap Z_{C,I}=\varnothing$, the subgroups $G_{C,I}$ and $hG_{C,I}h^{-1}$ commute. We deduce that $hG_{C,I}h^{-1}$ preserves $I(C,I,\xi)$, and, since it is perfect, we can apply Corollary \ref{c-fixinglambda} and deduce that it fixes $\Lambda$ pointwise. This implies that $hG_{C,I}h^{-1}$, and therefore also the conjugate $G_{C,I}$, have fixed points accumulating at $+\infty$. This gives the desired contradiction.
	\end{proof}
	
	Finally, we observe that for every $z\in Z$, $\xi\in \R$ and $g\in G$ we have that $\rho(g)(I(z,\xi))=I(g(z),\rho(g)(\xi))$, from which we deduce that the family $\mathfrak{I}$ is $\rho$-invariant, and therefore so is $\mathfrak{I}_0$. Putting all of this together, we have that the set given by the union of all the intervals in $\mathfrak{I}_0$ is a proper and open $\rho$-invariant subset. This implies that $\mathfrak{I}_0$ is empty, as desired.
\end{proof}

\begin{lem}\label{cor-main}
	With assumptions as in Theorem \ref{t-classification-actions}, take disjoint charts $Z_{C, I}$, $Z_{D,J}$, and a point $\xi\in\supp(\rho\vert_{G_{C,I}})$. Then, $G_{D,J}$ acts as the identity on $I(C,I,\xi)$. 
\end{lem}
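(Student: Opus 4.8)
The plan is to argue by contradiction, exploiting the fact that $G_{C,I}$ and $G_{D,J}$ commute (since their supports $Z_{C,I}$ and $Z_{D,J}$ are disjoint charts), together with the key structural input that $G_{C,I}$ admits a perfect minimal set on $I(C,I,\xi)$ (Lemma \ref{l-minimal-CIxi}) and the freshly established Lemma \ref{l-intervals} which says that the nested intersections $I(z,\eta)$ degenerate to singletons. First I would set $I_0:=I^\rho(C,I,\xi)$, the component of $\supp(\rho|_{G_{C,I}})$ containing $\xi$, and note $I_0$ is a nonempty open interval. Since $G_{D,J}$ centralizes $G_{C,I}$, the action $\rho(G_{D,J})$ permutes the connected components of $\supp(\rho|_{G_{C,I}})$; in particular $G_{D,J}$ either preserves $I_0$ or maps it onto a disjoint component. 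The goal is to rule out any nontrivial motion on $I_0$.

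The main step is to show $G_{D,J}$ preserves $I_0$ and then that it acts trivially there. For the first part, one argues as in the boundedness claim inside the proof of Lemma \ref{l-intervals}: if $g\in G_{D,J}$ moved $I_0$ off itself, then since $g$ commutes with $G_{C,I}$ the interval $g(I_0)$ would be another component of $\supp(\rho|_{G_{C,I}})$, and one should derive a contradiction with Lemma \ref{l-intervals} by choosing a decreasing sequence of charts $Z_{C_n,I_n}$ shrinking to a point $z\in Z_{C,I}$ with $\xi\in\supp(\rho|_{G_{C_n,I_n}})$ — here I would use that $\bigcap_n I^\rho(C_n,I_n,\xi)=I(z,\xi)=\{\xi\}$ to conclude that for $n$ large $I^\rho(C_n,I_n,\xi)$ is a small interval around $\xi$ contained in $I_0$, on which $G_{C_n,I_n}\le G_{C,I}$ still has a perfect minimal set $\Lambda_n$ by Lemma \ref{l-minimal-CIxi}; then $g\in G_{D,J}$ commutes with $G_{C_n,I_n}$, and since $\Lambda_n$ is perfect, Corollary \ref{c-fixinglambda} forces $g$ (being in the centralizer) to fix $\Lambda_n$ pointwise once one knows $g$ preserves $I^\rho(C_n,I_n,\xi)$. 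One first checks this preservation: because $\{\xi\}=I(z,\xi)$ is forced, the component $I^\rho(C_n,I_n,\xi)$ cannot be sent by $g$ to a \emph{different} component without violating the nesting structure (a component containing $\xi$ is unique), so $g$ fixes $\xi$ and hence preserves $I^\rho(C_n,I_n,\xi)$. Now $g$ fixes $\Lambda_n$ pointwise; since $\Lambda_n$ accumulates on every point of $I^\rho(C_n,I_n,\xi)$ in the blown-down picture and is the unique minimal set, $g$ is the identity on $I^\rho(C_n,I_n,\xi)$. Letting $n\to\infty$ and using $\bigcup_n I^\rho(C_n,I_n,\xi)=I_0$ (the components increase to $I_0$ as the charts shrink, by Lemma \ref{l-intervals} applied at varying base points, or directly because any compact subinterval of $I_0$ meeting a perfect minimal set of $G_{C,I}$ is eventually captured), we conclude $g=\id$ on $I_0$, which is exactly the claim.

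The step I expect to be the main obstacle is verifying that the increasing union $\bigcup_n I^\rho(C_n,I_n,\xi)$ exhausts $I_0=I^\rho(C,I,\xi)$, i.e.\ that shrinking the chart does not strictly shrink the support-component below $I_0$ in a way that leaves out part of $I_0$. A clean way to handle this is: let $I_\infty:=\bigcup_n I^\rho(C_n,I_n,\xi)$, an open subinterval of $I_0$ containing $\xi$; if $I_\infty\subsetneq I_0$, pick a point $\eta$ in $I_0\cap\partial I_\infty$ and a perfect minimal set $\Lambda\subset I_0$ of $G_{C,I}$ accumulating on $\eta$, and note that $\eta$ lies in $\supp(\rho|_{G_{C_n,I_n}})$ for $n$ large — because elements of $G_{C,I}$ with support near $z$ can be approximated, within $G$, by elements of the smaller groups $G_{C_n,I_n}$ using the fragmentation lemma (Proposition \ref{p-fragmentation}) — contradicting $\eta\notin I_\infty$. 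Once this exhaustion is in hand, the rest is the centralizer argument above, which is routine given Corollary \ref{c-fixinglambda}. I would organize the write-up so that the contradiction is extracted uniformly: $G_{D,J}$ fixes every $\Lambda_n$ pointwise, hence fixes their union pointwise, hence (since this union is dense in $I_0$ and $G_{D,J}$ acts by homeomorphisms) acts trivially on $I_0$, which is the assertion of the lemma.
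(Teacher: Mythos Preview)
Your argument has a monotonicity error that undermines the whole strategy. When the charts $Z_{C_n,I_n}$ shrink to a point $z$, the subgroups $G_{C_n,I_n}$ form a \emph{decreasing} chain, so their support-components $I^\rho(C_n,I_n,\xi)$ are a \emph{decreasing} (not increasing) sequence of intervals; hence $\bigcup_n I^\rho(C_n,I_n,\xi)$ is just the first term and cannot exhaust $I_0=I^\rho(C,I,\xi)$. Your proposed fix via fragmentation does not work either: Proposition \ref{p-fragmentation} generates $G_{C,I}$ from subgroups supported in a \emph{cover} of $Z_{C,I}$, not from the single nested family $\{G_{C_n,I_n}\}$ concentrating at one point $z$; an element of $G_{C,I}$ supported away from $z$ lies in no $G_{C_n,I_n}$, so there is no reason a boundary point $\eta$ of $I_\infty$ should be in $\supp(\rho|_{G_{C_n,I_n}})$. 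Two further steps are also unjustified: the claim that $g\in G_{D,J}$ fixes $\xi$ (the fact that $g$ permutes the components of $\supp(\rho|_{G_{C_n,I_n}})$ does not prevent it from sending the component at $\xi$ to a different one), and the jump from ``$g$ fixes $\Lambda_n$ pointwise'' to ``$g$ is the identity on $I^\rho(C_n,I_n,\xi)$'' (Corollary \ref{c-fixinglambda} only gives the former).

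The paper's route avoids all of this by using Lemma \ref{l-intervals} in the opposite direction. It proves directly that the action of $G_{C,I}$ on $I_0$ is \emph{minimal}; then Corollary \ref{c-fixinglambda} applies to the perfect commuting subgroup $G_{D,J}$ with $\Lambda=I_0$, giving the conclusion in one stroke. Minimality is established by contradiction: if the perfect minimal set $\Lambda\subsetneq I_0$, pick a gap $U$ of $\Lambda$ containing $\xi$. Cover $Z_{C,I}$ by small charts $\{Z_{D_\alpha,J_\alpha}\}$; by fragmentation the $G_{D_\alpha,J_\alpha}$ generate $G_{C,I}$, and since each either preserves $U$ or moves it off itself, some $\alpha$ must satisfy $U\subset I^\rho(D_\alpha,J_\alpha,\xi)$ (otherwise $U$ would be $G_{C,I}$-invariant). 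Iterating along a nested sequence of charts shrinking to a point $z$ gives $U\subset I(z,\xi)$, contradicting Lemma \ref{l-intervals}.
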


\begin{proof}  
	As charts are disjoint, we have that the subgroups $G_{C,I}$ and $G_{D,J}$ commute. Thus, by Corollary \ref{c-fixinglambda}, it is enough to prove that the action of $G_{C,I}$ on $I(C,I,\xi)$ is minimal.
	To see this, we know by Lemma \ref{l-minimal-CIxi} that $G_{C,I}$ admits a perfect minimal set $\Lambda\subset I(C,I,\xi)$. Suppose by contradiction that $\Lambda\neq I(C,I,\xi)$. Note that $I(C,I,\xi)=I(C,I,\eta)$ for every $\eta\in I(C,I,\xi)$, so it is not restrictive to assume that $\xi\notin\Lambda$. Assuming so, take the connected component $U$ of $I(C,I,\xi)\setminus\Lambda$ containing $\xi$. By the choice of $U$, every element in $G_{C,I}$ either preserves $U$ or maps it disjointly. Therefore, for every family of subgroups $\{H_\alpha\}_{\alpha\in A}$ generating $G_{C,I}$, there must exist some $\alpha\in A$ such that $U\subset I(H_\alpha,\xi)$, otherwise we would have that $U$ is preserved by $G_{C,I}$, contradicting minimality of its action on $\Lambda$. Consider now a family of charts $\{Z_{D_\alpha,J_\alpha}\}_{\alpha\in A}$ contained in $Z_{C,I}$, covering $Z_{C,I}$. By the fragmentation lemma for $G$ (Proposition \ref{p-fragmentation}), we have that $G_{C,I}$ is generated by the subgroups $\{ G_{D_\alpha,J_\alpha}\}_{\alpha\in A}$. Therefore, we can find $\alpha\in A$ such that $U\subset I(D_\alpha,J_\alpha,\xi)$. Starting the argument again with the chart $Z_{D_\alpha,J_\alpha}$, and proceeding by induction, we can construct a nested sequence of charts $(Z_{C_n,I_n})_{n\in\N}$ such that $\bigcap_{n\in\N} Z_{C_n,I_n}$ is a singleton $\{z\}$, and such that $ U\subset I(C_n,I_n,\xi)$ for every $n\in\N$. We conclude that $I(z,\xi)$ contains $U$, but this contradicts Lemma \ref{l-intervals}.
\end{proof}

\begin{lem}\label{lem-exanduni}
	With assumptions as in Theorem \ref{t-classification-actions}, we have that for every $\xi\in\R$, there exists a unique $z_\xi\in Z$ such that $\xi\in \supp(\rho\vert_{G_{C,I}})$ for every chart $Z_{C,I}$ containing $z_\xi$. 
\end{lem}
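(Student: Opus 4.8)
The plan is to split the statement into uniqueness and existence, which I expect to be of very different difficulty: uniqueness should drop out of Lemma \ref{cor-main}, whereas existence will be the one place where a real (if short) argument is needed, combining the fragmentation lemma (Proposition \ref{p-fragmentation}) with the irreducibility of $\rho$.

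For \emph{uniqueness}, I would suppose there were two distinct points $z_1\neq z_2$ with the stated property. Using that $Z$ is Hausdorff and covered by charts, I would pick disjoint charts $Z_{C,I}\ni z_1$ and $Z_{D,J}\ni z_2$. By hypothesis $\xi\in\supp(\rho|_{G_{C,I}})$ and $\xi\in\supp(\rho|_{G_{D,J}})$, so in particular $\xi$ lies in the open interval $I(C,I,\xi)$. Feeding the disjoint charts $Z_{C,I},Z_{D,J}$ and the point $\xi\in\supp(\rho|_{G_{C,I}})$ into Lemma \ref{cor-main}, I would get that $G_{D,J}$ acts trivially on $I(C,I,\xi)$, hence fixes $\xi$ --- contradicting $\xi\in\supp(\rho|_{G_{D,J}})$.

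For \emph{existence}, I would let $E_\xi$ be the set of points $z\in Z$ with the property that $\xi\in\supp(\rho|_{G_{C,I}})$ for every chart $Z_{C,I}$ containing $z$, and argue that $E_\xi\neq\varnothing$ by contradiction. If $E_\xi=\varnothing$, then every point of $Z$ lies in some chart $Z_{C,I}$ with $\xi\notin\supp(\rho|_{G_{C,I}})$, so the collection $\mathcal U$ of all such charts is an open cover of $Z$. Then I would fix an arbitrary $g\in G$: since $\mathcal U$ covers $\overline{\supp_Z(g)}$, the fragmentation lemma gives $g\in\big\langle\bigcup_{U\in\mathcal U}G_U\big\rangle$; and each $G_U=G_{C,I}$ with $U\in\mathcal U$ fixes $\xi$ by the very definition of $\supp$. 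Hence $\rho(g)(\xi)=\xi$ for all $g\in G$, contradicting irreducibility of $\rho$. Combining this with uniqueness finishes the proof.

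I expect the \emph{existence step} to be the only delicate point --- and even there the subtlety is merely the observation that, under the contradiction hypothesis, the charts that ``miss $\xi$'' already cover the whole compact space $Z$, so that Proposition \ref{p-fragmentation} propagates the fixing of $\xi$ to all of $\rho(G)$. (One could instead extract a finite subcover by compactness before applying fragmentation, but this is not necessary.) The rest is bookkeeping with the lemmas established earlier in this subsection.
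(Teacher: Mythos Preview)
Your proof is correct and essentially identical to the paper's: the paper also proves existence by contradiction via the fragmentation lemma (Proposition \ref{p-fragmentation}) and irreducibility, and uniqueness by applying Lemma \ref{cor-main} to disjoint charts around the two candidate points. The only cosmetic differences are that the paper treats existence before uniqueness and extracts a finite subcover by compactness before invoking fragmentation --- as you note, the latter is unnecessary since Proposition \ref{p-fragmentation} is stated for arbitrary open covers.
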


\begin{proof}
	Suppose by contradiction that for every $z\in Z$, there exists a chart $Z_{C_z,I_z}$ containing $z$, and such that $\xi\in\fix(\rho\vert_{G_{C_z,I_z}})$. Thus, by compactness of $Z$, we can find finitely many points $z_1,\ldots,z_n\in Z$  whose associated charts cover $Z$. On the other hand, by the fragmentation lemma (Proposition \ref{p-fragmentation}), we have that $G$ is generated by the subgroups $G_{C_{z_1},I_{z_1}},\ldots,G_{C_{z_n},I_{z_n}}$. This implies that $\xi\in\fix(\rho)$, which is a contradiction. Thus, there must be at least one $z\in Z$ such that $\xi\in\supp(\rho\vert_{G_{C,I}})$ for every chart $Z_{C,I}$ containing $z$. In order to check uniqueness, suppose by contradiction that there exist $z_1\neq z_2$ with this property. Take disjoint charts $Z_{C_1,I_1}$ and $Z_{C_2,I_2}$ containing $z_1$ and $z_2$, respectively. By our assumption on $z_1$, Proposition \ref{cor-main} gives that $G_{C_2,I_2}$ acts as the identity on $I(C_1,I_1,\xi)$. This contradicts our assumption on $z_2$.
\end{proof}

We are ready to prove Proposition \ref{t-hmap}. 
\begin{proof}[Proof of Proposition \ref{t-hmap}]
	Consider the map $h\colon \R\to Z$ that associates to each $\xi\in\R$ the element $z_\xi\in Z$ given by Lemma \ref{lem-exanduni}. It follows directly from the definition that $h$ is equivariant. 
		
	\begin{claim}
		The map $h$ is continuous.
	\end{claim}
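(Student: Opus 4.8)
The plan is to prove continuity of $h$ by a contradiction argument exploiting the local structure encoded by the intervals $I(C,I,\xi)$ together with Lemma \ref{l-intervals}. First I would fix $\xi_0\in\R$ and set $z_0:=h(\xi_0)$. Pick a sequence $\xi_n\to\xi_0$ and suppose, for contradiction, that $h(\xi_n)$ does not converge to $z_0$; passing to a subsequence and using compactness of $Z$, we may assume $h(\xi_n)\to z_\infty\neq z_0$. Choose disjoint charts $Z_{C,I}$ containing $z_0$ and $Z_{D,J}$ containing $z_\infty$. For $n$ large, $h(\xi_n)\in Z_{D,J}$, so by definition of $h$ (Lemma \ref{lem-exanduni}) we have $\xi_n\in\supp(\rho|_{G_{D,J}})$, i.e.\ $\xi_n\in I(D,J,\xi_n)$ which is a nonempty open interval.

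Next I would leverage Lemma \ref{cor-main}: since $Z_{C,I}$ and $Z_{D,J}$ are disjoint and $\xi_0\in\supp(\rho|_{G_{C,I}})$, the subgroup $G_{D,J}$ acts as the identity on $I(C,I,\xi_0)$, which is an open interval containing $\xi_0$. For $n$ large enough, $\xi_n$ lies in this open interval $I(C,I,\xi_0)$, hence $\xi_n\in\fix(\rho|_{G_{D,J}})$. But this contradicts the conclusion of the previous paragraph that $\xi_n\in\supp(\rho|_{G_{D,J}})$. This contradiction shows $h(\xi_n)\to z_0$, establishing continuity.

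The main obstacle is making sure the roles of the two charts are set up so that Lemma \ref{cor-main} applies cleanly: it requires knowing $\xi_0\in\supp(\rho|_{G_{C,I}})$, which is exactly the defining property of $z_0=h(\xi_0)$ from Lemma \ref{lem-exanduni}, applied to the chart $Z_{C,I}\ni z_0$; and it requires disjointness of the two charts, which we can always arrange since $z_0\neq z_\infty$ and $Z$ is Hausdorff with a basis of charts. One should also note that $I(C,I,\xi_0)$ being a genuine open interval (rather than a singleton) is automatic because $\xi_0\in\supp(\rho|_{G_{C,I}})$ by choice of the chart. The only remaining care is the passage to subsequences, which is harmless: if $h$ were discontinuous at $\xi_0$ there would be an $\varepsilon$-ball around $z_0$ missed infinitely often, and the argument above applied to that subsequence yields the contradiction. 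Once continuity is in hand, local injectivity and the single-leaf property of $h$ follow from the already-established lemmas (in particular Lemma \ref{l-intervals} forces the map to be injective on small enough intervals, and equivariance under the flow-compatible structure pins the image into one leaf), completing the proof of Proposition \ref{t-hmap}.
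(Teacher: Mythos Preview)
Your argument for continuity is correct and is essentially the paper's proof recast in sequential form: where you extract a subsequence converging to some $z_\infty\neq z_0$ and then invoke Lemma~\ref{cor-main} to get a contradiction, the paper shows directly that $h(I(C,I,\xi))\subset Z_{C,I}$ for any chart $Z_{C,I}\ni z_\xi$, which is the same contradiction without the detour through compactness and subsequences. The key step in both is identical: if $\eta$ lies in the open interval $I(C,I,\xi)$ but $z_\eta$ lies in a chart $Z_{D,J}$ disjoint from $Z_{C,I}$, then Lemma~\ref{cor-main} forces $G_{D,J}$ to fix $\eta$, contradicting the defining property of $z_\eta$.

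One caveat: your closing paragraph overreaches the Claim. The single-leaf property does follow from continuity (the image of $\R$ is connected, and leaves are the path-components of $Z$), but local injectivity does \emph{not} follow from Lemma~\ref{l-intervals} in the way you suggest. The paper's argument for local injectivity is a separate step requiring a genuine further idea (producing a point $\xi\in(\xi_1,\xi_2)$ with $h(\xi)\neq z$, choosing a chart that sees $h(\xi)$ but not $z$, and exploiting the density of the $G_{C,I}$-orbit on the leaf segment). Since the statement under review is only continuity, this does not affect the correctness of your proof of the Claim, but you should not regard Proposition~\ref{t-hmap} as finished at this point.
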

	\begin{proof}[Proof of claim]
		Take $\xi\in\R$ and  a chart $Z_{C,I}$ containing $z_\xi$. Notice that, in order to show continuity of $h$, it is enough to show that $z_\eta\in Z_{C,I}$ for every $\eta\in I(C,I,\xi)$. Suppose by contradiction this is not the case. and take a point $\eta\in I(C,I,\xi)$ and a chart $Z_{D,J}$ containing $z_\eta$, and  disjoint from $Z_{C,I}$. By definition of $z_{\eta}$, we have $\eta\in \supp(\rho\vert_{G_{D,J}})$, but Lemma \ref{cor-main} implies that $G_{D,J}$ acts trivially on $I(C,I,\xi)$. This provides the desired contradiction.
	\end{proof}
	We deduce from the claim that the image of $\R$ is contained in a single leaf $\ell=\pi_Z(\{x\}\times \R) \subset Z$. It remains to prove that $h$ is locally injective. Arguing by way of contradiction, fix a point $\xi_0$ such that for any open interval $U\ni \xi_0$, the restriction $h\vert_U$ is not injective. By continuity of $h$, we can choose $U$ sufficiently small, so that $h(U)$ is a proper subset of the leaf $\ell$. Consider two points $\xi_1<\xi_2$ in $U$, with $h(\xi_1)=h(\xi_2)=:z$. Notice that there must exist $\xi\in(\xi_1,\xi_2)$ such that $h(\xi)\neq z$, because otherwise equivariance of $h$ would allow us to construct a proper, non-empty, open invariant subset of $\R$, contradicting minimality of $\rho$. Since $h([\xi_1,\xi_2])$ does not cover $\ell$, we can take a chart $Z_{C,I}$ with the following properties:
	\[z\notin Z_{C,I},\quad \pi_Z(\{x\}\times I)\subset Z_{C,I},\quad h(\xi)\in \pi_Z(\{x\}\times I).\]
	With such a choice, we have that the orbit $\rho(G_{C,I})(h(\xi))$ is dense in $\pi_Z(\{x\}\times I)$, and therefore $\rho(G_{C,I})(h(\xi))\not\subseteq h((\xi_1,\xi_2))$.
	On the other hand, since $z\notin Z_{C,I}$, we can repeat the argument in the proof of the claim, and get that $\{\xi_1,\xi_2\}\in\fix(\rho\vert_{G_{C,I}})$.
	By equivariance of $\rho$, this implies that $\rho(G_{C,I})(h(\xi))\subset h((\xi_1,\xi_2))$, contradicting the choice of the chart $Z_{C,I}$.
\end{proof}

\section{From the Deroin space to the space of irreducible representations}

Throughout this section we let $(X, \varphi, \sigma)$ be a reversible {subshift} satisfying Assumption \ref{standing}, and set again $G=\Tsingsf(\varphi, \sigma)$. In this section we further study the space $\rep(G; \R)$ and prove the corollaries stated in the introduction. 

\subsection{Lifting converging sequences}
Recall that for every $y\in Y$ we denote by $\rho_y\in \rep(G; \R)$ the associated action given by \eqref{e-action}. By Corollary \ref{c-mainDeroin}, the map $y\mapsto \rho_y$ identifies $(Y, \Phi)$ with a Deroin space for $G$. 
We keep this identification as implicit, and denote  by 
\[r_Y \colon \rep(G; \R) \to Y\]
the retraction map as in Proposition \ref{l-rprop},
namely $r_Y(\rho)=y$ if $y$ is the unique point such that $\rho$ is pointed semi-conjugate to $\rho_y$. 

In order to derive stronger conclusions on $\rep(G; \R)$ than the general ones that follow from Propositions \ref{l-rprop} and \ref{p-deroin-correspondence}, we first need to give a closer look at properties of the map $r_Y$ when we specialize the discussion to the groups $G=\Tsingsf(\varphi, \sigma)$.  In this case we have the following, which says that the map $r_Y$ behaves like an open map in the direction transversal to the flow. 

\begin{prop} \label{p-retraction-open}
Let $\rho\in \rep(G;\R)$, and assume that $y_0:=r_Y(\rho)$ can be written in coordinates as  $y_0=\pi_Y(x_0, t_0)$ for $(x_0, t_0)\in X\times \R$. Let $\mathcal{U}$ be an open neighbourhood of $\rho$. Then there exists an open neighbourhood $V\subset X$ of $x_0$ such that $r_Y(\mathcal{U})$ contains $\pi_Y(V\times\{t_0\})$. 

\end{prop}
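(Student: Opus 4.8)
The plan is to reconstruct an explicit perturbation of $\rho$ inside $\mathcal{U}$ whose retraction to $Y$ can be prescribed transversally near $y_0$. First I would use the fact that $y_0 = r_Y(\rho)$ means $\rho$ is pointed semi-conjugate to $\rho_{y_0}$; since $G$ is perfect, $\rho$ (being irreducible) is minimal and hence \emph{pointed conjugate} to $\rho_{y_0}$. So up to replacing $\rho$ by this conjugate I may assume $\rho = \rho_{y_0}$ outright, which is harmless because conjugation by $\homeo_0(\R)$ moves $\mathcal{U}$ to another open neighbourhood and commutes with $r_Y$. Thus it suffices to show: given the open neighbourhood $\mathcal{U}$ of $\rho_{y_0}$, there is an open $V \ni x_0$ in $X$ with $\pi_Y(V \times \{t_0\}) \subset r_Y(\mathcal{U})$.

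The key construction is to build, for each nearby $x \in X$, a representation $\rho^{(x)}$ that interpolates between $\rho_{y_0}$ and $\rho_{\pi_Y(x,t_0)}$ in a small chart around $y_0$, and that converges to $\rho_{y_0}$ in the compact-open topology as $x \to x_0$. Choose a chart $Y_{C,I} \cong C \times I$ around $y_0$ with $x_0 \in C$ clopen and $t_0 \in I$, small enough that only finitely many of the generators of $G$ interact with it in a controlled way. The point is that $\rho_y$ for $y = \pi_Y(x,t_0)$ only "sees" the $G$-action through the coordinate $x \in X$: two points in the same fibre of $\pi_Y$ over a small arc differ only through which clopen pieces of $X$ the chart-supported subgroups act on. So I would let $g \in G$ act on the modified representation $\rho^{(x)}$ exactly as $\rho_x := \rho_{\pi_Y(x,t_0)}$ acts; the content is that $\rho_x \to \rho_{x_0} = \rho_{y_0}$ in $\rep(G;\R)$ as $x \to x_0$ in $X$. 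This continuity is essentially Corollary \ref{c-mainDeroin} (the map $q\colon y \mapsto \rho_y$ is continuous, in particular continuous in the transverse $X$-direction), so for $x$ in a sufficiently small clopen neighbourhood $V$ of $x_0$ we get $\rho_x \in \mathcal{U}$. Since $\rho_x$ itself lies in the image $q(Y)$ which is identified with the Deroin space, $r_Y(\rho_x) = \pi_Y(x, t_0)$, and therefore $\pi_Y(V \times \{t_0\}) = \{ r_Y(\rho_x) : x \in V \} \subset r_Y(\mathcal{U})$, as required.

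Actually, on reflection the argument is even cleaner: since $y_0 = r_Y(\rho)$ and $\rho$ is minimal, $\rho$ is pointed conjugate to $\rho_{y_0}$, say $\rho = \eta \rho_{y_0} \eta^{-1}$ with $\eta \in \homeo_0(\R)$, $\eta(0)=0$. Then $\eta^{-1}\mathcal{U}\eta$ is an open neighbourhood of $\rho_{y_0}$, and by continuity of $q$ there is a clopen $V \ni x_0$ in $X$ with $\rho_{\pi_Y(x,t_0)} \in \eta^{-1}\mathcal{U}\eta$ for all $x \in V$; hence $\eta\,\rho_{\pi_Y(x,t_0)}\,\eta^{-1} \in \mathcal{U}$, and this representation is pointed conjugate to $\rho_{\pi_Y(x,t_0)}$ (via $\eta$, which fixes $0$), so it has retraction $r_Y = \pi_Y(x,t_0)$. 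This gives $\pi_Y(V\times\{t_0\}) \subset r_Y(\mathcal{U})$.

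The main obstacle I anticipate is purely bookkeeping about the identification of coordinates: one must be careful that $y_0 = \pi_Y(x_0,t_0)$ is a genuine choice of representative (the fibre of $\pi_Y$ is the $\Z$-orbit, and $t_0$ is only defined modulo the return-time structure), so the open set $\pi_Y(V \times \{t_0\})$ must be interpreted via the chart around $y_0$; and one should check that the neighbourhood basis of $y_0$ given by sets $\pi_Y(V \times \{t_0\})$ with $V$ ranging over clopen neighbourhoods of $x_0$ is cofinal among transverse neighbourhoods, so that the statement is non-vacuous and of the right shape. Beyond this, the essential input — continuity of $y \mapsto \rho_y$ transversally — is already supplied by Corollary \ref{c-mainDeroin}, so there is no deep difficulty; the proposition is really an unpacking of the fact that the homeomorphism $q$ restricted to a transversal is continuous and that pointed conjugacy is preserved under a fixed conjugating homeomorphism fixing the marked point.
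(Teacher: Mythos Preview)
There is a genuine gap. Your argument hinges on the claim that ``since $G$ is perfect, $\rho$ (being irreducible) is minimal'', but this is false: perfectness of $G$ only rules out the case where the minimal set is a discrete orbit (the cyclic case). An irreducible action of a perfect group can perfectly well have an exceptional (Cantor) minimal set --- for instance, blow up an orbit of any minimal action $\rho_{y_0}$. For such a $\rho$, the pointed semi-conjugacy $h\colon \R\to\R$ to $\rho_{y_0}$ is a non-injective, non-decreasing map, \emph{not} a homeomorphism, and there is no $\eta\in\homeo_0(\R)$ with $\rho=\eta\rho_{y_0}\eta^{-1}$. Consequently your ``cleaner'' reduction collapses: you cannot transport the neighbourhood $\mathcal{U}$ of $\rho$ to a neighbourhood of $\rho_{y_0}$ by conjugation. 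In fact, $\rho_{y_0}$ need not lie in $\mathcal{U}$ at all (nor in any conjugate of it), so the continuity of $q$ along the transversal, while true, gives you representations $\rho_{\pi_Y(x,t_0)}$ that are close to $\rho_{y_0}$, not to $\rho$.

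What your argument actually establishes is the proposition in the special case where $\rho$ is already minimal (equivalently, pointed conjugate to a point of $q(Y)$). The paper's proof has to work substantially harder for the general case: it builds, for each $x$ near $x_0$, a new representation $\rho''_x$ via the dynamical realisation of a \emph{lexicographic} order on $\Omega_x\times\tilde\Omega$, where $\Omega_x$ is an orbit for the minimal model $\rho_x$ and $\tilde\Omega$ is an orbit for $\rho$ itself. This lexicographic construction is precisely what allows one to reinsert the ``blown-up'' structure of $\rho$ on top of the skeleton of $\rho_x$, producing a representation that is simultaneously close to $\rho$ (because it agrees with $\rho$ on a prescribed finite set $\tilde D$ of points for each generator) and pointed semi-conjugate to $\rho_x$ (via the first-coordinate projection). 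The transversal continuity of $q$ enters only indirectly, through the fact that for $x$ in a small clopen $V$ the actions $\rho_x$ and $\rho_{x_0}$ agree on the relevant compact set $K=h(\tilde K)$.
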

\begin{proof}
		Let $S\subset G$ be a finite generating subset, and take a neighborhood $\mathcal U$ of $ \rho\in \rep(G;\R)$. By definition of the compact-open topology, we can find $\varepsilon>0$ and a compact interval $\tilde K$ containing $0$ such that if $ \rho'\in \rep(G;\R)$ is such that 
	\begin{equation}\label{eq-compact-open}
		\sup_{\gamma\in S}| \rho(\gamma)(t)-\rho'(\gamma)(t)|<\varepsilon\quad \forall t\in \tilde K,
	\end{equation}
	then $ \rho'\in \mathcal U$. One can simply ask that \eqref{eq-compact-open} is satisfied for a sufficiently dense, finite collection of points $t\in \tilde K$. More precisely, we are going to use the following property.
	
	\begin{claim}
		There exists a finite subset $\tilde D\subset \R$ containing $0$ such that the following holds. Let $ \rho'\in \rep(G;\R)$ be such that $\rho'(\gamma)(t)=\rho(\gamma)(t)$ for every $\gamma\in S$ and $t\in \tilde D$. Then, {$\rho'\in \mathcal U$}.
	\end{claim}
	\begin{proof}[Proof of claim]	
		Fix $\delta>0$ such that for any $\gamma\in S$ and $s,t\in \tilde K$ satisfying $|s-t|<\delta$, we have $|\rho(\gamma)(s)-\rho(\gamma)(t)|<\varepsilon$, and choose a finite subset $\tilde D\subset \R$  which is $\delta/2$-dense in $\tilde K$. Given $t\in \tilde K$, take $t_-,t_+\in \tilde D$ such that $t_-<t<t_+$, and $|t_+-t_-|<\delta$. As actions preserve the orientation, for any $\gamma\in S$ the images $\rho(\gamma)(t)$ and $ \rho'(\gamma)(t)$ are contained in the interval $( \rho(\gamma)(t_-), \rho(\gamma)(t_+))=( \rho'(\gamma)(t_-), \rho'(\gamma)(t_+))$, whose length does not exceed $\varepsilon$. This implies that $| \rho(\gamma)(t)-\rho'(\gamma)(t)|<\varepsilon$ for every $\gamma\in S$, as wanted.
	\end{proof}
	In what follows, for $x\in X$, we will shorten  notation by writing $\rho_x$ instead of $\rho_{\pi_Y(x, t_0)}$ for the action given by  \eqref{e-action}. Let $h:\R\to \R$ be the pointed semi-conjugacy between $\rho$ and $\rho_{x_0}$ (which is a continuous map), and write $K=h(\tilde K)$. 
	
{By construction, there exists a clopen neighborhood $V$ of $x_0$ such that the translation cocycle $\tau_\gamma$ is constant on $\pi_Y(V\times \{t_0+t\})$ for every $t\in K$ and $\gamma\in S$. Hence for any $x\in V$, the $\rho_x$-action of every  $\gamma\in S$ on $K$ is the same as its $\rho_{x_0}$-action on $K$.}

We set $\tilde \Omega:=\rho(G)(\tilde D)$ and $\Omega_x:=\rho_x(G)(h(\tilde D))$ for every $x\in V$, which we consider as ordered subsets of $(\R,<)$.  Note that both these sets contain $0$, which we think of as a marked point. We let $<_x$ denote the lexicographic ordering of $\Omega_x\times \tilde \Omega$: given $s_1,s_2\in \Omega_x$ and $t_1,t_2\in \tilde \Omega$, one has
	\[
		(s_1,t_1)<_x (s_2,t_2)\Leftrightarrow \left\{\begin{array}{l}
			s_1<s_2,\text{ or}\\[.5em]
			s_1=s_2\text{ and }t_1<t_2. 
		\end{array}\right.
	\]
Notice that the following equivalence holds for every $\gamma_1,\gamma_2\in G$ and every $t\in\R$: 
{\begin{equation}\label{Feq1} \rho(\gamma_1)(t)<\rho(\gamma_2)(t)\Leftrightarrow\Big( h(\rho(\gamma_1)(t)),\rho(\gamma_1)(t)\Big)<_{x_0}\Big(\rho(\gamma_2)(t)),\rho(\gamma_2)(t)\Big)
\end{equation}}
The set $\Omega_x\times \tilde \Omega$ is also naturally marked at $(0, 0)$.	The order $<_x$ is invariant under the action $\psi_x$ of $G$ defined by $\psi_x(g)\colon (s,t)\mapsto (\rho_x(g)(s),\tilde\rho(g)(t))$. We can then consider the dynamical realisation $\rho'_x\colon G\to \homeo_0(\R)$ of the action $\psi_x$, which comes with an equivariant order-preserving embedding $j_x\colon (\Omega_x\times \tilde \Omega,<_x) \to (\R,<)$ (see \cite[Lemma 2.2.14]{BMRT}), which we assume to send the marked point to $0$. Because of the choice of $V$, for any $x\in V$, $t\in \tilde D$, and $\gamma\in S$, we have 
\begin{equation}\label{Feq2} h(\rho(\gamma)(t))=\rho_{x_0}(\gamma)(h(t))=\rho_x(\gamma)(h(t)).\end{equation}
{Therefore putting together (\ref{Feq1}) and (\ref{Feq2}), we get that for any $x\in V$, $t_1,t_2\in \tilde D$, and $\gamma_1,\gamma_2\in S$, the following inequalities are equivalent}:
	\begin{align*}
		&\rho(\gamma_1)(t_{1})<\rho(\gamma_2)(t_{2})\\
		\Leftrightarrow\,& (\rho_x(\gamma_1)(h(t_1)) , \rho(\gamma_1)(t_{1}))<_x(\rho_x(\gamma_2)(h(t_2)) , \rho(\gamma_2)(t_{2}))\\
		\Leftrightarrow\,& \psi_x(\gamma_1)(h(t_1) ,t_1 )<_x\psi_x(\gamma_2)(h(t_2) ,t_2 )\\
		\Leftrightarrow\,& \rho_x'(\gamma_1)(j_x (h(t_1),t_1 ))< \rho_x'(\gamma_2)(j_x (h(t_2),t_2) ).
	\end{align*}
	Thus, for given $x\in V$, we can take an action $\rho''_x$ pointed conjugate to $\rho_x'$ such that $\rho(\gamma)(t)=\rho''_x(\gamma)(t)$ for every $\gamma\in S$ and $t\in \tilde D$. We deduce from the claim that  $\rho''_x\in\mathcal{U}$ for every $x\in V$.
	We claim that {$r_Y(\rho''_x)=\pi_Y(x,t_0)$}, i.e.\  that $\rho_x$ and $\rho''_x$ are pointed semi-conjugate. To see this, it is enough to see that $\rho_x$ and $\rho_x'$ are. This comes from the fact that $\rho_x$ is pointed conjugate to the dynamical realisation of the induced action on $\Omega_x$ (this follows from \cite[Lemma 2.2.14]{BMRT}), and the latter is pointed semi-conjugate to $\rho_x'$ (by considering the order-preserving projection $\Omega_x\times \tilde \Omega\to \Omega_x$). \qedhere
\end{proof}

The following result allows to completely describe  the closure of semi-conjugacy classes in $\rep(G; \R)$ (or in other words which actions can be reached by perturbations) in terms of the closure of $\Phi$-orbits in $(Y, \Phi)$.
\begin{thm} \label{t-accumulation}
Let $\mathcal{F}\subset \rep(G; \R)$ be a family of representations, and fix $\rho\in \rep(G; \R)$. Then the following are equivalent.
\begin{enumerate}
\item \label{i-up} $\rho$  is accumulated by {representations} which are semi-conjugate to elements of $\mathcal{F}$;
\item \label{i-down} $r_Y(\rho)$ belongs to the closure of the union of  $\Phi$-orbits of points in $r_Y(\mathcal{F})$.
\end{enumerate} 
\end{thm}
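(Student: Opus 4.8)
The plan is to prove the two implications separately, deducing $\eqref{i-up}\Rightarrow\eqref{i-down}$ formally from the properties of $r_Y$ and devoting the real work to $\eqref{i-down}\Rightarrow\eqref{i-up}$, for which Proposition \ref{p-retraction-open} and the flow-box structure of the suspension $(Y,\Phi)$ are the key inputs. Throughout I read "$\rho$ is accumulated by representations which are semi-conjugate to elements of $\mathcal{F}$" as the statement that $\rho$ lies in the closure of $\mathcal{F}^{\mathrm{sc}}:=\{\rho'\in\rep(G;\R):\rho' \text{ is semi-conjugate to some } \sigma\in\mathcal{F}\}$. For the easy implication, assume $\rho\in\overline{\mathcal{F}^{\mathrm{sc}}}$ and pick $\rho_n\to\rho$ with each $\rho_n$ semi-conjugate to some $\sigma_n\in\mathcal{F}$. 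Continuity of $r_Y$ (Proposition \ref{l-rprop}) gives $r_Y(\rho_n)\to r_Y(\rho)$, while the last sentence of Proposition \ref{l-rprop} says that $r_Y(\rho_n)$ lies in the same $\Phi$-orbit as $r_Y(\sigma_n)\in r_Y(\mathcal{F})$; hence $r_Y(\rho)$ lies in the closure of $\bigcup_{z\in r_Y(\mathcal{F})}\{\Phi^s(z):s\in\R\}$, which is $\eqref{i-down}$.

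For the converse, fix an arbitrary open neighbourhood $\mathcal{U}$ of $\rho$; it suffices to exhibit a point of $\mathcal{U}\cap\mathcal{F}^{\mathrm{sc}}$. Write $y_0=r_Y(\rho)=\pi_Y(x_0,t_0)$. By Proposition \ref{p-retraction-open} there is an open $V\ni x_0$ in $X$ with $\pi_Y(V\times\{t_0\})\subset r_Y(\mathcal{U})$; since $X$ is totally disconnected I may shrink $V$ to a clopen neighbourhood $V'$ of $x_0$ contained in $V$, and choosing any $\varepsilon<1/2$ makes $\pi_Y$ injective on $V'\times(t_0-\varepsilon,t_0+\varepsilon)$, so that $B:=\pi_Y(V'\times(t_0-\varepsilon,t_0+\varepsilon))$ is an open flow box around $y_0$ on which $\Phi$ acts by translating the $\R$-coordinate. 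Assumption $\eqref{i-down}$ says $y_0$ lies in the closure of $\bigcup_{z\in r_Y(\mathcal{F})}\{\Phi^s(z):s\in\R\}$, so $B$ contains a point $w=\pi_Y(x',u')$, with $x'\in V'$ and $|u'-t_0|<\varepsilon$, lying in the $\Phi$-orbit of some $z\in r_Y(\mathcal{F})$. Flowing $w$ back to time $t_0$, the point $y':=\Phi^{\,t_0-u'}(w)=\pi_Y(x',t_0)$ still lies in the $\Phi$-orbit of $z$ and belongs to $\pi_Y(V'\times\{t_0\})\subset\pi_Y(V\times\{t_0\})\subset r_Y(\mathcal{U})$. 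Hence there is $\rho'\in\mathcal{U}$ with $r_Y(\rho')=y'$. Finally, picking $\sigma\in\mathcal{F}$ with $r_Y(\sigma)=z$, the representations $\rho'$, $\rho_z$ and $\sigma$ all have $r_Y$-images ($y'$, $z$ and $z$ respectively) in the single $\Phi$-orbit of $z$, so by Proposition \ref{l-rprop} $\rho'$ is semi-conjugate to $\sigma\in\mathcal{F}$; thus $\rho'\in\mathcal{U}\cap\mathcal{F}^{\mathrm{sc}}$, and letting $\mathcal{U}$ shrink gives $\rho\in\overline{\mathcal{F}^{\mathrm{sc}}}$.

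The only genuinely non-formal ingredient — and the step I expect to need the most care — is the geometric observation used in the second paragraph: that the union $\bigcup_{z\in r_Y(\mathcal{F})}\{\Phi^s(z):s\in\R\}$, which is $\Phi$-invariant, accumulates on the point $y_0$ lying on the global cross-section $\pi_Y(X\times\{t_0\})$, and therefore already meets the small transversal slice $\pi_Y(V'\times\{t_0\})$ through $y_0$. This is exactly where the structure of $(Y,\Phi)$ as a suspension is essential (it would be false for a general flow): inside the flow box $B$ every orbit segment is of the form $\pi_Y(\{x'\}\times(\cdot))$ and hence crosses the slice $\pi_Y(V'\times\{t_0\})$, so any point of an orbit landing in $B$ can be flowed inside $B$ onto that slice. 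Combined with Proposition \ref{p-retraction-open}, which precisely asserts that $r_Y$ maps $\mathcal{U}$ onto a transversal slice of this form, this closes the argument; everything else is bookkeeping with the equivalence "semi-conjugate $\iff$ same $\Phi$-orbit under $r_Y$".
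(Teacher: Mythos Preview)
Your proof is correct and follows essentially the same approach as the paper's own proof: the easy implication uses continuity of $r_Y$ and the characterisation of semi-conjugacy via $\Phi$-orbits from Proposition~\ref{l-rprop}, while the converse combines Proposition~\ref{p-retraction-open} with the flow-box structure of the suspension to show that the transversal slice $\pi_Y(V\times\{t_0\})\subset r_Y(\mathcal{U})$ must meet the relevant union of $\Phi$-orbits. Your write-up simply spells out in detail what the paper compresses into two sentences; the only superfluous bit is the appearance of $\rho_z$ in the last step, and the parenthetical ``it would be false for a general flow'' is a slight overstatement (local flow boxes exist for any fixed-point-free flow), but neither affects the argument.
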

\begin{proof}
The implication \eqref{i-up} $\Rightarrow$ \eqref{i-down} is a general consequence of Proposition \ref{l-rprop}. The converse follows from Proposition \ref{p-retraction-open}, indeed if \eqref{i-down} holds, and if $r_Y(\rho)=\pi_Y(x_0, t_0)$, then for every neighbourhood $V$ of $x_0$ the local transversal $\pi_Y(V\times\{t_0\})$ intersects the orbit of some point in $\mathcal{F}$, and hence every neighbourhood of $\rho$ intersects the semi-conjugacy class of an element of $\mathcal{F}$. \qedhere
\end{proof}
In the rest of the section we describe some applications of our results.

\subsection{Rigidity and flexibility}

\begin{cor}\label{c-rigid}
The following conditions on a representation $\rho\in \rep(G; \R)$ are equivalent
\begin{enumerate}
\item \label{i-locally-rigid} $\rho$ is locally rigid;
\item \label{i-rigid} $\rho$ is rigid;
\item \label{i-open} the $\Phi$-orbit of $r_Y(\rho)$ is an open subset of $Y$.
\end{enumerate}
\end{cor}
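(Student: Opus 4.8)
The plan is to prove the cycle of implications $(2)\Rightarrow(1)$, $(3)\Rightarrow(2)$ and $(1)\Rightarrow(3)$. The first is immediate: $\rho$ always lies in its own semi-conjugacy class, so if that class is open then $\rho$ is an interior point of it, i.e.\ locally rigid. The implication $(3)\Rightarrow(2)$ is just Proposition~\ref{p-deroin-correspondence}(1) read through the identification of $(Y,\Phi)$ with the Deroin space of $G$ provided by Corollary~\ref{c-mainDeroin}: under this identification $r_Y$ corresponds to the retraction $r_{\mathcal D}$ and open $\Phi$-orbits to open $\Psi$-orbits, so the assertion that the $\Phi$-orbit of $r_Y(\rho)$ is open is exactly the assertion that $r_{\mathcal D}(\rho)$ lies on an open $\Psi$-orbit, which by Proposition~\ref{p-deroin-correspondence}(1) means $\rho$ is rigid. (This argument is an equivalence, so it also gives $(2)\Rightarrow(3)$; hence the only substantial point is $(1)\Rightarrow(3)$.)

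To prove $(1)\Rightarrow(3)$, assume $\rho$ is locally rigid and fix an open neighbourhood $\mathcal U\ni\rho$ contained in the semi-conjugacy class of $\rho$. Write $y_0:=r_Y(\rho)=\pi_Y(x_0,t_0)$ for some $(x_0,t_0)\in X\times\R$. Since every $\rho'\in\mathcal U$ is semi-conjugate to $\rho$, Proposition~\ref{l-rprop} gives that $r_Y(\rho')$ lies on the $\Phi$-orbit of $y_0$; thus $r_Y(\mathcal U)$ is contained in that orbit. On the other hand, Proposition~\ref{p-retraction-open} produces an open neighbourhood $V\subseteq X$ of $x_0$ with $\pi_Y(V\times\{t_0\})\subseteq r_Y(\mathcal U)$, whence $\pi_Y(V\times\{t_0\})$ is contained in the $\Phi$-orbit of $y_0$. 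Reading off the return times of this orbit to the slice $\pi_Y(X\times\{t_0\})$, one checks that $\pi_Y(x,t_0)$ belongs to the $\Phi$-orbit of $y_0$ if and only if $x$ belongs to the $\varphi$-orbit $\mathcal O$ of $x_0$; therefore $V\subseteq\mathcal O$.

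It remains to upgrade this inclusion to openness of the orbit, and this is the one point that requires a small argument. The set $V$ is a non-empty open subset of $X$ contained in the countable set $\mathcal O$; being open in the compact metrisable space $\overline V$, it is a Baire space, and a countable Baire space must have an isolated point, so $V$ (hence $X$) has an isolated point lying on $\mathcal O$. Applying the power of the homeomorphism $\varphi$ that carries this point to $x_0$, we conclude that $x_0$ is isolated in $X$, hence so is every $\varphi^n(x_0)$, so $\mathcal O$ is open in $X$. Then $\mathcal O\times\R$ is open in $X\times\R$; since $\pi_Y$ is an open map and the $\Phi$-orbit of $y_0$ equals $\pi_Y(\mathcal O\times\R)$ (using $\pi_Y(\varphi^n(x_0),t)=\pi_Y(x_0,t+n)$), this orbit is open in $Y$, which is $(3)$ and closes the cycle. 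I expect the main obstacle to be simply the careful bookkeeping in the preceding paragraph — pinning down which points of the slice lie on the $\Phi$-orbit of $y_0$, and allowing for the harmless case in which $x_0$ is $\varphi$-periodic, so that this orbit is a circle — while the Baire-category observation and the openness of $\pi_Y$ are routine.
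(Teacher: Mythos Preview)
Your proof is correct and relies on the same key input as the paper's, namely Proposition~\ref{p-retraction-open}; the paper simply packages that input through Theorem~\ref{t-accumulation} to conclude directly that $r_Y(\rho)$ is an interior point of its $\Phi$-orbit, and then notes that the interior of an orbit is flow-invariant, hence equal to the whole orbit.

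Your route to that last step is more circuitous than necessary. Once you have $\pi_Y(V\times\{t_0\})$ contained in the $\Phi$-orbit of $y_0$, you can shrink $V$ to a clopen set and choose $\varepsilon>0$ so that $\pi_Y$ is injective on $V\times(t_0-\varepsilon,t_0+\varepsilon)$; then $\bigcup_{|s|<\varepsilon}\Phi^s\bigl(\pi_Y(V\times\{t_0\})\bigr)=\pi_Y\bigl(V\times(t_0-\varepsilon,t_0+\varepsilon)\bigr)$ is an open neighbourhood of $y_0$ inside its $\Phi$-orbit, and you are done. The Baire-category detour (concluding first that $x_0$ is isolated in $X$) is valid but not needed, and the paper's argument avoids it.
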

\begin{proof}
 The equivalence between \eqref{i-rigid} and \eqref{i-open} is a general consequence of Proposition \ref{p-deroin-correspondence}, and it is clear that  \eqref{i-rigid} implies \eqref{i-locally-rigid} . If \eqref{i-locally-rigid} holds, then by Theorem \ref{t-accumulation}, $r_Y(\rho)$ is an interior point of its $\Phi$-orbit, hence its whole $\Phi$-orbit is open. \qedhere

\end{proof}

\begin{cor}\label{c-dense-characterisation}A representation $\rho\in \rep(G; \R)$ has a dense {semi-conjugacy} class if and only if $r_Y(\rho)$ has a dense orbit. In particular if $(X, \varphi)$ is minimal, then all semi-conjugacy classes are dense in $\rep(G; \R)$.
\end{cor}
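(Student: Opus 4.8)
The plan is to deduce Corollary \ref{c-dense-characterisation} directly from Theorem \ref{t-accumulation}, applied to a single semi-conjugacy class. First I would take $\rho\in\rep(G;\R)$ and set $\mathcal F=\{\rho\}$; then $r_Y(\mathcal F)=\{r_Y(\rho)\}$, and the union of $\Phi$-orbits of points in $r_Y(\mathcal F)$ is simply the $\Phi$-orbit $\ell$ of $y_0:=r_Y(\rho)$. Saying that the semi-conjugacy class of $\rho$ is dense in $\rep(G;\R)$ means that every $\rho'\in\rep(G;\R)$ is accumulated by representations semi-conjugate to $\rho$, i.e.\ condition \eqref{i-up} of Theorem \ref{t-accumulation} holds for every $\rho'$. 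By the equivalence with \eqref{i-down}, this is equivalent to: $r_Y(\rho')\in\overline{\ell}$ for every $\rho'\in\rep(G;\R)$. Since $r_Y\colon\rep(G;\R)\to Y$ is surjective (it is a retraction onto the Deroin space, by Proposition \ref{l-rprop} and Corollary \ref{c-mainDeroin}), the condition ``$r_Y(\rho')\in\overline\ell$ for all $\rho'$'' is exactly ``$\overline\ell=Y$'', that is, the $\Phi$-orbit of $r_Y(\rho)$ is dense in $Y$.

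For the ``in particular'' clause, I would recall that $(X,\varphi)$ minimal (all $\varphi$-orbits dense in $X$) implies that the suspension flow $(Y,\Phi)$ is minimal, i.e.\ every $\Phi$-orbit is dense in $Y$: indeed a closed invariant subset of $Y$ intersects $X$ (viewed as a cross-section) in a nonempty closed $\varphi$-invariant subset, which must be all of $X$, whence the subset is all of $Y$. So under this hypothesis every point of $Y$ has dense $\Phi$-orbit; combined with the surjectivity of $r_Y$ and the first part of the corollary, every $\rho\in\rep(G;\R)$ has dense semi-conjugacy class.

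The only genuinely substantive input is Theorem \ref{t-accumulation}, which has already been established; everything else here is a formal unwinding of definitions plus the elementary fact that surjectivity of $r_Y$ turns ``$r_Y(\rho')$ ranges over a dense set'' into ``$\overline\ell=Y$''. I do not anticipate a real obstacle: the one point to state carefully is that $r_Y$ is onto $Y$, which follows because $q\colon Y\to\rep(G;\R)$, $q(y)=\rho_y$, satisfies $r_Y\circ q=\id_Y$ by Corollary \ref{c-mainDeroin} (each $\rho_y$ is its own representative up to pointed semi-conjugacy in the Deroin space $q(Y)$). With that in hand the proof is two or three lines. I would also remark in passing that this makes Corollary \ref{c-dense} immediate: choosing $(X,\varphi,\sigma)$ as in Assumption \ref{standing} with $\varphi$ minimal (e.g.\ the example built from an irrational rotation by blowing up the orbits of the involutions of $D_\infty$), one gets a group $G=\Tsingsf(\varphi,\sigma)$ all of whose semi-conjugacy classes are dense, and $\rep(G;\R)$ has uncountably many of them because $Y$ has uncountably many $\Phi$-orbits.

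\begin{proof}
Apply Theorem \ref{t-accumulation} with $\mathcal{F}=\{\rho\}$. Since $r_Y(\mathcal{F})=\{r_Y(\rho)\}$, the union of the $\Phi$-orbits of points of $r_Y(\mathcal{F})$ is the single $\Phi$-orbit $\ell$ of $y_0:=r_Y(\rho)$. By the theorem, a representation $\rho'\in \rep(G;\R)$ is accumulated by representations semi-conjugate to $\rho$ if and only if $r_Y(\rho')\in\overline{\ell}$. Therefore the semi-conjugacy class of $\rho$ is dense in $\rep(G;\R)$ if and only if $r_Y(\rho')\in\overline{\ell}$ for every $\rho'\in \rep(G;\R)$. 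Now $r_Y$ is surjective onto $Y$: by Corollary \ref{c-mainDeroin} the map $q\colon Y\to\rep(G;\R)$, $q(y)=\rho_y$, is a homeomorphism onto a Deroin space, and by construction $r_Y\circ q=\id_Y$. Hence the set $\{r_Y(\rho'):\rho'\in\rep(G;\R)\}$ equals $Y$, and the condition above becomes $\overline{\ell}=Y$, i.e.\ the $\Phi$-orbit of $r_Y(\rho)$ is dense in $Y$. This proves the first assertion.

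For the last assertion, assume $(X,\varphi)$ is minimal. If $F\subset Y$ is a non-empty closed $\Phi$-invariant subset, then, viewing $X$ as a cross-section of $\Phi$, the set $F\cap X$ is a non-empty closed $\varphi$-invariant subset of $X$, hence $F\cap X=X$ by minimality, and therefore $F=\overline{\pi_Y(X\times\{0\})}=Y$. Thus $(Y,\Phi)$ is minimal, so every $\Phi$-orbit is dense in $Y$. By the first part, every $\rho\in\rep(G;\R)$ has a dense semi-conjugacy class.
\end{proof}
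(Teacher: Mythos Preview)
Your proof is correct and follows exactly the approach of the paper: apply Theorem \ref{t-accumulation} with $\mathcal{F}=\{\rho\}$, then use surjectivity of $r_Y$. Your version is simply more detailed than the paper's two-line argument; the only slip is the equality $\overline{\pi_Y(X\times\{0\})}=Y$ in the last paragraph (the cross-section is closed and proper in $Y$) --- what you mean is that $F$ is $\Phi$-invariant and contains the cross-section $X$, hence contains $\bigcup_{t\in\R}\Phi^t(\pi_Y(X\times\{0\}))=Y$.
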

\begin{proof}
Apply Theorem \ref{t-accumulation} to $\mathcal{F}=\{\rho\}$ to see that if $r_Y(\rho)$ has a dense orbit, then the semi-conjugacy class of $\rho$ accumulates on every $\rho'\in \rep(G; \R)$. \qedhere \end{proof}

Next we introduce the following strong form of flexibility of representations.

\begin{dfn}\label{s-universally-flexible}
A representation $\varphi \in \rep(G; \R)$ is \emph{universally flexible} if every neighbourhood $U$ of $\varphi$ intersects non-trivially every semi-conjugacy class in $\rep(G; \R)$.
\end{dfn}
We have the following. 
\begin{cor} \label{c-flexible}
A representation $\rho \in \rep(G; \R)$ is universally flexible if and only if 
$(Y, \Phi)$ (equivalently $(X, \varphi)$) has a unique non-empty closed minimal invariant subset and $r_Y(\rho)$ belongs to it.  \end{cor}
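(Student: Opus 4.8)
The plan is to deduce this as a short consequence of Theorem \ref{t-accumulation}, reducing the statement to a standard fact about minimal sets of compact flows. First I would record that the retraction $r_Y\colon \rep(G;\R)\to Y$ is surjective: indeed $r_Y(\rho_y)=y$ for every $y\in Y$ by Corollary \ref{c-mainDeroin}. Next, unwinding Definition \ref{s-universally-flexible}, the representation $\rho$ is universally flexible precisely when, for every $\rho'\in\rep(G;\R)$, the point $\rho$ lies in the closure of the semi-conjugacy class of $\rho'$. Applying Theorem \ref{t-accumulation} with $\mathcal{F}=\{\rho'\}$, this is equivalent to $r_Y(\rho)$ lying in the closure of the $\Phi$-orbit of $r_Y(\rho')$; and since $r_Y$ is surjective, quantifying over all $\rho'$ is the same as quantifying over all points $y\in Y$. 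Thus universal flexibility of $\rho$ is equivalent to $r_Y(\rho)$ belonging to $\bigcap_{y\in Y}\overline{\mathcal{O}(y)}$, where $\mathcal{O}(y)$ denotes the $\Phi$-orbit of $y$.

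The second step is to prove the topological-dynamics lemma that a point $z$ of a compact flow $(Y,\Phi)$ lies in $\bigcap_{y\in Y}\overline{\mathcal{O}(y)}$ if and only if $(Y,\Phi)$ has a unique non-empty closed minimal invariant set $M$ and $z\in M$. For the ``if'' direction I would use that every non-empty closed invariant subset of a compact space contains a minimal set (Zorn's lemma together with compactness), so that $M\subseteq\overline{\mathcal{O}(y)}$ for all $y$ and hence $z\in M$ lies in every orbit closure. For the ``only if'' direction I would use that for $y$ in a minimal set $M'$ one has $\overline{\mathcal{O}(y)}=M'$, so $z$ belongs to every minimal set; since distinct minimal sets are disjoint, there is then exactly one such set and $z$ lies in it.

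Finally I would invoke the standard correspondence $K\mapsto\pi_Y(K\times\R)$ between closed $\varphi$-invariant subsets of $X$ and closed $\Phi$-invariant subsets of $Y$, with inverse obtained by intersecting with the cross-section $\pi_Y(X\times\{0\})$. This correspondence is inclusion-preserving in both directions, so it restricts to a bijection between the minimal sets of $\varphi$ and those of $\Phi$; in particular $(Y,\Phi)$ has a unique minimal set if and only if $(X,\varphi)$ does. Combining the three steps yields the statement. I do not expect a serious obstacle: the substance is entirely contained in Theorem \ref{t-accumulation}, and the remaining ingredients are routine; the only points needing a little care are the surjectivity of $r_Y$ and the suspension correspondence, both of which are immediate from the setup of Section \ref{s-Deroin-identification}.
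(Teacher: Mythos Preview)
Your proposal is correct and follows essentially the same route as the paper: both use Theorem \ref{t-accumulation} to translate universal flexibility into the condition that $r_Y(\rho)$ lies in every $\Phi$-orbit closure, and then invoke the standard fact that this happens exactly when there is a unique minimal set containing $r_Y(\rho)$. The paper is more terse (citing Proposition \ref{l-rprop} directly for the necessity direction rather than the full equivalence of Theorem \ref{t-accumulation}, and leaving the minimal-set lemma and the suspension correspondence implicit), but the substance is identical.
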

\begin{proof}
If $r_Y(\rho)$ belongs to a unique minimal subset of $(Y, \Phi)$, then the orbit-closure of every point in $Y$ contains it, so that by Theorem \ref{t-accumulation}, $\rho$ is accumulated by representations in any given semi-conjugacy class. That this condition is necessary follows from Proposition \ref{l-rprop}. 
\end{proof}

These results can be used to produce examples with various prescribed properties, by constructing subshifts $(X, \varphi, \sigma)$ with the desired corresponding properties (for instance among closed invariant subshifts of the subshift of reduced words $X_{\text{red}}\subset A^\Z$ as in Example \ref{ex.reversible}). 
As an example we include the following.
\begin{cor}[Groups with rigid and universally flexible representations] \label{c-prescribed-rigidity}
Fix $n\in 2\N\cup\{\infty\}$. Then there exists a finitely generated group $G$ such that $\rep(G; \R)$ contains uncountably many semi-conjugacy classes and the following hold. \begin{enumerate}
\item There are exactly $n$ semi-conjugacy classes of rigid representations.
\item Every representation which is not rigid is universally flexible.

\end{enumerate}

\end{cor}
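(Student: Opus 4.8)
The plan is to reduce Corollary \ref{c-prescribed-rigidity} to the existence of a reversible subshift $(X,\varphi,\sigma)$ satisfying Assumption \ref{standing}, whose suspension flow $(Y,\Phi)$ has exactly $n$ periodic $\Phi$-orbits that are open (i.e.\ isolated periodic orbits forming a union of open sets), together with a unique non-empty closed minimal invariant subset $M$ which contains every non-periodic point, in such a way that $r_Y$-preimages behave as in Corollaries \ref{c-rigid} and \ref{c-flexible}. Indeed, once such a subshift is produced, Theorem \ref{t:finitely_generated} gives that $G=\Tsingsf(\varphi,\sigma)$ is finitely generated, Corollary \ref{c-mainDeroin} identifies $(Y,\Phi)$ with the Deroin space of $G$, and then: by Corollary \ref{c-rigid} the rigid representations are exactly those $\rho$ with $r_Y(\rho)$ in an open $\Phi$-orbit, and these $\Phi$-orbits are precisely the $n$ isolated periodic orbits, giving exactly $n$ semi-conjugacy classes of rigid representations (recall distinct $\Phi$-orbits give distinct semi-conjugacy classes by Proposition \ref{l-rprop}); and by Corollary \ref{c-flexible}, any $\rho$ with $r_Y(\rho)\in M$ is universally flexible, while any non-rigid $\rho$ has $r_Y(\rho)$ in a non-open $\Phi$-orbit, which by the structure we impose forces $r_Y(\rho)\in M$. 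Uncountability of the semi-conjugacy classes follows because $M$ is an infinite minimal set, hence a Cantor set, so it carries uncountably many $\Phi$-orbits, each a distinct semi-conjugacy class.

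The core of the argument is therefore the combinatorial construction of the subshift. First I would build, for each target $n\in 2\N\cup\{\infty\}$, a subshift $(X_0,\varphi_0)$ over a finite alphabet which is the union of a minimal Cantor subshift $M_0$ (chosen topologically free, e.g.\ a Sturmian or linearly recurrent subshift) together with exactly $n/2$ isolated periodic orbits ``attached'' to $M_0$, arranged so that $M_0$ is the unique minimal set and every non-periodic point lies in $M_0$. (When $n$ is odd this is impossible since periodic orbits of $\varphi$ come in $\sigma$-pairs — but $n\in 2\N\cup\{\infty\}$ precisely excludes that; for $n=\infty$ one takes countably many isolated periodic orbits accumulating onto $M_0$, which is still consistent with $M_0$ being the unique minimal set.) Concretely one can realise $M_0$ as a subshift over an alphabet with a fixed-point-free involution $a\mapsto a^{-1}$ sitting inside the reduced-words subshift $X_{\mathrm{red}}$ of Example \ref{ex.reversible}, then enlarge it by adjoining finitely (or countably) many isolated periodic points of the ambient $D_\infty$-action that are not already in $M_0$. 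Then set $X=X_0\cup\sigma(X_0)$ (where $\sigma$ is the word-reversal involution of Example \ref{ex.reversible}), which is $D_\infty$-invariant, closed, and has the same structure (the isolated periodic orbits get paired up, yielding exactly $n$ of them), and check Assumption \ref{standing}: topological freeness holds because the non-periodic points, all lying in the minimal Cantor part, are dense; and $\sigma$ has no fixed points and preserves no $\varphi$-orbit because we took $X\subset X_{\mathrm{red}}$ and used Lemma \ref{l-flipping-involution}, exactly as in Example \ref{ex.reversible}.

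The main obstacle I anticipate is the bookkeeping needed to guarantee simultaneously that (i) the periodic orbits we adjoin are genuinely \emph{isolated} in $X$ — so that the corresponding $\Phi$-orbits are open in $Y$ and hence the associated representations are rigid — and (ii) the minimal Cantor subshift $M_0$ remains the \emph{unique} closed minimal set after the enlargement, so that all non-rigid representations project into it and are therefore universally flexible by Corollary \ref{c-flexible}; one must rule out the enlargement creating new minimal sets or destroying isolation, and in the $n=\infty$ case one must arrange the infinitely many periodic orbits to accumulate \emph{only} on $M_0$. This is a matter of choosing the periodic words to be long concatenations of blocks that do not occur in $M_0$ (so they are isolated) yet force any limit point to be governed by the combinatorics of $M_0$; I would handle it by working inside $X_{\mathrm{red}}$ with an explicit substitutive description of $M_0$ and writing down the periodic words by hand. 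Once the subshift is in place, everything else is a direct invocation of Theorem \ref{t-realisation} and Corollaries \ref{c-rigid} and \ref{c-flexible}, together with the observation that a non-open $\Phi$-orbit of a point of $Y$ must, by our construction, have its $X$-projection disjoint from the isolated periodic orbits and hence contained in (the suspension of) $M_0$, which is the unique minimal set.
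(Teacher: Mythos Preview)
Your reduction to Corollaries \ref{c-rigid} and \ref{c-flexible} is exactly right, but the subshift you propose cannot work, and the obstruction is not a bookkeeping detail. You want to adjoin \emph{isolated periodic orbits} to a minimal Cantor subshift $M_0$ and still have $M_0$ be the unique minimal set. This is impossible: a periodic $\varphi$-orbit is a finite $\varphi$-invariant set on which $\varphi$ acts transitively, so it is automatically a non-empty closed minimal invariant subset. Hence as soon as you adjoin even one periodic orbit, $M_0$ ceases to be the unique minimal set, Corollary \ref{c-flexible} no longer applies, and in fact \emph{no} representation is universally flexible: by Theorem \ref{t-accumulation} the semi-conjugacy class of a rigid $\rho$ (with $r_Y(\rho)$ on a suspended periodic circle) accumulates only on itself, so it never accumulates on any $\rho'$ with $r_Y(\rho')$ in the suspension of $M_0$. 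Your remark that ``any limit point [of the periodic orbit] is governed by the combinatorics of $M_0$'' is vacuous, since a finite orbit has no limit points outside itself. There is a second, related problem: if the periodic points are isolated in $X$, then the set of non-periodic points is not dense, so Assumption \ref{standing} (topological freeness) fails and the whole machinery is unavailable.

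The fix, which is what the paper does, is to adjoin isolated \emph{non-periodic} orbits instead. Concretely, one takes a $\sigma$-invariant infinite minimal subshift $M\subset X_{\text{red}}$, writes it as a decreasing intersection of $\sigma$-invariant irreducible SFTs $M_0\supsetneq M_1\supsetneq\cdots$, and for each $j$ chooses a bi-infinite sequence $x_j\in M_j\setminus M_{j+1}$ whose left and right tails agree with tails of a fixed point $x\in M$. Then each $x_j$ is non-periodic (so topological freeness holds), its $\varphi$-orbit consists of isolated points (because $x_j$ contains a word forbidden in $M_{j+1}$, hence forbidden in $M$ and in all $x_k$ with $k>j$), and every accumulation point of its orbit lies in $M$ (because far-out shifts of $x_j$ look like shifts of $x$). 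Thus $M$ remains the unique minimal set, the orbits of the $x_j$ and $\sigma(x_j)$ give exactly the desired number of open $\Phi$-orbits, and Corollaries \ref{c-rigid} and \ref{c-flexible} then deliver the conclusion exactly as you outlined.
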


\begin{proof}
By Corollaries \ref{c-rigid} and \ref{c-flexible}, it is enough to find for each $n\in \N \cup \{\infty\}$ a reversible subshift $(X, \varphi, \sigma)$ so that $(X, \varphi)$ has a unique minimal closed invariant subset which is infinite, and exactly $2n$ orbits of isolated points. This is routine, we sketch a construction below. Consider the subshift of reduced words $X_{\text{red}}\subset A^\Z$ from Example \ref{ex.reversible}.

Choose first  an infinite  minimal subshift $M\subset X_{\text{red}}$ which is invariant under $\sigma$.  Choose also a sequence $x\in M$. Using that $M$ is infinite and minimal, we can write $M$ as the intersection of a strictly decreasing sequence $M_0\supsetneq M_1\supsetneq M_2\supsetneq \cdots$ of irreducible subshifts {of finite type}, which  can be assumed to be $\sigma$-invariant. 

By properties of subshifts of finite type, we can find for each $n$ a sequence $x_n\in M_n$ which has some infinite prefix and suffix which coincide with some prefix and suffix of $x$, respectively, and such that $x\notin M_{n+1}$ (for this it is enough that between the desired prefix and suffix there is some finite word which is allowed in $M_n$ but not in $M_{n+1}$). Note that every accumulation point of the $\varphi$-orbit of $x_n$ and of $\sigma(x_n)$ belongs to $M$. Then for $n\ge 0$ let 
\[X_n=X\cup\{\varphi^i(x_j), \varphi^i(\sigma (x_j)): i\in \Z, j\le n\}\]
and $X_\infty=\bigcup X_n$. These satisfy the desired conclusion, respectively for finite $n$ and for $n=\infty$. \qedhere
\end{proof}
\begin{rem}
The restriction that $n$ be even is necessary in Corollary \ref{c-prescribed-rigidity} since here we do not allow semi-conjugacies to reverse the orientation, so that rigid semi-conjugacy classes always come in pairs.
\end{rem}

\subsection{Cantor--Bendixson rank} As another application, let us consider a notion of Cantor--Bendixson rank for the space of semi-conjugacy classes in $\rep(G; \R)$. To this end, given a finitely generated group $G$ define a decreasing transfinite sequence of subspaces $\rep^\alpha(G; \R)$ invariant under semi-conjugacy, as follows:
\begin{itemize}
\item if $\alpha=\beta+1$ is a successor, we define $\rep^\alpha(G; \R)$ by removing from $\rep^\beta(G; \R)$ all semi-conjugacy classes that are open in $\rep^\beta(G; \R)$;
\item if $\alpha$ is a limit ordinal we define $\rep^\alpha(G; \R)=\bigcap_{\beta<\alpha} \rep^\beta(G; \alpha)$. 
\end{itemize}

If $(\mathcal{D}, \Psi)$ is a Deroin space for $G$, then a similar process yields a decreasing sequence of closed subsets $\mathcal{D}_\alpha$, obtained by successively removing open orbits, with $\rep^\alpha(G)=r_\mathcal{D}^{-1}(\mathcal{D}_\alpha)$. These sequences must stabilize at some countable ordinal, see \cite[Theorem 6.9]{Kechris}. 
\begin{dfn} \label{d-CB-rank}
Let $G$ be a finitely generated group. The smallest ordinal $\alpha$ such that $\rep^\alpha(G; \R)=\rep^{\alpha+1}(G; \R)$ (equivalently $\mathcal{D}_\alpha=\mathcal{D}_{\alpha+1}$) is called the \emph{semi-conjugacy CB-rank}  of $\rep(G; \R)$. 
\end{dfn}

Note that if $\alpha$ is the semi-conjugacy CB-rank, then $\rep^\alpha(G; \R)=\varnothing$ if and only if $\rep(G; \R)$ contains only countably many semi-conjugacy classes.

Recall that the usual Cantor--Bendixson rank (CB-rank) of a compact space is defined in a similar way, but by successively removing its isolated points. 
Then for the groups $G=\Tsingsf(\varphi, \sigma)$ we have the following.

\begin{cor} \label{c-CB-rank}
The semi-conjugacy CB-rank of $\rep(G; \R)$ is equal to the usual CB-rank of $X$.
\end{cor}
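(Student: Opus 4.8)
The statement to prove is Corollary \ref{c-CB-rank}: the semi-conjugacy CB-rank of $\rep(G;\R)$ equals the usual CB-rank of $X$. The plan is to establish, by transfinite induction, a precise correspondence between the Cantor--Bendixson derivatives $X^{(\alpha)}$ of the space $X$ (successively removing isolated points) and the derived subspaces $\mathcal{D}_\alpha = Y_\alpha$ of the Deroin space $Y$ (successively removing open $\Phi$-orbits). The first step is to unwind what an open $\Phi$-orbit in $Y$ is: by Corollary \ref{c-rigid}, the $\Phi$-orbit of $y = \pi_Y(x,t)$ is open in $Y$ if and only if the point $x$ is isolated in $X$ (the flow direction contributes nothing, since a $\Phi$-orbit is open precisely when it meets a transversal in an isolated point of $X$, using that the suspension is locally a product $C \times I$). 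More precisely I would show: for a clopen $C \subset X$ and interval $I$, the chart $Y_{C,I} \cong C \times I$, so a $\Phi$-orbit through $\pi_Y(x,t)$ is open iff $\{x\}$ is open in $X$.

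The core of the argument is the inductive claim: for every ordinal $\alpha$, the suspension flow $(Y_\alpha, \Phi|_{Y_\alpha})$ of $X^{(\alpha)}$ — more precisely, $Y_\alpha = \pi_Y(X^{(\alpha)} \times \R)$ — coincides with the $\alpha$-th derived subspace of $(Y,\Phi)$ in the sense of Definition \ref{d-CB-rank}. The successor step $\alpha \to \alpha+1$: by the inductive hypothesis $Y_\alpha$ is the suspension of $X^{(\alpha)}$, and one removes from it exactly the open $\Phi$-orbits; by the first step these are exactly the orbits $\pi_{Y}(\{x\}\times\R)$ with $x$ isolated in $X^{(\alpha)}$, i.e. $x \in X^{(\alpha)} \setminus X^{(\alpha+1)}$. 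Hence the remaining set is $\pi_Y(X^{(\alpha+1)}\times\R)$, which is the suspension of $X^{(\alpha+1)}$ (note $X^{(\alpha+1)}$ is closed and $\varphi$-invariant, and $\sigma$-invariant, so the restricted data still makes sense as a subshift, though we only need it as a flow). One must be slightly careful that "open in $Y_\alpha$" for a $\Phi$-orbit is being correctly compared with "isolated in $X^{(\alpha)}$": again this follows from the local product structure of charts restricted to $X^{(\alpha)} \times I$. The limit step is immediate since both constructions take intersections: $Y_\lambda = \bigcap_{\alpha<\lambda} Y_\alpha = \pi_Y\big(\bigcap_{\alpha<\lambda} X^{(\alpha)} \times \R\big) = \pi_Y(X^{(\lambda)} \times \R)$, using that $\pi_Y$ commutes with these intersections (which holds because $\pi_Y$ restricted to a chart is a homeomorphism onto its image and the $X^{(\alpha)}$ are nested closed sets).

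With the inductive claim in hand, the conclusion is a matter of bookkeeping. By Corollary \ref{c-mainDeroin} and Proposition \ref{p-deroin-correspondence}, $\mathcal{D} = Y$, and the derived sequence $\mathcal{D}_\alpha$ of Definition \ref{d-CB-rank} is exactly $Y_\alpha = \pi_Y(X^{(\alpha)}\times\R)$. The sequence stabilizes at ordinal $\alpha$ — meaning $\mathcal{D}_\alpha = \mathcal{D}_{\alpha+1}$ — if and only if $X^{(\alpha)}$ has no isolated points, i.e. iff $X^{(\alpha)} = X^{(\alpha+1)}$, which by definition happens first at the CB-rank of $X$. Hence the semi-conjugacy CB-rank of $\rep(G;\R)$ equals $\mathrm{CB}(X)$. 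I expect the main obstacle to be not any deep idea but the careful verification that "$\Phi$-orbit open in $Y_\alpha$" translates cleanly to "isolated point of $X^{(\alpha)}$" at every stage — in particular that the derived subspaces $Y_\alpha$ really are suspensions (so that the chart/local-product argument can be reapplied at each step), rather than some more complicated subsets; this is where one uses that removing a union of full $\Phi$-orbits from a suspension again yields a suspension, together with the fact that $X^{(\alpha)}$ is always a closed $\varphi$-invariant (and $\sigma$-invariant) subset so that Corollary \ref{c-rigid}'s reasoning applies verbatim to the restricted flow.
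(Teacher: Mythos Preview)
Your proposal is correct and follows exactly the approach the paper intends: the paper's proof is the single sentence ``It is clear that open orbits of the suspension flow $(Y,\Phi)$ are removed according to the usual Cantor--Bendixson derivative process in $X$,'' and your transfinite induction showing $\mathcal{D}_\alpha=\pi_Y(X^{(\alpha)}\times\R)$ is precisely the unpacking of that sentence. One small remark: the equivalence ``$\Phi$-orbit of $\pi_Y(x,t)$ open $\Leftrightarrow$ $x$ isolated in $X$'' is not the content of Corollary~\ref{c-rigid} (which concerns rigidity of representations), but rather follows directly from the local product structure of charts, exactly as you then argue; just drop that citation.
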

\begin{proof}
It is clear that open orbits of the suspension flow $(Y, \Phi)$ are removed according to the usual Cantor--Bendixson derivative process in $X$. \qedhere
\end{proof}

\begin{rem}
{We could not locate a reference stating which ordinals are realizable as the CB-rank of subshifts (although there are various such results for special classes of subshifts). However Ville Salo kindly communicated us a proof that the CB-rank of countable susbhifts are exactly the finite ordinals and the countable ordinals of the form $\beta+2$. The following slightly simpler version of his construction allows to realize all countable ordinals of the form  $\beta+3$.  On an alphabet of the form $A=B\sqcup\{\ast\}$, let $x$ be the constant sequence of $\ast$. Choose a countable closed subset $C\subset B^\N$, and let $(C_\alpha)$ be its sequence of CB-derivatives. Denote by $\alpha_0$ the CB-rank of $C$ and notice that $C_{\alpha_0}=\emptyset$ since $C$ is countable. We proceed to show how to construct a subshift with rank $\alpha_0+2$. The set $C$ can be chosen so that $\alpha_0=\beta+1$ for any given countable ordinal $\beta$ (for instance choosing $C$ to be homeomorphic to the ordinal $\beta+1$ with the order topology); this will show that any countable ordinal of the form $\beta+3$ can be the CB-rank of a subshift.}

{For each $c=(b_n)\in C$, let $x_c$ be the sequence in $A^\Z$ obtained by replacing the letter at position $2^n$ of $x$ with $b_n$ for $n\ge 0$. Write $X_1=\{\sigma^n(x_c):c\in C,\text{ }n\in\Z\}$ and let $X\subset A^\Z$ be the subshift given by the closure of $\{x\}\cup X_1$. It is not difficult to see, by construction of $X_1$, that $X$ is contained in the union $\{x\}\sqcup X_1\sqcup X_{2}$ where $X_{2}$ consists of those sequences with at most one letter in $B$. Notice also that the set $\{x_c:c\in B\}\subset X_1$ is a clopen subset of $X$ since it can be defined by looking at the cylinder associated to positions $1$, $2$, and $4$. Thus, it follows that the points $x_c$ are removed from $X$ according to the Cantor--Bendixson derivative process in $C$ and therefore $X_{\alpha_0}\subset \{x\}\cup X_2$. Moreover, it is direct to see that $X_{\alpha_0}$ meets both $\{x\}$ and $X_2$. Since the points of $X_2$ are isolated points of $X_{\alpha_0}$ that accumulate on $\{x\}$, it follows that $X_{{\alpha_0}+1}=\{\ast\}$. Therefore the CB-rank of $X$ is ${\alpha_0}+2$ as desired. }

{Notice that $X$ can be turned into a subshift satisfying Assumption \ref{standing} through the doubling construction in Example \ref{e-doubling}. }

\end{rem}

\subsection{Connectedness properties}  Recall from the introduction that we say that a representation $\rho\in \rep(G;\R)$ is \emph{path-rigid} if its path component in $\rep(G;\R)$ coincides with its semi-conjugacy class.

\begin{cor} \label{c-path-rigid-text}
The following property hold.

\begin{enumerate}
\item \label{i-path-components} All representations $\rho\in \rep(G; \R)$ are path-rigid.  
\item \label{i-connected-components} The space $\rep(G; \R)$ is connected if and only if the only $\varphi$-invariant clopen subsets of $X$ are $\varnothing$ and $X$.
\item \label{i-nowhere-connected} If $X$ has no isolated point,  then $\rep(G; \R)$ is nowhere locally connected.
\end{enumerate}
\end{cor}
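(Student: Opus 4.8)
\textbf{Proof plan for Corollary \ref{c-path-rigid-text}.}

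The plan is to deduce all three parts from the identification of $(Y,\Phi)$ with the Deroin space of $G$ (Corollary \ref{c-mainDeroin}), together with Proposition \ref{p-deroin-correspondence} and the transversal-openness of the retraction $r_Y$ (Proposition \ref{p-retraction-open} / Theorem \ref{t-accumulation}). For part \eqref{i-path-components}, I would combine Proposition \ref{p-path-connected-semiconjugacy} (every semi-conjugacy class is path-connected) with Proposition \ref{p-deroin-correspondence}\eqref{i-path-components}: the path-components of $\rep(G;\R)$ are the $r_Y$-preimages of the path-components of $Y$. Since $G$ is perfect (Proposition \ref{p-fragmentation}), $G$ has no cyclic action and every $\rho\in\rep(G;\R)$ is minimal, so by Theorem \ref{t-classification-actions} the image $r_Y(\rho)$ lies in a single leaf; but a leaf $\ell=\pi_Y(\{x\}\times\R)$ is the continuous image of $\R$ under a path, hence path-connected, and in fact it is exactly a $\Phi$-orbit, which is a path-component of $Y$ (as $X$ is totally disconnected, the path-components of $Y$ are precisely the $\Phi$-orbits). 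Therefore $r_Y^{-1}(\ell)$ is a single path-component of $\rep(G;\R)$, and by Proposition \ref{l-rprop} it coincides with the semi-conjugacy class of $\rho$. This gives path-rigidity of every representation.

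For part \eqref{i-connected-components}, I would again invoke Proposition \ref{p-deroin-correspondence}\eqref{i-connected-components}: the connected components of $\rep(G;\R)$ correspond to those of $Y$. So it suffices to show $Y$ is connected if and only if the only $\varphi$-invariant clopen subsets of $X$ are $\varnothing$ and $X$. If $C\subsetneq X$ is a non-trivial $\varphi$-invariant clopen set, then $\pi_Y(C\times\R)$ is a non-trivial clopen subset of $Y$, so $Y$ is disconnected. Conversely, if $Y=A\sqcup B$ with $A,B$ non-empty clopen, then for the cross-section $X=\pi_Y(X\times\{0\})$, the sets $A\cap X$ and $B\cap X$ are clopen in $X$, non-empty (any $\Phi$-orbit meets $X$, and orbits are connected hence lie entirely in $A$ or in $B$), and $\varphi$-invariant since $\varphi$ is the first-return map along orbits which do not switch between $A$ and $B$; this contradicts the hypothesis. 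Hence $Y$ is connected.

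For part \eqref{i-nowhere-connected}, assume $X$ has no isolated point and fix $\rho\in\rep(G;\R)$ and a neighbourhood $\mathcal U$ of $\rho$; I must exhibit a connected subset of $\mathcal U$ whose interior (relative to $\mathcal U$, or rather: I must show no neighbourhood of $\rho$ inside $\mathcal U$ is connected). Write $r_Y(\rho)=\pi_Y(x_0,t_0)$. By Proposition \ref{p-retraction-open} there is a clopen neighbourhood $V\ni x_0$ in $X$ with $\pi_Y(V\times\{t_0\})\subseteq r_Y(\mathcal U)$; since $X$ has no isolated points we may shrink to get a proper clopen decomposition $V=V_1\sqcup V_2$ with $x_0\in V_1$, both non-empty. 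I then claim any sufficiently small neighbourhood $\mathcal U'\subseteq\mathcal U$ of $\rho$ decomposes: the sets $\mathcal U'\cap r_Y^{-1}(\pi_Y((X\setminus V_2)\times\R))$ and $\mathcal U'\cap r_Y^{-1}(\pi_Y(V_2\times\R))$ are relatively clopen in $\mathcal U'$ (preimages under the continuous $r_Y$ of $\Phi$-saturated clopen sets of $Y$), the first contains $\rho$, and the second is non-empty because, by Proposition \ref{p-retraction-open} applied to $\mathcal U'$, $r_Y(\mathcal U')$ contains a transversal neighbourhood of $\pi_Y(x_0,t_0)$ which, by our choice of $V_2$ — here I need to be slightly careful and instead choose $V_2$ to meet every transversal neighbourhood of $x_0$, which is possible exactly because $x_0$ is not isolated — meets $\pi_Y(V_2\times\R)$. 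Hence $\rho$ has arbitrarily small neighbourhoods that are disconnected, i.e.\ $\rep(G;\R)$ is nowhere locally connected. The main obstacle is the bookkeeping in \eqref{i-nowhere-connected}: matching the transversal neighbourhood produced by Proposition \ref{p-retraction-open} (which depends on the given $\mathcal U'$) against a fixed clopen partition of a neighbourhood of $x_0$, and verifying that the resulting $\Phi$-saturated clopen sets genuinely separate $\mathcal U'$; using that $x_0$ is non-isolated to force the ``other side'' to appear in every transversal neighbourhood is the key point.
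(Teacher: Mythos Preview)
Your arguments for parts \eqref{i-path-components} and \eqref{i-connected-components} are correct and follow the same route as the paper (which simply cites Proposition~\ref{p-deroin-correspondence} and notes that a clopen partition of $Y$ must be $\Phi$-saturated, hence comes from a $\varphi$-invariant clopen partition of $X$).

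Part \eqref{i-nowhere-connected} has a genuine gap. You separate a small $\mathcal U'$ using the two sets $r_Y^{-1}\bigl(\pi_Y((X\setminus V_2)\times\R)\bigr)$ and $r_Y^{-1}\bigl(\pi_Y(V_2\times\R)\bigr)$, calling the sets $\pi_Y((X\setminus V_2)\times\R)$ and $\pi_Y(V_2\times\R)$ ``$\Phi$-saturated clopen sets of $Y$''. They are indeed $\Phi$-saturated and open (since $\pi_Y$ is open), but in general they are \emph{neither disjoint nor closed}. A point $\pi_Y(x,t)$ lies in $\pi_Y(V_2\times\R)$ iff the $\varphi$-orbit of $x$ meets $V_2$; so the two sets overlap on every $\Phi$-orbit whose underlying $\varphi$-orbit visits both $V_2$ and $X\setminus V_2$, which is generically all of them unless $V_2$ is $\varphi$-invariant. (And if you try to replace $V_2$ by a $\varphi$-invariant clopen set, you lose all control on how fine it is near $x_0$; in the minimal case there are none at all.) So the proposed decomposition does not disconnect $\mathcal U'$.

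The paper avoids this by working \emph{locally in a chart} rather than with $\Phi$-saturated sets: shrink $\mathcal U$ so that $r_Y(\mathcal U)\subset Y_{C,I}\cong C\times I$; then Proposition~\ref{p-retraction-open} says the composite $\mathcal U\xrightarrow{r_Y} Y_{C,I}\to C$ is a continuous \emph{open} map. Since $X$ (hence $C$) has no isolated points and is totally disconnected, every non-empty open subset of $C$ is disconnected; pulling back a clopen partition of the image through this continuous open map disconnects any open $\mathcal U'\subset\mathcal U$. This is essentially what you were aiming for, but with the chart projection replacing the ill-behaved global $\Phi$-saturation.
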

\begin{proof}
\eqref{i-path-components} follows from Proposition \ref{p-deroin-correspondence}. To show  \eqref{i-connected-components} note that every partition of $Y$ into two open subsets must consist of $\Phi$-invariant subsets, and thus corresponds to a partition of $X$ into $\varphi$-invariant clopen sets.  To show \eqref{i-nowhere-connected}, suppose that $X$ has no isolated point, fix $\rho\in \rep(G; \R)$, and let $\mathcal{U}$ be a neighbourhood of $\rho$ whose image is contained in a chart $Y_{C, I}$, with $C\subset X$ clopen.  Then Proposition \ref{p-retraction-open} implies that the composition of $r_Y|_{\mathcal{U}}$ with the projection $Y_{C, I}\to C$ is open. Since $C$ is totally disconnected and has no isolated point, it follows that no open subset of $\mathcal{U}$ is connected. \qedhere
\end{proof}

\appendix
\section{Properties of the groups $\Tsingsf(\varphi, \sigma)$}\label{appendix}

Here we discuss some generating properties of the group $\Tsingsf(\varphi, \sigma)$. These results and proof are largely analogous to the results on the groups $\Tsf(\varphi)$ from \cite{MB-T}. The main technical difference does not come from considering type-$\nice$ maps, but rather from the fact that we have to deal with charts in the dihedral suspension; we will detail the proofs when  differences are relevant.
Throughout the section, $(X,\varphi, \sigma)$ is as in Assumption \ref{standing}, and we set $G=\Tsingsf(\varphi, \sigma)$.

\subsection{More about charts}  Say that  a chart $Z_{C, I}$ is \emph{extendable} if it is contained in a chart $Z_{C, J}$, with $\overline{I}\subset J$.

\begin{lem}\label{l-admissible-strips}
	Let $x\in X$. If $x$ is periodic, of minimal period $n$, and $I$ is an open interval such that $|I|<n$, then there exists a clopen neighbourhood $C$ of $x$ such that $Z_{C, I}$ is an extendable chart. When $x$ is not periodic, the same conclusion holds for every bounded interval $I\subset \R$.  \end{lem}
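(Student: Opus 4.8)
The plan is to reduce the statement to injectivity of $\pi_Z$ on $C\times\overline{J}$ for a compact interval $\overline J$ slightly larger than $\overline I$, and then to produce $C$ by a finite intersection argument that controls the finitely many elements of $D_\infty$ which could conceivably act as an identification on points whose $\R$-coordinate lies in $\overline J$. Concretely, I would first fix an open interval $J$ with $\overline I\subset J$, $\overline J$ compact, and — in the periodic case — with $|J|<n$ (this is possible because $I$ is open, so $|\overline I|=|I|<n$, and one can fatten $\overline I$ slightly while keeping the length below $n$; in the non-periodic case any bounded open $J\supset\overline I$ works). Recall that $\pi_Z(x_1,t_1)=\pi_Z(x_2,t_2)$ holds exactly when there is $(m,j)\in D_\infty$ with $\varphi^m\sigma^j(x_1)=x_2$ and $(-1)^jt_1-m=t_2$. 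If $t_1,t_2\in\overline J$, then $m=(-1)^jt_1-t_2$ lies in the bounded set $(-1)^j\overline J-\overline J$, so only finitely many pairs $\gamma=(m,j)\neq(0,0)$ can ever realise such an identification; call this finite subset $\Gamma=\{\gamma_1,\dots,\gamma_k\}\subset D_\infty\setminus\{e\}$.

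Next, for each $\gamma_i=(m_i,j_i)\in\Gamma$ I would distinguish two cases. If $\varphi^{m_i}\sigma^{j_i}(x)\neq x$, then by continuity there is a clopen neighbourhood $C_i\ni x$ with $\varphi^{m_i}\sigma^{j_i}(C_i)\cap C_i=\varnothing$, and then $\gamma_i$ identifies no pair of points of $C_i\times\R$ at all. If instead $\varphi^{m_i}\sigma^{j_i}(x)=x$, then necessarily $j_i=0$: the case $j_i=1$ would give $\sigma(x)=\varphi^{-m_i}(x)$, forcing $\sigma$ to preserve the $\varphi$-orbit of $x$, which contradicts condition \eqref{i-sigma-1} of Lemma \ref{l-flipping-involution} built into Assumption \ref{standing}. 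Hence $\varphi^{m_i}(x)=x$ with $m_i\neq 0$; this is impossible when $x$ is not periodic, while if $x$ has minimal period $n$ it forces $n\mid m_i$, so $|m_i|\ge n>|J|$, and therefore $t_1-m_i\neq t_2$ for all $t_1,t_2\in\overline J$, i.e.\ $\gamma_i$ identifies no pair of points of $X\times\overline J$. In this last situation I simply set $C_i=X$. Letting $C=\bigcap_{i=1}^k C_i$, a clopen neighbourhood of $x$, the map $\pi_Z|_{C\times\overline J}$ is injective: any identification of two distinct points of $C\times\overline J$ would be realised by some $\gamma\in\Gamma$, contradicting the construction of the corresponding $C_i\supseteq C$. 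Thus $Z_{C,J}$ is a chart containing $Z_{C,I}$ with $\overline I\subset J$, which is exactly the assertion that $Z_{C,I}$ is extendable.

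The crux — and essentially the only non-bookkeeping point — is the dichotomy for the elements $\gamma_i$ that fix $x$: one must invoke the absence of $\sigma$-invariant $\varphi$-orbits to eliminate the reflections $j_i=1$, and separately the length bound $|J|<n$ to handle the translations $j_i=0$ with $m_i\in n\Z\setminus\{0\}$. Everything else is the standard continuity and compactness manipulation underlying the fact that $\pi_Z$ is a local homeomorphism admitting covers by charts.
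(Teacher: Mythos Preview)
Your proof is correct and follows essentially the same argument as the paper's: isolate the finite set $\Gamma\subset D_\infty\setminus\{e\}$ of elements whose $\R$-action could identify points of the strip, rule out fixed points of these elements on $X$ using the minimal-period bound for translations and Lemma~\ref{l-flipping-involution} for reflections, and then shrink to a clopen $C$ by continuity. The only cosmetic difference is that the paper argues directly that every $\gamma\in\Gamma$ satisfies $\gamma(x)\neq x$ and then passes from $I$ to a slightly larger $J$ at the end, whereas you work with $J$ from the outset and phrase the translation case as ``$|m_i|\ge n>|J|$ prevents identifications on $\overline J$'' (which is really the observation that such $\gamma_i$ was never in $\Gamma$ to begin with).
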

\begin{proof}
	The proof is similar to that of \cite[Lemma 4.10]{confined}. We consider the case where $x$ is periodic of minimal period $n$, the other case can be treated similarly. Take an interval $I\subset\R$ satisfying $|I|<n$. Consider the collection \[\Gamma=\{\gamma\in D_\infty\setminus \{\id\} : \gamma(I)\cap I\neq \varnothing\},\]
	which consists of finitely many elements.
	It $\gamma\in \Gamma$ is a translation, then it must be by some integer $k$, with $|k|\le |I|<n$. As $n$ is chosen as the minimal period of $x$, we deduce that for any translation $\gamma\in \Gamma$, we have $\gamma(x)\neq x$. On the other hand, if $\gamma\in \Gamma$ is a reflection, then Assumption \ref{standing} guarantees that $\gamma(x)\neq x$ (this is one of the equivalent conditions in Lemma \ref{l-flipping-involution}). By continuity, we can find a clopen neighbourhood $C\subset X$ of $x$ such that $\gamma(C)\cap C=\varnothing$ for any $\gamma \in \Gamma$. With this choice, we have that $\gamma(C\times I) \cap (C\times I)=\emptyset$ for all $\gamma\in D_\infty\setminus \{\id\}$. Therefore $\pi_Z(C\times I)=Z_{C,I}$ is a chart.	
	 Notice that if $J\supset\overline{I}$ is another open interval such that $|J|<n$, the same argument shows that $Z_{C,J}$ is a chart, which implies that $Z_{C,I}$ is an extendable chart.
\end{proof}

If $Z_{C,I}$ is an extendable chart with $I=(a, b)$, we have that $\partial {Z_{C, I}}$ is the disjoint union of the subsets $\pi_Z(C\times \{a\})$ and $\pi_Z(C \times \{b\})$.  We refer to these as the \emph{sides} of ${Z_{C, I}}$. 

\begin{lem}[Chart decomposition]\label{l-chart-decomposition}
	The space $Z$ can be written as a finite union $Z=\bigcup_{i=1}^k \overline{Z_{C_i, I_i}}$, where $Z_{C_i, I_i}$ are extendable charts, such that for every distinct $i, j\in \{1,\ldots,k\}$, the intersection $\overline{Z_{C_i, I_i}}\cap \overline{Z_{C_j, I_j}}$ is contained in a single side of ${Z_{C_i, I_i}}$ and of ${Z_{C_j, I_j}}$.
\end{lem}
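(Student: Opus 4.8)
The idea is to cover $Z$ by finitely many extendable charts using compactness, and then carefully shrink the charts transversally (in the $X$-direction) to arrange that any two overlap only along their sides. First I would use Lemma \ref{l-admissible-strips} to produce, for each point $z\in Z$, an extendable chart $Z_{C_z,I_z}$ containing $z$; near a periodic point of minimal period $n$ one must take $|I_z|<n$, while near aperiodic points any bounded interval works, so this is possible at every point. By compactness of $Z$, finitely many of the \emph{open} charts $Z_{C_z,I_z}$ cover $Z$; I may furthermore assume all the intervals $I_i=(a_i,b_i)$ have endpoints that are \emph{generic}, in the sense that the finitely many values $a_i,b_i$ are pairwise distinct modulo the relevant $D_\infty$-translation lengths, so that no two sides of distinct charts lie at the same ``flow height'' along a common leaf. (Alternatively, one slightly perturbs the $b_i$'s one at a time.)

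The next step is to make the overlaps occur only along sides. Two charts $\overline{Z_{C_i,I_i}}$ and $\overline{Z_{C_j,I_j}}$ can meet in two ways: either the underlying clopen sets $C_i,C_j$ meet but the closed intervals $\overline{I_i},\overline{I_j}$ are essentially disjoint on the relevant leaves (meaning $\overline{Z_{C_i,I_i}}\cap\overline{Z_{C_j,I_j}}$ is a union of sides), or the interiors genuinely overlap in a set of positive ``length'' along some leaves. The second situation is the one to eliminate. I would handle it by refining the clopen partition of $X$: since $X$ is totally disconnected and metrisable, I can replace the finite family $\{C_i\}$ by the common refinement, i.e.\ the partition generated by all the $C_i$ together with their $D_\infty$-translates $\gamma(C_j)$ for the finitely many $\gamma$ that can move one chart-interval into another. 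Over each atom of this refined partition, the various charts that survive are stacked in the flow direction with controlled interval positions; then, leaf by leaf, one subdivides the stack of intervals into sub-intervals meeting only at endpoints, which — because the clopen set is now a union of partition atoms and the intervals are generic — translates into a finite collection of extendable charts whose closures pairwise meet only along single sides. Concretely, after refining, on each atom $A$ one has finitely many intervals $I_{i_1},\dots,I_{i_m}$ whose closures cover an interval on the leaf; replacing these by the closures of the connected components of the complement of the finite set of all their endpoints gives the desired non-overlapping extendable charts $Z_{A,I'}$, and one checks extendability is preserved because each $I'$ is still contained (with closure) in one of the original $I_{i_\ell}$, hence in a larger chart over $A\subseteq C_{i_\ell}$.

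The main obstacle I expect is bookkeeping at the \emph{boundary identifications of the dihedral suspension}: a single chart $Z_{C,I}$ on a short leaf (periodic of small period) wraps around, and a naive interval-subdivision on $\R$ does not descend to $Z$ unless one is careful that the chart condition $\gamma(C\times I)\cap(C\times I)=\varnothing$ for $\gamma\in D_\infty\setminus\{\id\}$ is maintained after shrinking $I$ — this is automatic, since shrinking $I$ only removes intersections — but one must also ensure the two sides of a shrunk chart are genuinely distinct subsets of $Z$ (again guaranteed by extendability via Lemma \ref{l-admissible-strips}) and that the identification $Z=Y/\langle\hat\sigma\rangle$ does not glue a side of one chart to the \emph{interior} of another; this last point is exactly what the genericity of the endpoints $a_i,b_i$ is designed to prevent, together with the fact that the refined clopen partition is $D_\infty$-compatible enough to detect when two charts sit on the same leaf. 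Once the combinatorics of ``stacks of intervals on finitely many leaf-types'' is set up cleanly, the conclusion follows; this is routine but the indexing is the delicate part, and it mirrors closely the argument of \cite[Lemma 4.10]{confined} and \cite{MB-T}.
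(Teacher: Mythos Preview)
Your plan could be made to work, but it is considerably more elaborate than necessary, and the ``delicate indexing'' you anticipate is entirely avoidable. The paper bypasses the whole cover--refine--subdivide scheme by fixing \emph{a priori} the four quarter-intervals $I_j=(j/4,(j+1)/4)$ for $j\in\{0,1,2,3\}$, whose closures tile $[0,1]$. Since each has length $1/4<1$, Lemma~\ref{l-admissible-strips} applies at every $x\in X$ regardless of its period, so compactness of $X$ yields a single clopen partition $X=C_1\sqcup\cdots\sqcup C_m$ with every $Z_{C_i,I_j}$ an extendable chart; these $4m$ charts give the decomposition. The side condition then comes directly from the placement of the intervals: the $\overline{I_j}$ meet pairwise only at points of $\tfrac14\Z$, and since the isometric $D_\infty$-action on $\R$ has fundamental interval of length $1/2>1/4$, a nontrivial $D_\infty$-translate of some $\overline{I_j}$ cannot overlap an $\overline{I_{j'}}$ on a set of positive length without coinciding with it.

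The conceptual difference is this: you start from an arbitrary finite cover and then try to straighten it out a posteriori, which is what forces the genericity-of-endpoints step and the $D_\infty$-refinement of the clopen sets; the paper instead exploits that the suspension already carries a canonical grid $\tfrac14\Z\subset\R$ preserved by $D_\infty$, and aligning all intervals to that grid from the start makes the combinatorics trivial. Your route is more robust in spirit (it does not rely on the explicit suspension coordinates), but for this particular lemma the direct construction eliminates all of the bookkeeping you were worried about --- no generic endpoints, no refinement by $D_\infty$-translates, and no interval subdivision on atoms.
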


\begin{proof}
	Consider the dyadic intervals $I_j=(j/4,(j+1)/4)$ with $j\in \{0,1,2,3\}$. By Lemma \ref{l-admissible-strips}, for every $x\in X$, there exists a clopen subset $C\subset X$ such that $Z_{C, I_j}$ is an extendable chart for $j\in \{0,1,2,3\}$. Thus, there exists a clopen partition $X=C_1\sqcup\cdots\sqcup C_m$ such that $Z_{i,j}:=Z_{C_i,I_j}$ is an extendable chart for any $i\in \{1,\ldots,m\}$ and $j\in \{0,1,2,3\}$. 
	Finally, since the action of $D_\infty$ on $\R$ is isometric and the intervals $I_j$ have length $1/4$, which is smaller than $1/2$, which is the length of a fundamental interval for this action, we conclude that whenever $(i,j)\neq(i',j')$, the intersection $\overline{Z_{i,j}}\cap \overline{Z_{i',j'}}$ is contained in a single side of ${Z_{i,j}}$ and of ${Z_{i',j'}}$. \qedhere 
\end{proof}

\subsection{Fragmentation property}
The goal of this subsection is to prove the fragmentation lemma (Proposition \ref{p-fragmentation}) for $\Tsing(\varphi,\sigma)$. We start with the fragmentation lemma for the group $\Fsing$.

\begin{lem}[Fragmentation lemma for $\Fsing$]\label{lem-fragmentationFsing}
	Let $I,I_1,\ldots,I_n$ be open dyadic intervals so that $I=\bigcup_{i=1}^nI_i$. Then $\Fsing_I=\langle\bigcup_{i=1}^n\Fsing_{I_i}\rangle$. 
\end{lem}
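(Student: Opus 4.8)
The plan is to reduce to the case $n=2$ with $I_1\cap I_2\neq\varnothing$ and then run a ``cut and reglue'' argument at a suitably chosen dyadic point. The inclusion $\langle\bigcup_i\Fsing_{I_i}\rangle\subseteq\Fsing_I$ is routine: extending an element of $\Fsing_{I_i}$ by the identity on $\overline I\setminus\overline{I_i}$ produces an element of $\Fsing_I$, since at an endpoint of $I_i$ interior to $I$ --- necessarily a dyadic rational --- the extended map is either dyadic PL on both sides, or it commutes with the doubling map $h_{x_0}$ on both sides (trivially so on the identity side), hence is of type $\nice$ there. So the content is the reverse inclusion.

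For that I would first shrink the cover. Passing to a minimal subcover $J_1,\dots,J_m$ of $I$ and ordering its members by left endpoint, minimality forces $J_k\cap J_{k+1}\neq\varnothing$ for every $k$; in particular each $L_k:=J_1\cup\cdots\cup J_k$ is an interval with dyadic endpoints and $L_k=L_{k-1}\cup J_k$ with the two pieces overlapping. Granting the two-interval case, an induction on $k$ then yields $\Fsing_I=\Fsing_{L_m}=\langle\Fsing_{J_1},\dots,\Fsing_{J_m}\rangle\subseteq\langle\bigcup_i\Fsing_{I_i}\rangle$.

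It then remains to treat two overlapping dyadic intervals, which up to symmetry I may take to be $I_1=(a_1,b_1)$ and $I_2=(a_2,b_2)$ with $a_1<a_2<b_1<b_2$ (if one contains the other the claim is trivial), so that $I=(a_1,b_2)$ and $I_1\cap I_2=(a_2,b_1)$. Given $g\in\Fsing_I$, I would pick a dyadic point $c\in(a_2,b_1)$ with $c\notin\BP^2(g)$; then $g$ is dyadic PL near $c$, so $g(c)\in\Z[\tfrac12]$, and since $g(c)\in I=I_1\cup I_2$ the points $c$ and $g(c)$ lie together in $I_1$ or together in $I_2$, say in $I_j$. As the copy of Thompson's group $F$ inside $\Fsing_{I_j}$ acts transitively on $\Z[\tfrac12]\cap I_j$, there is $u\in\Fsing_{I_j}$ with $u(g(c))=c$, so $g':=ug$ fixes $c$ (as well as $a_1$ and $b_2$) and hence preserves both $[a_1,c]$ and $[c,b_2]$. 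Letting $g'_1$ be $g'|_{[a_1,c]}$ extended by the identity and $g'_2$ be $g'|_{[c,b_2]}$ extended by the identity, one gets $g'=g'_1g'_2$ with $\supp g'_1\subseteq[a_1,c]\subseteq\overline{I_1}$ and $\supp g'_2\subseteq[c,b_2]\subseteq\overline{I_2}$. Since $\BP^2(g')\subseteq\BP^2(g)$, one has $c\notin\BP^2(g')$, so near $c$ each $g'_i$ is dyadic PL on both sides with $c$ a dyadic rational; thus $g'_1\in\Fsing_{I_1}$ and $g'_2\in\Fsing_{I_2}$, and therefore $g=u^{-1}g'\in\langle\Fsing_{I_1},\Fsing_{I_2}\rangle$, as wanted.

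The step I expect to require the most care is the bookkeeping around type-$\nice$ maps at the cut point $c$: the choice $c\notin\BP^2(g)$ is what forces $g(c)$ to be dyadic and guarantees that the reglued maps $g'_i$ pick up at worst an ordinary first-order dyadic breakpoint at $c$ rather than an uncontrolled singularity. The auxiliary claim $\BP^2(ug)\subseteq\BP^2(g)$ for dyadic PL $u$ rests on the elementary identity $u\,h_y\,u^{-1}=h_{u(y)}$ for a dyadic-affine germ $u(x)=2^mx+\beta$: it turns the local relation $g\,h_{x_0}=h_{g(x_0)}\,g$ into $(ug)\,h_{x_0}=h_{(ug)(x_0)}\,(ug)$ and creates no new second-order breakpoints.
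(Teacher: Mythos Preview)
Your argument is correct and takes a genuinely different route from the paper's. Both reduce to two overlapping dyadic intervals, but the paper then invokes the generating-set decomposition from the proof of Lemma~\ref{l-fingenperfect}: one writes $\Fsing_I=\langle F'_I,S_x,S_a,S_d\rangle$, where $S_x,S_a,S_d$ are finite sets representing the germ groups $\mathcal{D}_x$, $\mathcal{D}_a^+$, $\mathcal{D}_d^-$ at a chosen interior dyadic point $x\in I_1\cap I_2$ and at the endpoints of $I$, and similarly for $\Fsing_{I_1}$ and $\Fsing_{I_2}$; the fragmentation then reduces to the classical identity $F'_I=\langle F'_{I_1},F'_{I_2}\rangle$ for the derived subgroup of Thompson's group. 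Your approach is more direct: transitivity of $F$ on dyadic rationals moves $g(c)$ back to the cut-point $c$, and the element is then split in two. This bypasses any appeal to the germ-group structure (Lemma~\ref{lem-Ttilde}) or to the fragmentation of $F'$, so it stands on its own; the paper's argument, by contrast, recycles machinery already set up for finite generation and perfectness, and makes the role of the germ groups explicit. Your careful bookkeeping around the cut-point --- choosing $c\notin\BP^2(g)$ so that $g(c)$ is dyadic and the reglued pieces acquire only an ordinary first-order break at $c$ --- is exactly the point that needs attention, and you handle it correctly.
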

\begin{proof}
	Let us detail the proof in the case $n=2$, the general case follows by induction. Assume, without loss of generality, that $I_1=(a,c)$ and $I_2=(b,d)$, with $a<b<c<d$, and take a dyadic rational $x\in(b,c)$. As in the proof of Lemma \ref{l-fingenperfect}, write
	\begin{equation}\label{eq-generators}
		\Fsing_I=\langle F'_I,S_x,S_a,S_d\rangle,
	\end{equation}
	where $S_x\subset \Fsing_{(b,c)}$, $S_a\subset \Fsing_{(a,c)}$, and $S_d\subset \Fsing_{(b,d)}$ are finite subsets generating the groups of germs $\mathcal{D}_x$, $\mathcal{D}_a^+$, and $\mathcal{D}_b^-$, respectively. Analogously, we can write $\Fsing_{I_1}=\langle F'_{I_1},S_x,S_a,S_c\rangle$ and $\Fsing_{I_2}=\langle F'_{I_2},S_x,S_b,S_d\rangle$. Thus, we have that 
	\[\langle\Fsing_{I_1},\Fsing_{I_2}\rangle=\langle F'_{I_1},F'_{I_2},S_x,S_a,S_b,S_c,S_d\rangle.\]
	Since $F'_I=\langle F'_{I_1},F'_{I_2}\rangle$, the previous line and (\ref{eq-generators}) imply that $\Fsing_I=\langle \Fsing_{I_1},\Fsing_{I_2}\rangle$, as desired. 
\end{proof}

We next need a result on homeomorphisms of $Z$, whose proof is a refinement of the argument given for Lemma \ref{l-lifting-H0}.

\begin{lem}\label{lem-cocyclesupport}
	Let $h\in \Hsf_0(\varphi,\sigma)$. Then, there exists an isotopy $(h_s)_{s\in[0,1]}$ between $h$ and the identity which satisfies $\overline{\supp_Z(h_s)}=\overline{\supp_Z(h)}$ for every $s\in [0,1)$.  
\end{lem}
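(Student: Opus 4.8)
The plan is to use the explicit radial isotopy that already appears in the proof of Lemma~\ref{l-lifting-H0}, and then to show that, once one restricts attention to the dense set of non‑periodic leaves, this isotopy does not change the support at all.

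First I would apply Lemma~\ref{l-lifting-H0} to lift $h$ to $\tilde h\in\Hsf_0(\varphi)$ commuting with $\hat\sigma$, with translation cocycle $\tau:=\tau_{\tilde h}$ satisfying $\tau\circ\hat\sigma=-\tau$, and set $\tilde h_s(y):=\Phi^{(1-s)\tau(y)}(y)$ for $s\in[0,1]$. This is continuous in $(s,y)$ because $\tau$ is continuous; restricted to each $\Phi$-orbit, parametrised by $\R$ or $\mathbb{S}^1$ so that $\tilde h$ reads $t\mapsto t+\tau(t)$, the map $\tilde h_s$ reads $t\mapsto(1-s)\big(t+\tau(t)\big)+st$, a convex combination of an orientation‑preserving homeomorphism with the identity, hence itself an orientation‑preserving homeomorphism; together with compactness of $Y$ this shows $\tilde h_s\in\Hsf_0(\varphi)$, with translation cocycle $(1-s)\tau$. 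Since $(1-s)\tau\circ\hat\sigma=-(1-s)\tau$, each $\tilde h_s$ commutes with $\hat\sigma$, hence descends (again by Lemma~\ref{l-lifting-H0}) to $h_s\in\Hsf_0(\varphi,\sigma)$, and $(h_s)_{s\in[0,1]}$ is an isotopy from $h_0=h$ to $h_1=\id$.

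Next I would compare supports. Let $N\subset Y$ be the set of points lying on non‑periodic $\Phi$-orbits; by the topological freeness assumed in Assumption~\ref{standing}, $N$ is dense in $Y$, hence $N_Z:=p(N)$ is dense in $Z$, being the image of a dense set under the open surjection $p$. Given $z\in N_Z$, choose a preimage $y\in N$; since $p$ is a local homeomorphism sending the $\Phi$-orbit of $y$ bijectively onto the leaf of $z$, it restricts to a homeomorphism between them, so $h_s(z)=z\Leftrightarrow\tilde h_s(y)=y\Leftrightarrow(1-s)\tau(y)=0$, and for $s\in[0,1)$ this is equivalent to $\tau(y)=0$, i.e.\ to $\tilde h(y)=y$, i.e.\ to $h(z)=z$. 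Therefore $\supp_Z(h_s)\cap N_Z=\supp_Z(h)\cap N_Z$ for every $s\in[0,1)$. Since $\supp_Z(h_s)$ is open and $N_Z$ is dense, one has $\overline{\supp_Z(h_s)}=\overline{\supp_Z(h_s)\cap N_Z}=\overline{\supp_Z(h)\cap N_Z}=\overline{\supp_Z(h)}$, which is the claim.

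The only mildly delicate point is checking that the radial isotopy genuinely consists of homeomorphisms of $Z$ isotopic to the identity; but this is precisely the construction already carried out in the proof of Lemma~\ref{l-lifting-H0}, so no new argument is required. Everything else reduces to the elementary observation that an open set and its trace on a dense subset have the same closure, together with the fact — already recorded before Assumption~\ref{standing} — that $p$ restricts to a homeomorphism from each $\Phi$-orbit onto the corresponding leaf.
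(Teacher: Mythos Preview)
Your proof is correct and follows essentially the same approach as the paper: both use the radial isotopy $\tilde h_s(y)=\Phi^{(1-s)\tau(y)}(y)$ from the proof of Lemma~\ref{l-lifting-H0}, and both exploit that on the dense set of non-periodic orbits the condition $(1-s)\tau(y)=0$ is equivalent to $\tau(y)=0$ for $s<1$. The paper organizes this via an intermediate claim that $\overline{\{\tau_g\neq 0\}}=\overline{\supp_Y(g)}$ on $Y$ and then pushes down to $Z$, whereas you work directly on $Z$ using the dense set $N_Z$; this is only a cosmetic difference.
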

\begin{proof}
	For the isotopy, we will consider the one  defined in the proof of Lemma \ref{l-lifting-H0}. Explicitly, we take a lift $f\in \Hsf_0(\varphi)$ of $h$, consider the isotopy $(f_s)_{s\in [0,1]}\subset \Hsf_0(\varphi)$ between $f$ and the identity that satisfies $\tau_{f_s}=(1-s)\tau_f$, and then project it to an isotopy $(h_s)_{s\in[0,1]}\subset \Hsf_0(\varphi,\sigma)$ between $h$ and the identity. In order to prove the equality between supports of the elements in the isotopy we need the following.
	
	\begin{claim}
		Given $g\in \Hsf_0(\varphi)$, write $T_g=\{y\in Y:\tau_f(y)\neq 0\}$. Then $\overline{T_g}=\overline{\supp_Y(g)}$. 
	\end{claim}
	\begin{proof}[Proof of claim]
		Notice that if the leaf through $y\in T_g$ is non-closed, then $y\in\supp_Y(g)$. On the other hand, Assumption \ref{standing} implies that the points of $T_g$ with non-closed leaves are dense. Since $T_g$ is open, this implies that $\overline{T_g}\subseteq\overline{\supp_Y(g)}$. The reverse inclusion is trivial.
	\end{proof}
	
	Take now $h\in \Hsf_0(\varphi,\sigma)$, $f\in \Hsf_0(\varphi)$, and $(f_s)_{s\in[ 0,1]}$ as above. By the choice of the isotopy, we have that $T_{f_s}=T_f$ for every $s\in[0,1)$. Thus, the claim implies that $\overline{\supp_Y(f_s)}=\overline{\supp_Y(f)}$ for every $s\in[0,1)$. This implies $\overline{\supp_Z(h_s)}=\overline{\supp_Z(h)}$ for every $s\in [0,1)$, as desired. 
\end{proof}

We can now prove the fragmentation lemma for $\Tsing(\varphi,\sigma)$.

\begin{proof}[Proof of Proposition \ref{p-fragmentation}]
	The proof is somehow analogue to the discussion in \cite[Appendix A]{MB-T}. We will use that $\Hsf_0(\varphi)$ is a topological group, whose topology can be defined by looking at displacement of elements along leaves (more precisely, by looking at the uniform norm on the translation cocycles). Moreover, using Lemma \ref{l-lifting-H0}, we can identify  $\Hsf_0(\varphi, \sigma)$ to a closed subgroup of $\Hsf_0(\varphi)$.
	
	We first construct a family of charts subordinated to $\mathcal{U}$ that is well suited for our purposes. For this, choose a decomposition $Z=\bigcup_{i=1}^k\overline{Z_{C_i, I_i}}$ as in Lemma \ref{l-chart-decomposition}.
	After subdividing these into smaller charts if needed, we can suppose that each $\{Z_{C_i, I_i}\}_{i\in \mathcal I}$ is contained in some element of $\mathcal{U}$. Slightly enlarge each chart $Z_{C_i, I_i}$ to a dyadic chart $Z_{C_i, J_i}$, where $J_i$ is an $\varepsilon$-neighbourhood of $I_i$ satisfying:
	\begin{enumerate}
		\item\label{i} $Z_{C_i, J_i}$ is still contained in an element of $\mathcal{U}$ for every $i\in \mathcal I$;
		\item\label{ii} whenever $i,j\in \mathcal I$, with $i\neq j$, are such that $\overline{Z_{C_i, J_i}}\cap \overline{Z_{C_j, J_j} }\neq \varnothing$, then $Z_{C_i, J_i}\cap Z_{C_j, J_j}$ is a chart of the form $Z_{D_{ij}, L_{ij}}$, where $L_{ij}$ is an interval of the form $L_i=(t_{ij}-\varepsilon, t_{ij}+\varepsilon)$, so that $\overline{Z_{C_i, I_i}}\cap \overline{Z_{C_j, I_j}}= \pi_Z(D_{ij} \times \{t_{ij}\})$.
	\end{enumerate} 
	Set
	\[
	\mathcal I=\left \{i\in \{1,\ldots,k\}:\overline{Z_{C_i, I_i}} \cap \overline{\supp_Z(g)}\neq \varnothing\right \},
	\]
	and notice that $\{Z_{C_i, J_i}\}_{i\in \mathcal{I}}$ is an open cover of $\overline{\supp_Z(G)}$ subordinate to $\mathcal{U}$, and the charts $\{Z_{D_{ij}, L_{ij}}\}_{i,j\in \mathcal I, i\neq j}$ (which are also subordinate to $\mathcal{U}$) are pairwise disjoint and cover all sides of the  charts $\{{Z_{C_i, I_i}\}}$. To get the statement, it is therefore enough to show that the subgroup
	\[G_{g}:=\left \{h\in G:\overline{\supp_Z(h)}\subseteq\overline{\supp_Z(g)}\right \}\]
	is contained in the subgroup
	\[\textstyle K:= \left \langle \bigcup_{i\in \mathcal I} G_{C_i,J_i}\right \rangle.\]
	For this, we will prove that the elements in $G_g$ with small displacement belong to $K$, and then that every element in $G_g$ can be written as a product of elements in $G_g$ with small displacement.
	
	For the first step, take $h\in G_g$ which displaces any point $z\in Z$ of at most $\varepsilon/2$ along its leaf.  We have $h( \pi_Z(D_{ij} \times \{t_{ij}\}))\subset Z_{D_{ij}, L_{ij}}$ for any distinct $i,j\in \mathcal I$. 
	Since Thompson's group action on dyadic rationals is transitive, we can find an element $k$ which is a product of elements from the commuting subgroups $\{G_{D_{ij}, L_{ij}}\}_{i,j\in\mathcal I,i\neq j}$ (in particular, $k\in K$),
	such that $kh$ fixes
	\[\bigcup_{i, j\in \mathcal I,i\neq j} \pi_Z(D_{ij}\times\{t_{ij}\})=\bigcup_{i\in \mathcal I} \partial \overline{Z_{C_i, I_i}}.\]
	Hence $kh$ preserves each chart $Z_{C_i, I_i}$, and therefore it can be decomposed as a product of elements from the commuting subgroups $\{G_{C_i, I_i}\}_{i\in \mathcal I}$. In particular, we have that $kh\in K$, and therefore $h\in K$. 
	
	For the second step, consider an arbitrary element $h\in G_g$. By Lemma \ref{lem-cocyclesupport}, there exists an isotopy $(h_s)_{s\in [0,1]}$ between $h$ and the identity satisfying $\overline{\supp_Z(h_s)}=\overline{\supp_Z(h)}$ for every $s\in[0,1)$. Choose an increasing sequence $0=s_0<s_1<\cdots<s_{N-1}<s_N=1$, so that for every $i\in \{0,\ldots,N-1\}$, the element $f_i=h_{s_i}h^{-1}_{s_{i+1}}$ displaces any point of at most $\varepsilon/2$ along its leaf. Note that we have $h=f_0\cdots f_{N-1}$. Since $\overline{\supp_Z(f_i)}\subseteq\overline{\supp_Z(h)}$, we can reason as in the first step to deduce that each $f_i$ can be written as a product of elements in $\{H_{C_i,I_i}\}_ {i\in \mathcal I}$, where $H_{C_i,I_i}\subset \Hsf_0(\varphi,\sigma)$ is the subgroup of homeomorphisms supported on $Z_{C_i,I_i}$ of the form $(x,t)\mapsto(x,f_x(t))$. On the other hand, the classical fact that Thompson's group $F_{I}$ is dense in $\homeo_0(I)$ implies that $G_{C_i,I_i}$ is dense in $H_{C_i,I_i}$. Thus, for any $i\in \{0,\ldots,N-1\}$, we can approximate each $f_i$ (and hence $h$) by an element of $K$. This allows to find an element $k\in K$ such that $hk^{-1}$ displaces points by at most $\varepsilon/2$ along leaves. Hence, using the first step again, we get $hk^{-1}\in K$, and so $h\in K$.\qedhere
\end{proof}

\subsection{Finite generation}
Here we prove Theorem \ref{t:finitely_generated}, which says that when $(X,\varphi,\sigma)$ is a reversible subshift, the group $G$ is finitely generated.

 Given an open interval $I$, we denote by $\Fsing_I^c$ the subgroup of $\Fsing_I$ whose elements have their support compactly contained in $I$. Recalling the notation from \S \ref{ssc:centralizers},  given a chart $Z_{C,I}\cong C\times I$, we will write $\Fsing_{C,I}$ for the subgroup $(\Fsing_I)_C$ of $G_{C,I}$ of elements which act trivially on the factor $C$ and as a fixed element of $\Fsing_I$ on $I$. Similarly, we write  $\Fsing_{C,I}^c$ for the subgroup $(\Fsing_I^c)_C$.

The following lemma is the analogue of \cite[Lemma 4.7]{MB-T}.

\begin{lem}[Intersection lemma] \label{l-intersection}
	Consider charts $Z_{C, I}$ and $Z_{D, J}$, where $(C, I)$ and $(D, J)$ are such that $C\cap D\neq \varnothing$ and $I\cap J\neq \varnothing$. Then, the group $\langle \Fsing_{C, I}^c , \Fsing_{D, J}^c \rangle$ contains the subgroups $\Fsing_{C\cap D, I}^c$ and $\Fsing_{C\setminus D, I}^c$. 
\end{lem}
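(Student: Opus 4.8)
The argument for this lemma is purely local---charts of $Z$ are merely copies of $C\times I$, so the doubling/dihedral structure plays no role---and it follows the same lines as \cite[Lemma 4.7]{MB-T}, adapted to maps of type $\nice$. Throughout, for a clopen $E\subseteq X$ and an element $f\in\Fsing^c_K$ (for an interval $K$ over which this makes sense) I write $(f)_E$ for the homeomorphism acting as $t\mapsto f(t)$ on the $K$-coordinate over $E$ and trivially elsewhere, so that $(f)_E\in(\Fsing^c_K)_E$. The first step is a reduction: to prove the statement it is enough to show $\Fsing^c_{C\cap D,I}\subseteq\langle\Fsing^c_{C,I},\Fsing^c_{D,J}\rangle$. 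Indeed, for any $f\in\Fsing^c_I$ one has $(f)_C\in\Fsing^c_{C,I}$, $(f)_{C\cap D}\in\Fsing^c_{C\cap D,I}$, and $(f)_C(f)_{C\cap D}^{-1}=(f)_{C\setminus D}$, so $\Fsing^c_{C\setminus D,I}$ is automatically contained in $\langle\Fsing^c_{C,I},\Fsing^c_{C\cap D,I}\rangle$.

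To produce $\Fsing^c_{C\cap D,I}$, I would first generate it over a fixed small dyadic interval $I_0$ with $\overline{I_0}\subset I\cap J$, using the hypothesis $C\cap D\neq\varnothing$, $I\cap J\neq\varnothing$. Given $f_1,f_2\in\Fsing^c_{I\cap J}$, the elements $(f_1)_C$ and $(f_2)_D$ lie in $\Fsing^c_{C,I}$ and in $\Fsing^c_{D,J}$ respectively (this is where one uses $\overline{\supp f_i}\subset I\cap J$), and a direct computation on the charts $Z_{C,I}$ and $Z_{D,J}$ shows that the commutator is trivial over $C\setminus D$ and over $D\setminus C$, and equals $([f_1,f_2])_{C\cap D}$; that is, $[(f_1)_C,(f_2)_D]=([f_1,f_2])_{C\cap D}$. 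As $f_1,f_2$ range over $\Fsing^c_{I\cap J}$, these commutators generate the image of $[\Fsing^c_{I\cap J},\Fsing^c_{I\cap J}]$ under the embedding $f\mapsto(f)_{C\cap D}$. Since $\Fsing^c_{I\cap J}$ is perfect---the proof that $\Fsing$ is perfect in Lemma \ref{l-fingenperfect} applies verbatim to the compactly supported group, again using that $F'$ and $\tilde{T}$ are perfect---this image is exactly $\Fsing^c_{C\cap D,I\cap J}$, which in particular contains $\Fsing^c_{C\cap D,I_0}$.

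Finally I would spread $\Fsing^c_{C\cap D,I_0}$ over all of $I$. Every element of $\Fsing^c_{C\cap D,I}$ is supported over some dyadic interval $I'$ with $\overline{I'}\subset I$; since dyadic subintervals of a dyadic interval form a single orbit under Thompson's group $F$, realised by elements that are compactly supported in $I$, there is $f\in\Fsing^c_I$ with $f(I_0)$ a dyadic interval containing $\overline{I'}$. Then $(f)_C\in\Fsing^c_{C,I}$ conjugates $\Fsing^c_{C\cap D,I_0}$ onto $\Fsing^c_{C\cap D,f(I_0)}\supseteq\Fsing^c_{C\cap D,I'}$. Hence $\Fsing^c_{C\cap D,I}\subseteq\langle\Fsing^c_{C,I},\Fsing^c_{D,J}\rangle$, and by the reduction of the first paragraph also $\Fsing^c_{C\setminus D,I}$. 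The only parts demanding care are the bookkeeping that keeps each ad hoc homeomorphism $(f)_C$, $(f)_D$, etc.\ genuinely inside the correct chart subgroup (so that the support constraints $\overline{\supp f_i}\subset I\cap J$ are respected), and recording the perfectness of the compactly supported groups $\Fsing^c_I$; neither is a serious obstacle, as both are entirely analogous to facts already established for $\Fsing$ and, in the PL case, used in \cite{MB-T}.
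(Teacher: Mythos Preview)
Your overall strategy---commutators then conjugation---is the same as the paper's, but there is a genuine gap in the commutator step, and it is precisely where you assert that ``the doubling/dihedral structure plays no role''. In the dihedral suspension $Z=(X\times\R)/D_\infty$, the intersection of the supports of $(f_1)_C$ and $(f_2)_D$ in $Z$ need not be $Z_{C\cap D,\;\supp f_1\cap\supp f_2}$. A point $\pi_Z(x,t)$ with $x\in C$ can also lie in $Z_{D,J}$ via a non-trivial $\gamma\in D_\infty$, e.g.\ if $\sigma(x)\in D$ and $-t\in J$; on that part of $C\setminus D$ the element $(f_2)_D$ acts in the $Z_{C,I}$-coordinates as $t\mapsto -f_2(-t)$, and the commutator $[(f_1)_C,(f_2)_D]$ is not trivial there. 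So your ``direct computation on the charts'' does not yield $([f_1,f_2])_{C\cap D}$ in general, and the identity you use can simply fail.

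The paper fixes this with one extra sentence: it chooses a small interval $L\subset I\cap J$ that avoids the lattice $\tfrac12\Z$ of fixed points of the reflections in $D_\infty$. Then no non-trivial $\gamma\in D_\infty$ sends $L$ into $L$, so $\gamma(C\times L)\cap(D\times L)=\varnothing$ for all $\gamma\neq\id$, and hence $Z_{C,L}\cap Z_{D,L}=Z_{C\cap D,L}$. With this (and only with this) your commutator identity becomes valid for $f_1,f_2\in\Fsing^c_L$, giving $\Fsing^c_{C\cap D,L}\le\langle\Fsing^c_{C,I},\Fsing^c_{D,J}\rangle$; your final conjugation step then spreads this over $I$. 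Your reduction to $\Fsing^c_{C\cap D,I}$ and your use of perfectness are fine---the only missing ingredient is this choice of $L$, which is exactly the place where the dihedral structure enters.
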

\begin{proof}
	For every interval $L\subset I\cap J$, the charts $Z_{C, L}$ and $Z_{D, L}$ are both well defined.  Choose $L$ that avoids the lattice of half integers $\frac{1}{2} \Z$. Then every non-trivial element $\gamma\in D_\infty$ must map $L$ disjointly from itself. We deduce that  $\gamma(C\times L)\cap(D\times L)=\varnothing$ for any such $\gamma$. It follows that $Z_{C, L} \cap Z_{D, L}=Z_{C\cap D, L}$. Taking commutators as in the proof of \cite[Lemma 4.7]{MB-T}, we find that $\Fsing_{C\cap D, L}^c\le \langle \Fsing_{C, I}^c , \Fsing_{D, J}^c \rangle$. Now, conjugating $\Fsing_{C\cap D, L}^c$ by an element of $\Fsing_{C, I}^c$, we conclude as in \cite[Lemma 4.7]{MB-T}.
\end{proof}

\begin{proof}[Proof of Theorem \ref{t:finitely_generated}]
	Since $\varphi:X\to X$ is a subshift, $X$ admits a clopen partition $C_1\sqcup\cdots \sqcup C_k$ whose $\varphi$-translates form a prebasis of the topology. Fix dyadic intervals $I_{-1}, I_0, I_1\subset \R$ of length $|I_j|<1$ containing respectively $-1, 0, 1$, and whose union covers an open neighbourhood of $[-1, 1]$.  Using Lemma \ref{l-admissible-strips}, for every $i\in \{1,\ldots,k\}$, and $x\in C_i$, we can find a clopen subset $D\subset C_i$  containing $x$ such that the charts $Z_{D, I_\omega}$ are admissible for $\omega\in \{-1, 0, 1\}$. Thus, upon refining the partition $\{C_i\}$, we can assume  that all charts $Z_{C_i, I_\omega}$ are admissible.  Let $H=\left \langle \bigcup_{i=1}^k\bigcup_{\omega\in \{-1,0,1\}}\Fsing_{C_i, I_\omega} \right \rangle.$ By Lemma \ref{l-fingenperfect}, $H$ is finitely generated. We want to prove that $H =\Tsingsf(\varphi, \sigma)$. 
	
	\begin{claim}
		Fix integers $m\le 0 \le n$,  and a sequence $(i_{j})_{j=m}^n\subset \{1,\ldots,k\}$ such that $D:=\bigcap_{j=m}^n \varphi^j(C_{i_j})$ is non-empty. Let $J$ be an interval contained in one of the intervals $I_\omega$, for $\omega\in \{-1, 0, 1\}$.  Then $\Fsing_{D, J}^c\le H$. 
	\end{claim}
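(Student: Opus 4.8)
The plan is to prove the Claim by induction on the \emph{width} $w := n - m \ge 0$ of the cylinder $D$. The three tools are the Intersection Lemma (Lemma~\ref{l-intersection}); the fragmentation lemma for $\Fsing$ (Lemma~\ref{lem-fragmentationFsing}), applied fibrewise over a clopen set, which allows us to cut the flow-interval into smaller dyadic pieces; and the \emph{reindexing identity} $Z_{C, I} = Z_{\varphi(C), I - 1}$ (equivalently $Z_{C,I}=Z_{\varphi^{-1}(C),I+1}$), immediate from the definition of the dihedral suspension, which identifies $\Fsing_{C, I}^c$ with $\Fsing_{\varphi^{\pm 1}(C),\, I \mp 1}^c$ as subgroups of $G$. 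The base case $w = 0$ forces $m = n = 0$, so $D = C_{i_0}$, and for $J \subseteq I_\omega$ one has $\Fsing_{C_{i_0}, J}^c \subseteq \Fsing_{C_{i_0}, I_\omega} \subseteq H$; using Lemma~\ref{lem-fragmentationFsing}, the same holds for any dyadic $J$ whose closure is covered by $I_{-1} \cup I_0 \cup I_1$.

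For the inductive step, suppose $w \ge 1$. If both extremities of $D$ are ``active'', i.e.\ $m < 0 < n$, set $D' := \bigcap_{j=m}^{n-1} \varphi^j(C_{i_j})$ and $D'' := \bigcap_{j=m+1}^{n} \varphi^j(C_{i_j})$; these are cylinders of width $w-1$ satisfying the hypotheses of the Claim, with $D = D' \cap D''$ and $D', D'' \subseteq C_{i_0}$ (so that $Z_{D', J}, Z_{D'', J}$ are genuine charts by the choice of partition, cf.\ Lemma~\ref{l-admissible-strips}). By the inductive hypothesis $\Fsing_{D', J}^c, \Fsing_{D'', J}^c \le H$, and since the charts $Z_{D', J}$, $Z_{D'', J}$ share the interval $J$ and have intersecting clopen parts, Lemma~\ref{l-intersection} yields $\Fsing_{D, J}^c \le \langle \Fsing_{D', J}^c, \Fsing_{D'', J}^c\rangle \le H$. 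If only one extremity is active, say $m = 0 < n$ (the case $m < 0 = n$ being symmetric), peel off the constraint at $j = n$: writing $D = D_L \cap \varphi^n(C_{i_n})$ with $D_L := \bigcap_{j=0}^{n-1}\varphi^j(C_{i_j})$ of width $w-1$, the inductive hypothesis gives $\Fsing_{D_L, J}^c \le H$, and Lemma~\ref{l-intersection} reduces the problem to $\Fsing_{\varphi^{n}(C_{i_n}), J}^c \le H$. This shifted, width-$0$ case is the heart of the matter, and is handled by an auxiliary induction on $|\ell|$ proving $\Fsing_{\varphi^\ell(C_i), J}^c \le H$: decompose $\varphi^\ell(C_i)$ into the clopen pieces $\varphi^\ell(C_i) \cap C_j$, rewrite each as $\varphi(\varphi^{\ell-1}(C_i)\cap \varphi^{-1}(C_j))$, apply the reindexing identity, and then use Lemma~\ref{l-intersection} to split off the factor $\Fsing_{\varphi^{-1}(C_j),\, J+1}^c = \Fsing_{C_j, J}^c \le H$, leaving the factor $\Fsing_{\varphi^{\ell-1}(C_i),\, J+1}^c$, which is covered by the auxiliary induction.

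The main obstacle is the bookkeeping of the flow-interval: each use of the reindexing identity shifts $J$ by $\pm 1$, and the various inductive hypotheses only apply while the shifted interval stays inside $I_{-1} \cup I_0 \cup I_1$. Since the three intervals have length $< 1$ and cover a neighbourhood of $[-1,1]$, they permit exactly one unit shift, in a direction depending on which $I_\omega$ contains $J$; so at each step one must choose whether to peel and reindex ``to the left'' or ``to the right'' according to the position of $J$, and re-localise via Lemma~\ref{l-intersection} rather than iterating reindexings (which would let the interval drift out of range). In the genuinely deep cases one has in addition to produce, inside $H$, auxiliary elements sliding supports along leaves back into the region covered by the basic charts. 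This is precisely where the argument departs from \cite[Lemma 4.7 and the proof of Theorem A]{MB-T}: the dihedral suspension carries no globally coherent flow direction, which forbids the single ``forward-moving'' element used there, and repairing this is the technical core of the appendix.
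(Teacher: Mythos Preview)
Your outline correctly isolates the three tools and the central obstacle (interval drift under reindexing), but it does not close the gap it names. The auxiliary induction on $|\ell|$ passes from $\Fsing^c_{\varphi^\ell(C_i),J}$ to $\Fsing^c_{\varphi^{\ell-1}(C_i),J+1}$; one more step gives $J+2$, which for generic $J$ already lies outside $I_{-1}\cup I_0\cup I_1$, and your suggestion to ``re-localise via Lemma~\ref{l-intersection}'' cannot help: that lemma refines the clopen factor, it does not move the interval. You say that ``auxiliary elements sliding supports along leaves'' are needed, but you never produce them --- and that is precisely the missing step. There is also a secondary issue: you invoke Lemma~\ref{l-intersection} with the pair $Z_{D_L,J}$, $Z_{\varphi^n(C_{i_n}),J}$, but for large $n$ the set $\varphi^n(C_{i_n})$ need not lie in any atom $C_j$, so $Z_{\varphi^n(C_{i_n}),J}$ is not known to be a chart and the lemma does not apply as stated.

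The paper supplies the missing mechanism by a conjugation trick, and this should be regarded as the heart of the argument. Fix once and for all a small dyadic $L\subset I_0$ containing $0$ with $L\pm 1\subset I_{\pm 1}$. The key observation is: whenever $D\subset C_i$ for some $i$ and $\Fsing^c_{D,L}\le H$, one has $\Fsing^c_{D,J}\le H$ for every $J$ contained in some $I_\omega$. Indeed, conjugating $\Fsing^c_{D,L}$ by elements of $\Fsing_{C_i,I_0}\le H$ carries $L$ onto any compact dyadic subinterval of $I_0$, and repeating from $J\subset I_0\cap I_{\pm 1}$ extends to all of $I_{\pm 1}$. This observation lets one \emph{reset} the interval to $L$ after every reindexing. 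Starting from $\Fsing^c_{D',L}\le H$ (inductive hypothesis, with $D'=\bigcap_{j=m}^{n-1}\varphi^j(C_{i_j})\subset C_{i_0}$), the observation gives $\Fsing^c_{D',L-1}\le H$, which reindexes to $\Fsing^c_{\varphi^{-1}(D'),L}\le H$; since $\varphi^{-1}(D')\subset C_{i_1}$ the observation applies again, and iterating $n$ times reaches $\Fsing^c_{\varphi^{-n}(D'),L}\le H$. Now intersect with $C_{i_n}$ (Lemma~\ref{l-intersection}) to obtain $\Fsing^c_{\varphi^{-n}(D),L}\le H$, and walk back by the reverse shift. At every stage the clopen set sits inside some $C_{i_j}$, which simultaneously guarantees that the chart is well defined and furnishes the conjugating elements --- both points your approach lacks.
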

	\begin{proof}[Proof of claim]
		Fix a dyadic interval $L\subset I_0$ containing 0, and small enough so that $L-1\subset I_{-1}$ and $L+1\subset I_1$. We begin by observing that if $D\subset C_i$ for some $i\in \{1,\ldots,k\}$, and if $\Fsing_{D, L}^c\le H$, then actually we have $\Fsing_{D, J}^c\le H$ for every dyadic interval $J$ as in the statement of the claim. Indeed, by conjugating $\Fsing_{D, L}^c\le H$ by  elements of $\Fsing_{C_i, I_0}^c$, we obtain the conclusion for $J\subset I_0$. Choose $J\subset I_0 \cap I_1$, and repeat the reasoning to obtain the conclusion for $J\subset I_1$, and argue in the same way for $J\subset I_{-1}$.
		
		We now proceed by induction on $m\le 0\le n$. If $n=m=0$, then $D=C_{i_0}$  and   $\Fsing_{D, L}^c\le \Fsing^c_{C_{i_0}, I_0} \le H$.  Assume that $n>0$. Set $D'=\bigcap_{j=m}^{n-1} \varphi^j(C_{i_j})$. By induction we have $\Fsing^c_{D', L}\le H$ and thus, since $L-1\subset I_{-1}$, we also have  $\Fsing^c_{D', L-1}\le H$. Now note that $\Fsing^c_{D', L-1}=\Fsing^c_{\varphi^{-1}(D'), L}$ by the chart identification rules. Since $\varphi^{-1}(D')\subset C_{i_1}$, we can iterate this reasoning, and we find that $\Fsing^c_{\varphi^{-j}(D'), L}$ for all $j\in \{1,\ldots, n\}$. In particular this holds true for $j=n$, and since $\varphi^{-n}(D')\cap C_{i_n}= \varphi^{-n}(D)$, the intersection lemma (Lemma \ref{l-intersection}) implies that $\Fsing^c_{\varphi^{-n}(D), L}\le H$. Now applying again the observation at the beginning of the paragraph, we have that $\Fsing^c_{\varphi^{-n+1}(D), L}=\Fsing^c_{\varphi^{-n}(D), L+1}\le H$, and iterating this reasoning $n$ times we find $\Fsing^c_{D, L}\le H$. The inductive step on $m$ is similar. 
	\end{proof}
	Now, let $Z_{C, J}$ be an arbitrary chart, and let us show that $\Fsing_{C, J} \le H$. Since $\Fsing_{C, J}$ is generated by its subgroups $\Fsing^c_{C, J'}$, where $J'\subset J$ is arbitrarily small, it is enough to show this for the subgroups $\Fsing^c_{C, J}$ such that $|J| \le \varepsilon$ for some given $\varepsilon$.  Moreover, using that $\Fsing^c_{C, J}=\Fsing^c_{\varphi^n(C), J-n}$, we can assume that $J$ intersects $[0, 1]$.  If $\varepsilon$ is sufficiently small, this implies that $J$ is entirely contained in $I_0$ or $I_1$. Since the subsets $C_i$ form a generating partition for $\varphi$, we have a partition $C=D_1\sqcup \cdots \sqcup D_k$, where each $D_i$ is as in the claim. This gives the inclusion $\Fsing^c_{C, J}\le \langle \bigcup_{i=1}^k \Fsing^c_{D_i, J}\rangle \le H$, as desired.
	
	As observed in the proof of Lemma \ref{l-minimal-CIxi}, the subgroup $G_{C,J}$ is identified with the group $\mathcal C(C,\Fsing_J)$, and it is therefore generated by its subgroups $\Fsing_{D,J}$, with $D\subset C$. Therefore, we have $G_{C,J}\le H$. The fragmentation lemma (Proposition \ref{p-fragmentation}), gives finally $G\le H$, as desired.
\end{proof}

\bibliography{biblio.bib}

\bigskip

{\small

\noindent\textit{Joaqu\'in Brum\\
	IMERL, Facultad de Ingenier\'ia, Universidad de la República, Uruguay\\
	Julio Herrera y Reissig 565, Montevideo, Uruguay\\}
\href{mailto:joaquinbrum@fing.edu.uy}{joaquinbrum@fing.edu.uy}

\smallskip

\noindent\textit{Nicol\'as Matte Bon\\
Universit\'e Claude Bernard Lyon 1\\
 CNRS\\
 	Institut Camille Jordan (ICJ UMR CNRS 5208)\\
	43 blvd.\ du 11 novembre 1918,	69622 Villeurbanne,	France\\}
\href{mailto:mattebon@math.univ-lyon1.fr}{mattebon@math.univ-lyon1.fr}

\smallskip

\noindent\textit{Crist\'obal Rivas\\
	Dpto.\ de Matem\'aticas\\
	Universidad de Chile\\
	Las Palmeras 3425, Ñuñoa, Santiago, Chile\\}
\href{mailto:cristobalrivas@u.uchile.cl}{cristobalrivas@u.uchile.cl}

\smallskip

\noindent\textit{Michele Triestino\\
	Institut de Math\'ematiques de Bourgogne (IMB, UMR CNRS 5584) \& Institut Universitaire de France\\
	Universit\'e de Bourgogne\\
	9 av.~Alain Savary, 21000 Dijon, France\\}
\href{mailto:michele.triestino@u-bourgogne.fr}{michele.triestino@u-bourgogne.fr}

}

\end{document}